\documentclass{amsart}
\usepackage{amssymb,amsmath,mathrsfs,amsfonts}
\usepackage{amscd, amssymb, amsmath, amsthm, graphics,graphicx}
\usepackage{amsmath,amsfonts,amsthm,amssymb}
\usepackage{slashbox}
\usepackage{fancyhdr}
\usepackage[all]{xy}
\usepackage[colorlinks=true]{hyperref}
\usepackage[small,nohug,heads=vee]{diagrams}
\usepackage{multirow}
\usepackage{enumitem}
\diagramstyle[labelstyle=\scriptstyle]
\numberwithin{equation}{section}

\newtheorem{theorem}{Theorem}[section]
\newtheorem{lemma}[theorem]{Lemma}
\newtheorem{proposition}[theorem]{Proposition}
\newtheorem{corollary}[theorem]{Corollary}
\newtheorem{construction}[theorem]{Construction}

\newtheorem{data}[theorem]{Data}
\newtheorem{notation}[theorem]{Notation}
\newtheorem{parameter}[theorem]{Parameter}

\theoremstyle{definition}
\newtheorem{definition}[theorem]{Definition}

\newtheorem{question}[theorem]{Question}
\newtheorem{remark}[theorem]{Remark}

%\begin{document}
\begin{document}
\sloppy

\title[Virtual domination of $3$-manifolds II]{Virtual domination of $3$-manifolds II}

    %Information for first author
\author{Hongbin Sun}
\address{Department of Mathematics, Rutgers University - New Brunswick, Hill Center, Busch Campus, Piscataway, NJ 08854, USA}
\email{hongbin.sun@rutgers.edu}

%    General info

\subjclass[2010]{57M10, 57M50, 30F40}
\thanks{The author is partially supported by Simons Collaboration Grants 615229.}
\keywords{hyperbolic $3$-manifolds, non-zero degree maps, good pants construction, quasi-isometric embedding}

\date{\today}
\begin{abstract}
For any closed oriented $3$-manifold $M$ with positive simplicial volume and any closed oriented $3$-manifold $N$, we prove that there exists a finite cover $M'$ of $M$ that admits a degree-$1$ map $f:M'\to N$, i.e. $M$ virtually $1$-dominates $N$. This result generalizes previous virtual domination results with closed hyperbolic domain to more general domains.
\end{abstract}

\maketitle
\vspace{-.5cm}
%\newpage
\section{Introduction}

In this paper, we assume all manifolds are compact, connected and oriented, unless otherwise indicated.

For two closed oriented $n$-manifolds $M, N$ and a non-zero integer $d$, we say that $M$ {\it $d$-dominates} $N$ if there is a map $f:M\to N$ of degree $d$, i.e. $f_*:H_n(M;\mathbb{Z})\to H_n(N;\mathbb{Z})$ satisfies $f_*([M])=d[N]$ for fundamental classes of $M$ and $N$. Moreover, we say that $M$ {\it dominates} $N$ if $M$ $d$-dominates $N$ for some non-zero integer $d$. Roughly speaking, $M$ dominates $N$ implies that $M$ is topologically more complicated than $N$. 

According to \cite{CT}, for two closed oriented $n$-manifolds $M$ and $N$, Gromov defined that $M\geq N$ if $M$ dominates $N$, thus defined a (non-antisymmetric) partial order on the set of closed oriented $n$-manifolds. Moreover, the following question is asked in \cite{CT}:
whether there is an easily described class $\mathscr{C}$ of closed oriented $n$-manifolds, such that for any closed oriented $n$-manifold $N$, there exists $M\in \mathscr{C}$ such that $M$ dominates $N$?

In this paper, we are interested in a more specific version of the above question in \cite{CT}.
\begin{question}\label{CT}
Which closed oriented $n$-manifold $M$ satisfies the following property? For any closed oriented $n$-manifold $N$, there is an  $M'$ in the following family 
$$\mathscr{C}=\{\text{all\ finite\ covers\ of\ }M\},$$ 
such that $M'$ dominates $N$.
\end{question}

At first, by classification theorems in dimension $1$ and $2$, it is easy to obtain a complete answer of Question \ref{CT} for $1$- and $2$-manifolds: $S^1$ and all closed oriented surfaces of genus at least $2$, respectively. 
In \cite{Gai}, Gaifullin prove that for any dimension $n$, there exists a closed oriented $n$-manifold satisfying the property in Question \ref{CT}.

In this paper, we focus on $3$-manifolds and give all $3$-manifolds satisfying the property in Question \ref{CT}.
Since we do not have a complete understanding on manifolds of dimension at least $4$, it seems hopeless to give a complete answer of Question \ref{CT} for dimension at least $4$. 

In the topic of non-zero degree maps, simplicial volume is the most powerful tool: for a map $f:M\to N$ between closed oriented manifolds of same dimension, we have 
$$\|M\|\geq |\text{deg}(f)|\cdot\|N\|.$$ 
Here $\|\cdot \|$ denotes the simplicial volume of a closed orientable manifold. For closed orientable hyperbolic $n$-manifolds, their simplicial volumes are proportional to their hyperbolic volumes, and the multiplicative constant only depends on the dimension.  
In \cite{Soma1}, Soma gave a complete criterion on whether a closed orientable $3$-manifold has positive simplicial volume: a closed orientable $3$-manifold $M$ has positive simplicial volume if and only if one of its prime summand has a hyperbolic JSJ piece.
Moreover, the simplicial volume of $M$ equals the sum of simplicial volumes of hyperbolic JSJ pieces of prime summands of $M$.

We say that a manifold {\it virtually} satisfies a certain property if it has a finite cover satisfying this property.
Note that a closed orientable manifold has positive simplicial volume if and only if it virtually has positive simplicial volume. So if a closed oriented $3$-manifold satisfies the property in Question \ref{CT}, then it must have positive simplicial volume.

In \cite{Sun2} and \cite{LS}, we proved that all closed oriented hyperbolic $3$-manifolds satisfy the property in Question \ref{CT}, and actually the non-zero degree map $f:M'\to N$ can be realized by a degree-$1$ map. 

In this paper, we prove the following result, which generalizes results in \cite{Sun2} and \cite{LS} from closed oriented hyperbolic $3$-manifolds to all closed oriented $3$-manifolds with positive simplicial volumes. This result completely answers Question \ref{CT} in dimension $3$: a closed oriented $3$-manifold satisfies the property in Question \ref{CT} if and only if it has positive simplicial volume.

%In \cite{Gai}, Gaifullin prove that, for any dimension $n$, there exists a closed oriented $n$-manifold $M_0$, such that for any closed oriented $n$-manifold $N$, there exists a finite cover $M_0'\to M_0$ and a non-zero map $f:M_0'\to N$. In other words, $$\mathscr{C}=\{\text{all\ finite\ covers\ of\ }M_0\}$$ is a (easily described) family of manifolds that answers Question \ref{CT}.

\begin{theorem}\label{main1}
For any closed oriented $3$-manifold $M$ with positive simplicial volume and any closed oriented $3$-manifold $N$, there exists a finite cover $M'$ of $M$ that admits a degree-$1$ map $f:M'\to N$. In other words, any closed oriented $3$-manifold with positive simplicial volume virtually $1$-dominates all closed oriented $3$-manifolds.
\end{theorem}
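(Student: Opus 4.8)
The plan is to reduce the general case to the hyperbolic case already handled in \cite{Sun2} and \cite{LS}. Since $M$ has positive simplicial volume, by Soma's criterion one of its prime summands has a hyperbolic JSJ piece; after passing to a connected summand and then to the JSJ decomposition, $M$ contains a compact hyperbolic piece $Y$ with nonempty torus boundary (the closed hyperbolic case being exactly what is already known). So the first step is a purely topological reduction: it suffices to show that a finite cover of $M$ $1$-dominates $N$, and domination is insensitive to connected sum in the target in the sense that $M$ $1$-dominates $N$ if $M$ $1$-dominates $N \# (S^2\times S^1)$ or if we can first $1$-dominate $S^3$ and then collapse; more usefully, pinching and the fact that any $N$ is dominated by a manifold built from $S^3$'s and one hyperbolic-like block lets us assume $N$ is whatever convenient model we like. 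I would isolate the statement ``if $M$ has a finite cover $M'$ containing a compact hyperbolic JSJ piece $Y'$ such that $Y'$ $1$-dominates (rel boundary, in an appropriate sense) a handlebody or a manifold with the same boundary pattern, then $M'$ $1$-dominates $N$'' and prove the topology of gluing the dominating map across the JSJ tori and the remaining Seifert pieces by mapping everything else to a point or to a ball.

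The analytic heart is the good-pants / quasi-isometrically-embedded-surface technology from \cite{Sun2}, now applied to the \emph{cusped} hyperbolic piece $Y$ rather than a closed hyperbolic manifold. The key steps there: (1) pass to a finite cover of $Y$ (and correspondingly of $M$, using separability/LERF for the relevant subgroups and the fact that virtual properties of JSJ pieces assemble to virtual properties of $M$ — this is where Wise's and Agol's work, together with the graph-of-groups structure, gets used) so that the cusps are ``long'' and the good pants construction has enough room; (2) build, inside this cover $Y'$, an immersed $\pi_1$-injective (indeed quasi-isometrically embedded) closed surface, or rather a nearly-geodesic $2$-complex, whose fundamental group surjects appropriately and which carries a degree-$1$ map to a model surface/$2$-complex for $N$; (3) thicken this surface to get the actual $3$-manifold-level degree-one map, using the ``connected sum of surface bundles'' or ``partial Dehn filling'' model of $N$ as in the hyperbolic case. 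I would follow the template of \cite{Sun2} almost verbatim, the new input being the verification that the good-pants construction and the relevant exponential mixing / equidistribution estimates hold for geometrically finite groups with cusps, which is by now standard (Kahn–Wright).

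The main obstacle I expect is step (1) combined with the cusped good-pants construction: in a cusped hyperbolic $3$-manifold the thin parts destroy the uniform injectivity-radius hypotheses that the closed good-pants machine relies on, so one must either work with the good pants staying in the thick part (arranging, after a finite cover, that no short geodesic threads a cusp in a way that obstructs closing up the pants) or use the cusped version of the Kahn–Wright surface subgroup theorem and carefully control how the built surface meets the JSJ tori so that it glues to the surfaces constructed in the neighboring Seifert-fibered pieces. Managing the boundary pattern — ensuring the immersed surface in $Y'$ meets $\partial Y'$ in a controlled family of curves that can be matched across the decomposition to assemble a closed surface (or closed $2$-complex) in all of $M'$ — is the real bookkeeping difficulty; the rest is adapting estimates that are already in the literature. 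A secondary obstacle is making all the finite covers (one per JSJ/geometric piece, chosen for geometric reasons) compatible along the gluing tori so that they glue to an honest finite cover $M'$ of $M$, which requires the subgroup-separability input for the graph-of-groups $\pi_1(M)$.
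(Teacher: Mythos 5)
Your broad template does match the paper's: reduce the target to a closed hyperbolic $N$, work inside a cusped hyperbolic JSJ piece $M_0$ of $M$, use good-pants technology to build a $\pi_1$-injective object modeled on $N$, pass to a finite cover via separability, and extend a degree-one map by pinching. But the step you single out as the ``real bookkeeping difficulty'' --- controlling how the immersed surface meets $\partial Y'$, matching boundary curves across the JSJ decomposition, and making finite covers of the individual geometric pieces compatible along the gluing tori --- is a wrong turn, and as stated it is a genuine gap: gluing independently chosen covers of JSJ pieces into a cover of $M$ is exactly the kind of problem one cannot solve by hand, and nothing in the actual argument requires it. In the paper the $2$-complex $Z$ (obtained from $N^{(2)}$ by replacing each triangle by a surface with one boundary circle) is immersed entirely in the bounded-height part of the single cusped piece $M_0$, never touching the JSJ tori or the Seifert pieces; one proves $j_*(\pi_1(Z))$ is convex cocompact in $\pi_1(M_0)$ and, since it lies in a vertex group of the JSJ graph of groups, separable in all of $\pi_1(M)$ (Sun's characterization of separable subgroups, resting on Agol's LERF). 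Scott's criterion then produces one finite cover $M'$ of the whole $M$ containing an embedded copy of the compact convex core $\mathcal{Z}$; the degree-one map is a degree-one proper map $\mathcal{Z}\to\mathcal{N}^{(2)}$ (pinch each $S_{\Delta}$ to its triangle) extended over $M'\setminus\mathcal{Z}$ by crushing onto a graph in the $3$-handles of $N$. No surface assembly across tori and no cover-compatibility issue ever arises.

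The second gap is your assertion that the cusped-manifold input is ``by now standard (Kahn--Wright).'' Kahn--Wright supplies the surface subgroup theorem and the hamster-wheel/umbrella machinery, but the proof needs three further ingredients that are precisely the new content: a connection principle with homological control in cusped hyperbolic $3$-manifolds (Theorem \ref{enhancedconnectionprinciple}, with explicit height bounds), the height-controlled panted cobordism group of \cite{Sun4} together with the statement that a suitable null-homologous good multicurve bounds a nearly geodesic subsurface with prescribed feet (Theorem \ref{boundingsurface}), and a new $\pi_1$-injectivity argument for $j:Z\looparrowright M_0$. The last point is not routine: in the cusped setting the assemblies contain hamster wheels and the feet-matching along outer cuffs is only controlled up to a $100\epsilon$ error rather than the $\epsilon/R^2$-type precision available in closed manifolds, so the injectivity proofs from \cite{Sun2} and \cite{LS} do not transfer and one needs the new global estimates on nearly geodesic subsurfaces (Theorem \ref{estimateonsurface}) plus the interpolation to a totally geodesic model to identify the convex core with $\mathcal{Z}$. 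Your proposal would stall exactly at these points if pursued as written.
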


Theorem \ref{main1} is actually the strongest possible result in the topic of virtual domination in dimension $3$, and it implies the following result on virtually $\pi_1$-surjective $k$-domination for any non-zero integer $k$.

\begin{corollary}
For any closed oriented $3$-manifold $M$ with positive simplicial volume, any closed oriented $3$-manifold $N$, and any non-zero integer $k$, there exists a finite cover $M'$ of $M$ that admits a $\pi_1$-surjective degree-$k$ map $f:M'\to N$. 
%In other words, every closed oriented $3$-manifold with positive simplicial volume virtually $\pi_1$-surjectively $k$-dominates all closed oriented $3$-manifolds. 
\end{corollary}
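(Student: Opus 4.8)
The plan is to obtain the corollary as a quick consequence of Theorem~\ref{main1}, pushing all of the geometric work into that theorem. The key observation is the standard fact that, for any $m\geq 1$, the connected sum $\#_{i=1}^{m}N$ admits a $\pi_1$-surjective map to $N$ of degree $m$: present $\#_{i=1}^m N$ as $m$ punctured copies of $N$ glued along their boundary $2$-spheres, collapse those spheres to get a map onto the one-point union $\bigvee_{i=1}^m N$ (a $\pi_1$-isomorphism, sending the fundamental class to the sum of the fundamental classes of the summands, since each $N\setminus B^3\to(N\setminus B^3)/\partial = N$ has degree $1$), and then apply the fold map $\bigvee_{i=1}^m N\to N$ that is the identity on each wedge factor, which is visibly $\pi_1$-surjective and multiplies the fundamental class by $m$. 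To deal with the sign of $k$, I would run the same construction with $\overline{N}$ (the orientation reversal of $N$) in place of $N$ when $k<0$, using that the identity is a degree-$(-1)$ map $\overline{N}\to N$; thus for every nonzero integer $k$ there is a $\pi_1$-surjective map $g_k:W_k\to N$ of degree $k$, where $W_k=\#_{i=1}^{|k|}N$ if $k>0$ and $W_k=\#_{i=1}^{|k|}\overline{N}$ if $k<0$.

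With $g_k$ in hand, the proof is a single application of Theorem~\ref{main1}. Since $M$ has positive simplicial volume, Theorem~\ref{main1} applied to $M$ and the closed oriented $3$-manifold $W_k$ produces a finite cover $M'$ of $M$ together with a degree-$1$ map $f_1:M'\to W_k$. A degree-$\pm1$ map between closed oriented manifolds is automatically $\pi_1$-surjective, so $f_1$ is $\pi_1$-surjective. Setting $f:=g_k\circ f_1:M'\to N$, we get $\deg(f)=\deg(g_k)\cdot\deg(f_1)=k$, and $f$ is $\pi_1$-surjective as a composition of $\pi_1$-surjective maps. This $M'$ and $f$ are as required.

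I do not expect a genuine obstacle here: all of the substance is contained in Theorem~\ref{main1}, whose hypothesis is exactly that $M$ has positive simplicial volume and whose conclusion supplies the degree-$1$ domination of the auxiliary target $W_k$. The only point needing a little care is orienting the summands of $W_k$ compatibly so that the fold map has the correct degree $+|k|$ onto a copy of $N$ (respectively $\overline{N}$), and correspondingly switching to $\overline{N}$ to accommodate negative $k$; both are routine.
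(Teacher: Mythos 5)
Your proposal is correct and follows essentially the same route as the paper: form the connected sum $\#^{|k|}N$ (or $\#^{|k|}\bar{N}$ for negative $k$), collapse to the wedge and fold to get a $\pi_1$-surjective degree-$k$ map onto $N$, and compose with the degree-$1$ map from a finite cover of $M$ supplied by Theorem \ref{main1}, using that degree-$1$ maps are $\pi_1$-surjective. No gaps.
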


\begin{proof}
If $k$ is positive, we take $N'=\#^k N$; if $k$ is negative, we take $N'=\#^{-k}\bar{N}$, where $\bar{N}$ is the orientation reversal of $N$. There is a $\pi_1$-surjective degree-$k$ map $g:N'\to N$, which first maps $N'$ to the one-point union of $k$ copies of $N$ ($k$ is positive) or $-k$ copies of $\bar{N}$ ($k$ is negative), then maps each copy of $N$ or $\bar{N}$ to $N$ by identity. Theorem \ref{main1} provides a finite cover $M'$ of $M$ and a degree-$1$ map $h:M'\to N'$. Since a degree-$1$ map is always $\pi_1$-surjective, $f=g\circ h:M'\to N$ is a $\pi_1$-surjective degree-$k$ map.
\end{proof}

In a forthcoming paper \cite{Sun5}, we will  prove a version of Theorem \ref{main1} for compact oriented $3$-manifolds with nonempty tori boundary.

\bigskip

Theorem \ref{main1} has the following consequence in algebraic topology.

By following \cite{Gai}, we say that a closed oriented $n$-manifold $M$ has the universal realisation of cycles (URC) property if for any topological space $X$ and any $z\in H_n(X;\mathbb{Z})$, there exists a finite cover $M'$ of $M$ and a map $f:M'\to X$ such that $f_*[M']=kz$ for some $k\in \mathbb{Z}\setminus \{0\}$. 
In \cite{Thom}, Thom proved that for any $z\in H_n(X;\mathbb{Z})$, there exists a closed oriented $n$-manifold $N^n$ and a map $g:N^n\to X$, such that $g_*[N^n]=kz$ for some $k\in \mathbb{Z}\setminus \{0\}$. Thom's result and Theorem \ref{main1} together imply the following corollary.

\begin{corollary}
A closed oriented $3$-manifold  $M$ has the URC property if and only if $M$ has positive simplicial volume.
\end{corollary}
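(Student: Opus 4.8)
The plan is to prove the two implications separately; both are short deductions from tools already in place, so I do not expect either to present a genuine obstacle.

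For the ``if'' direction, suppose $M$ has positive simplicial volume, and let $X$ be an arbitrary topological space with a class $z\in H_3(X;\mathbb{Z})$. First I would invoke Thom's theorem (quoted in the excerpt) to produce a closed oriented $3$-manifold $N$ and a map $g:N\to X$ with $g_*[N]=kz$ for some $k\in\mathbb{Z}\setminus\{0\}$. Then Theorem \ref{main1} supplies a finite cover $M'$ of $M$ together with a degree-$1$ map $f:M'\to N$, so that $f_*[M']=[N]$. The composition $g\circ f:M'\to X$ then satisfies $(g\circ f)_*[M']=g_*\bigl(f_*[M']\bigr)=g_*[N]=kz$, which is exactly the statement that $M$ has the URC property (with the same nonzero integer $k$ coming from Thom's theorem, since $f$ loses no multiplicity). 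The only point requiring a moment's care is that a degree-$1$ map sends the fundamental class to the fundamental class, not to a multiple of it; this is what makes the bookkeeping of $k$ clean.

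For the ``only if'' direction, suppose $M$ has the URC property. I would fix any closed oriented hyperbolic $3$-manifold $N$; such manifolds exist, and they have positive simplicial volume because simplicial volume is proportional to hyperbolic volume. Applying the URC property to $X=N$ and $z=[N]\in H_3(N;\mathbb{Z})$ yields a finite cover $M'$ of $M$ and a map $f:M'\to N$ with $f_*[M']=k[N]$ for some $k\in\mathbb{Z}\setminus\{0\}$, i.e. $\deg(f)=k\ne 0$. The fundamental inequality then gives $\|M'\|\geq |\deg(f)|\cdot\|N\| = |k|\cdot\|N\| > 0$, and since $\|M'\|=\deg(M'\to M)\cdot\|M\|$ for the covering projection, we conclude $\|M\|>0$.

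Putting the two directions together completes the proof. I expect no difficulty beyond correctly citing Thom's realization theorem and the degree/simplicial-volume inequality; the entire force of the corollary is carried by Theorem \ref{main1}.
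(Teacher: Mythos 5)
Your proof is correct and follows essentially the same route the paper intends: the ``if'' direction is exactly the composition of Thom's realization theorem with the degree-$1$ map from Theorem \ref{main1}, and the ``only if'' direction is the standard simplicial-volume argument (degree inequality plus multiplicativity under finite covers) that the paper already invokes when noting that positive simplicial volume is necessary for virtual domination. No gaps.
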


Theorem \ref{main1} can also be used to give alternative proofs of some recent results on virtual properties of $3$-manifolds.

In \cite{CG}, Chu and Groves proved that, for any compact irreducible $3$-manifold $M$ with positive simplicial volume and empty or tori boundary, and any finite abelian group $A$, $M$ has a finite cover $M'$ such that $A$ is a direct summand of $H_1(M';\mathbb{Z})$. Theorem \ref{main1} does not fully recover Chu and Groves' result, but it can be used to give an alternative proof for the case of closed $3$-manifolds.

\begin{corollary}
For any closed oriented $3$-manifold $M$ with positive simplicial volume and any finite abelian group $A$, $M$ has a finite cover $M'$ such that $A$ is a direct summand of $H_1(M';\mathbb{Z})$.
\end{corollary}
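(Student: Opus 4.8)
The plan is to deduce this corollary purely formally from Theorem \ref{main1}, once a suitable target manifold has been chosen. First I would realize the finite abelian group $A$ as the first integral homology of a closed oriented $3$-manifold: writing $A\cong\bigoplus_{i=1}^{k}\mathbb{Z}/n_i\mathbb{Z}$ via the structure theorem, I take $N=L(n_1,1)\#\cdots\#L(n_k,1)$, a connected sum of lens spaces (with $N=S^3$ when $A=0$, i.e.\ the empty connected sum). Since $\pi_1$ of a connected sum is the free product of the $\pi_1$'s, abelianizing gives $H_1(N;\mathbb{Z})\cong\bigoplus_{i}\mathbb{Z}/n_i\mathbb{Z}\cong A$; alternatively this is a direct Mayer--Vietoris computation.

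Next I would invoke Theorem \ref{main1} for $M$ and this $N$: because $M$ has positive simplicial volume, there is a finite cover $M'$ of $M$ together with a degree-$1$ map $f\colon M'\to N$. The last ingredient is the standard fact that a degree-$1$ map between closed oriented $n$-manifolds induces a split surjection on integral homology. Concretely, let $f^{!}\colon H_1(N;\mathbb{Z})\to H_1(M';\mathbb{Z})$ be the umkehr homomorphism $f^{!}=D_{M'}\circ f^{*}\circ D_N^{-1}$, where $D$ denotes Poincar\'e duality (cap product with the fundamental class). The projection formula $f_*\big(f^{*}(\beta)\cap[M']\big)=\beta\cap f_*[M']=\beta\cap[N]$, combined with $\deg f=1$, yields $f_*\circ f^{!}=\mathrm{id}_{H_1(N;\mathbb{Z})}$. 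By the splitting lemma for abelian groups, $H_1(M';\mathbb{Z})\cong\ker f_*\,\oplus\,f^{!}\!\big(H_1(N;\mathbb{Z})\big)$, so $A\cong H_1(N;\mathbb{Z})$ is a direct summand of $H_1(M';\mathbb{Z})$, as desired.

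I do not expect a genuine obstacle in this argument: all of the difficulty is absorbed into Theorem \ref{main1}, and the remaining steps are elementary. The only points requiring care are the homology computation for connected sums of lens spaces and the split-surjectivity property of degree-$1$ maps. It is worth noting that $\pi_1$-surjectivity of $f$ alone would only exhibit $A$ as a quotient of $H_1(M';\mathbb{Z})$; it is precisely the degree-$1$ hypothesis that upgrades this to a direct-summand statement. Note also that, in contrast to the theorem of Chu and Groves, no irreducibility hypothesis on $M$ is needed here.
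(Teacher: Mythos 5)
Your proposal is correct and follows essentially the same route as the paper: take $N$ a connected sum of lens spaces with $H_1(N;\mathbb{Z})\cong A$, apply Theorem \ref{main1} to get a degree-$1$ map $f:M'\to N$, and use the classical fact that a degree-$1$ map splits on integral homology. Your write-up merely spells out the umkehr/projection-formula details that the paper leaves as "a classical result."
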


\begin{proof}
Let $N$ be a connected sum of lens spaces such that $H_1(N;\mathbb{Z})\cong A$. Then Theorem \ref{main1} provides a finite cover $M'$ of $M$ and a degree-$1$ map $f:M'\to N$. By a classical result in algebraic topology, $H_1(N;\mathbb{Z})\cong A$ is a direct summand of $H_1(M';\mathbb{Z})$.
\end{proof}

Let $G$ be either $\text{Isom}_+(\mathbb{H}^3)$ or $\text{Isom}_e(\widetilde{SL_2(\mathbb{R})})$, the identity component of isometry groups of $\mathbb{H}^3$ or $\widetilde{SL_2(\mathbb{R})}$ respectively. For any closed oriented $3$-manifold $M$ and any representation $\rho:\pi_1(M)\to G$, there is an associated representation volume $\text{vol}(M,\rho)\in \mathbb{R}$ defined. If $\rho$ runs over all representations from $\pi_1(M)$ to $G$, $|\text{vol}(M,\rho)|$ only takes finitely many values. The hyperbolic representation volume and Seifert volume of $M$ are defined to be the supremum of $|\text{vol}(M,\rho)|$ for $G=\text{Isom}_+(\mathbb{H}^3)$ and $\text{Isom}_e(\widetilde{SL_2(\mathbb{R})})$ respectively. A fundamental property of representation volume is similar to the property of simplicial volume. For a map $f:M\to N$ between closed oriented $3$-manifolds, we have 
$$V(M)\geq |\text{deg}(f)|\cdot V(N).$$
Here $V(\cdot)$ denotes the hyperbolic representation volume or the Seifert volume. Closed oriented hyperbolic $3$-manifolds and closed oriented Seifert $3$-manifolds with $\widetilde{SL(2,\mathbb{R})}$-geometry are models with positive hyperbolic representation volumes and positive Seifert volumes respectively.

In Theorem 1.6 (1) of \cite{DLW} and Theorem 1.5 of \cite{DLSW}, it is prove that a closed orientable $3$-manifold $M$ with positive simplicial volume virtually has positive hyperbolic representation volume and positive Seifert volume. We can deduce these two results from Theorem \ref{main1} directly.

\begin{corollary}
For any closed oriented $3$-manifold $M$ with positive simplicial volume, it has a finite cover $M'$ with positive hyperbolic representation volume and positive Seifert volume.
\end{corollary}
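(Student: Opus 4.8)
The plan is to reduce the statement to the monotonicity of representation volume under non-zero degree maps, using Theorem~\ref{main1} to manufacture a degree-$1$ map from a finite cover of $M$ onto an appropriate model manifold. Concretely, I would fix once and for all a closed oriented hyperbolic $3$-manifold $N_1$ (for instance the Weeks manifold) and a closed oriented Seifert fibered $3$-manifold $N_2$ with $\widetilde{SL(2,\mathbb{R})}$-geometry (for instance the unit tangent bundle of a closed surface of genus $\geq 2$). By the facts recalled just before the statement, $N_1$ has positive hyperbolic representation volume and $N_2$ has positive Seifert volume; write $V_{hyp}(N_1)>0$ and $V_{Seif}(N_2)>0$.

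Next I would form the connected sum $N=N_1\# N_2$ and apply Theorem~\ref{main1} to the pair $(M,N)$. This yields a finite cover $M'$ of $M$ together with a degree-$1$ map $f\colon M'\to N$. Let $p_i\colon N\to N_i$ be the pinch map collapsing the summand complementary to $N_i$; each $p_i$ has degree $1$, so $p_i\circ f\colon M'\to N_i$ is a degree-$1$ map for $i=1,2$.

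Finally I would invoke the inequality $V(M')\geq|\text{deg}(p_i\circ f)|\cdot V(N_i)=V(N_i)$, applied with $V=V_{hyp}$ and $i=1$, and with $V=V_{Seif}$ and $i=2$. This gives $V_{hyp}(M')\geq V_{hyp}(N_1)>0$ and $V_{Seif}(M')\geq V_{Seif}(N_2)>0$ for one and the same cover $M'$, which is exactly the assertion.

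There is essentially no obstacle left in this argument: the substance has all been absorbed into Theorem~\ref{main1}, and the remaining ingredients—existence of the two model manifolds, positivity of their representation volumes, the degree-$1$ pinch maps off a connected sum, and monotonicity of $V$ under non-zero degree maps—are standard. The only point needing a moment's care is that a single finite cover must work for both geometries at once; this is why I target the connected sum $N_1\# N_2$ rather than applying Theorem~\ref{main1} separately to get a hyperbolic and a Seifert target, which would otherwise require an extra step passing to a common finite cover.
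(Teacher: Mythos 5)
Your argument is correct and coincides with the paper's own proof: apply Theorem~\ref{main1} to the connected sum $N_1\# N_2$ of a closed hyperbolic manifold and a Seifert manifold with $\widetilde{SL(2,\mathbb{R})}$-geometry, then pinch onto each summand and use monotonicity of the representation volumes under degree-$1$ maps. Your remark about targeting the connected sum so that a single cover $M'$ works for both volumes is exactly the point of the paper's construction.
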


\begin{proof}
Let $N_1$ be any closed oriented hyperbolic $3$-manifold, and let $N_2$ be any closed oriented Seifert $3$-manifold with $\widetilde{SL(2,\mathbb{R})}$-geometry. By applying Theorem \ref{main1}, there exists a finite cover $M'$ of $M$ that $1$-dominates $N_1\# N_2$. So $M'$ $1$-dominates both $N_1$ and $N_2$. Since $N_1$ has positive hyperbolic representation volume and $N_2$ has positive Seifert volume, $M'$ has both positive hyperbolic representation volume and positive Seifert volume.
\end{proof}

In Theorem 6.4 of \cite{Thu}, Thurston proved that, if both $M$ and $N$ are closed oriented hyperbolic $3$-manifolds and $f:M\to N$ is a non-zero degree map such that 
$$\|M\|=|\text{deg}(f)|\cdot \|N\|,$$
then $f$ is homotopic to a covering map. In \cite{Soma2}, Soma proved that if both $M$ and $N$ have hyperbolic JSJ pieces and the same equation holds for a map $f:M\to N$, then $f$ is homotopic to a map that restricts to a covering map on the disjoint union of hyperbolic JSJ pieces. So in general, it wastes some simplicial volume for $M$ to dominate $N$, and we wonder whether this waste of volume can be small when passing to finite covers.

\begin{question}
Let $M,N$ be two closed oriented $3$-manifolds with positive simplicial volumes. For any $\epsilon>0$, whether there exists a finite cover $M'$ of $M$ and a non-zero degree map $f:M'\to N$ such that 
$$\|M'\|\leq (1+\epsilon) |\text{deg}(f)|\cdot \|N\|?$$
\end{question}

\bigskip

The strategy of the proof of Theorem \ref{main1} is similar to the strategy of the proof in \cite{LS}, which proves the virtual $1$-domination result for closed oriented hyperbolic $3$-manifolds. We sketch the proof of Theorem \ref{main1} in the following.

At first, by using a result in \cite{BW}, we can assume the target manifold $N$ is hyperbolic. Then we take a geometric triangulation of $N$, and we denote the $1$- and $2$-skeletons by $N^{(1)}$ and $N^{(2)}$ respectively. The triangulation of $N$ also induces a handle structure of $N$, and we denote the union of $0$-, $1$- and $2$-handles by $\mathcal{N}^{(2)}$. Since the virtual $1$-domination result of closed oriented hyperbolic $3$-manifolds was proved in \cite{LS}, we assume that $M$ is not hyperbolic and is irreducible. So $M$ contains a hyperbolic JSJ piece $M_0$, which is an oriented cusped hyperbolic $3$-manifold. Then we construct a $\pi_1$-injective map $j:Z\looparrowright M_0$ from a $2$-complex $Z$ to $M_0$. The $2$-complex $Z$ is closely related to $N^{(2)}$: it is obtained by replacing each triangle $\Delta$ in $N^{(2)}$ by a compact orientable surface $S_{\Delta}$ with a single boundary component. 
Since $j_*(\pi_1(Z))<\pi_1(M_0)<\pi_1(M)$ is a separable subgroup of $\pi_1(M)$, $M$ has a finite cover $M'$ such that it contains a codimension-$0$ submanifold $\mathcal{Z}$ with following topological description. Here $\mathcal{Z}$ is homeomorphic to the oriented $3$-manifold obtained by replacing each $2$-handle in $\mathcal{N}^{(2)}$ by $S_{\Delta}\times I$, pasting along the same annulus. Then there is a degree-$1$ proper map $\mathcal{Z}\to \mathcal{N}^{(2)}$ and it extends to a degree-$1$ map $M'\to N$.

The construction of $j:Z\looparrowright M_0$ and the proof of its $\pi_1$-injectivity invoke the machinery of good pants construction initiated by Kahn and Markovic. For the proof of virtual $1$-domination for closed hyperbolic $3$-manifolds in \cite{Sun2} and \cite{LS}, the following results from good pants construction were applied.
\begin{enumerate}
\item In \cite{KM}, Kahn and Markovic constructed $\pi_1$-injective nearly geodesic closed subsurfaces in closed oriented hyperbolic $3$-manifolds.
\item In \cite{LM}, Liu and Markovic computed panted cobordism groups of closed oriented hyperbolic $3$-manifolds.
\item In \cite{Liu}, Liu established the connection principle with homological control for closed oriented hyperbolic $3$-manifolds.
\item In \cite{Sun2}, the author proved the $\pi_1$-injectivity of $j:Z\looparrowright M$ for closed oriented hyperbolic $3$-manifolds.
 \end{enumerate}
Item (1) gives the fundamental construction of nearly geodesic closed subsurfaces in closed hyperbolic $3$-manifolds, and it is the foundation of all further developments of the good pants construction. In the construction of $j:Z\looparrowright M$ in \cite{LS}, item (3) is used to construct $j$-images of edges of $N^{(1)}\subset Z$, so that the $j$-image of the boundary of any triangle in $N^{(2)}$ satisfies an enhanced null-homologous condition in $M$. Then item (2) is used to construct the restriction of $j$ on each $S_{\Delta}$. Finally, item (4) is applied to prove the $\pi_1$-injectivity of $j$.

Unfortunately, all works in items (1)-(4) are not directly applicable for cusped hyperbolic $3$-manifolds, since the work in \cite{KM} requires the crucial condition that the hyperbolic $3$-manifold has positive injectivity radius. Recently in \cite{KW1}, Kahn and Wright successfully generalized the work in \cite{KM} to construct $\pi_1$-injective nearly geodesic closed subsurfaces in cusped hyperbolic $3$-manifolds, thus achieved item (1) for cusped hyperbolic $3$-manifolds. Then in \cite{Sun4}, the author computed the panted cobordism groups (with height control) of cusped hyperbolic $3$-manifolds, thus generalized item (2). In this paper, we need to generalize items (3) and (4) to cusped hyperbolic $3$-manifolds. 

A generalization of the connection principle with homological control (item (3)) will be established in Theorem \ref{enhancedconnectionprinciple}. The proof is similar to the closed hyperbolic $3$-manifold case in \cite{Liu}, by invoking a new connection principle in cusped hyperbolic $3$-manifolds (Theorem \ref{connectionprinciple}) proved in \cite{Sun4}. The proof of the $\pi_1$-injectivity of $j$ (item (4)) is similar to the proof of the $\pi_1$-injectivity in \cite{Sun2}. The main work here is to estimate geometry of nearly geodesic subsurfaces in cusped hyperbolic $3$-manifolds (Proposition \ref{estimateonsurface}). 
%Once this result is established, the remainder of the proof is similar to the proof of $\pi_1$-injectivity in \cite{Sun2}.

In this paper, for some results on cusped hyperbolic $3$-manifolds, the proofs of their corresponding results in closed hyperbolic $3$-manifolds work well for cusped manifolds. In this case, we skip the proofs and refer readers to references.

We summarize the organization of this paper in the following. In Section \ref{pregoodpants}, we review the good pants construction in closed and cusped hyperbolic $3$-manifolds, including constructions of nearly geodesic subsurfaces and computations of panted cobordism groups (\cite{KM}, \cite{LM}, \cite{KW1}, \cite{Sun4}). 
%In particular, we apply the results in \cite{KW1} and \cite{Sun4} to prove Proposition \ref{boundingsurface}, which characterizes which $(R,\epsilon)$-multicurve in a cusped hyperbolic $3$-manifold bounds an $(R,\epsilon)$-nearly geodesic subsurface.
In Section \ref{connection_principle}, we prove the connection principle with homological control for cusped hyperbolic $3$-manifolds (Theorem \ref{enhancedconnectionprinciple}). We also prove Theorem \ref{boundingsurface}, which characterizes which null-homologous good multicurve bounds a nearly geodesic subsurface in a cusped hyperbolic $3$-manifold. In Section \ref{topo1}, we first use the good pants construction to construct a $\pi_1$-injective map $j:Z\looparrowright M$, then we construct the desired finite cover $M'$ of $M$ and the degree-$1$ map $f:M'\to N$. The $\pi_1$-injectivity of $j$ (Theorem \ref{pi1injectivity}) will be proved in Section \ref{pi1inj1}.

\bigskip
\bigskip

\section{Preliminary on the good pants construction}\label{pregoodpants}

In this section, we review the good pants construction on closed and cusped hyperbolic $3$-manifolds.

\subsection{Nearly geodesic subsurfaces in closed hyperbolic $3$-manifolds}\label{prekahnmarkovic}
In \cite{KM}, Kahn and Markovic proved the following celebrated surface subgroup theorem. This result initiates the development of the good pants construction, and it is the first step of Agol's proof of Thurston's virtual Haken and virtual fibering conjectures (\cite{Agol}).
	
	\begin{theorem}[Surface subgroup theorem \cite{KM}]\label{surface}
		For any closed hyperbolic $3$-manifold $M$,
		there exists an immersed closed hyperbolic subsurface $f\colon S\looparrowright M$,
		such that $f_*\colon \pi_1(S)\rightarrow 	\pi_1(M)$ is injective.
	\end{theorem}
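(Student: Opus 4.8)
The plan is to follow the Kahn--Markovic strategy: build $S$ out of a large but finite family of immersed totally geodesic pairs of pants in $M$, glued along their boundary geodesics in a tightly controlled way so that the resulting closed surface carries a hyperbolic structure which is $(1+\epsilon)$-quasi-isometric to its image in $M$; quasiconvexity of that image, and hence $\pi_1$-injectivity of $f$, then follows from Morse stability of quasigeodesics in $\mathbb{H}^3$.

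First I would fix a large parameter $R$ and a small $\epsilon>0$ and consider the set of \emph{$(R,\epsilon)$-good pants}: immersed pairs of pants in $M$ homotopic to totally geodesic ones whose three cuffs are closed geodesics of complex length within $\epsilon$ of $2R$ (so nearly real and nearly $2R$). Each good pair of pants, together with an orientation of one of its cuffs $\gamma$, determines a well-defined unit normal ``foot'' on the unit normal bundle of $\gamma$ at the seam meeting that cuff. The analytic input I would import rather than reprove is exponential mixing of the frame flow on $M$ (Moore's theorem on decay of matrix coefficients); from it one deduces both that good pants exist in profusion and that, for each closed geodesic $\gamma$ of length near $2R$, the set of feet contributed by good pants with $\gamma$ as a cuff is equidistributed on the normal bundle of $\gamma$ up to an error exponentially small in $R$.

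The gluing comes next. For each cuff geodesic $\gamma$, I would pair up the good pants lying on its two sides so that, at the common seam, the two feet are approximately antipodal but rotated along $\gamma$ by exactly one unit of length --- the ``rotation by $1$'' condition --- which forces the shear (twist) parameter of the glued surface along every pants curve to lie within $O(\epsilon)$ of $+1$. Since the foot measures on the two sides of each $\gamma$ are both nearly uniform, hence nearly equal to one another, a perfect matching realizing this condition exists after passing to a bounded number of parallel copies of each pair of pants; here I would invoke Hall's marriage theorem (in the measure-theoretic / bipartite form used by Kahn--Markovic). Carrying this out simultaneously over all cuffs produces a connected closed oriented surface $S$ equipped with a pants decomposition all of whose curves have length near $2R$ and all of whose shears are near $1$, together with a map $f\colon S\looparrowright M$ totally geodesic on each pair of pants. (Orientability is arranged by working with oriented good pants from the start, and connectedness by passing to a single component, which inherits the same bounds.)

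The main obstacle --- and the technical heart of the argument --- is showing that this combinatorial and metric control forces $f$ to be a $(1+\epsilon)$-quasi-isometric immersion (equivalently, that the intrinsic hyperbolic metric on $S$ with cuffs near $2R$ and shears near $1$ is uniformly close to the path metric induced from $M$). This is a local-to-global statement: one first checks that a geodesic segment in $\tilde S$ crossing a single pair of pants or a single gluing collar maps to a path in $\mathbb{H}^3$ that is nearly geodesic and ``flares'' definitely away from the seams, and then runs a no-backtracking / thin-triangles argument to promote this to a global quasigeodesic estimate with multiplicative constant $1+\epsilon$ once $R$ is large enough. Granting this, $f_*\pi_1(S)$ is a quasiconvex, hence undistorted, subgroup of $\pi_1(M)$, so in particular $f_*$ is injective, which proves the theorem.
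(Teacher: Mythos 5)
Your proposal is essentially the Kahn--Markovic argument that the paper itself relies on (the paper cites this theorem from \cite{KM} and reviews exactly this construction in Section \ref{prekahnmarkovic}): good pants produced and equidistributed via exponential mixing of the frame flow, feet on each good curve matched by Hall's marriage theorem subject to the near-$(1+\pi i)$ shift condition, and a local-to-global estimate showing the assembled surface is quasi-isometrically embedded, hence $\pi_1$-injective. So you are following the same route as the source; the only caveat is that the quasi-isometric local-to-global step, which you correctly flag as the technical heart, is where the real work of \cite{KM} lies.
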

	
The immersed subsurface of Kahn and Markovic is nice in geometric sense: it is built by pasting a finite collection of {\it $(R,\epsilon)$-good pants} along {\it $(R,\epsilon)$-good curves} in a nearly geodesic way. More details are given in the following.

We fix a closed oriented hyperbolic $3$-manifold $M$, a small number $\epsilon>0$ and a large number $R>0$. 

\begin{definition}\label{goodcurves}
An {\it $(R,\epsilon)$-good curve} is an oriented closed geodesic in $M$ with complex length satisfying $|{\bf l}(\gamma)-2R|<2\epsilon$. The set consisting of all $(R,\epsilon)$-good curves is denoted by ${\bold \Gamma}_{R,\epsilon}$.
\end{definition}

The complex length of $\gamma$ is defined by ${\bf l}(\gamma)=l+i\theta \in \mathbb{C}/2\pi i\mathbb{Z}$, where $l\in \mathbb{R}_{>0}$ is the length of $\gamma$, and $\theta\in \mathbb{R}/2\pi \mathbb{Z}$ is the rotation angle of the loxodromic isometry of $\mathbb{H}^3$ corresponding to $\gamma$. In this paper, we adopt the convention in \cite{KW1} that $(R,\epsilon)$-good curves have length close to $2R$, instead of the convention in \cite{KM} that $(R,\epsilon)$-good curves have length close to $R$, since estimates in \cite{KW1} will play an important role. Note that ${\bf \Gamma}_{R,\epsilon}$ is a finite set.

\begin{definition}\label{goodpants}
Let $\Sigma_{0,3}$ be the oriented topological pair of pants.
A pair of {\it $(R,\epsilon)$-good pants} is a homotopy class of immersion $\Sigma_{0,3} \looparrowright M$, denoted by $\Pi$, such that all three cuffs of $\Sigma_{0,3}$ are mapped to $(R,\epsilon)$-good curves $\gamma_1,\gamma_2,\gamma_3\in {\bold \Gamma}_{R,\epsilon}$, and the complex half length $\bold{hl}_{\Pi}(\gamma_i)$ of each $\gamma_i$ with respect to $\Pi$ satisfies
		$$\left|\bold{hl}_{\Pi}(\gamma_i)-R\right|<\epsilon.$$
We use ${\bold \Pi}_{R,\epsilon}$ to denote the finite set consisting of all $(R,\epsilon)$-good pants. 
\end{definition}

The complex half length $\bold{hl}_{\Pi}(\gamma_i)$ is defined as following. Let $s_{i-1}$ and $s_{i+1}$ be the common orthogonal segments (also called seams) from $\gamma_i$ to $\gamma_{i-1}$ and $\gamma_{i+1}$ respectively, let $\vec{v}_{i-1}$ and $\vec{v}_{i+1}$ be tangent vectors of $s_{i-1}$ and $s_{i+1}$ at their intersections with $\gamma_i$ respectively. Then the complex half length $\bold{hl}_{\Pi}(\gamma_i)$ is defined to be the complex distance from $\vec{v}_{i-1}$ to $\vec{v}_{i+1}$ along $\gamma_i$. More precisely, we have $\bold{hl}_{\Pi}(\gamma_i)=l+i\theta \in \mathbb{C}/2\pi i\mathbb{Z}$, where $l$ is the length of the oriented geodesic subsegment of $\gamma_i$ from $s_{i-1}$ to $s_{i+1}$, and $\theta$ is the angle from the parallel transport of $\vec{v}_{i-1}$ along this geodesic segment to $\vec{v}_{i+1}$. Note that $\bold{hl}_{\Pi}(\gamma_i)$ does not change if $s_{i-1}$ and $s_{i+1}$ are swapped. For a fixed $\gamma$, the value of $\bold{hl}_{\Pi}(\gamma)$ has two possibilities (depending on $\Pi$). If we know that $\gamma\in {\bf \Gamma}_{R,\epsilon}$ and $\Pi\in {\bf \Pi}_{R,\epsilon}$, $\bold{hl}_{\Pi}(\gamma)$ is uniquely determined by ${\bf l}(\gamma)$, and we denote this value by $\bold{hl}(\gamma)$ when it causes no confusion.

For $\gamma\in {\bf \Gamma}_{R,\epsilon}$, we use ${\bf  \Pi}_{R,\epsilon}(\gamma)$ to denote the set of pairs $(\Pi,c)$ such that $\Pi\in {\bf  \Pi}_{R,\epsilon}$ and $c$ is an oriented boundary component of $\Sigma_{0,3}$ that is mapped to $\gamma$ via the immersion. Each $(\Pi,c)\in {\bf  \Pi}_{R,\epsilon}(\gamma)$ is called a {\it marked $(R,\epsilon)$-good pants} based on $\gamma$.

For any $\gamma\in {\bf \Gamma}_{R,\epsilon}$, we can identify its normal bundle with $N^1(\gamma)=\mathbb{C}/({\bf l}(\gamma)\mathbb{Z}+2\pi i\mathbb{Z})$, then its half-normal bundle is defined to be 
$$N^1(\sqrt{\gamma})=\mathbb{C}/({\bf hl}(\gamma)\mathbb{Z}+2\pi i\mathbb{Z}).$$ 
Given $(\Pi,c)\in {\bf \Pi}_{R,\epsilon}(\gamma)$ and let $\gamma_i=\gamma$, then the pair of normal vectors $\vec{v}_{i-1},\vec{v}_{i+1}$ used to define ${\bf hl}_{\Pi}(\gamma_i)$ gives a unique vector ${\bf foot}_{\gamma}(\Pi,c)\in N^1(\sqrt{\gamma})$, called the {\it formal foot} of $(\Pi,c)$ on $\gamma$.

In \cite{KM}, an important innovation is that $(R,\epsilon)$-good pants are pasted along $(R,\epsilon)$-good curves with nearly $1$-shifts, rather than exactly matching seams along common cuffs. More precisely, in the nearly geodesic subsurface $S\looparrowright M$ in Theorem \ref{surface}, for any two marked $(R,\epsilon)$-good pants $(\Pi_1,c_1)\in {\bf \Pi}_{R,\epsilon}(\gamma), (\Pi_2,c_2)\in {\bf \Pi}_{R,\epsilon}(\bar{\gamma})$ such that $c_1$ is pasted with $c_2$, after identifying $N^1(\sqrt{\gamma})$ with $N^1(\sqrt{\bar{\gamma}})$ naturally, it is required that 
$$|{\bf foot}_{\gamma}(\Pi_1,c_1)-{\bf foot}_{\bar{\gamma}}(\Pi_2,c_2)-(1+\pi i)|<\frac{\epsilon}{R}\ \ \text{in}\ N^1(\sqrt{\gamma}).$$
This nearly $1$-shift is crucial for proving the injectivity of $f_*: \pi_1(S)\rightarrow \pi_1(M)$. Kahn and Markovic showed that, for any $(R,\epsilon)$-good curve $\gamma$, the feet of $(R,\epsilon)$-good pants based on $\gamma$ are nearly evenly distributed on $N^1(\sqrt{\gamma})$. So $M$ contains a large finite collection of $(R,\epsilon)$-good pants and they can be pasted together by nearly $1$-shift along $(R,\epsilon)$-good curves. Therefore, the asserted $\pi_1$-injective nearly geodesic closed subsurface can be constructed. 

\bigskip

\subsection{Nearly geodesic subsurfaces in cusped hyperbolic $3$-manifolds}\label{prekahnwright}

In \cite{KW1}, Kahn and Wright generalized Kahn and Markovic's surface subgroup theorem (Theorem \ref{surface}) to cusped hyperbolic $3$-manifolds.

\begin{theorem}\label{surfaceincusp}(Theorem 1.1 of \cite{KW1})
		Let $\Gamma<PSL_2(\mathbb{C})$ be a Kleinian group and assume that $\mathbb{H}^3/\Gamma$ has finite volume and is not compact. Then for all $K>1$, there exists $K$-quasi-Fuchsian (closed) surface subgroups in $\Gamma$.
	\end{theorem}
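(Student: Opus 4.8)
The plan is to adapt the Kahn--Markovic construction sketched in Section \ref{prekahnmarkovic} to the finite-volume cusped setting $M=\mathbb{H}^3/\Gamma$. Fix a small $\epsilon>0$ and a large $R>0$ with $\epsilon/R$ very small; it suffices to produce, for each such pair, an immersed closed subsurface built from $(R,\epsilon)$-good pants glued with nearly-$1$ shifts, and then to show any such surface is $K$-quasi-Fuchsian with $K=K(R,\epsilon)\to 1$ as $\epsilon\to 0$, $R\to\infty$. Since $M$ is non-compact, the first issue is that good curves and good pants may travel through the cusps, where the local geometry is governed by horoball geometry rather than by generic geometry of $\mathbb{H}^3$; following Kahn and Wright, I would restrict attention to good curves and pants whose excursions into a fixed family of disjoint embedded cusp neighborhoods have uniformly bounded depth, and then verify that discarding the ``deep'' configurations is harmless for the counting below.

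The first substantive step is abundance and equidistribution of good pants. The analytic input replacing the compactness used in \cite{KM} is that the geodesic flow (equivalently the frame flow on the $\mathrm{PSL}_2(\mathbb{C})$-frame bundle) of a finite-volume hyperbolic $3$-manifold is exponentially mixing for H\"older test functions, a consequence of the spectral gap for the associated unitary representation. Using this, one shows that for every $\gamma\in{\bf \Gamma}_{R,\epsilon}$, the formal feet ${\bf foot}_{\gamma}(\Pi,c)$ of marked $(R,\epsilon)$-good pants based on $\gamma$ are $\tfrac{\epsilon}{R}$-equidistributed on the half-normal bundle $N^1(\sqrt{\gamma})$, up to a uniform multiplicative constant. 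The new content compared with \cite{KM} is purely in the error analysis: a geodesic segment of length about $2R$ that spends a long time inside a cusp horoball is exponentially rare in its excursion depth, and one must check that such contributions are negligible relative to the main term (which is comparable to $e^{2R}$ up to polynomial factors). A clean way to organize this is to count only those pants that stay in a fixed large thick part, shrinking the allowed cusp depth as needed.

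Given equidistribution of feet, assembling a closed surface is formal, exactly as in \cite{KM} and \cite{LM}: one produces a finite multiset of good pants admitting a perfect matching of their cuffs along good curves so that each gluing satisfies the nearly-$1$-shift condition $\left|{\bf foot}_{\gamma}(\Pi_1,c_1)-{\bf foot}_{\bar\gamma}(\Pi_2,c_2)-(1+\pi i)\right|<\tfrac{\epsilon}{R}$ in $N^1(\sqrt{\gamma})$. Near-equidistribution of the feet guarantees, through a Hall-type marriage argument (solve a rational matching, then clear denominators), that such a matching exists; gluing along it yields an immersed closed subsurface $j\colon S\looparrowright M$, and passing to a component makes $S$ connected.

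The main obstacle is the last step: showing $j_*\colon\pi_1(S)\to\Gamma$ is injective with $K$-quasi-Fuchsian image, $K\to 1$. In the closed case this reduces to the geometric estimate that every ``pair of pants path'' in the universal cover develops to a path in $\mathbb{H}^3$ uniformly close to a geodesic, so that $\widetilde{S}$ is a quasi-plane. In the cusped case $\Gamma$ is only relatively hyperbolic, relative to the cusp subgroups, and a surface path may enter a horoball; one must prove a relatively hyperbolic analogue of the nearly-geodesic estimate, controlling how the assembled surface winds inside each maximal horoball. Two things have to be extracted: first, that $\pi_1(S)$ is relatively quasiconvex in $\Gamma$; second --- and this is the crux, and the precise reason \cite{KM} does not apply verbatim --- that $j_*\pi_1(S)$ intersects every parabolic subgroup of $\Gamma$ trivially, so that the subsurface is genuinely a closed quasi-Fuchsian surface rather than merely relatively quasi-Fuchsian. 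Concretely, this amounts to bounding uniformly the depth of the excursions of $\widetilde{S}$ into the cusps, which should follow from the bounded-depth restriction imposed on the good pants together with the nearly-$1$-shift gluing. Once cusp excursions are uniformly bounded, the limit set of $j_*\pi_1(S)$ is a quasi-circle containing no parabolic fixed point, and the quasi-isometry constant tends to $1$; taking $\epsilon/R$ small enough makes it smaller than any prescribed $K>1$.
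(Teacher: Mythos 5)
Your plan reproduces the Kahn--Markovic scheme and treats the cusps as a matter of error analysis, but the step ``restrict to bounded-depth good curves and pants and verify that discarding the deep configurations is harmless for the counting'' is precisely where the argument breaks down, and it is the reason Kahn and Wright had to introduce genuinely new objects (good hamster wheels and umbrellas) rather than rerun \cite{KM} with a thick-part truncation. Exponential mixing does give equidistribution of the feet of \emph{all} $(R,\epsilon)$-good pants on $N^1(\sqrt{\gamma})$, but the matching argument needs equidistribution of the feet of the pants you actually keep, at every curve you actually use as a cuff. If you throw away every pants having a cuff above some height cutoff, then at a good curve $\gamma$ whose height is moderately large (still below the cutoff), the surviving pants are exactly those whose other cuffs stay low, and their feet are systematically biased toward the directions pointing down the cusp; the discarded pants are not a small fraction at such $\gamma$, so the bias is not absorbed by the mixing error term. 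Pushing the cutoff higher does not help: the main term in the mixing count at a curve of height $h$ degrades with $h$, so beyond height on the order of a constant times $\log R$ the error dominates and no equidistribution statement is available at all. This regress (low curves force moderately high cuffs, at which the truncated pants measure is unbalanced) is the ``main difficulty'' the source paper isolates, and your proposal offers no mechanism to resolve it. Kahn and Wright's resolution is to keep every pants having at least one cuff below a cutoff $h_c$, and to replace each such pants along its high cuffs by a $\mathbb{Q}_+$-combination of umbrellas --- assemblies of $(R,\epsilon)$-hamster wheels --- whose remaining boundaries all have height at most $h_T$ with $h_c-h_T\geq 44\log R$; only after this replacement is the boundary measure at every curve of height below $h_c$ close enough to Lebesgue for the Hall-type matching (this is exactly the content of Theorem \ref{cuspedprecisedescription} in the paper you were given).

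A secondary, smaller issue: even granting a matched assembly, the final $\pi_1$-injectivity/quasi-Fuchsian step is not a routine transcription of the closed case, because the assembly now contains hamster wheels glued along outer cuffs where only a slow-and-constant-turning field condition (a $100\epsilon$ bend, not an $\epsilon/R$ foot-matching) is available; the local-to-global nearly-geodesic estimate has to be proved for this weaker well-matched condition (this is the content of the appendix estimates such as Theorem \ref{localestimateKW}). Your sketch correctly identifies that one must exclude parabolics and bound cusp excursions of $\widetilde S$, but it attributes this to a bounded-depth restriction on the pants that, as explained above, cannot be imposed without destroying the matching in the first place.
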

	
The main difficulty for proving Theorem \ref{surfaceincusp} is that, for cusped hyperbolic $3$-manifolds, good pants are not evenly distributed along good curves that run into cusps very deeply. 

We first define the height function in cusped hyperbolic $3$-manifolds. At first, by the Margulis lemma, there exists $\epsilon_0>0$ such that the subset of $M$ consisting of points of injectivity radii at most $\epsilon_0$ is a disjoint union of solid tori and cusp neighborhoods of ends of $M$ (simply called cusps). For any point of $M$ in the complement of the union of cusps, we define its height to be $0$. For any point $p$ in a cusp $C\subset M$, we define the height of $p$ to be the distance between $p$ and the boundary of $C$. We call the set of points with zero height the {\it thick part} of $M$, which is different from the usual convention. For a geodesic segment or a closed geodesic in $M$, we define its {\it height} to be the maximal height of points on it. For a pair of $(R,\epsilon)$-good pants in $M$, we define its height to be the maximal height of its three cuffs. 

For any $h>0$, we use ${\bf \Gamma}_{R,\epsilon}^{<h}$ (${\bf \Pi}_{R,\epsilon}^{<h}$) to denote the set of all $(R,\epsilon)$-good curves (the set of all $(R,\epsilon)$-good pants) with height at most $h$. We have similar notations for ${\bf \Gamma}_{R,\epsilon}^{\geq h}$ and ${\bf \Pi}_{R,\epsilon}^{\geq h}$.

To construct nearly geodesic subsurfaces in cusped hyperbolic $3$-manifolds, Kahn and Wright introduced a new geometric object called {\it $(R,\epsilon)$-good hamster wheel}.  For any positive integer $R$, let $Q_R$ be the oriented hyperbolic pants with cuff lengths $2,2$ and $2R$ respectively. The {\it $R$-perfect hamster wheel} $H_R$ is the $R$-sheet cyclic regular cover of $Q_R$ with $R+2$ boundary components, such that each cuff of $H_R$ has length $2R$. The preimage of the length-$2R$ cuff consists of $R$ components, which are called {\it inner cuffs}. The other two cuffs of $H_R$ are called {\it outer cuffs}. The {\it rungs} are the minimal length orthogeodesics between two outer cuffs, and there are $R$ rungs in $H_R$.
An {\it $(R,\epsilon)$-good hamster wheel} (simply an $(R,\epsilon)$-hamster wheel) ${\bf H}$ is a map $f:H_R\to M$ up to homotopy, such that the image of each cuff of $H_R$ lies in ${\bf \Gamma}_{R,\epsilon}$, and $f$ approximates a totally geodesic isometric immersion. See Section 2.9 of \cite{KW1} for the precise definition of $(R,\epsilon)$-hamster wheels. An $(R,\epsilon)$-good component is either an $(R,\epsilon)$-good pants or an $(R,\epsilon)$-good hamster wheel.

An {\it marked $(R,\epsilon)$-hamster wheel} is a pair $({\bf H},c)$ consists of an $(R,\epsilon)$-hamster wheel ${\bf H}$ and an oriented boundary component $c\subset \partial H_R$. Let $\gamma\in {\bf \Gamma}_{R,\epsilon}$ be the image of $c$, then we say that the marked $(R,\epsilon)$-hamster wheel $({\bf H}, c)$ is based at $\gamma\in {\bf \Gamma}_{R,\epsilon}$. 
For any marked $(R,\epsilon)$-hamster wheel $({\bf H}, c)$ based at $\gamma\in {\bf \Gamma}_{R,\epsilon}$, there is a {\it slow and constant turning normal field} ${\bf v}$ on $N^1(\sqrt{\gamma})$ that approximates the (inward) tangent direction of ${\bf H}$. If $c$ is an inner cuff mapped to $\gamma\in {\bf \Gamma}_{R,\epsilon}$, there are two other inner cuffs that are $Ce^{-\frac{R}{2}}$-close to $c$, and they give two vectors in $N^1(\gamma)$ whose distance along $\gamma$ is close to ${\bf hl}(\gamma)$. Kahn and Wright defined the {\it formal feet} ${\bf foot}_{\gamma}({\bf H},c)\in N^1(\sqrt{\gamma})$ to be an average of these two feet, and required the slow and constant turning normal field going through this formal feet. See Section 2.9 of \cite{KW1} for the precise definition of formal feet on inner cuffs. Note that $(R,\epsilon)$-good hamster wheels do not have formal feet on their outer cuffs.

%Then two cuffs of $H_R$ (called outer cuffs) are the preimage of the two boundary components of $Q_R$ of length $2$, while the other $R$ cuffs of $H_R$  (called inner cuffs) are the preimage of the boundary component of $Q_R$ of length $2R$.

%By a {\it slow and constant turning normal field} on $\gamma\in {\bf \Gamma}_{R,\epsilon}$, we mean a unit normal field ${\bf v}$ in $N^1(\gamma)=\mathbb{C}/({\bf l}(\gamma)\mathbb{Z}+2\pi i\mathbb{Z})$ with constant slope close to $0$. 
%Then the definition of $(R,\epsilon)$-good hamster wheel requires that each outer cuff $\gamma_j$ of ${\bf H}$ ($j=1,2$) is equipped with a slow and constant turning normal field ${\bf v}_j$, such that the inward tangent vectors of common orthogeodesics between $\gamma_1$ and $\gamma_2$ (corresponding to embedded arcs in $H_R$) are $\frac{\epsilon}{R}$-close to ${\bf v}_j$ at the intersection points.

%Then for any $(R,\epsilon)$-good hamster wheel ${\bf H}$ and any inner cuff $\gamma$ of $(\bf H)$

Then Kahn and Wright defined the {\it $(R,\epsilon)$-well-matched condition} for pasting finitely many $(R,\epsilon)$-good components in a nearly geodesic manner. For two marked $(R,\epsilon)$-good components $(\Sigma_1,c_1)$ and $(\Sigma_2,c_2)$ such that $c_1$ and $c_2$ are mapped to $\gamma$ and $\bar{\gamma}$ respectively, Kahn and Wright require:
\begin{enumerate}
\item If both $(\Sigma_1,c_1)$ and $(\Sigma_2,c_2)$ have formal feet on $\gamma$ and $\bar{\gamma}$ respectively, it is required that $|{\bf foot}_{\gamma}(\Sigma_1,c_1)-{\bf foot}_{\bar{\gamma}}(\Sigma_2,c_2)-(1+\pi i)|<\frac{\epsilon}{R}$.
\item Otherwise, at least one of  $(\Sigma_1,c_1)$ and $(\Sigma_2,c_2)$ is an outer cuff of a hamster wheel, it is required that two slow and constant turning fields form a bend of at most $100\epsilon$.
\end{enumerate}
According to \cite{KW1}, for an $(R,\epsilon)$-good hamster wheel ${\bf H}$ and an outer cuff $c$ mapped to $\gamma\in {\bf \Gamma}_{R,\epsilon}$, we can also define ${\bf foot}_{\gamma}({\bf H},c)$ to be any vector in $N^1(\sqrt{\gamma})$ lying in the slow and constant turning vector field on $\gamma$. Unless otherwise indicated, we take ${\bf foot}_{\gamma}({\bf H},c)$ to be $\frac{\epsilon}{R}$-close to the tangent vector of the orthogeodesic from $\gamma$ to a preferred inner cuff of ${\bf H}$. These feet on outer cuffs are not formal feet, and we must use item (2) of the $(R,\epsilon)$-well-matched condition in this case.

A {\it good assembly} in a cusped hyperbolic $3$-manifold is a compact oriented subsurface (possibly with boundary) obtained by pasting finitely many $(R,\epsilon)$-good components according to the $(R,\epsilon)$-well-matched condition. In this paper, we also call a good assembly an {\it $(R,\epsilon)$-nearly geodesic subsurface}. Kahn and Wright proved that a subsurface in a cusped hyperbolic $3$-manifold arose from a good assembly is $\pi_1$-injective.

To construct a good assembly, Kahn and Wright defined another geometric object called {\it umbrella}. An umbrella ${\bf U}$ consists of a compact planar surface $U$ decomposed as a finite union of subsurfaces homeomorphic to $H_R$ and a map $f:U\to M$ to the hyperbolic $3$-manifold, such that the restriction of $f$ on each subsurface of $U$ homeomorphic to $H_R$ gives an $(R,\epsilon)$-good hamster wheel, and these $(R,\epsilon)$-good hamster wheels are $(R,\epsilon)$-well-matched with each other. Umbrellas are used to take care of the problem that feet of good pants are not evenly distributed on $N^1(\sqrt{\gamma})$ when the height of $\gamma$ is high. 

Actually, when constructing an umbrella, we always paste inner cuffs of an $(R,\epsilon)$-good hamster wheel with outer cuffs of other $(R,\epsilon)$-good hamster wheels. In this case, since outer cuffs do not have formal feet, we only require the $(100\epsilon)$-bending condition for slow and constant turning normal fields. Moreover, for two adjacent $(R,\epsilon)$-good hamsters wheel sharing $\gamma\in {\bf \Gamma}_{R,\epsilon}$, we can require that basepoints of their feet on $\gamma$ have distance $\frac{\epsilon}{R}$-close to $1$ (the proof of Theorem 3.8 in \cite{KW1}).

An marked umbrella is a pair $({\bf U},c)$ consisting of an umbrella ${\bf U}$ and an oriented boundary component $c\subset \partial U$. If $c$ is mapped to $\gamma\in {\bf \Gamma}_{R,\epsilon}$, then we say that the marked umbrella $({\bf U}, c)$ is based at $\gamma$. For any boundary component $c$ of ${\bf U}$ mapped to $\gamma\in {\bf \Gamma}_{R,\epsilon}$, it is the boundary component of a unique $(R,\epsilon)$-good hamster wheel ${\bf H}$ contained in ${\bf U}$, and we define ${\bf foot}_{\gamma}({\bf U},c)={\bf foot}_{\gamma}({\bf H},c)\in N^1(\sqrt{\gamma})$.
%Let $H_R\subset S$ be the subsurface of $S$ containing $c$, then the foot of ${\bf U}$ on $\gamma$, ${\bf foot}_{\gamma}({\bf U},c)\in N^1(\sqrt{\gamma})$, is defined to be any choice of ${\bf foot}_{\gamma}({\bf H},c)\in N^1(\sqrt{\gamma})$.

\bigskip

Now we summarize Kahn and Wright's construction of nearly geodesic subsurfaces in cusped hyperbolic $3$-manifolds. 

Kahn and Wright first took two cut-off heights $h_T$ and $h_c$ such that $h_T\geq 6\log{R}$ and $h_c\geq h_T+44\log{R}$ hold (Theorem 4.15 and equation (5.4.1) of \cite{KW1}). Then they considered the collection of all $(R,\epsilon)$-good pants $\Pi$ with at least one cuff has height at most $h_c$. For any pants $\Pi$ in the above collection with $\Pi\in {\bf \Pi}_{R,\epsilon}^{\geq h_c}$, it gives one or two marked pants $(\Pi,c)$ based at some $\gamma\in {\bf \Gamma}_{R,\epsilon}^{<h_c}$. In Theorem 4.15 of \cite{KW1}, for each such $(\Pi,c)$, Kahn and Wright constructed a $\mathbb{Q}_+$-linear combination of umbrellas $\hat{U}(\Pi,c,\gamma)$ with coefficients sum to $1$ such that the following hold.
\begin{enumerate}
\item As a $\mathbb{Q}_+$-linear combination of umbrellas, the boundary of $\hat{U}(\Pi,c,\gamma)$ contains one copy of $\gamma$, and all other boundary components of $\hat{U}(\Pi,c,\gamma)$ have height at most $h_T$.
\item Each component of $\hat{U}(\Pi,c,\gamma)$ is $(R,\epsilon)$-well-matched with any $(R,\epsilon)$-good pants that is $(R,\epsilon)$-well-matched with $(\Pi,c)$ along $\gamma$.
\end{enumerate} 
Then they used $\hat{U}(\Pi,c,\gamma)$ to replace $\Pi$ in the above collection of good pants. 
% Let $c_0$ be the boundary component of $\hat{U}(\Pi,c,\gamma)$ mapped to $\gamma_0$ (with coefficents sum to $1$), since it is flexible to choose feet for umbrellas, we can and will take 
%$${\bf foot}_{\gamma}(\hat{U}(\Pi,c,\gamma),c_0)={\bf foot}_{\gamma}(\Pi,c ).$$ Then we choose other feet of $\hat{U}(\Pi,c,\gamma)$ arbitrarily.
 
 After the replacement process, two finite $\mathbb{Q}_+$-linear combinations of $(R,\epsilon)$-good objects are obtained. The first one is the sum of all $(R,\epsilon)$-good pants in ${\bf \Pi}_{R,\epsilon}^{<h_c}$:
 $$A_0=\sum_{\Pi\in {\bf \Pi}_{R,\epsilon}^{<h_c}}\Pi,$$ 
and the second one is the sum of all $\mathbb{Q}_+$-linear combinations of umbrellas constructed above:
 $$A_1=\sum_{\gamma_\in {\bf \Gamma}_{R,\epsilon}^{<h_c}}\sum_{(\Pi,c)\in {\bf \Pi}_{R,\epsilon}^{\geq h_c}(\gamma)}\hat{U}(\Pi,c,\gamma).$$

So $A=A_0+A_1$ is a $\mathbb{Q}_+$-linear combination of $(R,\epsilon)$-good pants and umbrellas $\sum_{i=1}^Na_i{\bf \Sigma}_i$. Here each $a_i\in \mathbb{Q}_+$ and each ${\bf \Sigma}_i$ is an $(R,\epsilon)$-good pants or an umbrella. More precisely, each ${\bf \Sigma}_i$ is the homotopy class of a map $f_i:\Sigma_i\to M$ defined on an oriented planer surface $\Sigma_i$. Then for any $\gamma_0\in {\bf \Gamma}_{R,\epsilon}^{<h_c}$, we define $\partial A_{\gamma_0}$ to be the following measure on $N^1(\sqrt{\gamma_0})$: for any $B\subset N^1(\sqrt{\gamma_0})$, its measure is 
$$\partial A_{\gamma_0}(B)=\sum_{i=1}^Na_i \cdot \#\{c \ |\ c\ \text{ is\ an\ oriented}\ \partial\text{-component\ of\ }\Sigma_i, f_i(c)=\gamma_0, {\bf foot}_{\gamma}({\bf \Sigma}_i,c)\in B\}.$$

Then Kahn and Wright proved that, for any $\gamma_0\in {\bf \Gamma}_{R,\epsilon}^{<h_c}$, $\partial A_{\gamma_0}$ is close to a multiple of the Lebesgue measure on $N^1(\sqrt{\gamma_0})$. After eliminating denominators of coefficients in $A$, they could paste good pants and umbrellas in $A\cup \bar{A}$ ($\bar{A}$ denotes the orientation reversal of $A$) to get the desired $(R,\epsilon)$-nearly geodesic closed subsurface.
% $\gamma \in {\bf \Gamma}_{R,\epsilon}^{<h_T}$, so that it does not break the evenly distributive property of $$\sum_{(\Pi,c)\in {\bf \Pi}_{R,\epsilon}(\gamma)}{\bf foot}_{\gamma}(\Pi,c),$$ as a measure on $ N^1(\sqrt{\gamma}).$ After eliminating the denominator, they applied the Hall's marriage theorem  to $(A_0+A_1)+(\overline{A_0}+\overline{A_1})$ (via the doubling trick in \cite{KW1}) to obtain a good assembly of $(R,\epsilon)$-good pants and umbrellas that gives rise to a closed subsurface, which is nearly totally geodesic.
 
%For fixed $\gamma_0\in {\bf \Gamma}_{R,\epsilon}$ and $A_0$ above, we define its associated marked object based on $\gamma_0$ to be the formal linear combination of $(\Pi,c)$ based on $\gamma_0$ such that $\Pi$ show up in $A_0$:
%$$A_0(\gamma_0)=\sum_{\Pi	\in {\bf \Pi}_{R,\epsilon}^{<h_c}}\sum_{c\in \partial \Pi, c\to \gamma_0}(\Pi,c).$$ Here $c\to \gamma_0$ means the boundary component $c$ of $\Pi$ is mapped to $\gamma_0$. The corresponding object $A_1(\gamma_0)$ is defined similarly. For any $\hat{U}(\Pi,c,\gamma)$ show up in $A_1$, we use $(\hat{U}(\Pi,c,\gamma),c_0)$ to denote the term of $A_1(\gamma)$ such that $c_0$ is mapped to $\gamma$, so its coefficient is exactly $1$.

In this paper, we will use some more precise geometric description of Kahn and Wright's $(R,\epsilon)$-nearly geodesic subsurfaces, so we summarize the above discussion as the following result. The proof of this result is hidden in several proofs in \cite{KW1}, so we only sketch the proof here.

\begin{theorem}\label{cuspedprecisedescription}
For any oriented cusped hyperbolic $3$-manifold $M$, any small enough $\epsilon>0$, any large enough integer $R>0$ depending only on $M$ and $\epsilon>0$,  any constants $h_T\geq 6\log{R}$ and $h_c\geq 44 \log{R}+h_T$, the following statement holds for 
$$A=\sum_{\Pi\in {\bf \Pi}_{R,\epsilon}^{<h_c}}\Pi+\sum_{\gamma_\in {\bf \Gamma}_{R,\epsilon}^{<h_c}}\sum_{(\Pi,c)\in {\bf \Pi}_{R,\epsilon}^{\geq h_c}(\gamma)}\hat{U}(\Pi,c,\gamma).$$
\begin{enumerate}

\item For any $\gamma_0\in {\bf \Gamma}_{R,\epsilon}^{<h_c}$, let $\tau:N^1(\sqrt{\gamma_0})\to N^1(\sqrt{\gamma_0})$ be the translation by $1+\pi i$. Then for any $B\subset N^1(\sqrt{\gamma_0})$, we have $$\partial A_{\gamma_0}(N_{\frac{\epsilon}{R}}(B))\geq \partial A_{\gamma_0}(\tau(B)),$$ and $$\partial A_{\gamma_0}(N_{\frac{\epsilon}{R}}(B))\geq \partial A_{\gamma_0}(\tau(B))+1$$ if $N_{\frac{\epsilon}{R}}(B)\ne N^1(\sqrt{\gamma_0})$. Here $N_{\frac{\epsilon}{R}}(B)$ denotes the $\frac{\epsilon}{R}$-neighborhood of $B$.

\item After eliminating denominators, one can paste all boundary components of good pants and umbrellas in $A\cup\bar{A}$ in the following fashion. If ${\bf \Sigma}_1$ is pasted with ${\bf \Sigma_2}$ along boundary components $c_1$ and $c_2$ such that $c_1$ is mapped to $\gamma_0$ and $c_2$ is mapped to $\bar{\gamma}_0$, then the following inequality holds in $N^1(\sqrt{\gamma_0})$: 
\begin{align}\label{2.1}
|{\bf foot}_{\gamma_0}({\bf \Sigma_1},c_1)-{\bf foot}_{\bar{\gamma}_0}({\bf \Sigma_2},c_2)-(1+i\pi)|<\frac{\epsilon}{R}.
\end{align} 
In particular, we get an $(R,\epsilon)$-nearly geodesic closed subsurface in $M$.

\end{enumerate}
\end{theorem}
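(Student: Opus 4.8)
The plan is to extract Theorem~\ref{cuspedprecisedescription} from the construction in \cite{KW1} by tracking the quantitative statements already proved there, rather than re-deriving the surface subgroup theorem from scratch. The two parts require different inputs: part~(1) is a statement about the boundary measure $\partial A_{\gamma_0}$ being nearly a multiple of Lebesgue measure, which is the core counting estimate of \cite{KW1}; part~(2) is the combinatorial pasting (a perfect matching argument) that turns such a measure-theoretic statement into an actual closed subsurface, which is a Hall-type argument as in \cite{KM}.

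For part~(1), I would first recall that after the umbrella-replacement process, Kahn and Wright establish (this is the content behind Theorem~4.15 and the measure estimates in Sections~4--5 of \cite{KW1}) that for every $\gamma_0\in{\bf\Gamma}_{R,\epsilon}^{<h_c}$ the measure $\partial A_{\gamma_0}$ on $N^1(\sqrt{\gamma_0})$ is $C\epsilon/R$-equidistributed with respect to Lebesgue measure, i.e.\ it has a density bounded between $(1-C\epsilon/R)\mu$ and $(1+C\epsilon/R)\mu$ for the uniform probability-type measure $\mu$ normalized to the total mass of $\partial A_{\gamma_0}$ (the key point being that the total mass is a large integer after clearing denominators). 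Granting this, the two displayed inequalities follow by a soft argument: for any $B$, the translate $\tau(B)$ has the same Lebesgue measure as $B$, while $N_{\epsilon/R}(B)$ strictly contains $B$ and its Lebesgue measure exceeds that of $B$ by an amount proportional to the perimeter of $\partial B$ times $\epsilon/R$, unless $N_{\epsilon/R}(B)$ is already everything; feeding the equidistribution estimate in (the error $C\epsilon/R$ is absorbed because $R$ is large) gives $\partial A_{\gamma_0}(N_{\epsilon/R}(B))\ge\partial A_{\gamma_0}(\tau(B))$, with the strict improvement by at least $1$ in the non-exhaustive case coming from integrality of the measure (any positive deficit in a measure taking values in $\mathbb{Z}$ on open sets after clearing denominators, suitably rescaled, is at least $1$). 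I would state this as a lemma and point to the precise locations in \cite{KW1}; the routine boundary-measure computation I would not write out.

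For part~(2), the pasting is the ``good pants bijection'' / defect-free matching argument: define a bipartite matching problem whose left vertices are the marked boundary components of components of $A$ and whose right vertices are those of $\bar A$, with an edge between $(\mathbf{\Sigma}_1,c_1)$ based at $\gamma_0$ and $(\mathbf{\Sigma}_2,c_2)$ based at $\bar\gamma_0$ whenever inequality~(\ref{2.1}) holds (after the natural identification $N^1(\sqrt{\gamma_0})\cong N^1(\sqrt{\bar\gamma_0})$ and accounting for whether the relevant condition is the formal-feet condition~(1) or the outer-cuff bending condition~(2) of the $(R,\epsilon)$-well-matched definition). Part~(1) is exactly the marriage/Hall condition needed: for any set $B$ of feet, the inequality $\partial A_{\gamma_0}(N_{\epsilon/R}(B))\ge\partial A_{\gamma_0}(\tau(B))$ says the set of possible partners for feet landing in $\tau(B)$ has at least as much mass, so a perfect matching exists (the $+1$ handles the boundary/degenerate cases and ensures we can even insist on a matching compatible with the orientation-reversal involution so that the final surface is genuinely closed and oriented). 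A perfect matching is precisely a recipe for gluing all cuffs in $A\cup\bar A$ respecting~(\ref{2.1}); since each individual component is $\pi_1$-injective and the $(R,\epsilon)$-well-matched condition is satisfied at every gluing curve, Kahn and Wright's assembly theorem gives that the result is an $(R,\epsilon)$-nearly geodesic closed subsurface. I would cite the matching argument from \cite{KM} and its adaptation in \cite{KW1} (the proof of Theorem~3.8 and the assembly discussion) rather than reproduce it.

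The main obstacle is bookkeeping rather than a genuinely new idea: the quantities $h_T\ge 6\log R$ and $h_c\ge 44\log R+h_T$ must be threaded through so that the umbrella-replacement of Theorem~4.15 of \cite{KW1} applies with the stated (and not merely some unspecified) cut-off heights, and one must check that the feet on outer cuffs of hamster wheels inside umbrellas—which are not formal feet and only satisfy the $100\epsilon$-bending condition—still enter the matching problem correctly, so that inequality~(\ref{2.1}) is the right uniform statement to assert at every gluing curve (for outer-cuff gluings one reads~(\ref{2.1}) as a consequence of the bending condition together with the normalization of ${\bf foot}_{\gamma}({\bf H},c)$ to be $\frac{\epsilon}{R}$-close to the orthogeodesic direction). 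I expect to handle this by being explicit that ``$\mathbf{\Sigma}_i$ is a good pants or an umbrella'' and that within umbrellas all internal gluings were already arranged in \cite{KW1}, so the only new gluings are between the left-over boundary curves, exactly the ones $\partial A_{\gamma_0}$ measures. Because all of this is present, if dispersed, in \cite{KW1}, I would keep the proof to a sketch with precise pointers, as the paper's stated convention allows.
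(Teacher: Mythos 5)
Your part (2) is essentially the paper's argument: encode the pasting as a bipartite matching between boundary components of $A$ and of $\bar A$ with edges given by inequality (\ref{2.1}), observe that part (1) is exactly Hall's condition, and invoke the well-matched assembly machinery of \cite{KW1} for $\pi_1$-injectivity of the resulting closed subsurface. (One small misreading: the ``$+1$'' is not needed to run the matching or to respect the orientation-reversal pairing --- the weak inequality already gives Hall's condition for $A$ against $\bar A$; the paper keeps the $+1$ for later applications.)

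Part (1), however, has a genuine gap, and it is precisely the quantitative heart of the theorem. You assert as a lemma that the \emph{full} measure $\partial A_{\gamma_0}$ is $C\epsilon/R$-equidistributed against Lebesgue measure, attributing this to \cite{KW1}. That is not what \cite{KW1} provides: after the umbrella replacement, the feet contributed by the \emph{outer} boundary components of the umbrellas are controlled only in total mass, not in distribution, so no multiplicative density bound for $\partial A_{\gamma_0}$ is available (and for an atomic measure such a bound is not even the right formulation). The paper's proof instead decomposes $\partial A_{\gamma_0}$ as in equation (\ref{2.2}): the distinguished boundary of each $\hat U(\Pi,c,\gamma_0)$ is arranged to have the \emph{same} foot as the replaced pants $(\Pi,c)$, so the first summand is the foot measure of \emph{all} good pants $A'=\sum_{\Pi\in{\bf\Pi}_{R,\epsilon}}\Pi$, which is evenly distributed (Theorem 5.1 of \cite{KW1}); the second summand ($\partial_{\mathrm{out}}$ of the umbrellas) is bounded in total mass by $C_1R^{19}e^{2R-\frac12(h_c-h_T)}\le C_1R^{-3}e^{2R}$ --- this is exactly where $h_c-h_T\ge 44\log R$ is used, which in your write-up is dismissed as bookkeeping --- and is then beaten by the pants mass $\ge C_2R^{-2}e^{2R}$ of a small disk placed in $N_{\frac{\epsilon}{R}}(B)\setminus N_{\frac{\epsilon}{2R}}(B)$. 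Without this decomposition-plus-absorption step your soft argument does not go through: a multiplicative $(1\pm C\epsilon/R)$ error on the torus $N^1(\sqrt{\gamma_0})$ (of area $\approx 2\pi R$) is an additive error of order $\epsilon$, which overwhelms the gain from enlarging $B$ to $N_{\epsilon/R}(B)$ when $B$ has nearly full measure. Finally, your derivation of the ``$+1$'' from integrality is incorrect: at this stage $A$ is a $\mathbb{Q}_+$-combination (denominators are only cleared in part (2)), so $\partial A_{\gamma_0}$ is rational-valued; and even for an integer-valued measure, the case of exact equality in the weak inequality would yield no improvement. In the paper the $+1$ is a robust quantitative surplus, $C_2R^{-2}e^{2R}\ge C_1R^{-3}e^{2R}+1$ (which is why the remark notes that $1$ may be replaced by any fixed positive integer), not an integrality phenomenon.
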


\begin{proof}
(1) We fix an $(R,\epsilon)$-good curve $\gamma_0\in  {\bf \Gamma}_{R,\epsilon}^{<h_c}$. For any marked good pants $( \Pi,c)\in {\bf \Pi}_{R,\epsilon}^{\geq h_c}(\gamma_0)$, we can choose the foot of $\hat{U}(\Pi,c,\gamma_0)$ on $\gamma_0$ such that
${\bf foot}_{\gamma_0}(\hat{U}(\Pi,c,\gamma_0),c)={\bf foot}_{\gamma_0}({\bf \Pi},c).$ By following the proof of Theorem 1.1 in \cite{KW1}, we have the following decomposition of the measure $\partial A_{\gamma_0}$: 
\begin{align}\label{2.2}
\partial A_{\gamma_0}=\partial (\sum_{\Pi\in {\bf \Pi}_{R,\epsilon}}\Pi)_{\gamma_0}+\partial_{\text{out}}(\sum_{\gamma \in {\bf \Gamma}_{R,\epsilon}^{< h_c}}\sum_{(\Pi,c)\in {\bf \Pi_{R,\epsilon}^{\geq h_c}}(\gamma)}\hat{U}(\Pi,c,\gamma))_{\gamma_0}.
\end{align}
Here $\partial_{\text{out}}$ only counts the boundary components of $\hat{U}(\Pi,c,\gamma)$ other than $\gamma$. 

Let $A'$ denotes $\sum_{\Pi\in {\bf \Pi}_{R,\epsilon}}\Pi$, then the proof of Theorem 5.1 of \cite{KW1} implies that $\partial A'_{\gamma_0}$ is evenly distributed on $N^1(\sqrt{\gamma_0})$, and in particular 
\begin{align}\label{2.3}
\partial A'_{\gamma_0}(N_{\frac{\epsilon}{2R}}(B))\geq \partial A'_{\gamma_0}(\tau(B))
\end{align}
holds for any $B\subset N^1(\sqrt{\gamma_0})$.

Theorem 5.11 of \cite{KW1} implies that the second term in equation (\ref{2.2}) is bounded above by 
\begin{align}\label{2.4}
C_1(M,\epsilon)R^{19}e^{2R-\frac{1}{2}(h_c-h_T)}\leq C_1(M,\epsilon)R^{-3}e^{2R}.
\end{align} 
The proof of Theorem 5.2 of \cite{KW1} implies that 
\begin{align}\label{2.5}
\partial A'_{\gamma_0}(D_{\frac{\epsilon}{4R}})\geq C_2(M,\epsilon)R^{-2}e^{2R}\geq C_1(M,\epsilon)R^{-3}e^{2R}+1
\end{align} 
for any disk $D_{\frac{\epsilon}{4R}}\subset N^1(\sqrt{\gamma_0})$ of radius $\frac{\epsilon}{4R}$.

If $N_{\frac{\epsilon}{R}}(B)=N^1(\sqrt{\gamma_0})$, the desired inequality in item (1) obviously holds. For any $B\subset N^1(\sqrt{\gamma_0})$ with $N_{\frac{\epsilon}{R}}(B)\ne N^1(\sqrt{\gamma_0})$, the following inequality holds:
\begin{align*}
&\partial A_{\gamma_0}(N_{\frac{\epsilon}{R}}(B))\geq \partial A'_{\gamma_0}(N_{\frac{\epsilon}{R}}(B))\geq \partial A'_{\gamma_0}(N_{\frac{\epsilon}{2R}}(B)) +\partial A'_{\gamma_0}(D_{\frac{\epsilon}{4R}})\\
\geq &\  \partial A'_{\gamma_0}(\tau(B))+C_1(M,\epsilon)R^{-3}e^{2R}+1\geq \partial A_{\gamma_0}(\tau(B))+1.
\end{align*}
Here $D_{\frac{\epsilon}{4R}}$ denotes a disk of radius $\frac{\epsilon}{4R}$ contained in $N_{\frac{\epsilon}{R}}(B)\setminus N_{\frac{\epsilon}{2R}}(B)$, which exists since $N_{\frac{\epsilon}{R}}(B)$ has nonempty boundary.
The first, third, fourth inequalities hold by equations (\ref{2.2}), (\ref{2.3}) and (\ref{2.5}), (\ref{2.4}) respectively.

(2) Note that all boundaries curves of $A$ (after eliminating denominators) lie in ${\bf \Gamma}_{R,\epsilon}^{<h_c}$. For any $\gamma_0\in {\bf \Gamma}_{R,\epsilon}^{<h_c}$, any term ${\bf \Sigma_1}$ in $A$ with $c_1\subset \partial \Sigma_1$ mapped to $\gamma_0$ and some term ${\bf \Sigma_2}$ in $\bar{A}$ with $c_2\subset \partial \Sigma_2$ mapped to $\bar{\gamma}_0$ satisfies $$|{\bf foot}_{\gamma_0}({\bf \Sigma_1},c_1)-{\bf foot}_{\bar{\gamma}_0}({\bf \Sigma_2},c_2)-(1+i\pi)|<\frac{\epsilon}{R}$$
if and only if 
$$\tau({\bf foot}_{\bar{\gamma}_0}({\bf \Sigma_2},c_2))\in N_{\frac{\epsilon}{R}}({\bf foot}_{\gamma_0}({\bf \Sigma_1},c_1))\subset N^1(\sqrt{\gamma_0}).$$ 

Recall that $\partial A_{\gamma_0}(B)$ counts the number of feet arised from $A$ that lie in $B\subset N^1(\sqrt{\gamma_0})$. By the Hall's marriage theorem, we can paste $A\cup \bar{A}$ in a way satisfying equation (\ref{2.1}) (such that terms in $A$ are pasted with terms in $\bar{A}$) exists if and only if for any $\gamma_0\in {\bf \Gamma}_{R,\epsilon}^{<h_c}$ and any $B\subset N^1(\sqrt{\gamma_0})$, the following inequality holds
$$\partial A_{\gamma_0}(\tau(B))\leq \partial A_{\gamma_0}(N_{\frac{\epsilon}{R}}(B)).$$
This is exactly the conclusion of item (1), so such an $(R,\epsilon)$-nearly geodesic closed subsurface is obtained.
\end{proof}

We need the extra $+1$ term in Theorem \ref{cuspedprecisedescription} (1) for further applications in this paper, and actually $1$ can be replaced by any fixed positive integer.

\bigskip

\subsection{The panted cobordism groups of finite volume hyperbolic $3$-manifolds}\label{prepantedcobordism}

In \cite{LM}, Liu and Markovic introduced panted cobordism groups of closed oriented hyperbolic $3$-manifolds and computed these groups. In \cite{Sun4}, the author generalized some results in \cite{LM} to oriented cusped hyperbolic $3$-manifolds. We will review these results in this section.

We first fix a closed oriented hyperbolic $3$-manifold $M$, a small number $\epsilon>0$ and a large number $R>0$. Let $\mathbb{Z}{\bf \Gamma}_{R,\epsilon}$ be the free abelian group generated by ${\bf \Gamma}_{R,\epsilon}$, modulo the relation $\gamma+\bar{\gamma}=0$ for all $\gamma\in {\bf \Gamma}_{R,\epsilon}$. Here $\bar{\gamma}$ denotes the orientation reversal of $\gamma$. Let $\mathbb{Z}{\bf \Pi}_{R,\epsilon}$ be the free abelian group generated by ${\bf \Pi}_{R,\epsilon}$, modulo the relation $\Pi+\bar{\Pi}=0$ for all $\Pi\in {\bf \Pi}_{R,\epsilon}$. By taking the oriented boundary of $(R,\epsilon)$-good pants, we get a homomorphism $\partial:\mathbb{Z}{\bf \Pi}_{R,\epsilon}\to \mathbb{Z}{\bf \Gamma}_{R,\epsilon}$. The panted cobordism group $\Omega_{R,\epsilon}(M)$ is defined as the following in \cite{LM}.

\begin{definition}\label{closedcobordismgroup} 
The {\it panted cobordism group} $\Omega_{R,\epsilon}(M)$ is defined to be the cokernel of the homomorphism $\partial$, i.e. $\Omega_{R,\epsilon}(M)$ fits into the following exact sequence 
$$\mathbb{Z}{\bf \Pi}_{R,\epsilon}\xrightarrow{\partial} \mathbb{Z}{\bf \Gamma}_{R,\epsilon}\to \Omega_{R,\epsilon}(M)\to 0.$$
\end{definition}

To state the result in \cite{LM}, we need the following definition.

\begin{definition}\label{framebundle}
For an oriented hyperbolic $3$-manifold $M$ and a point $p\in M$, a \emph{special orthonormal frame} (simply a frame) of $M$ based at $p$ is a triple of unit tangent vectors $(\vec{t}_p,\vec{n}_p,\vec{t}_p\times \vec{n}_p)$ such that $\vec{t}_p,\vec{n}_p\in T_p^1M$ with $\vec{t}_p\perp \vec{n}_p$, and $\vec{t}_p\times \vec{n}_p\in T_p^1M$ is the cross product with respect to the orientation of $M$. We use $\text{SO}(M)$ to denote the frame bundle over $M$ consisting of all special orthonormal frames of $M$.
\end{definition}

For simplicity, we denote each element in $\text{SO}(M)$ by its basepoint and the first two vectors of the frame, as $(p,\vec{t}_p,\vec{n}_p)$, since the third vector is determined by the first two. We call $\vec{t}_p$ and $\vec{n}_p$ the tangent and normal vectors of this frame, respectively.

In \cite{LM}, Liu and Markovic proved the following theorem.
	
\begin{theorem}\label{homology}(Theorem 5.2 of \cite{LM})
Given a closed oriented hyperbolic $3$-manifold $M$, for small enough $\epsilon>0$ depending on $M$ and large enough $R>0$ depending on $M$ and $\epsilon$, there is a natural isomorphism 
		$$\Phi:\Omega_{R,\epsilon}(M)\rightarrow H_1(\text{SO}(M);\mathbb{Z})$$
from the panted cobordism group of $M$ to the first homology group of $\text{SO}(M)$.
	\end{theorem}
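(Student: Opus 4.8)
The plan is to construct the map $\Phi$ directly and then verify it is a well-defined isomorphism. First I would fix, once and for all, a basepoint $* \in \mathrm{SO}(M)$ and a way to associate to each $(R,\epsilon)$-good curve $\gamma$ a based loop in $\mathrm{SO}(M)$: a good curve $\gamma$ comes with a canonical lift to $\mathrm{SO}(M)$ (the unit tangent vector along $\gamma$ together with a normal framing, e.g. a parallel normal field, made canonical up to the relevant error by the good-curve conditions), and connecting its basepoint to $*$ by a short path gives a class $[\gamma] \in H_1(\mathrm{SO}(M);\mathbb{Z})$ that is independent of the auxiliary choices because $\mathrm{SO}(M)$ is a connected space whose fundamental group we are abelianizing. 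This assignment reverses sign under $\gamma \mapsto \bar\gamma$ (reversing orientation of $\gamma$ corresponds to a loop in the $\mathrm{SO}(3)$-fibre composed with the reversed curve, and one checks the fibre contribution is killed in $H_1$, or is accounted for consistently), so it descends to a homomorphism $\widetilde{\Phi} : \mathbb{Z}{\bf \Gamma}_{R,\epsilon} \to H_1(\mathrm{SO}(M);\mathbb{Z})$.

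Next I would show $\widetilde{\Phi}$ kills the image of $\partial$, so that it factors through $\Omega_{R,\epsilon}(M)$. The point is that a good pair of pants $\Pi \in {\bf \Pi}_{R,\epsilon}$, being a nearly geodesic immersion of $\Sigma_{0,3}$, lifts to a map $\Sigma_{0,3} \to \mathrm{SO}(M)$ (thicken the immersed surface to a framing along it), exhibiting $[\gamma_1] + [\gamma_2] + [\gamma_3]$ as the image of the $2$-chain $[\Sigma_{0,3}]$ under the boundary map, hence null-homologous in $H_1$; here one must be slightly careful that the framings on the cuffs coming from the pants agree with the canonical framings used to define $\widetilde\Phi$, up to paths that are themselves null-homologous — this is where the good-pants estimates (half-lengths near $R$, feet near evenly distributed) are used to keep everything within controlled neighbourhoods. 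This produces the homomorphism $\Phi : \Omega_{R,\epsilon}(M) \to H_1(\mathrm{SO}(M);\mathbb{Z})$.

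For surjectivity, I would invoke the good pants construction toolkit: $H_1(\mathrm{SO}(M);\mathbb{Z})$ is generated by classes of short loops, and the connection principle (a loop in $\mathrm{SO}(M)$ can be approximated, in the relevant coarse sense, by a concatenation of segments that assembles into good curves and good pants with prescribed framing data) shows every homology class is hit; equivalently, one shows the natural map from closed geodesics with framings to $H_1(\mathrm{SO}(M))$ is onto and that good curves suffice by a density/equidistribution argument. For injectivity — the main obstacle — one must show that if a $\mathbb{Z}$-linear combination of good curves is null-homologous in $\mathrm{SO}(M)$, then it lies in the image of $\partial$, i.e. it can be realized as the oriented boundary of a $\mathbb{Z}$-combination of good pants. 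This is the substantive content: it requires taking an actual $2$-chain in $\mathrm{SO}(M)$ bounding the given $1$-cycle, subdividing and straightening it into nearly geodesic triangles, and then replacing those triangles by good pants while controlling the framing/feet data so the well-matched conditions can be met — in other words, a relative version of the Kahn–Markovic assembly together with the panted-cobordism bookkeeping of \cite{LM}. I expect essentially all the geometric difficulty to be concentrated here, with the rest of the argument being the formal homological algebra of the exact sequence defining $\Omega_{R,\epsilon}(M)$.
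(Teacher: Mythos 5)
First, a point of context: this paper does not prove Theorem \ref{homology} at all; it is quoted from Theorem 5.2 of \cite{LM}, and the only ingredient the paper records is the definition of $\Phi$ via canonical lifts --- a definition that explicitly includes a $2\pi$-rotation of the frame about its normal vector before closing up. Your proposal drops exactly that rotation, and the step where this matters is your well-definedness argument. Your claim that ``the fibre contribution is killed in $H_1$'' is false: since $M$ is an oriented $3$-manifold, $\text{SO}(M)\cong M\times \text{SO}(3)$, so $H_1(\text{SO}(M);\mathbb{Z})\cong H_1(M;\mathbb{Z})\oplus \mathbb{Z}/2$, and the fibre loop $\Lambda$ is the nontrivial torsion class (the present paper uses this splitting itself, e.g.\ in defining $\sigma$ in Theorem \ref{boundingsurface}). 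With your naive (untwisted) lift, the three framed cuffs of a good pants do \emph{not} sum to zero: trivialize $T\Sigma_{0,3}$ (possible since $\partial\Sigma_{0,3}\neq\emptyset$); by Poincar\'e--Hopf the boundary-tangent framing winds $\chi(\Sigma_{0,3})=-1$ times relative to this trivialization, so the sum of the three cuff lifts equals $\chi(\Sigma_{0,3})\cdot\Lambda=\Lambda\neq 0$ in $H_1(\text{SO}(M);\mathbb{Z})$. Hence your $\widetilde\Phi$ does not kill $\partial\Pi$ and does not factor through $\Omega_{R,\epsilon}(M)$. The $2\pi$-rotation in the canonical lift adds $\Lambda$ to each cuff, hence $3\Lambda=\Lambda$ to the boundary sum, and this cancels the Euler-characteristic obstruction; it is an essential spin-type correction, not optional bookkeeping. (Your orientation-reversal check happens to come out right, but again not for the stated reason: it holds because right multiplication by a fixed rotation is homotopic to the identity of $\text{SO}(M)$, not because fibre classes die in homology.)

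Second, the heart of the theorem --- injectivity, i.e.\ that any $\mathbb{Z}$-combination of good curves whose canonical lifts are null-homologous in $\text{SO}(M)$ bounds a $\mathbb{Z}$-combination of good pants --- is only gestured at. The route you suggest (take a singular $2$-chain in $\text{SO}(M)$ bounding the cycle, straighten it into nearly geodesic triangles, and swap triangles for good pants) is not the argument of \cite{LM}, and the genuinely hard step is precisely the one left unaddressed: converting such straightened data into $(R,\epsilon)$-good pants glued along $(R,\epsilon)$-good curves, with the framing/length constraints preserved throughout. In \cite{LM} this is done by a different, carefully structured sequence of panted-cobordism moves built from the connection principle, not by straightening chains. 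So as it stands the proposal reproduces (incorrectly, because of the missing $2\pi$ twist) the definition of $\Phi$ and the surjectivity heuristic, but does not contain a proof of the theorem.
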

	
In \cite{Sun4}, the author generalized Theorem \ref{homology} to oriented cusped hyperbolic $3$-manifolds, and the result needs some height conditions on involved curves and pants. 

For any $h'>h>0$, $\mathbb{Z}{\bf \Gamma}_{R,\epsilon}^{<h}$ is naturally a subset of $\mathbb{Z}{\bf \Gamma}_{R,\epsilon}^{<h'}$. For the boundary homomorphism $\partial: \mathbb{Z}{\bf \Pi}_{R,\epsilon}^{<h'}\to \mathbb{Z}{\bf \Gamma}_{R,\epsilon}^{<h'}$, we use $\mathbb{Z}{\bf \Pi}_{R,\epsilon}^{h,h'}$ to denote the preimage of $\mathbb{Z}{\bf \Gamma}_{R,\epsilon}^{<h}<\mathbb{Z}{\bf \Gamma}_{R,\epsilon}^{<h'}$ in $\mathbb{Z}{\bf \Pi}_{R,\epsilon}^{<h'}$. Then we have the following definition.

\begin{definition}\label{cuspedcobordismgroup}
For an oriented cusped hyperbolic $3$-manifold $M$ and any $h'>h>0$, we define the $(R,\epsilon)$-panted cobordism group of height $(h,h')$, denoted by $\Omega_{R,\epsilon}^{h,h'}(M)$, to be the cokernel of the homomorphism $\partial|: \mathbb{Z}{\bf \Pi}_{R,\epsilon}^{h,h'}\to  \mathbb{Z}{\bf \Gamma}_{R,\epsilon}^{<h}$. Thus $\Omega_{R,\epsilon}^{h,h'}(M)$ fits into the following exact sequence $$\mathbb{Z}{\bf \Pi}_{R,\epsilon}^{h,h'}\xrightarrow{\partial|}  \mathbb{Z}{\bf \Gamma}_{R,\epsilon}^{<h}\to \Omega_{R,\epsilon}^{h,h'}(M)\to 0 $$
\end{definition}

%More precisely, an $(R,\epsilon)$-multicurve $L$ of height at most $h$ represents the zero element in $\Omega_{R,\epsilon}^{h,h'}(M)$ if and only if there is an $(R,\epsilon)$-panted subsurface of height at most $h'$, such that its oriented boundary is $L$.

In \cite{Sun4}, the author proved the following analogy of Theorem \ref{homology}.

\begin{theorem}\label{cuspedhomology}
For any oriented cusped hyperbolic 3-manifold M, any numbers
$\beta>\alpha\geq 4$ with $\beta-\alpha\geq 3$ and any small enough $\epsilon>0$, there exists $R_0> 0$,
such that for any $R > R_0$, we have an isomorphism
$$\Phi:\Omega_{R,\epsilon}^{\alpha\log{R},\beta\log{R}}(M)\to H_1(\text{SO}(M); \mathbb{Z}).$$
\end{theorem}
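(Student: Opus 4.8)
The plan is to mimic the strategy Liu and Markovic used for the closed case (Theorem \ref{homology}), adapting every step to respect the height constraints. First I would construct the homomorphism $\Phi:\Omega_{R,\epsilon}^{\alpha\log R,\beta\log R}(M)\to H_1(\mathrm{SO}(M);\mathbb{Z})$. Given an $(R,\epsilon)$-good curve $\gamma$ of height less than $\alpha\log R$, one lifts $\gamma$ to a loop in $\mathrm{SO}(M)$ by carrying along a continuously varying frame whose tangent vector is the tangent of $\gamma$; since $\gamma$ has a nearly-canonical normal framing coming from its being a good curve (the half-normal bundle $N^1(\sqrt\gamma)$ structure), this lift is well defined up to homotopy, giving a class $[\gamma]_{\mathrm{SO}}\in H_1(\mathrm{SO}(M);\mathbb{Z})$. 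Then I must check this is compatible with the relation $\gamma+\bar\gamma=0$ and that it kills the image of $\partial$: for a good pants $\Pi$ with all cuffs of height less than $\alpha\log R$, the frame fields along the three cuffs bound a frame field over the pair of pants $\Sigma_{0,3}$ mapped into $M$, up to a controlled correction — this is exactly the computation in \cite{LM} of the ``frame holonomy'' of a pair of pants, and it shows $\partial\Pi$ maps to $0$. This yields a well-defined $\Phi$.

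Second, I would produce an inverse, or at least prove surjectivity and injectivity separately. For surjectivity: every class in $H_1(\mathrm{SO}(M);\mathbb{Z})$ is represented by a loop, which one can homotope into the thick part (height $0$) and then approximate by a closed geodesic whose frame-lift is the prescribed loop; a further perturbation/doubling argument (taking $\gamma$ together with a good pants that adjusts its length and half-length into the $(R,\epsilon)$ windows) realizes it by an $(R,\epsilon)$-good curve of bounded — hence, for $R$ large, less than $\alpha\log R$ — height. This is where the connection principle of \cite{Sun4} (Theorem \ref{connectionprinciple}, invoked in the excerpt) does the real work: it lets me join up segments into good curves while keeping the height controlled and prescribing the homology/frame data. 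For injectivity: if $[\gamma]_{\mathrm{SO}}=0$, I must exhibit $\gamma$ (or a multiple, but the group structure forces the exact statement) as $\partial$ of a $\mathbb{Z}$-combination of good pants all of whose cuffs have height less than $\alpha\log R$ and such that the auxiliary curves stay below $\beta\log R$. This is the ``panted null-cobordism'' construction of \cite{LM}, reorganized so that the bookkeeping curves introduced along the way — which in \cite{LM} can be taken anywhere — are now forced to lie below $\beta\log R$, using that $\beta-\alpha\geq 3$ and $\alpha\geq 4$ give enough room for the Kahn–Wright umbrella replacements and connection-principle detours to stay in the allowed height band.

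The main obstacle, and the only genuinely new content over \cite{LM}, is the height bookkeeping in the injectivity direction. In the closed case one freely throws in auxiliary good curves and good pants to build the cobordism realizing $\partial(\text{stuff}) = \gamma$; here each such auxiliary object must be certified to have height below the relevant cutoff. The key technical input is that one can perform each elementary move — stabilizing a curve's length, adjusting its half-length, splitting a pants, connecting two feet — within the thick part or within a bounded-height collar, so that every newly created curve has height $O(1)$, which is $<\alpha\log R$ for $R$ large, and every pants used has height $<\beta\log R$. The hypotheses $\alpha\geq 4$, $\beta-\alpha\geq 3$ are precisely the slack needed so that the umbrella constructions of \cite{KW1} (which require $h_T\geq 6\log R$, $h_c\geq h_T+44\log R$ in their setting, and analogous gaps here) and the connection-principle estimates of \cite{Sun4} can be carried out entirely inside the band $[0,\beta\log R)$ while only touching curves of height $<\alpha\log R$. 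I would organize the argument so that this height control is isolated into one lemma about elementary moves, after which the algebra of \cite{LM} transfers verbatim.
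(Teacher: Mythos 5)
First, a point of reference: the paper you are comparing against does not actually prove this statement — Theorem \ref{cuspedhomology} is quoted from \cite{Sun4}, where the computation of the height-controlled panted cobordism group is the main result. Your outline does match the strategy of that cited proof at the level of architecture: define $\Phi$ by canonical lifts, check it kills $\partial{\bf \Pi}$ via the frame-holonomy computation of \cite{LM}, get surjectivity from the cusped connection principle, and redo the Liu--Markovic null-cobordism argument with height bookkeeping, which is indeed the only genuinely new content. So the plan is pointed in the right direction.

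However, there is a concrete gap at exactly the step you identify as the crux. Your proposed ``one lemma about elementary moves'' asserts that every auxiliary curve created along the way can be arranged to have height $O(1)$, with the moves performed ``within the thick part or within a bounded-height collar.'' This is not achievable and is not how the cited argument works: the connection principle for cusped manifolds (Theorem \ref{connectionprinciple}) produces segments of length $t$ whose height is only bounded by roughly $\tfrac{1}{2}\log t + C\log\tfrac{1}{\delta}+C$, so good curves and pants assembled from such segments generically have height on the scale of $\log R$, not $O(1)$; moreover the input multicurve itself may already have height up to $\alpha\log R\geq 4\log R$, and the constructions that absorb or modify it (including the umbrella-type corrections of \cite{KW1}) necessarily produce objects whose heights exceed any constant. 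This is precisely why the cutoffs in the theorem are $\alpha\log R$ and $\beta\log R$ with $\alpha\geq 4$ and $\beta-\alpha\geq 3$, rather than constants; the real bookkeeping must track heights at the $\log R$ scale and verify that each step of the \cite{LM} algebra stays inside the band $[\alpha\log R,\beta\log R]$ when it needs to, and this quantitative verification — the substance of \cite{Sun4} — is deferred rather than supplied in your proposal. A smaller inaccuracy: in the injectivity step the pants of the null-cobordism are allowed cuffs of height up to $\beta\log R$ (that is the definition of $\mathbb{Z}{\bf \Pi}_{R,\epsilon}^{h,h'}$); only the boundary multicurve must lie below $\alpha\log R$, so ``good pants all of whose cuffs have height less than $\alpha\log R$'' is not the right condition to aim for.
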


For both Theorem \ref{homology} and Theorem \ref{cuspedhomology}, the isomorphism $\Phi$ is defined by taking {\it canonical lifts} of $(R,\epsilon)$-good curves. For any $(R,\epsilon)$-good curve $\gamma$, we fix a point $p\in \gamma$ and a frame ${\bf p}=(p,\vec{v}_p,\vec{n}_p)\in \text{SO}(M)_p$ based at $p$ such that $\vec{v}_p$ is tangent to $\gamma$. Then a canonical lift of $\gamma$ is a map $\hat{\gamma}: S^1\to \text{SO}(M)$ defined by the concatenation of following paths:
\begin{itemize}
\item First parallel transport ${\bf p}$ along $\gamma$ to another frame ${\bf p'}$ based at $p$.
\item Do $2\pi$-rotation of ${\bf p'}$ along its normal vector and return to ${\bf p'}$.
\item Connect ${\bf p'}$ back to ${\bf p}$ along a $(2\epsilon)$-short path in $\text{SO}_p(M)$.
\end{itemize}
In \cite{LM}, it is proved that the homology class of $\hat{\gamma}$ is independent of the choice of $p$ and ${\bf p}$.
So if we compose the isomorphism $\Phi:\Omega_{R,\epsilon}^{\alpha\log{R},\beta\log{R}}(M)\to H_1(\text{SO}(M); \mathbb{Z})$ with the homomorphism $\pi_*:H_1(\text{SO}(M);\mathbb{Z})\to H_1(M;\mathbb{Z})$ induced by bundle projection, $\pi_*\circ \Phi$ maps each $(R,\epsilon)$-good multicurve to its homology class in $H_1(M;\mathbb{Z})$.

For any small $\epsilon>0$ and large $R>0$, an {\it $(R,\epsilon)$-panted subsurface} in a hyperbolic $3$-manifold $M$ is a (possibly disconnected) compact oriented surface $F$ with a pants decomposition and an immersion $i: F\looparrowright M$, such that the restriction of $i$ to each pair of pants in the pants decomposition of $F$ gives a pair of $(R,\epsilon)$-good pants.

The $(R,\epsilon)$-panted subsurface was originally defined by Liu and Markovic in \cite{LM}, and it does not require any feet-matching condition when two $(R,\epsilon)$-good pants are pasted along an $(R,\epsilon)$-good curve.
Geometrically, an $(R,\epsilon)$-multicurve $L\in \mathbb{Z}{\bf \Gamma}_{R,\epsilon}^{<h}$ represents the trivial element in $\Omega_{R,\epsilon}^{h,h'}(M)$ if and only if it bounds an $(R,\epsilon)$-panted subsurface of height at most $h'$.

\bigskip
\bigskip

\section{A connection principle with homological control}\label{connection_principle}

In general, the connection principle is a machinery in hyperbolic geometry that constructs geodesic segments and $\partial$-framed segments (see definition below) in hyperbolic $3$-manifolds. For closed hyperbolic $3$-manifolds, the idea of connection principle was initiated in Proposition 4.4 of \cite{KM}, which relies on the exponential mixing property of frame flow  (\cite{Mor} and \cite{Pol}).
The first formally stated connection principle was given in Lemma 4.15 of \cite{LM}, and the first homological version of connection principle was given in Theorem 3.1 of \cite{Liu}. For cusped hyperbolic $3$-manifolds, 
key idea of the connection principle was given in \cite{KW1} and \cite{KW2}, and the author proved such a connection principle in Theorem 4.1 of \cite{Sun4}. 

In this section, we first define the main geometric object: $\partial$-framed segments, give some geometric estimates and review the connection principle of cusped hyperbolic $3$-manifolds in \cite{Sun4} (Theorem \ref{connectionprinciple}). Then we prove the homological version of connection principle of cusped hyperbolic $3$-manifolds (Theorem \ref{enhancedconnectionprinciple}). The proof of Theorem \ref{enhancedconnectionprinciple} is similar to the corresponding result of closed hyperbolic $3$-manifolds in \cite{Liu}. In the proof of Theorem \ref{enhancedconnectionprinciple}, we provide details of all geometric constructions, since they invoke the new connection principle (Theorem \ref{connectionprinciple}). On the other hand, when checking homological conditions, the readers are referred to proofs in \cite{Liu}, since those proofs work well for cusped hyperbolic $3$-manifolds.
We also use the connection principle (Theorem \ref{connectionprinciple}) and Theorem \ref{cuspedprecisedescription} to prove Theorem \ref{boundingsurface}, which implies that certain null-homologous $(R,\epsilon)$-multicurve in a cusped hyperbolic $3$-manifold bounds a nearly geodesic subsurface.

\subsection{Geometric estimates on $\partial$-framed segments and the connection principle}\label{partialframedsegment}

At first, we need to define oriented $\partial$-framed segments and their associated objects, since they are geometric objects that will be constructed by the connection principle.
\begin{definition}\label{segment} 
An {\it oriented $\partial$-framed segment} in $M$ is a triple 
$$\mathfrak{s}=(s,\vec{n}_{\text{ini}},\vec{n}_{\text{ter}}),$$ 
such that $s$ is an immersed oriented compact geodesic segment (simply called a geodesic segment), $\vec{n}_{\text{ini}}$ and $\vec{n}_{\text{ter}}$ are unit normal vectors of $s$ at its initial and terminal points respectively. 
\end{definition}

We have the following objects associated to an oriented $\partial$-framed segment $\mathfrak{s}$:
\begin{itemize}
\item The {\it carrier segment} of $\mathfrak{s}$ is the oriented geodesic segment $s$, and the {\it height} of $\mathfrak{s}$ is the height of $s$.	

\item The {\it initial endpoint} $p_{\text{ini}}(\mathfrak{s})$ and the {\it terminal point} $p_{\text{ter}}(\mathfrak{s})$ are the initial and terminal points of $s$ respectively.

\item The {\it initial framing} $\vec{n}_{\text{ini}}(\mathfrak{s})$ and the {\it terminal framing} $\vec{n}_{\text{ter}}(\mathfrak{s})$ are the unit normal vectors $\vec{n}_{\text{ini}}$ and $\vec{n}_{\text{ter}}$ respectively.

\item The {\it initial direction} $\vec{t}_{\text{ini}}(\mathfrak{s})$ and the {\it terminal direction} $\vec{t}_{\text{ter}}(\mathfrak{s})$ are the unit tangent vectors of $s$ at $p_{\text{ini}}(\mathfrak{s})$ and $p_{\text{ter}}(\mathfrak{s})$ respectively.

\item The {\it initial frame} and the {\it terminal frame} of $\mathfrak{s}$ are $(p_{\text{ini}}(\mathfrak{s}),\vec{t}_{\text{ini}}(\mathfrak{s}),\vec{n}_{\text{ini}}(\mathfrak{s}))$ and $(p_{\text{ter}}(\mathfrak{s}),\vec{t}_{\text{ter}}(\mathfrak{s}),\vec{n}_{\text{ter}}(\mathfrak{s}))$ respectively.

\item The {\it length} $l(\mathfrak{s})\in(0,\infty)$ of $\mathfrak{s}$ is the length of the geodesic segment $s$, the {\it phase} $\varphi(\mathfrak{s})\in \mathbb{R}/2\pi \mathbb{Z}$ of $\mathfrak{s}$ is the angle from the parallel transport of $\vec{n}_{\text{ini}}$ along $s$ to $\vec{n}_{\text{ter}}$, with respect to $\vec{t}_{\text{ter}}(\mathfrak{s})$.

\item The {\it orientation reversal} of $\mathfrak{s}=(s,\vec{n}_{\text{ini}},\vec{n}_{\text{ter}})$ is defined to be 
$$\bar{\mathfrak{s}}=(\bar{s},\vec{n}_{\text{ter}},\vec{n}_{\text{ini}}).$$ 

\item For any angle $\phi \in \mathbb{R}/2\pi\mathbb{Z}$, the {\it frame rotation} of $\mathfrak{s}$ by $\phi$ is defined to be
$$\qquad \qquad \mathfrak{s}(\phi)=(s,\cos{\phi}\cdot \vec{n}_{\text{ini}}+\sin{\phi} \cdot (\vec{t}_{\text{ini}}\times \vec{n}_{\text{ini}}),\cos{\phi} \cdot \vec{n}_{\text{ter}}+\sin{\phi} \cdot (\vec{t}_{\text{ter}}\times \vec{n}_{\text{ter}})).$$

\end{itemize}
%The {\it frame flipping} of $\mathfrak{s}=(s,\vec{n}_{\text{ini}},\vec{n}_{\text{ter}})$ is defined to be 
%$$\mathfrak{s}^*=(s,-\vec{n}_{\text{ini}},-\vec{n}_{\text{ter}}).$$ 

we also need a few geometric definitions from Section 4 of \cite{LM}. 

\begin{definition}\label{chainsandcycles}
Let $0<\delta<\frac{\pi}{3}$, $L>0$ and $0<\theta<\pi$ be three constants.
\begin{enumerate}

\item Two oriented $\partial$-framed segments $\mathfrak{s}$ and $\mathfrak{s}'$ are {\it $\delta$-consecutive} if the terminal point of $\mathfrak{s}$ is the initial point of $\mathfrak{s}'$, and the terminal framing of $\mathfrak{s}$ is $\delta$-close to the initial framing of $\mathfrak{s}'$. The {\it bending angle} between $\mathfrak{s}$ and $\mathfrak{s}'$ is the angle between the terminal direction of $\mathfrak{s}$ and the initial direction of $\mathfrak{s}'$.

\item A {\it $\delta$-consecutive chain} of oriented $\partial$-framed segments is a finite sequence $\mathfrak{s}_1,\cdots,\mathfrak{s}_m$ such that each $\mathfrak{s}_i$ is $\delta$-consecutive to $\mathfrak{s}_{i+1}$ for $i=1,\cdots,m-1$. It is a {\it $\delta$-consecutive cycle} if furthermore $\mathfrak{s}_m$ is $\delta$-consecutive to $\mathfrak{s}_1$. A $\delta$-consecutive chain or cycle is {\it $(L,\theta)$-tame} if each $\mathfrak{s}_i$ has length at least $2L$ and each bending angle is at most $\theta$.

\item For an $(L,\theta)$-tame $\delta$-consecutive chain $\mathfrak{s}_1,\cdots,\mathfrak{s}_m$, the {\it reduced concatenation}, denoted by $\mathfrak{s}_1\cdots \mathfrak{s}_m$, is the oriented $\partial$-framed segment defined as the following. The carrier segment of $\mathfrak{s}_1\cdots \mathfrak{s}_m$ is homotopic to the concatenation of carrier segments of $\mathfrak{s}_1,\cdots,\mathfrak{s}_m$, with respect to endpoints. The initial and terminal framings of $\mathfrak{s}_1\cdots\mathfrak{s}_m$ are the closest unit normal vectors to the initial framing of $\mathfrak{s}_1$ and the terminal framing of $\mathfrak{s}_m$ respectively.

\item For an $(L,\theta)$-tame $\delta$-consecutive cycle $\mathfrak{s}_1,\cdots,\mathfrak{s}_m$, the {\it reduced cyclic concatenation}, denoted by $[\mathfrak{s}_1\cdots \mathfrak{s}_m]$, is the oriented closed geodesic freely homotopic to the cyclic concatenation of carrier segments of $\mathfrak{s}_1,\cdots,\mathfrak{s}_m$, assuming it is not null-homotopic.
\end{enumerate}
\end{definition}

The following lemma from \cite{LM} is very useful for estimating the length and phase of a concatenation or a cycle of oriented $\partial$-framed segments. Here the function $I(\cdot)$ is defined by $I(\theta)=2\log{(\sec{\frac{\theta}{2}})}$.

\begin{lemma}\label{lengthphase} (Lemma 4.8 of \cite{LM})
Given any positive constants $\delta,\theta,L$ with $0<\theta<\pi$ and $L\geq I(\theta)+10\log{2}$, the following statements hold in any oriented hyperbolic $3$-manifold.
\begin{enumerate}

\item If $\mathfrak{s}_1,\cdots,\mathfrak{s}_m$ is an $(L,\theta)$-tame $\delta$-consecutive chain of oriented $\partial$-framed segments, denoting the bending angle between $\mathfrak{s}_i$ and $\mathfrak{s}_{i+1}$ as $\theta_i\in[0,\theta)$, then 
$$\Big|l(\mathfrak{s}_1\cdots \mathfrak{s}_m)-\sum_{i=1}^ml(\mathfrak{s}_i)+\sum_{i=1}^{m-1}I(\theta_i)\Big|<\frac{(m-1)e^{(-L+10\log{2})/2}\sin{(\theta/2)}}{L-\log{2}}$$
and 
$$\Big|\varphi(\mathfrak{s}_1\cdots\mathfrak{s}_m)-\sum_{i=1}^m\varphi(\mathfrak{s}_i)\Big|<(m-1)(\delta+e^{(-L+10\log{2})/2}\sin{(\theta/2)}),$$
where $|\cdot|$ on $\mathbb{R}/2\pi\mathbb{Z}$ is understood as the distance from zero valued in $[0,\pi]$.

\item If $\mathfrak{s}_1,\cdots,\mathfrak{s}_m$ is an $(L,\theta)$-tame $\delta$-consecutive cycle of oriented $\partial$-framed segments, denoting the bending angle between $\mathfrak{s}_i$ and $\mathfrak{s}_{i+1}$ as $\theta_i\in[0,\theta)$ with $\mathfrak{s}_{m+1}$ equals $\mathfrak{s}_1$ by convention, then 
$$\Big|l([\mathfrak{s}_1\cdots \mathfrak{s}_m])-\sum_{i=1}^ml(\mathfrak{s}_i)+\sum_{i=1}^{m}I(\theta_i)\Big|<\frac{me^{(-L+10\log{2})/2}\sin{(\theta/2)}}{L-\log{2}}$$
and 
$$\Big|\varphi([\mathfrak{s}_1\cdots\mathfrak{s}_m])-\sum_{i=1}^m\varphi(\mathfrak{s}_i)\Big|<m(\delta+e^{(-L+10\log{2})/2}\sin{(\theta/2)}).$$
\end{enumerate}
\end{lemma}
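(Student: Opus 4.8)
The plan is to reduce the whole statement to a single two-segment estimate, prove that estimate by elementary hyperbolic trigonometry in $\mathbb{H}^3$, and then obtain both the chain and the cycle versions by an induction in which one tracks how the per-junction errors accumulate.

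First I would treat a chain $\mathfrak{s},\mathfrak{s}'$ of length two with bending angle $\theta_1\in[0,\theta)$. Lifting a neighborhood of the broken path to $\mathbb{H}^3$, the two carrier segments share an endpoint and span a unique totally geodesic plane; inside that plane the carrier segment of $\mathfrak{s}\mathfrak{s}'$ is the third side of a hyperbolic triangle whose other two sides have lengths $l(\mathfrak{s}),l(\mathfrak{s}')\ge 2L$ and whose included angle is $\pi-\theta_1$. The hyperbolic law of cosines gives
$$\cosh l(\mathfrak{s}\mathfrak{s}')=\cosh l(\mathfrak{s})\cosh l(\mathfrak{s}')+\sinh l(\mathfrak{s})\sinh l(\mathfrak{s}')\cos\theta_1,$$
and since $l(\mathfrak{s}),l(\mathfrak{s}')\ge 2L$ and $L\ge I(\theta)+10\log 2$ the right-hand side equals $\tfrac12 e^{l(\mathfrak{s})+l(\mathfrak{s}')}\cos^2(\theta_1/2)$ up to a relative error of order $e^{-4L}$; taking logarithms and using $I(\theta_1)=-2\log\cos(\theta_1/2)$ produces $l(\mathfrak{s}\mathfrak{s}')=l(\mathfrak{s})+l(\mathfrak{s}')-I(\theta_1)+\eta$, where a careful version of this estimate bounds $|\eta|$ by the single-junction quantity $\frac{e^{(-L+10\log 2)/2}\sin(\theta/2)}{L-\log 2}$ (the constraint $I(\theta)\le L-10\log2$ is what turns $\tan(\theta_1/2)$-type factors into $\sin(\theta/2)$ times an exponentially small factor). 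The same triangle shows the straightened side stays within a bounded distance $d_0(\theta_1)$ of the bend, so away from a fixed-size neighborhood of the bend it is exponentially close (rate $\sim e^{-L}$) to the union of the two original carrier segments. For the phase, parallel transport of a unit normal vector along the straightened geodesic therefore differs from transport along $\mathfrak{s}$ followed by transport along $\mathfrak{s}'$ by a rotation of size $O(e^{-L/2})$, and combining this with the $\delta$-closeness of the terminal framing of $\mathfrak{s}$ to the initial framing of $\mathfrak{s}'$ and with the nearest-normal-vector definition of the framings of the reduced concatenation yields $|\varphi(\mathfrak{s}\mathfrak{s}')-\varphi(\mathfrak{s})-\varphi(\mathfrak{s}')|<\delta+e^{(-L+10\log 2)/2}\sin(\theta/2)$.

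Next I would induct on $m$ for part (1). Set $\mathfrak{t}_1=\mathfrak{s}_1$ and $\mathfrak{t}_k=\mathfrak{t}_{k-1}\mathfrak{s}_k$. One checks that $l(\mathfrak{t}_{k-1})\ge 2L$ (each straightening costs at most $I(\theta)\le L-10\log 2$ while each segment contributes at least $2L$, so lengths in fact grow by at least $L$ per step), that the terminal direction and endpoint of $\mathfrak{t}_{k-1}$ differ from those of $\mathfrak{s}_{k-1}$ by only $O(e^{-L})$ — the displacement from straightening at the $j$-th junction decays like $e^{-L(k-1-j)}$, and these sum to a rapidly convergent geometric series — so the bending angle of $\mathfrak{t}_{k-1}$ with $\mathfrak{s}_k$ equals $\theta_{k-1}$ up to an exponentially small error, and the pair $\mathfrak{t}_{k-1},\mathfrak{s}_k$ is again $(L,\theta)$-tame and $\delta'$-consecutive with $\delta'=\delta+O(e^{-L})$. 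Applying the two-segment estimate at the $(k-1)$st junction and telescoping $l(\mathfrak{t}_m)=\sum_k l(\mathfrak{s}_k)+\sum_{k\ge 2}\bigl(l(\mathfrak{t}_k)-l(\mathfrak{t}_{k-1})-l(\mathfrak{s}_k)\bigr)$ gives the stated length inequality, the $m-1$ per-junction length errors adding to at most $\frac{(m-1)e^{(-L+10\log 2)/2}\sin(\theta/2)}{L-\log 2}$; the identical telescoping for the phase, where the genuine $\delta$-mismatches add linearly, gives $|\varphi(\mathfrak{t}_m)-\sum_i\varphi(\mathfrak{s}_i)|<(m-1)(\delta+e^{(-L+10\log2)/2}\sin(\theta/2))$.

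For the cycle in part (2) I would form $\mathfrak{s}_1\cdots\mathfrak{s}_m$ as above and then close it up: the oriented closed geodesic freely homotopic to the cyclic concatenation is obtained by one further straightening, joining the terminal frame of $\mathfrak{s}_m$ to the initial frame of $\mathfrak{s}_1$ across a bend of angle $\theta_m$, and its translation length and rotation angle are by definition $l([\mathfrak{s}_1\cdots\mathfrak{s}_m])$ and $\varphi([\mathfrak{s}_1\cdots\mathfrak{s}_m])$. Applying the two-segment length and phase estimates once more at this last junction and adding the extra error to the chain bound accounts for the additional $-I(\theta_m)$ summand, the index range $i=1,\dots,m$, and the replacement of $m-1$ by $m$. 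The hard part is not the trigonometry — the law of cosines and the thin-triangle comparisons are routine — but the bookkeeping in the induction: checking that every intermediate reduced concatenation remains $(L,\theta)$-tame so the two-segment lemma keeps applying, that the cumulative perturbation of endpoints and directions over all earlier straightenings stays exponentially small, and that the exponentially small geometric errors and the linearly accumulating $\delta$-mismatches combine with exactly the constants in the statement.
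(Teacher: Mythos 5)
The paper gives no proof of this lemma at all: it is quoted verbatim from Lemma 4.8 of \cite{LM}, so the only benchmark is the Liu--Markovic argument, and your overall plan (a two-segment estimate via the hyperbolic law of cosines in the plane spanned by the two carriers, with $I(\theta_1)=2\log\sec(\theta_1/2)$ coming from $1+\cos\theta_1=2\cos^2(\theta_1/2)$, then an induction with telescoping and one extra closing step for cycles) is the standard route and is consistent with it. The length bookkeeping is fine: the intermediate concatenations gain at least $L$ per step, the two far angles of each thin triangle are of size $O(e^{-2L}\tan(\theta/2))$, and since $\sec^{2}(\theta/2)=e^{I(\theta)}\leq e^{L}/2^{10}$ these errors sit comfortably inside the stated per-junction bound, so the length halves of (1) and (2) would go through.

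The genuine gap is in the phase step of your two-segment estimate. The assertion that ``parallel transport of a unit normal vector along the straightened geodesic differs from transport along $\mathfrak{s}$ followed by transport along $\mathfrak{s}'$ by a rotation of size $O(e^{-L/2})$'' is false: the two transports differ by essentially the holonomy of the thin triangle, i.e.\ a rotation about the normal of the plane of the bend by the angle defect, which is $\theta_1$ up to exponentially small corrections, not small at all; closeness of the straightened carrier to the broken path does not control this. What the phase estimate actually requires is that the \emph{twist about the carrier} is almost additive: the nearest-normal projections defining the framings of $\mathfrak{s}\mathfrak{s}'$ at the two far vertices move framings only by the exponentially small far angles of the triangle, the in-plane rotation by $\approx\theta_1$ does not contribute to the phase, and the $\delta$-mismatch at the bend enters linearly -- noting that when $\theta_1$ is not small, $\delta$-consecutiveness forces both framings at the bend to lie near the binormal of the bend, since the two normal planes there meet at dihedral angle $\theta_1$. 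None of this is in your sketch, and it is precisely the nontrivial content of the phase inequality. A secondary gap of the same kind occurs in part (2): the closed geodesic $[\mathfrak{s}_1\cdots\mathfrak{s}_m]$ is the axis of the holonomy isometry of the cycle, not a reduced concatenation of two segments, so one cannot literally ``apply the two-segment estimate once more at the last junction''; one needs the standard comparison between the complex translation length of the resulting loxodromic element and the broken geodesic cycle, which is a separate (if routine) argument.
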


For an $(L,\theta)$-tame $\delta$-consecutive chain of $\partial$-framed segments $\mathfrak{s}_1,\cdots,\mathfrak{s}_m$, we need the following lemma to bound the difference between initial frames of $\mathfrak{s}_1$ and $\mathfrak{s}_1\cdots \mathfrak{s}_m$.

\begin{lemma}\label{directiondifference}
Let $\delta,\theta,L$ be positive constants with $0<\theta<\pi$ and $L\geq I(\theta)+10\log{2}$. If $\mathfrak{s}_1,\cdots,\mathfrak{s}_m$ is an $(L,\theta)$-tame $\delta$-consecutive chain of oriented $\partial$-framed segments, then the distance between initial frames of $\mathfrak{s}_1$ and $\mathfrak{s}_1\cdots \mathfrak{s}_m$ in $\text{SO}(M)_{p_{\text{ini}}(\mathfrak{s}_1)}$ is at most $8e^{-L}$.
\end{lemma}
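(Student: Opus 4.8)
The plan is to track how the initial frame of $\mathfrak{s}_1 \cdots \mathfrak{s}_m$ differs from the initial frame of $\mathfrak{s}_1$ by working one bending vertex at a time and summing a geometric-type error. Recall that the carrier segment of $\mathfrak{s}_1 \cdots \mathfrak{s}_m$ is the geodesic homotopic rel endpoints to the broken geodesic through the $m$ carrier segments, bent by angles $\theta_i < \theta$ at the junction points. Since each segment has length at least $2L$, the broken geodesic is a $(1,C)$-quasigeodesic for a controlled constant, and the true geodesic between its endpoints fellow-travels it closely; what I need is specifically a bound on how much the initial \emph{direction} $\vec{t}_{\text{ini}}(\mathfrak{s}_1 \cdots \mathfrak{s}_m)$ deviates from $\vec{t}_{\text{ini}}(\mathfrak{s}_1)$, together with a bound on how much the initial framing rotates (which is already addressed by the phase estimate in Lemma \ref{lengthphase} combined with the definition of reduced concatenation, so the framing part will be comparatively routine).

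First I would reduce to a local estimate. Consider a single junction: two geodesic segments of length $\geq L$ meeting at angle $\theta_i$, replaced by the geodesic joining the far endpoints. Standard hyperbolic trigonometry (the same estimates underlying Lemma 4.8 of \cite{LM}) shows that this geodesic makes an angle with the first segment that is at most $C' e^{-L}\sin(\theta_i/2)$ at the initial endpoint, and similarly differs from the original direction by an exponentially small amount; the normal framing, being parallel-transported and then projected to the closest normal vector, picks up a comparable error plus the $\delta$-discrepancy at that junction. Then I would iterate: bending at the $i$-th junction only affects the initial direction through a factor that decays like $e^{-(\text{distance from } p_{\text{ini}}(\mathfrak{s}_1) \text{ to that junction})}$, because in $\mathbb{H}^3$ the influence of a bend on a far-away tangent direction decays exponentially in the distance along the geodesic. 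Since consecutive junctions are at least $2L$ apart, the distances from $p_{\text{ini}}(\mathfrak{s}_1)$ to the successive junctions are at least $2L, 4L, 6L, \dots$, so the total deviation is bounded by $\sum_{k\geq 1} C'' e^{-2kL} \cdot \sin(\theta/2) < 8 e^{-L}$ once $L \geq I(\theta) + 10\log 2$, after absorbing the $\delta$ terms similarly (the $\delta$-discrepancy at junction $k$ is likewise attenuated by $e^{-2kL}$ before it reaches the initial endpoint). Summing the geometric series and being generous with constants yields the stated $8e^{-L}$.

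The main obstacle I expect is making the ``influence of a far bend decays exponentially'' claim precise and quantitative in a way that cleanly feeds the geometric series — i.e., showing that if you perturb a broken geodesic at a point lying distance $t$ from a fixed endpoint, the tangent direction at that endpoint of the straightened geodesic moves by $O(e^{-t})$ times the size of the perturbation, uniformly over the configuration. This is morally the contraction property of the closest-point projection / the exponential convergence of geodesics in $\mathbb{H}^3$, and one honest way to get it is to realize the straightening as a composition of one-junction straightenings and control each composition step in the unit tangent bundle $\mathrm{T}^1 M$ (or in $\mathrm{SO}(M)$) using that the geodesic flow for time $\geq 2L$ contracts the relevant transverse directions by $e^{-2L}$. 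Once that Lipschitz-contraction bookkeeping is set up, the rest is summation. An alternative, if one wants to avoid reproving a contraction lemma, is to derive everything from Lemma \ref{lengthphase} applied to the chain and to all of its tail subchains $\mathfrak{s}_j \cdots \mathfrak{s}_m$, comparing $\mathfrak{s}_1 \cdots \mathfrak{s}_m$ with $\mathfrak{s}_1(\mathfrak{s}_2\cdots\mathfrak{s}_m)$ and inducting on $m$; I would try this inductive route first since it stays entirely within tools already established in the excerpt, and only fall back on the contraction argument if the induction's error terms fail to telescope to a clean constant.
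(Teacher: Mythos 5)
Your proposal is correct and takes essentially the paper's approach: the paper makes your ``a bend at distance $t$ moves the initial direction by $O(e^{-t})$'' claim precise exactly along your preferred fallback route, telescoping over the partial concatenations by comparing $\mathfrak{s}_1\cdots\mathfrak{s}_k$ with $\mathfrak{s}_1\cdots\mathfrak{s}_{k+1}$, bounding $l(\mathfrak{s}_1\cdots\mathfrak{s}_{k+1})\geq l(\mathfrak{s}_{k+1})+kL$ via Lemma \ref{lengthphase} (1), and applying the hyperbolic sine law to get an angle change at $p_{\text{ini}}(\mathfrak{s}_1)$ of at most $2e^{-kL}$, which sums to $4e^{-L}$ and doubles to $8e^{-L}$ for the frame. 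The only adjustments to your bookkeeping are that the straightening losses $I(\theta_i)$ degrade your claimed $e^{-2kL}$ decay to roughly $e^{-kL}$ (still summing under the stated constant, thanks to $L\geq I(\theta)+10\log 2$), and that the $\delta$-discrepancies never enter the initial-frame estimate at all, since the initial framing of the reduced concatenation is by definition the closest normal vector to $\vec{n}_{\text{ini}}(\mathfrak{s}_1)$, so the framing error is controlled by the direction error alone.
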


\begin{proof}
We first bound the angle between initial directions of $\mathfrak{s}_1\cdots \mathfrak{s}_k$ and $\mathfrak{s}_1\cdots \mathfrak{s}_{k+1}$.

By Lemma \ref{lengthphase} (1), then length of $\mathfrak{s}_1\cdots \mathfrak{s}_{k+1}$ is at least 
\begin{align*}
& \sum_{i=1}^{k+1}l(\mathfrak{s}_i)-\sum_{i=1}^k I(\theta_i)-\frac{ke^{(-L+10\log{2})/2}\sin(\theta/2)}{L-\log 2}\\
\geq \ & l(\mathfrak{s}_{k+1})+k(2L-I(\theta)-\frac{e^{(-L+10\log 2)/2}\sin{(\theta/2)}}{L-\log{2}}) \\
\geq \ & l(\mathfrak{s}_{k+1})+kL.
\end{align*}
By the hyperbolic sine law, the angle between initial directions of $\mathfrak{s}_1\cdots \mathfrak{s}_k$ and $\mathfrak{s}_1\cdots \mathfrak{s}_{k+1}$ is at most $$\arcsin{\frac{\sinh{(l(\mathfrak{s}_{k+1}))}}{\sinh{(l(\mathfrak{s}_{k+1})+kL)}}}\leq \arcsin{e^{-kL}}\leq 2e^{-kL}.$$

So the angle between initial directions of $\mathfrak{s}_1$ and $\mathfrak{s}_1\cdots \mathfrak{s}_m$ is bounded above by $$\sum_{k=1}^{m-1}2e^{-kL}\leq 2\frac{e^{-L}}{1-e^{-L}}\leq 4 e^{-L}.$$

By definition, the angle between initial framings of $\mathfrak{s}_1$ and $\mathfrak{s}_1\cdots \mathfrak{s}_m$ is also bounded above by $4 e^{-L}$. So the initial frames have distance at most $8e^{-L}$ in $\text{SO}(M)|_{p_{\text{ini}}(\mathfrak{s}_1)}$.
\end{proof}

The following lemma (Lemma 3.7 of \cite{Sun4}) bounds the distance between a $\delta$-consecutive cycle of geodesic segments and the corresponding closed geodesic, which is useful for bounding heights of closed geodesics arised from geometric constructions. In the following, an $(L,\theta)$-tame cycle of geodesic segments is a cycle of geodesic segments that satisfies the definition of a $\delta$-consecutive $(L,\theta)$-tame cycle of $\partial$-framed segments (in Definition \ref{chainsandcycles} (2)), except the $\delta$-closeness condition on framings.

\begin{lemma}\label{distance}
For any positive constants $\theta,L$ where $0<\theta<\pi$ and $L\geq 4(I(\theta)+10\log{2})$, the following statement holds in any oriented hyperbolic $3$-manifold. Suppose that $s_1,\cdots,s_m$ is an $(L,\theta)$-tame cycle of geodesic segments with $m\leq L$. 
%Denote the bending angle between $s_i$ and $s_{i+1}$ by $\theta_i\in[0,\theta)$, with $s_{m+1}$ equal to $s_1$ by convention. 
Then the closed geodesic $[s_1\cdots s_m]$ lies in the $1$-neighborhood of the union $\cup_{i=1}^m s_i$. %Moreover, for
%$$K=\max_{i=1,\cdots,m}{\Big\{\log{\frac{1+\tan{\frac{\theta_i}{4}}}{1-\tan{\frac{\theta_i}{4}}}}\Big\}},$$ $[s_1\cdots s_m]$ and $\cup_{i=1}^m s_i$ lie in the 
%$(K+\frac{1}{10})$-neighborhood of each other.
\end{lemma}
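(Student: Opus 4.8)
The plan is to reduce the estimate to a combination of the length lower bounds already available from Lemma~\ref{lengthphase}~(2) and a geometric comparison between a long $(L,\theta)$-tame cyclic concatenation and the closed geodesic in its free homotopy class. First I would fix $i$ and isolate the segment $s_i$ in the cycle $s_1,\cdots,s_m$; by Lemma~\ref{lengthphase}~(2), using $L\geq 4(I(\theta)+10\log 2)$ and $m\leq L$, the total length $l([s_1\cdots s_m])$ is at least $\sum_j l(s_j)-\sum_j I(\theta_j)-1$, which is comparable to $\sum_j l(s_j)$ and in particular very large relative to any single bending angle. The key point is that the geodesic $[s_1\cdots s_m]$ and the broken geodesic $\cup_j s_j$ are freely homotopic through a homotopy whose track stays controlled, so it suffices to bound, for a generic point $x$ on $[s_1\cdots s_m]$, its distance to some $s_j$.

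The main step is a local thin-triangle argument. I would lift to $\mathbb{H}^3$: the cyclic concatenation of carrier segments lifts to a bi-infinite $\delta_0$-quasigeodesic broken line $\widetilde{P}$ (with $\delta_0$ depending only on $\theta$ via $I(\theta)$, since the segments have length $\geq 2L$ and the exterior angles are $\leq\theta<\pi$), and the closed geodesic lifts to its axis $A$. Because $\widetilde{P}$ is a quasigeodesic with quantitatively good constants — the long segment length $2L$ dominates the bounded turning — it lies within a uniformly bounded Hausdorff distance of $A$; the standard Morse/stability lemma in $\mathbb{H}^3$ gives an explicit bound of the form $C(\theta)$, and the hypothesis $L\geq 4(I(\theta)+10\log 2)$ is exactly what is needed to push this bound below $1$. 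Concretely, for a vertex $v$ of $\widetilde{P}$ joining two segments each of length $\geq 2L$, one compares the bi-infinite broken geodesic through $v$ with the straight geodesic through the far endpoints and uses the hyperbolic law of sines / the thinness of ideal triangles to see the displacement of $v$ from $A$ is at most a geometric series in $e^{-L}$ summing to well under $1$; interior points of each $s_j$ are then even closer to $A$ by convexity. Projecting back down to $M$, every point of $[s_1\cdots s_m]$ lies within distance $1$ of $\cup_j s_j$.

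The hard part, and the place requiring the most care, is making the quasigeodesic stability constant fully explicit so that the stated numerical hypothesis $L\geq 4(I(\theta)+10\log 2)$ genuinely suffices, rather than merely ``$L$ large enough.'' Abstract Morse-lemma statements are typically non-quantitative, so I would instead run a direct hyperbolic-trigonometry induction along the broken line: show that the distance from the $k$-th vertex to the axis contracts by a definite factor (coming from $\sinh$-ratios of segments of length $\geq 2L$, exactly as in the proof of Lemma~\ref{directiondifference}) at each step, and that the constraint $m\leq L$ prevents accumulation over the cycle. This mirrors the argument already used in Lemma~\ref{directiondifference} and in Lemma~3.7 of \cite{Sun4}, so I expect the estimates to close with the same inputs; the $\delta$-closeness of framings plays no role here and may be dropped, consistent with the remark preceding the statement that we only need an $(L,\theta)$-tame cycle of geodesic segments.
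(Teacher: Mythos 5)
There is a genuine gap, and it is in the central quantitative claim of your argument. You propose to show that the broken line $\widetilde{P}$ lies within Hausdorff distance less than $1$ of the axis $A$ — in particular that each vertex $v$ of $\widetilde{P}$ is within ``a geometric series in $e^{-L}$'' of $A$ — and then to transfer this to the statement of the lemma. That intermediate claim is false for the range of angles allowed by the hypotheses. The bending angle $\theta$ may be arbitrarily close to $\pi$ (the hypothesis $L\geq 4(I(\theta)+10\log 2)$ only forces $L$ to grow with $I(\theta)$; it does not make $\theta$ small), and increasing $L$ does not move the corners closer to the axis. Concretely, for two long geodesic rays meeting at a vertex $v$ with bending angle $\theta$, the distance $d$ from $v$ to the geodesic joining their far endpoints satisfies, in the limit of long rays, $\sinh d=\tan(\theta/2)$, so $d\approx \tfrac{1}{2}I(\theta)+\log 2$ for $\theta$ near $\pi$; already for $\theta=2\pi/3$ one gets $d=\mathrm{arccosh}\,2\approx 1.32>1$. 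Thus the vertices of $\widetilde{P}$ are in general \emph{not} within distance $1$ of $A$, the two-sided Morse-type bound cannot be ``pushed below $1$,'' and the convexity remark about interior points inherits the same false premise. Note also that thinness of triangles bounds the distance from a point on one side to the union of the other two sides, not the distance from the opposite vertex to a side, so it cannot be used the way you invoke it. The lemma is true only in the stated one-sided direction precisely because the closed geodesic cuts the corners: near each vertex it stays within the thin-triangle constant ($\approx 0.88<1$) of the union of the two adjacent segments, while the vertex itself may be far from the geodesic.

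The correct shape of the argument (the paper does not reprove this lemma; it quotes Lemma 3.7 of \cite{Sun4}) is one-sided: lift to $\mathbb{H}^3$ as you do, locate on the axis $A$ points $p_i$ that are exponentially close (in $L$) to each segment $\widetilde{s}_i$ away from its endpoints — this is where the length/phase estimates of Lemma~\ref{lengthphase} and the tameness $L\geq 4(I(\theta)+10\log 2)$ enter, and where $m\leq L$ keeps the accumulated error terms of size $m\,e^{-cL}$ negligible — and then bound the distance from each point of the sub-arc of $A$ between $p_i$ and $p_{i+1}$ to $[p_i,v_i]\cup[v_i,p_{i+1}]$ (with $v_i$ the intervening vertex) by the thinness constant of $\mathbb{H}^3$, finally comparing $[p_i,v_i]$ and $[v_i,p_{i+1}]$ with sub-segments of $\widetilde{s}_i$ and $\widetilde{s}_{i+1}$ using convexity of the distance function. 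Your framework (invariant broken line versus axis, local control at corners, $m\leq L$ against accumulation) is the right setting, but the estimate must bound the distance from the axis to the path, not from the path (and in particular its vertices) to the axis.
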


Finally, we introduce a notation here. Let $\mathfrak{s}$ be a $\partial$-framed segment from $p$ to $q$ with phase close (say $10^{-2}$-close) to $0$, let ${\bf p}$ and ${\bf q}$ be two frames based at $p$ and $q$ that are very close (say $10^{-2}$-close) to initial and terminal frames of $\mathfrak{s}$ respectively. Then we use 
$$[{\bf p}\xrightarrow{\mathfrak{s}}{\bf q}]$$
 to denote the relative homology class in $H_1(\text{SO}(M),\{{\bf p},{\bf q}\};\mathbb{Z})$ represented by the following path. First travel from ${\bf p}$ to the initial frame of $\mathfrak{s}$ along a $10^{-2}$-short path in $\text{SO}(M)_p$, then parallel transport the initial frame of $\mathfrak{s}$ along the carrier of $\mathfrak{s}$ (to a frame in $\text{SO}(M)_q$ that is $10^{-2}$-close to the terminal frame of $\mathfrak{s}$), then travel to ${\bf q}$ along a $2\times 10^{-2}$-short path in $\text{SO}(M)_q$. 
Moreover, if ${\bf p}={\bf q}$, then $[{\bf p}\xrightarrow{\mathfrak{s}}{\bf q}]$ also represents a homology class in $H_1(\text{SO}(M);\mathbb{Z})$.

Similarly, let $s$ be an oriented geodesic segment from $p$ to $q$, and let ${\bf p}$ and ${\bf q}$ be two frames based at $p$ and $q$ respectively. Suppose that the parallel transport of ${\bf p}$ along $s$ is close (say $10^{-2}$-close) to ${\bf q}$, then we define a relative homology class 
$$[{\bf p}\xrightarrow{s}{\bf q}]\in H_1(\text{SO}(M),\{{\bf p}, {\bf q}\};\mathbb{Z})$$ 
represented by the following path. First parallel transport ${\bf p}$ along $s$ to a frame ${\bf q}'$, then travel from ${\bf q}'$ to ${\bf q}$ along a $10^{-2}$-short path in $\text{SO}(M)_q$.

\bigskip

%\subsection{A connection principal with height control}\label{connection_principal_height}

In \cite{Sun4}, by using results in \cite{KW1} and \cite{KW2}, the author proved a connection principle in oriented cusped hyperbolic $3$-manifolds. 

We first need a notation here. For a point $p\in M$ and a unit tangent vector $\vec{v}_p \in T_pM$, we define a number
$\alpha_{\vec{t}_p}\in[0,\frac{\pi}{2}]$ as the following.
\begin{enumerate}
\item If $p$ has zero height or $p$ has positive height and $\vec{t}_p$ points down
the cusp, we define $\alpha_{\vec{t}_p}= 0$. Here $\vec{t}_p$ ``points down the cusp" means that,
for the cusp whose horotorus boundary going through $p$, $\vec{t}_p$ points to the outward of
this cusp.
\item If $p$ has positive height and $\vec{v}_p$ points up the cusp, we define $\alpha_{\vec{t}_p}\in [0,\frac{\pi}{2}]$
to be the angle between $\vec{v}_p$ and the horotorus going through $p$.
\end{enumerate}
The author proved the following connection principle of oriented cusped hyperbolic $3$-manifolds in Theorem 4.1 of \cite{Sun4}.

\begin{theorem}\label{connectionprinciple}
For any oriented cusped hyperbolic 3-manifold $M$, there exist constants $T>0, C>2$ and $0 <\kappa< 1$ only depend on $M$, such that for any $\delta\in(0,10^{-2})$ and any $t\geq \max{\{T,C(\log{\frac{1}{\delta}}+1)\}}$, the following holds.

Let $p,q\in M$ be two points in $M$ with heights $h_p,h_q< \kappa t$. Let ${\bf p}=(p,\vec{t}_p, \vec{n}_p)\in \text{SO}(M)_p$ and
${\bf q}=(q,\vec{t}_q, \vec{n}_q)\in \text{SO}(M)_q$ be a pair of frames based at $p$ and $q$ respectively. Then for
any $\theta\in \mathbb{R}/2\pi\mathbb{Z}$, there exists a $\partial$-framed segment $\mathfrak{s}$ from $p$ to $q$ such that the following hold.
\begin{enumerate}
\item The length and phase of $\mathfrak{s}$ are $\delta$-close to $t$ and $\theta$ respectively.
\item The initial and terminal frames of $\mathfrak{s}$ are $\delta$-close to ${\bf p}$ and ${\bf q}$ respectively.
\item The height of $\mathfrak{s}$ is at most
$$\max{\Big\{h_p+\min{\{\log{(\sec{\alpha_{\vec{t}_p}})},\log{\frac{1}{\delta}}+C\}}, h_q+\min{\{\log{(\sec{\alpha_{-\vec{t}_q}})},\log{\frac{1}{\delta}}+C\}},\frac{1}{2}\log{t}+C\log{\frac{1}{\delta}}+C\Big\}}.$$
\end{enumerate}
\end{theorem}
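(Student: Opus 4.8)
The plan is to construct the $\partial$-framed segment $\mathfrak{s}$ by the standard mechanism behind all connection principles --- the effective (exponentially fast) mixing of the frame flow $g_t$ on $\mathrm{SO}(M)$, as in Proposition 4.4 of \cite{KM} and Lemma 4.15 of \cite{LM} --- the one genuinely new ingredient being the control of the height of the resulting geodesic segment in the cusps, where the finite-volume (rather than closed) hypothesis forces extra work and where the estimates of \cite{KW1} and \cite{KW2} enter. First I would reduce (1)--(3) to a single statement about $g_t$: replacing the target frame $\mathbf{q}$ by $\mathbf{q}' = (q,\vec t_q, R_{-\theta}\vec n_q)$, where $R_{-\theta}$ is the rotation by $-\theta$ in the normal plane of $\vec t_q$, it suffices to produce $v_0 \in \mathrm{SO}(M)$ which is $\tfrac{\delta}{2}$-close to $\mathbf{p}$ and with $g_t(v_0)$ being $\tfrac{\delta}{2}$-close to $\mathbf{q}'$, while controlling the height of the geodesic carrying the orbit segment $\{g_s(v_0): 0\le s\le t\}$. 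One then takes $\mathfrak s$ to be that carrier, with initial framing the normal part of $v_0$ and terminal framing exactly $\vec n_q$. The phase is then automatically $\delta$-close to $\theta$, since the parallel transport of the initial framing along $\mathfrak s$ is the normal part of $g_t(v_0)\approx R_{-\theta}\vec n_q$, whose angle to $\vec n_q$ is $\theta$; the length is $\delta$-close to $t$ because we run the flow for exactly time $t$; and the two frame conditions in (2) hold by construction.

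For the mixing step I would take smooth bump functions $\varphi,\psi$ on $\mathrm{SO}(M)$ supported in $O(\delta)$-balls about $\mathbf p$ and $\mathbf q'$, with $\mu(\varphi) = \mu(\psi)\sim \delta^{\dim \mathrm{SO}(M)}$ and Sobolev norms $\sim \delta^{-O(1)}$. Finite volume gives a spectral gap for the frame flow, hence exponential decay of correlations (\cite{Mor}, \cite{Pol}, in the effective form used in \cite{KW1} and \cite{KW2}): for $t\ge T$ one has $\langle \varphi, \psi\circ g_t\rangle = \mu(\varphi)\mu(\psi) + O(e^{-c t}\,\delta^{-O(1)})$. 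Once also $t\ge C(\log\tfrac1\delta + 1)$ with $C$ large --- exactly the hypothesis on $t$ --- the main term $\mu(\varphi)\mu(\psi)\sim \delta^{O(1)}$ dominates the error, so $\langle\varphi,\psi\circ g_t\rangle > 0$ and there is $v_0$ in the support of $\varphi$ with $g_t(v_0)$ in the support of $\psi$. This already yields a geodesic segment of length $\delta$-close to $t$ from $p$ to $q$ with the correct endpoint frames, but with no height control.

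The height control is the \textbf{main obstacle}, and I would handle it by intersecting the ``good'' set of $v_0$ above with positive-measure subsets that force small cusp excursions, in two regimes. Near the endpoints: a geodesic leaving $p$ in a direction $\delta$-close to $\vec t_p$ rises inside its cusp by at most $\min\{\log(\sec\alpha_{\vec t_p}),\ \log\tfrac1\delta + C\}$ --- the first bound is the exact extra rise of the unperturbed geodesic, by elementary horoball geometry in $\mathbb{H}^3$, and the second holds because a generic $\delta$-perturbation of $\vec t_p$ makes angle $\gtrsim\delta$ with the vertical direction --- and likewise at $q$ with $\alpha_{-\vec t_q}$; restricting $\mathrm{supp}\,\varphi$ and $\mathrm{supp}\,\psi$ to these directions costs only a definite fraction of their mass and produces the first two terms of the asserted bound. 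In the middle: set $H := \tfrac12\log t + C\log\tfrac1\delta + C$ and let $B_{\ge H}\subset \mathrm{SO}(M)$ be the set of frames whose forward geodesic reaches height $\ge H$ within time $t$; since in dimension $3$ the part of a cusp above height $H$ has volume $\sim e^{-2H}$, an effective logarithm-law / excursion count gives $\mu(B_{\ge H}) \lesssim t\,e^{-2H} \lesssim \delta^{2C}e^{-2C}$, far smaller than the surviving mass $\sim\delta^{O(1)}$ of the restricted $\psi$ once $C$ is large (depending only on $M$). Hence the set of $v_0$ with $g_t(v_0)$ in the restricted support of $\psi$, with initial and terminal sub-arcs staying below the endpoint bounds, and with $v_0\notin B_{\ge H}$, still has positive measure.

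Picking $v_0$ from this set and assembling $\mathfrak s$ as in the first paragraph gives (1)--(3), with the height of $\mathfrak s$ bounded by the maximum of the three displayed quantities: the two endpoint terms from the near-endpoint restriction, and $\tfrac12\log t + C\log\tfrac1\delta + C$ from the middle. The delicate point is precisely the quantitative excursion estimate $\mu(B_{\ge H})\lesssim t\,e^{-2H}$, made uniform over all finite-length orbit segments and shown to be compatible with the mixing argument so that the ``good'' set survives all the intersections; this is where \cite{KW1} and \cite{KW2} are essential, whereas the remaining bookkeeping is a routine adaptation of the closed-manifold arguments in \cite{KM} and \cite{LM}.
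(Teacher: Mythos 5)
First, note that this paper does not actually prove Theorem \ref{connectionprinciple}: it is imported verbatim from Theorem 4.1 of \cite{Sun4}, whose proof is built on the counting/connection machinery of \cite{KW1} and \cite{KW2} rather than on a direct bump-function mixing argument. So your proposal can only be judged against that cited proof, and while your overall frame (reduce (1)--(3) to hitting a rotated target frame under the frame flow, then control cusp excursions separately near the endpoints and in the middle) is the right shape, and your endpoint geometry is essentially correct (the exact rise $\log\sec\alpha$ for the unperturbed direction, $\log\frac{1}{\delta}+C$ for a direction kept at angle $\gtrsim\delta$ from vertical, at the cost of a definite fraction of the mass), the quantitative core of your argument does not survive in exactly the regime that makes the cusped case hard.

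The gap is in how you handle endpoints deep in the cusps, i.e.\ the hypothesis $h_p,h_q<\kappa t$, which your proof never uses. When $p$ sits at height $h_p\gg\log\frac{1}{\delta}$, a $\delta$-ball about ${\bf p}$ wraps around the cusp torus and has measure of order $\delta^{4}e^{-2h_p}$, not $\delta^{\dim \mathrm{SO}(M)}$; hence the main term in the mixing inequality is of order $\delta^{O(1)}e^{-2h_p-2h_q}$, and beating the error $O(e^{-ct}\delta^{-O(1)})$ forces $ct\gtrsim h_p+h_q+O(\log\frac{1}{\delta})$ --- this is precisely where $\kappa$ and the constant $C$ in $t\geq C(\log\frac{1}{\delta}+1)$ come from, and it is absent from your sketch. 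Worse, your middle-regime exclusion breaks down there: if $h_p>H=\frac12\log t+C\log\frac{1}{\delta}+C$ (allowed, since $h_p$ may be as large as $\kappa t$), then every admissible $v_0$ lies in $B_{\ge H}$, so the condition $v_0\notin B_{\ge H}$ is vacuous; and even for moderate $h_p,h_q$ the global bound $\mu(B_{\ge H})\lesssim t e^{-2H}\sim\delta^{2C}$ is useless, because the surviving ``good'' set has measure $\sim\delta^{O(1)}e^{-2h_p-2h_q}$, which can be exponentially small in $t$, so no constant $C$ depending only on $M$ makes the excluded set smaller than the good set. What is needed is a conditional statement: after the initial monotone descent from height $h_p$ (and before the final ascent to $h_q$), the trajectory's excursions are controlled relative to the set selected by mixing, not in absolute measure. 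Achieving this uniformly in $h_p,h_q<\kappa t$ is exactly the content of the arguments in \cite{KW1}, \cite{KW2} and Section 4 of \cite{Sun4}, and it cannot be replaced by the intersection-of-positive-measure-sets bookkeeping you propose. (Two smaller fixable points: the orbit segment of $g_t$ runs between the basepoints of $v_0$ and $g_t(v_0)$, so one must still pass to the genuine geodesic segment from $p$ to $q$ and check that length, phase, frames and height move only by $O(\delta)$; and to get the first entry $\log\sec\alpha_{\vec t_p}$ in the minimum with no additive constant, the perturbed initial direction must be restricted to point no further up the cusp than $\vec t_p$, since a $\delta$-perturbation can otherwise increase the rise by a bounded but nonzero amount.)
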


\bigskip

\subsection{A connection principle with homological control}\label{connection_principal_homology}

In this section, we prove the following homological version of connection principle in oriented cusped hyperbolic $3$-manifolds.

\begin{theorem}\label{enhancedconnectionprinciple}
Let $M$ be an oriented cusped hyperbolic 3-manifold, and let ${\bf p}=(p,\vec{t}_p,\vec{n}_p),{\bf q}=(q,\vec{t}_q,\vec{n}_q)\in \text{SO}(M)$ be two frames based at $p,q\in M$ respectively. Let $\Xi \in H_1(\text{SO}(M),\{{\bf p}, {\bf q}\};\mathbb{Z})$ be a relative homology class with boundary $\partial\Xi=[{\bf q}]-[{\bf p}]$.

Then for any $\delta\in (0,10^{-2})$, there exists $T=T(M,\Xi,\delta)$ depending on $M,\Xi$ and $\delta$, such that for any $t>T$, there is a $\partial$-framed segment $\mathfrak{s}$ from $p$ to $q$ such that the following hold.
\begin{enumerate}
\item The heights of $p,q$ are at most $\log{t}$, and the height of $\mathfrak{s}$ is at most $2\log{t}$.
\item The length and phase of $\mathfrak{s}$ are $\delta$-close to $t$ and $0$ respectively. The initial and terminal frames of $\mathfrak{s}$ are $\delta$-close to ${\bf p}$ and ${\bf q}$ respectively.
\item We have $$[{\bf p}\xrightarrow{\mathfrak{s}}{\bf q}]=\Xi \in H_1(\text{SO}(M),\{{\bf p}, {\bf q}\};\mathbb{Z}).$$
\end{enumerate}
\end{theorem}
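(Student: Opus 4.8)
The plan is to realize the prescribed relative homology class $\Xi$ by a concatenation of connecting segments produced by Theorem \ref{connectionprinciple}, following the strategy of Liu's proof (Theorem 3.1 of \cite{Liu}), but being careful about heights since we are in a cusped manifold. First I would fix a basepoint frame, say ${\bf p}$, and choose finitely many loops $\ell_1,\dots,\ell_k$ in $\text{SO}(M)$ based at ${\bf p}$ whose homology classes generate a subgroup of $H_1(\text{SO}(M);\mathbb{Z})$ containing $\Xi-[{\bf p}\xrightarrow{s_0}{\bf q}]$, where $s_0$ is any fixed geodesic-segment connector from $p$ to $q$ (obtained from Theorem \ref{connectionprinciple} for some fixed large length). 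Each $\ell_i$ can be homotoped to a $\delta$-consecutive cycle of $\partial$-framed segments passing through ${\bf p}$, and the key is that we can arrange each such cycle to be $(L,\theta)$-tame with $\theta$ small and $L$ as large as we like, by subdividing and using the connection principle (Theorem \ref{connectionprinciple}) to build the individual segments; Lemma \ref{lengthphase} controls the resulting length and phase of the cyclic concatenation, and Lemma \ref{directiondifference} controls the frame defect at ${\bf p}$.

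Next I would assemble $\mathfrak{s}$ as follows: start at ${\bf p}$, traverse (suitably many copies of, with signs) the cycles representing the $\ell_i$'s so as to accumulate exactly the homology class $\Xi - [{\bf p}\xrightarrow{s_0}{\bf q}]$ in $H_1(\text{SO}(M);\mathbb{Z})$, return to a frame very close to ${\bf p}$, then append the fixed connector $s_0$ to reach ${\bf q}$. This produces an $(L,\theta)$-tame $\delta'$-consecutive chain whose reduced concatenation $\mathfrak{s}$ has $[{\bf p}\xrightarrow{\mathfrak{s}}{\bf q}]=\Xi$ in $H_1(\text{SO}(M),\{{\bf p},{\bf q}\};\mathbb{Z})$, since relative homology classes with the correct boundary form a torsor over $H_1(\text{SO}(M);\mathbb{Z})$ and we have adjusted by exactly the right difference. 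The length of $\mathfrak{s}$ is then forced to be close to some fixed value $t_0$ determined by $\Xi$ and the chosen cycles; to hit an arbitrary target length $t>T$, I would pad $\mathfrak{s}$ with one extra connector whose length is $t - t_0$ up to the bending corrections $I(\theta_i)$ appearing in Lemma \ref{lengthphase} — that is, run the connection principle one more time from ${\bf p}$ back to ${\bf p}$ (or split the last connector) with the residual length, which does not change the homology class since a nullhomotopic loop can be inserted, or more precisely by absorbing the extra length into $s_0$ itself. The phase is set to be $\delta$-close to $0$ by the same trick, choosing $\theta$ in each application of Theorem \ref{connectionprinciple} appropriately and using the phase estimate in Lemma \ref{lengthphase}.

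The height control is where the cusped case genuinely differs from \cite{Liu}. For each application of Theorem \ref{connectionprinciple}, part (3) gives the height bound, and since all frames we connect are based at ${\bf p}$ (or ${\bf q}$) which have height $0$ — or at worst we may first move ${\bf p},{\bf q}$ into the thick part, so $h_p,h_q<\log t$ is automatic once $t$ is large — the dominant term is $\tfrac12\log t' + C\log\tfrac1\delta + C$ where $t'$ is the length of each individual connector. As long as each connector has length $\le t^2$, say, this is $<2\log t$ for large $t$; and since the total length of $\mathfrak{s}$ is $\approx t$ with a bounded number of segments (the number $k$ of generators and the number of repetitions depend only on $\Xi$, not on $t$), one long connector of length $\approx t$ plus finitely many short ones of bounded length suffices, all of height $< 2\log t$. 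Lemma \ref{distance} is not needed here since $\mathfrak{s}$ is a segment, not a closed geodesic, but the analogous height estimate for reduced concatenations follows directly from the individual height bounds in Theorem \ref{connectionprinciple} together with the $1$-neighborhood containment. The main obstacle, and the step requiring the most care, is verifying that the homology bookkeeping is exactly right — that $[{\bf p}\xrightarrow{\mathfrak{s}}{\bf q}]$ equals $\Xi$ and not merely a class differing from it by the image of something — which amounts to checking that concatenation of $\partial$-framed segments induces addition on relative homology classes compatibly with the short-path closures in the fibers of $\text{SO}(M)$; this is exactly the content of the corresponding argument in \cite{Liu}, which transfers verbatim, so I would cite it there rather than redo it.
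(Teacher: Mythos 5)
Your overall strategy --- rerun Liu's argument with the cusped connection principle (Theorem \ref{connectionprinciple}) supplying the segments and its part (3) supplying the height control, with the number of auxiliary pieces depending only on $\Xi$ and one long connector absorbing the length $t$ --- is the same as the paper's, and the height/length bookkeeping is in the right spirit. But there is a genuine gap at the two places where you treat connection-principle segments as if their homotopy or homology classes were under control. Theorem \ref{connectionprinciple} prescribes endpoints, frames, length, phase and height, but says nothing about which class the carrier represents. Consequently: (i) ``homotoping each $\ell_i$ to a tame $\delta$-consecutive cycle by subdividing and using the connection principle to build the individual segments'' does not preserve $[\ell_i]\in H_1(\text{SO}(M);\mathbb{Z})$ --- replacing arcs by connection-principle segments can change both the $H_1(M)$ part and the fiber class $\Lambda\in H_1(\text{SO}(3);\mathbb{Z})$; and (ii) the padding step ``run the connection principle one more time from ${\bf p}$ back to ${\bf p}$, which does not change the homology class since a nullhomotopic loop can be inserted'' is false as stated: a connection-principle loop at $p$ is in general neither nullhomotopic nor null-homologous. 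The fallback ``absorb the extra length into $s_0$'' makes $s_0$, hence the needed correction class, depend on $t$ in an uncontrolled way, which ruins the claim that the number of correcting cycles is bounded independently of $t$.

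What repairs both defects is exactly the content of the paper's Lemmas \ref{commonnormal}--\ref{nullhomologous}: every auxiliary loop is built so that its carrier is null-homologous by construction (commutator concatenations $\mathfrak{a}\mathfrak{b}\bar{\mathfrak{a}}\bar{\mathfrak{b}}$) and so that its relative class in $H_1(\text{SO}(M),\cdot\,;\mathbb{Z})$ is known (the $2\pi$-rotation segments of Lemma \ref{commontangent}); the $H_1(M,\{p,q\};\mathbb{Z})$ part of $\Xi$ is carried by a fixed geodesic representative $s_0$ of $\pi_*(\Xi)$ chosen once and for all (not produced by the connection principle), and after forming $\mathfrak{s}_1\mathfrak{s}_0\mathfrak{s}_2$ the only residual ambiguity is the single $\mathbb{Z}/2$ fiber class, killed by one last rotation segment $\mathfrak{s}_3$. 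Your deferral to \cite{Liu} is legitimate for the purely homological verifications (the paper does the same), but the geometric constructions behind those lemmas must be redone with Theorem \ref{connectionprinciple} to obtain the height bounds; they are not a step that can be skipped, and without them your mechanism does not produce exactly the class $\Xi$. Two smaller points: you cannot ``move ${\bf p},{\bf q}$ into the thick part'' (they are prescribed by $\Xi$), though since $T$ may depend on $\Xi$ the bound $h_p,h_q\le\log t$ is automatic anyway; and the height of a reduced concatenation is controlled via the $1$-neighborhood containment of Lemma \ref{distance}, which the paper does invoke rather than dismiss.
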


%We will call $\hat{s}$ to be the {\it $\delta$-close path associated to $\mathfrak{s}$ from ${\bf p}$ to ${\bf q}$}.

The proof of Theorem \ref{enhancedconnectionprinciple} follows a similar approach as in \cite{Liu}, with the connection principle of closed hyperbolic $3$-manifolds (Lemma 4.15 of \cite{LM}) replaced by Theorem \ref{connectionprinciple}. In the proof of Theorem \ref{enhancedconnectionprinciple} and its lemmas, we will focus on how to use the connection principle of cusped hyperbolic $3$-manifolds (Theorem \ref{connectionprinciple}) to accomplish geometric constructions and how to deduce height control. Once the geometric construction is done, the proof of homological conditions follows from the corresponding proof in \cite{Liu}. Those proofs in \cite{Liu} only involve constructions of chains and homotopy, so they work well for cusped hyperbolic $3$-manifolds.

%To prove Theorem \ref{enhancedconnectionprinciple}, we first prove the following lemmas. 

\begin{lemma}\label{commonnormal}
Let $M$ be an oriented cusped hyperbolic $3$-manifold, let $p$ be a point in $M$, and let ${\bf p},{\bf p}'\in \text{SO}(M)_p$ be two frames based at $p$ sharing the same normal vector. Then for any $\delta\in (0,10^{-2})$, there exists $T_1=T_1(M,p,\delta)$ depending on $M$, $p$ and $\delta$, such that for any $t>T_1$, there is a $\partial$-framed segment $\mathfrak{s}$ from $p$ to itself such that the following hold.
\begin{enumerate}
\item The height of $p$ is at most $\log{t}$, and the height of $\mathfrak{s}$ is at most $2\log{t}$.
\item The length and phase of $\mathfrak{s}$ are $\delta$-close to $t$ and $0$ respectively. The initial and terminal frames of $\mathfrak{s}$ are $\delta$-close to ${\bf p}$ and ${\bf p}'$ respectively.
\item The carrier of $\mathfrak{s}$ gives a null-homologous closed curve in $M$.
\end{enumerate}
Moreover, if ${\bf p}={\bf p'}$, then $[{\bf p}\xrightarrow{\mathfrak{s}} {\bf p}]=0\in H_1(\text{SO}(M);\mathbb{Z})$.
\end{lemma}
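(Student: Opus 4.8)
\textbf{Proof plan for Lemma \ref{commonnormal}.}

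The plan is to reduce the statement to an application of the cusped connection principle (Theorem \ref{connectionprinciple}), using the point $p$ both as the initial and terminal point, and then to adjust the framings and homology class. First I would fix $t$ large, set $\delta' = \delta/10$, and observe that since the height of $p$ is a fixed constant depending on $M$, the hypothesis $h_p, h_q < \kappa t$ of Theorem \ref{connectionprinciple} is satisfied once $t > T_1$ for a suitable $T_1 = T_1(M,p,\delta)$; the height bound $\frac12\log t + C\log\frac1\delta + C \le 2\log t$ (again for $t$ large) gives item (1). Applying Theorem \ref{connectionprinciple} with ${\bf p}$ as the initial frame, ${\bf p}'$ as the terminal frame, target length $t$, and target phase $\theta = 0$ produces a $\partial$-framed segment $\mathfrak{s}_0$ from $p$ to $p$ with length and phase $\delta'$-close to $t$ and $0$, and initial/terminal frames $\delta'$-close to ${\bf p}$ and ${\bf p}'$; this already gives items (1) and (2). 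The carrier of $\mathfrak{s}_0$ is a closed loop in $M$ based at $p$, but it need not be null-homologous, so the remaining work is to correct its homology class.

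The correction step is where I would follow \cite{Liu}. Let $h_0 \in H_1(M;\mathbb{Z})$ be the homology class of the carrier of $\mathfrak{s}_0$. Since $H_1(M;\mathbb{Z})$ is finitely generated, I would fix once and for all a finite generating set of loops based near $p$, represented by $\partial$-framed segments, and for a suitable integer combination realize a $\partial$-framed segment $\mathfrak{s}_1$ from $p$ to $p$ whose carrier has homology class $-h_0$ and whose initial and terminal frames are both very close to ${\bf p}$; the length of $\mathfrak{s}_1$ is then some bounded constant $L_0 = L_0(M,p)$. Then I would reapply Theorem \ref{connectionprinciple} to build a ``gluing'' $\partial$-framed segment from the terminal frame of $\mathfrak{s}_1$ back to ${\bf p}$ of length roughly $t - l(\mathfrak{s}_0) - l(\mathfrak{s}_1)$ with phase compensating the accumulated phase error, so that the reduced concatenation $\mathfrak{s} = \mathfrak{s}_0 \mathfrak{s}_1 (\text{glue})$, taken as an $(L,\theta)$-tame $\delta''$-consecutive chain with small bending angles, has total length $\delta$-close to $t$ and total phase $\delta$-close to $0$ by Lemma \ref{lengthphase}, and initial/terminal frames $\delta$-close to ${\bf p}$ and ${\bf p}'$ by Lemma \ref{directiondifference}. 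By construction the carrier of $\mathfrak{s}$ has homology class $h_0 + (-h_0) + 0 = 0$, giving item (3). The height of each piece is at most $2\log t$ by Theorem \ref{connectionprinciple} (the constant pieces $\mathfrak{s}_1$ live in a bounded region), and Lemma \ref{distance} keeps the height of the concatenation under control.

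For the final sentence, when ${\bf p} = {\bf p}'$ the chain closes up into a $\delta''$-consecutive cycle, and $[{\bf p}\xrightarrow{\mathfrak{s}}{\bf p}]$ is a genuine class in $H_1(\text{SO}(M);\mathbb{Z})$. Here I would argue as in \cite{Liu}: the class equals $[{\bf p}\xrightarrow{\mathfrak{s}_0}{\bf p}] + [{\bf p}\xrightarrow{\mathfrak{s}_1}{\bf p}] + [{\bf p}\xrightarrow{\text{glue}}{\bf p}]$ in $H_1(\text{SO}(M);\mathbb{Z})$, and each of these three summands can be understood via the bundle projection $\pi_*: H_1(\text{SO}(M);\mathbb{Z})\to H_1(M;\mathbb{Z})$ together with the fiber class. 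The fact that $\pi_*$ of the total is $0$ handles the horizontal part; for the fiber ($\mathbb{Z}/2$ or $\mathbb{Z}$) part, I would further adjust $\mathfrak{s}_1$ (or insert an extra $2\pi$-rotation segment) to kill it, exactly as in the corresponding step of \cite{Liu}. The main obstacle I anticipate is bookkeeping: ensuring that all the auxiliary $\partial$-framed segments satisfy the $(L,\theta)$-tameness hypotheses of Lemmas \ref{lengthphase}, \ref{directiondifference}, \ref{distance} simultaneously (in particular that every piece has length $\ge 2L$, which forces the ``glue'' segment to be long, hence $t$ to be large), and that the height bound $2\log t$ is not violated when the bounded-length correction pieces $\mathfrak{s}_1$ wander through the cusps — this is controlled by Theorem \ref{connectionprinciple}(3) and Lemma \ref{distance}, but requires care.
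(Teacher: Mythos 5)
There is a genuine gap in your homology bookkeeping. In your decomposition $\mathfrak{s}=\mathfrak{s}_0\,\mathfrak{s}_1\,(\text{glue})$ you claim the total class is $h_0+(-h_0)+0=0$, but the gluing segment is produced by Theorem \ref{connectionprinciple}, which gives no control whatsoever on the homology class of its carrier --- that absence of control is exactly what this lemma is meant to overcome, so assigning it the class $0$ is circular. If instead you try to fix this by building the long segment first and correcting afterwards, the correction loop representing $-h_0$ cannot be taken of bounded length $L_0(M,p)$ as you assert: $h_0$ is the class of a geodesic loop of length about $t$, and its expression in a fixed generating set of $H_1(M;\mathbb{Z})$ can have coefficients growing linearly in $t$, so the correcting segment can have length comparable to (or exceeding) $t$. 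Then the total length can no longer be made $\delta$-close to $t$, and the length budget collapses. There are also smaller untreated issues: as written your chain ends with terminal frame near ${\bf p}$ rather than ${\bf p}'$, and the bending angle between $\mathfrak{s}_0$ (terminal direction $\approx\vec{v}_2$) and $\mathfrak{s}_1$ (initial direction $\approx\vec{v}_1$) can be $\pi$, violating the tameness hypotheses of Lemmas \ref{lengthphase} and \ref{directiondifference}.

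The paper sidesteps all of this with one idea you are missing: null-homology is arranged algebraically rather than corrected homologically. One chooses auxiliary frames ${\bf q}=(p,\vec{v}_3,\vec{n})$, ${\bf q}'=(p,\vec{v}_4,\vec{n})$ with all bending angles at most $\pi/3$, uses Theorem \ref{connectionprinciple} twice to get segments $\mathfrak{a}$ (from $\approx{\bf p}$ to $\approx{\bf q}$) and $\mathfrak{b}$ (from $\approx(p,-\vec{v}_2,\vec{n})$ to $\approx{\bf q}'$) of length about $\frac{t+I}{4}$, and sets $\mathfrak{s}=\mathfrak{a}\mathfrak{b}\bar{\mathfrak{a}}\bar{\mathfrak{b}}$: the carrier is a commutator in $\pi_1(M,p)$, hence null-homologous regardless of the (uncontrolled) classes of $\mathfrak{a}$ and $\mathfrak{b}$, while Lemmas \ref{lengthphase}, \ref{directiondifference} and \ref{distance} give the length, phase, frame and height estimates. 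The ``moreover'' statement likewise does not follow from a generic fiber adjustment as you suggest (you would need to detect and then realize the $\mathbb{Z}/2$ correction geometrically, which requires further machinery); it comes from the specific symmetric choice $\vec{v}_3=-\vec{v}_1$, $\vec{v}_4=\vec{v}_1$ in the commutator together with Lemma 3.4 of \cite{Liu}. You should rebuild your argument around such a commutator (or an equivalent device making null-homology automatic) rather than attempting an a posteriori homological correction.
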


\begin{proof}

We take $T_1=T_1(M,p,\delta)$ such that the following hold: $$T_1\geq 4\max\{100,T, C(\log{\frac{10}{\delta}}+1), \kappa^{-1}h_p,e^{h_p},\frac{10}{\delta}e^{C+1},\frac{10^Ce^{C+1}}{\delta^C},10\log{\frac{1000}{\delta}}\}.$$
Here $T,C,\kappa$ are constants in Theorem \ref{connectionprinciple}, and $h_p$ denotes the height of $p$. Then we take any $t>T_1$.

Since ${\bf p}$ and ${\bf p}'$ share the same normal vector, we denote ${\bf p}=(p,\vec{v}_1,\vec{n})$ and ${\bf p}'=(p,\vec{v}_2,\vec{n})$. We take two unit vectors $\vec{v}_3,\vec{v}_4$ based at $p$ and orthogonal with $\vec{n}$, such that all angles $\theta_{23}=\angle(\vec{v}_2,-\vec{v}_3),\theta_{34}=\angle(-\vec{v}_3,\vec{v}_4),\theta_{41}=\angle(\vec{v}_4,\vec{v}_1)$ are at most $\frac{\pi}{3}$. Here $\vec{v}_3$ and $\vec{v}_4$ exist since $\vec{v}_1$ and $\vec{v}_2$ divide the normal plane of $\vec{n}$ to two pieces and one of them has angle at most $\pi$.

We take two frames ${\bf q}=(p,\vec{v}_3,\vec{n}), {\bf q}'=(p,\vec{v}_4,\vec{n})\in \text{SO}(M)_p$ based at $p$. Let $I=I(\theta_{23})+I(\theta_{34})+I(\theta_{41})$. By our choice of $T_1$, we can apply Theorem \ref{connectionprinciple} (with respect to $\frac{\delta}{10}$) to obtain two $\partial$-framed segments $\mathfrak{a},\mathfrak{b}$ from $p$ to itself such that the following hold.
\begin{itemize}
\item[(a)] The lengths and phases of $\mathfrak{a}$ and $\mathfrak{b}$ are $\frac{\delta}{10}$-close to $\frac{t+I}{4}$ and $0$ respectively. The heights of $\mathfrak{a}$ and $\mathfrak{b}$ are at most $2\log{t}-1$.
\item[(b)] The initial and terminal frames of $\mathfrak{a}$ are $\frac{\delta}{10}$-close to ${\bf p}$ and ${\bf q}$ respectively.
\item[(c)] The initial and terminal frames of $\mathfrak{b}$ are $\frac{\delta}{10}$-close to $(p,-\vec{v}_2,\vec{n})$ and ${\bf q'}$ respectively.
\end{itemize}

Then we take $\mathfrak{s}$ to be the reduced concatenation $\mathfrak{a}\mathfrak{b}\bar{\mathfrak{a}}\bar{\mathfrak{b}}$, and note that $\theta_{23},\theta_{34},\theta_{41}$ are bending angles of this concatenation. 
%More precisely, the carrier of $\mathfrak{s}$ is the geodesic segment homotopic to the concatenation of carriers of $\mathfrak{a}\mathfrak{b}\bar{\mathfrak{a}}\bar{\mathfrak{b}}$, while the initial (terminal) framing of $\mathfrak{s}$ is the closest normal vector to the initial framing of $\mathfrak{a}$ (terminal framing of $\bar{\mathfrak{b}}$).
Now we check that $\mathfrak{s}$ satisfies desired conditions. For condition (1), the height bound of $p$ follows from the choice of $T_1$, and the height bound of $\mathfrak{s}$ follows from item (a) and Lemma \ref{distance}. For condition (2), the length and phase bounds of $\mathfrak{s}$ follow form items (a) - (c) and Lemma \ref{lengthphase} (1), while the conditions on initial and terminal frames follow from items (b) (c) and Lemma \ref{directiondifference}. Condition (3) clearly holds since the carrier of $\mathfrak{s}$ is a commutator in $\pi_1(M,p)$.

For the moreover part, since ${\bf p}={\bf p'}$, we have $\vec{v}_1=\vec{v}_2$. Then we take $\vec{v}_3=-\vec{v}_1$ and $\vec{v}_4=\vec{v}_1$ in the above construction. The initial direction of $\mathfrak{a}$ and terminal direction of $\mathfrak{b}$ are both $\frac{\delta}{10}$-close to $\vec{v}_1$, while the terminal direction of $\mathfrak{a}$ and initial direction of $\mathfrak{b}$ are both $\frac{\delta}{10}$-close to $-\vec{v}_1$.
The homological condition $[{\bf p}\xrightarrow{\mathfrak{s}} {\bf p}]=0\in H_1(\text{SO}(M);\mathbb{Z})$ follows from Lemma 3.4 of \cite{Liu}. 

\end{proof}

For a frame ${\bf p}=(p,\vec{v},\vec{n})\in \text{SO}(M)_p$, we have a path of frames $\omega:\mathbb{R}\to \text{SO}(M)_p$ defined by $\omega (t)=(p,\vec{v},\cos{t}\cdot \vec{n}+\sin{t}\cdot \vec{v}\times \vec{n})$ given by frame rotation. For any $\phi\in \mathbb{R}$, we define a new frame ${\bf p}(\phi)=(p,\vec{v},\cos{\phi}\cdot \vec{n}+\sin{\phi}\cdot \vec{v}\times \vec{n})$. The following lemma is an analogy of Proposition 3.5 of \cite{Liu}.

\begin{lemma}\label{commontangent}
Let $M$ be an oriented cusped hyperbolic $3$-manifold, let $p\in M$ be a point, let ${\bf p}\in \text{SO}(M)_p$ be a frame based at $p$, and let $\phi\in [-2\pi,2\pi]$ be a real number. Then for any $\delta \in (0,10^{-2})$, there exists $T_2=T_2(M,p,\delta)$ depending on $M$, $p$ and $\delta$, such that for any $t>T_2$, there is a $\partial$-framed segment $\mathfrak{s}$ from $p$ to itself such that the following hold.
\begin{enumerate}
\item The height of $p$ is at most $\log{t}$, and the height of $\mathfrak{s}$ is at most $2\log{t}$.
\item The length and phase of $\mathfrak{s}$ are $\delta$-close to $t$ and $0$ respectively. The initial and terminal frames of $\mathfrak{s}$ are $\delta$-close to ${\bf p}$ and ${\bf p}(\phi)$ respectively.
\item The carrier of $\mathfrak{s}$ is null-homologous in $M$.
\item The following two relative homology classes are equal: $$[{\bf p}\xrightarrow{\mathfrak{s}}{\bf p}(\phi)]=[\omega|_{[0,\phi]}:[0,\phi]\to \text{SO}(M)] \in H_1(\text{SO}(M),\{{\bf p}, {\bf p}(\phi)\};\mathbb{Z}).$$
\end{enumerate}
\end{lemma}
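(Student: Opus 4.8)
The plan is to reduce Lemma \ref{commontangent} to Lemma \ref{commonnormal} by a geometric trick: rather than rotating the framing directly, I will route the $\partial$-framed segment through an auxiliary frame whose tangent vector differs from $\vec{v}$, traverse a short segment (or concatenation of segments), and come back, so that the accumulated framing rotation realizes the prescribed angle $\phi$ and, more importantly, the realized relative homology class is exactly the path $\omega|_{[0,\phi]}$ in the fiber. The key point is that once I have built a $\partial$-framed segment $\mathfrak{s}$ whose initial frame is $\delta$-close to ${\bf p}$ and terminal frame is $\delta$-close to ${\bf p}(\phi)$ and whose carrier is null-homologous, the class $[{\bf p}\xrightarrow{\mathfrak{s}}{\bf p}(\phi)]\in H_1(\mathrm{SO}(M),\{{\bf p},{\bf p}(\phi)\};\mathbb{Z})$ differs from the fiber path $[\omega|_{[0,\phi]}]$ by an element of $H_1(\mathrm{SO}(M);\mathbb{Z})$ (close up both endpoints along short fiber paths), and that correction element can be killed by pre- or post-composing with a loop supplied by the moreover part of Lemma \ref{commonnormal} (which produces the zero class in $H_1(\mathrm{SO}(M);\mathbb{Z})$) — or, if a nonzero correction is needed, by an appropriate reference cycle. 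So the strategy splits into a geometric construction step and a homological bookkeeping step, exactly mirroring Proposition 3.5 of \cite{Liu}.

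Concretely, I would first fix $T_2=T_2(M,p,\delta)$ at least as large as $T_1(M,p,\delta/10)$ from Lemma \ref{commonnormal}, plus the threshold from Theorem \ref{connectionprinciple} applied with parameter $\delta/10$, plus enough slack ($\geq 4\max\{100,T,C(\log(10/\delta)+1),\kappa^{-1}h_p,e^{h_p},\dots\}$, as in the proof of Lemma \ref{commonnormal}) so that all length, height and phase estimates below have room. For $t>T_2$: write ${\bf p}=(p,\vec{v},\vec{n})$ and ${\bf p}(\phi)=(p,\vec{v},\vec{n}(\phi))$ where $\vec{n}(\phi)=\cos\phi\cdot\vec{n}+\sin\phi\cdot\vec{v}\times\vec{n}$. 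Pick an auxiliary unit vector $\vec{w}$ at $p$ with $\vec{w}\perp\vec{v}$ (for instance $\vec{w}$ making angle $\leq\pi/3$ with whatever is convenient), and set ${\bf r}=(p,\vec{w},\vec{v})$ — a frame sharing nothing forbidden. Using Theorem \ref{connectionprinciple} with target length roughly $t/2$ (minus a correction $I(\cdot)/2$ for the bending angles) and phase $0$, build a $\partial$-framed segment $\mathfrak{a}$ from $p$ to $p$ with initial frame $\delta/10$-close to ${\bf p}$ and terminal frame $\delta/10$-close to ${\bf r}$, and a second $\partial$-framed segment $\mathfrak{b}$ from $p$ to $p$ with initial frame $\delta/10$-close to a rotate of ${\bf r}$ by $\phi$ in the appropriate plane and terminal frame $\delta/10$-close to ${\bf p}(\phi)$; the rotation of the framing by $\phi$ is absorbed into the freedom of choosing the terminal normal vector in the connection principle, since rotating the frame ${\bf r}$ about its tangent vector $\vec{w}$ by $\phi$ turns $\vec{v}$ into $\vec{v}(\phi)$ but we instead want the rotation in the $(\vec{n},\vec{v}\times\vec{n})$-plane — so I would in fact choose ${\bf r}$ to have tangent vector along $\pm\vec{v}$ so that the fiber rotation commutes correctly, and insert a short bending. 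Then set $\mathfrak{s}=\mathfrak{a}\,\mathfrak{b}$ (reduced concatenation), checking $(L,\theta)$-tameness with $L\approx t/2$, $\theta\leq\pi/3$. Conditions (1) and (2) follow from Theorem \ref{connectionprinciple}, Lemma \ref{lengthphase}(1), Lemma \ref{directiondifference} and Lemma \ref{distance} exactly as in the proof of Lemma \ref{commonnormal}; condition (3) holds because the carrier of $\mathfrak{s}$ is a product of commutator-type loops (or one can arrange $[\mathfrak{a}][\mathfrak{b}]$ to represent the trivial class in $H_1(M;\mathbb{Z})$ by routing through null-homologous loops as in Lemma \ref{commonnormal}).

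The main obstacle — and the step deserving genuine care rather than a wave of the hand — is condition (4): getting the relative homology class $[{\bf p}\xrightarrow{\mathfrak{s}}{\bf p}(\phi)]$ to equal \emph{on the nose} the fiber path $[\omega|_{[0,\phi]}]$, not merely up to an element of $H_1(\mathrm{SO}(M);\mathbb{Z})$. Here I would follow the argument of Proposition 3.5 of \cite{Liu} verbatim: the difference $[{\bf p}\xrightarrow{\mathfrak{s}}{\bf p}(\phi)]-[\omega|_{[0,\phi]}]$ is a genuine (absolute) homology class in $H_1(\mathrm{SO}(M);\mathbb{Z})$; one computes its image under the bundle projection $\pi_*$ (it is zero, by condition (3)) and its pairing with the generator of the $S^1$-fiber (controlled by the phase of $\mathfrak{s}$, which is $\delta$-close to $0$ by condition (2), hence the fiber-winding contribution matches), and one adjusts the construction — replacing $\mathfrak{a}$ by $\mathfrak{a}$ concatenated with a suitable loop from Lemma \ref{commonnormal} (moreover part, which yields the zero class, or a chosen reference loop to hit a nonzero target) — to arrange the difference to vanish. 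Since $\pi_1(\mathrm{SO}(M))\to H_1(\mathrm{SO}(M);\mathbb{Z})$ and the structure of $H_1(\mathrm{SO}(M);\mathbb{Z})$ as an extension of $H_1(M;\mathbb{Z})$ by the fiber $\mathbb{Z}/2$ (or $\mathbb{Z}$ in the relevant bundle) are exactly as in the closed case, the bookkeeping in \cite{Liu} transfers without change — the only new content, already handled in the construction step above, is that all the geometric segments are produced by Theorem \ref{connectionprinciple} with explicit height control $\leq 2\log t$, which is why the height hypotheses in (1) are satisfied.
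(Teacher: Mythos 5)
Your geometric step (two connection-principle segments concatenated, with tameness, length/phase and frame estimates from Lemma \ref{lengthphase}, Lemma \ref{directiondifference}, Lemma \ref{distance}) is fine as far as it goes, but your treatment of condition (4) has a genuine gap, and it is exactly the point the lemma exists for. You argue: close up the endpoints, observe that the difference $[{\bf p}\xrightarrow{\mathfrak{s}}{\bf p}(\phi)]-[\omega|_{[0,\phi]}]$ is an absolute class, kill its $H_1(M;\mathbb{Z})$ part via (3), and claim the fiber part "matches" because the phase of $\mathfrak{s}$ is $\delta$-close to $0$, fixing any residue by concatenating with loops from Lemma \ref{commonnormal}. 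This does not work. The fiber contribution lives in $H_1(\text{SO}(3);\mathbb{Z})\cong\mathbb{Z}/2$, and the phase, being an angle in $\mathbb{R}/2\pi\mathbb{Z}$, cannot distinguish a total framing rotation of $0$ from one of $2\pi$; so the spin ambiguity $\Lambda$ is not detected by your estimates. Nor can it be repaired afterwards: the moreover part of Lemma \ref{commonnormal} only supplies loops representing the \emph{zero} class in $H_1(\text{SO}(M);\mathbb{Z})$, which cannot cancel a nontrivial correction, and an "appropriate reference cycle" realizing $\Lambda$ with controlled relative class is precisely what Lemma \ref{commontangent} (at $\phi=2\pi$) is supposed to produce — indeed that is how it is used in the proof of Theorem \ref{enhancedconnectionprinciple} — so invoking it here is circular.

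The paper avoids this by a two-stage construction you have not reproduced. First, for $|\phi|<d$ with $10^4d$ below the injectivity radius of $\text{SO}(3)$, it builds $\mathfrak{a}$ (from near ${\bf p}$ to near $(p,-\vec{v},\vec{n})$) and $\mathfrak{b}$ (from near $(p,-\vec{v},\vec{n})$ to near ${\bf p}(\phi/2)$) and takes the commutator-type reduced concatenation $\mathfrak{a}\cdot\mathfrak{b}\cdot\bar{\mathfrak{a}}(\phi/2)\cdot\bar{\mathfrak{b}}(\phi)$ with frame rotations inserted; because the rotation angles stay well inside the injectivity radius of $\text{SO}(3)$, the relative class can be identified with $[\omega|_{[0,\phi]}]$ by the argument of Section 3.2.1 of \cite{Liu}. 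Second, for general $\phi\in[-2\pi,2\pi]$ it subdivides, $\phi_k=k\phi/N$ with each increment less than $d$, applies the small-angle case to get $\mathfrak{s}_k$ from ${\bf p}(\phi_{k-1})$ to ${\bf p}(\phi_k)$, and concatenates, tracking the class step by step (Section 3.2.2 and Lemma 3.2 of \cite{Liu}). The subdivision is not a convenience but the mechanism that pins down the $\mathbb{Z}/2$ part; a single long segment with endpoint-frame control, as in your proposal, leaves that part undetermined. To fix your write-up you would need to replace the "bookkeeping" paragraph with this small-angle commutator construction plus subdivision (or an equivalent device that genuinely tracks the spin class).
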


\begin{proof}
Let $d>0$ be a positive number such that $10^4d$ is smaller than the injectivity radius of $\text{SO}(3)$. Take a positive integer $N$ such that $N>\frac{2\pi}{d}$. We take $T_2=T_2(M,p,\delta)$ to be: 
$$\max\{1000N^2,4NT, 4NC(\log{\frac{100N}{\delta}}+1), \frac{4Nh_p}{\kappa},e^{h_p},\frac{100N}{\delta}e^{C+2},\frac{(100N)^Ce^{C+2}}{\delta^C},10N\log{\frac{10^4N}{\delta}}\}.$$
Here $T,C,\kappa$ are constants in Theorem \ref{connectionprinciple}, and $h_p$ denotes the height of $p$. We assume that $t>T_2$ holds.

At first, we assume that $|\phi|<d$. Let ${\bf p}=(p,\vec{v},\vec{n})$ and let ${\bf q}=(p,-\vec{v},\vec{n})$. Then we apply Theorem \ref{connectionprinciple} (with respect to $\frac{\delta}{100N}$) to obtain two $\partial$-framed segments  $\mathfrak{a},\mathfrak{b}$ from $p$ to itself such that the following hold.
\begin{itemize}
\item The lengths and phases of $\mathfrak{a}$ and $\mathfrak{b}$ are $\frac{\delta}{100N}$-close to $\frac{t}{4N}$ and $0$ respectively. The heights of $\mathfrak{a}$ and $\mathfrak{b}$ are at most $2\log{t}-2$.
\item The initial and terminal frames of $\mathfrak{a}$ are $\frac{\delta}{100N}$-close to ${\bf p}$ and ${\bf q}$ respectively.
\item The initial and terminal frames of $\mathfrak{b}$ are $\frac{\delta}{100N}$-close to ${\bf q}$ and ${\bf p}(\phi/2)$ respectively.
\end{itemize} 
We take $\mathfrak{s}$ to be the reduced concatenation $\mathfrak{a}\cdot\mathfrak{b}\cdot\bar{\mathfrak{a}}(\phi/2)\cdot \bar{\mathfrak{b}}(\phi)$ (arised from a $\frac{2\delta}{100N}$-consecutive chain). Then $\mathfrak{s}$ satisfies the desired properties of this lemma, such that its height is at most $2\log{t}-1$, its length is close to $\frac{t}{N}$, with all $\delta$ replaced by $\frac{\delta}{10N}$. The height bound, length and phase bound, and frame closeness follow from Lemmas \ref{distance}, \ref{lengthphase} (1) and \ref{directiondifference} respectively. Condition (4) follows from the argument in Section 3.2.1 of \cite{Liu}.

Now $\phi$ can be any angle in $[-2\pi,2\pi]$. For any $k=0,1,\cdots N$, let $\phi_k=\frac{k\phi }{N}$, then $|\phi_{k+1}-\phi_k|<d$ holds. For each $k=1,2,\cdots,N$, we apply the previous case to construct a $\partial$-framed segment $\mathfrak{s}_k$ with respect to ${\bf p}(\phi_{k-1})$ and ${\bf p}(\phi_k)$ (since $({\bf p}(\phi_{k-1}))(\phi/N)={\bf p}(\phi_k)$) such that the following hold.
\begin{itemize}

\item[(a)] The height of $\mathfrak{s}_k$ is at most $2\log{t}-1$. The length and phase of $\mathfrak{s}_k$ are $\frac{\delta}{10N}$-close to $\frac{t}{N}$ and $0$ respectively.

\item[(b)] The initial and terminal frames of $\mathfrak{s}_k$ are $\frac{\delta}{10N}$-close to ${\bf p}(\phi_{k-1})$ and ${\bf p}(\phi_k)$ respectively.

\item[(c)] The carrier of $\mathfrak{s}_k$ gives a null-homologous closed curve in $M$.

\item[(d)] The following closed curve is null-homologous in $\text{SO}(M)$. First take the path from ${\bf p}(\phi_{k-1})$ to ${\bf p}(\phi_k)$ in the definition of $[{\bf p}(\phi_{k-1})\xrightarrow{\mathfrak{s}_k}{\bf p}(\phi_k)]$, then travels back to ${\bf p}(\phi_{k-1})$ along the $\frac{\phi}{N}$-long rotation path in $\text{SO}(M)_p$.
\end{itemize}

Then we take $\mathfrak{s}$ to be the reduced concatenation $\mathfrak{s}_1\cdots \mathfrak{s}_N$. The height bound of $\mathfrak{s}$ in (1) follows from item (a) and Lemma \ref{distance}. The length and phase estimates of $\mathfrak{s}$ in (2) follow from items (a) (b) and Lemma \ref{lengthphase} (1). The condition of initial and terminal frames of $\mathfrak{s}$ in (2) follows from item (b) and Lemma \ref{directiondifference}. The homological condition in (3) follows directly from item (c). The relative homological condition in (4) follows from arguments in Section 3.2.2 and Lemma 3.2 of \cite{Liu}. 

\end{proof}

The following lemma is a direct consequence of Lemmas \ref{commonnormal} and \ref{commontangent}.
\begin{lemma}\label{nullhomologous}
Let $M$ be an oriented cusped hyperbolic $3$-manifold, let $p\in M$ be a point, and let ${\bf p},{\bf p}'\in \text{SO}(M)_p$ be two frames based at $p$. Then for any $\delta \in (0,10^{-2})$, there exists $T_3=T_3(M,p,\delta)$ depending on $M$, $p$ and $\delta$, such that for any $t>T_3$, there is a $\partial$-framed segment $\mathfrak{s}$ from $p$ to itself such that the following hold.
\begin{enumerate}
\item The height of $p$ is at most $\log{t}$, and the height of $\mathfrak{s}$ is at most $2\log{t}$.
\item The length and phase of $\mathfrak{s}$ are $\delta$-close to $t$ and $0$ respectively. The initial and terminal frames of $\mathfrak{s}$ are $\delta$-close to ${\bf p}$ and ${\bf p}'$ respectively.
\item The carrier of $\mathfrak{s}$ is null-homologous in $M$.
\end{enumerate}
\end{lemma}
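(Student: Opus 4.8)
The statement is essentially a corollary of the two preceding lemmas, obtained by factoring the change of frame from $\mathbf{p}$ to $\mathbf{p}'$ into elementary pieces. The plan is to produce $\mathfrak{s}$ as a reduced concatenation $\mathfrak{s}=\mathfrak{s}_1\mathfrak{s}_2\mathfrak{s}_3$ of three $\partial$-framed segments, each of length roughly $t/3$ and phase close to $0$, where $\mathfrak{s}_1$ and $\mathfrak{s}_3$ are produced by Lemma \ref{commonnormal} and $\mathfrak{s}_2$ by Lemma \ref{commontangent}, arranged so that the terminal frame of $\mathfrak{s}_i$ agrees up to small error with the initial frame of $\mathfrak{s}_{i+1}$ and the composite moves $\mathbf{p}$ to $\mathbf{p}'$. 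This works because any element of $\text{SO}(3)$ is a product of a rotation fixing the normal direction, a rotation fixing the (new) tangent direction, and a rotation fixing the (new) normal direction --- the Euler-angle factorization adapted to the shapes of the two available lemmas.

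In detail, write $\mathbf{p}=(p,\vec{v},\vec{n})$ and $\mathbf{p}'=(p,\vec{v}',\vec{n}')$ and assume first that $\vec{n}'\neq\pm\vec{n}$. Let $\vec{w}$ be the unit vector along $\vec{n}\times\vec{n}'$, so $\vec{w}\perp\vec{n}$ and $\vec{w}\perp\vec{n}'$, and let $\phi\in(0,\pi)$ be the angle from $\vec{n}$ to $\vec{n}'$ in the plane orthogonal to $\vec{w}$; with the notation preceding Lemma \ref{commontangent} this means $(p,\vec{w},\vec{n})(\phi)=(p,\vec{w},\vec{n}')$. Put $\mathbf{p}_1=(p,\vec{w},\vec{n})$ and $\mathbf{p}_2=(p,\vec{w},\vec{n}')$. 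I would take $T_3$ at least $3\max\{T_1(M,p,\frac{\delta}{10}),T_2(M,p,\frac{\delta}{10})\}$, and also large enough that $h_p\le\log t$ for $t>T_3$. For such $t$, apply Lemma \ref{commonnormal} with length parameter $\approx t/3$ to the pair $\mathbf{p},\mathbf{p}_1$ to get $\mathfrak{s}_1$; Lemma \ref{commontangent} with length parameter $\approx t/3$ to $\mathbf{p}_1$ and the angle $\phi$ to get $\mathfrak{s}_2$ from $\mathbf{p}_1$ to $\mathbf{p}_2$; and Lemma \ref{commonnormal} again to $\mathbf{p}_2,\mathbf{p}'$ to get $\mathfrak{s}_3$. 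Then set $\mathfrak{s}=\mathfrak{s}_1\mathfrak{s}_2\mathfrak{s}_3$.

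The verification is routine. At each junction the terminal frame of $\mathfrak{s}_i$ and the initial frame of $\mathfrak{s}_{i+1}$ are both $\frac{\delta}{10}$-close to $\mathbf{p}_1$, resp.\ $\mathbf{p}_2$, so $\mathfrak{s}_1,\mathfrak{s}_2,\mathfrak{s}_3$ is a tame $\frac{\delta}{5}$-consecutive chain whose two bending angles are $O(\delta)$; hence by Lemma \ref{lengthphase}(1) the length of $\mathfrak{s}$ is $\delta$-close to $t$ and its phase is $\delta$-close to $0$ (the corrections $I(\theta_i)$ being $O(\delta^2)$). Lemma \ref{directiondifference} then shows the initial and terminal frames of $\mathfrak{s}$ are $\delta$-close to $\mathbf{p}$ and $\mathbf{p}'$ respectively. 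Each $\mathfrak{s}_i$ has height at most $2\log(t/3)$ by Lemmas \ref{commonnormal} and \ref{commontangent}, and, exactly as in the proof of Lemma \ref{commonnormal}, Lemma \ref{distance} shows $\mathfrak{s}$ lies in the $1$-neighborhood of the union of the three carriers, giving height $\le 2\log t$ for $t$ large. Finally the carrier of each $\mathfrak{s}_i$ is null-homologous in $M$ by conclusion (3) of the two lemmas, and the carrier of $\mathfrak{s}$ is homotopic rel endpoints to their concatenation, hence null-homologous as well. In the degenerate cases one applies Lemma \ref{commonnormal} once directly when $\vec{n}'=\vec{n}$, and when $\vec{n}'=-\vec{n}$ one picks any unit $\vec{w}\perp\vec{n}$ and uses Lemma \ref{commontangent} with $\phi=\pi$ followed by one application of Lemma \ref{commonnormal}.

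There is no serious obstacle here, since all the hyperbolic-geometric content --- the connection principle (Theorem \ref{connectionprinciple}), the height estimates, and the homological bookkeeping --- is already packaged inside Lemmas \ref{commonnormal} and \ref{commontangent}. The only points that need a little care are keeping the bending angles between the $\mathfrak{s}_i$ genuinely small, so that no surviving $I(\theta_i)$-correction appears in the length, and choosing the length parameters slightly below $t/3$ so that the accumulated height stays under $2\log t$ rather than $2\log t+O(1)$.
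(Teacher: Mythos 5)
Your proposal is correct and takes essentially the same approach as the paper: factor the change of frame into elementary rotations supplied by Lemmas \ref{commonnormal} and \ref{commontangent}, form a tame nearly-consecutive reduced concatenation of three segments of length about $t/3$, and verify length/phase, frames, height and null-homology via Lemmas \ref{lengthphase}, \ref{directiondifference} and \ref{distance}. The only difference is the order of the Euler-type factorization: the paper uses commontangent–commonnormal–commontangent (choosing a normal vector orthogonal to both tangent directions), which avoids the degenerate cases $\vec{n}'=\pm\vec{n}$ that your normal–tangent–normal order must treat separately, but both orders work.
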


\begin{proof} We take $$T_3=T_3(M,p,\delta)=3\Big(T_1(M,p,\frac{\delta}{10})+T_2(M,p,\frac{\delta}{10})\Big),$$ and take any $t>T_3$.

Let ${\bf p}=(p,\vec{v}_1,\vec{n}_1)$ and ${\bf p}'=(p,\vec{v}_2,\vec{n}_2)$. We take $\vec{n}\in T_pM$ to be a unit vector orthogonal to both $\vec{v}_1$ and $\vec{v}_2$, and take two frames ${\bf p}_1=(p,\vec{v}_1,\vec{n})$ and ${\bf p}_2=(p,\vec{v}_2,\vec{n})$ based at $p$. Since both $\vec{n}_1$ and $\vec{n}$ are normal to $\vec{v}_1$, we have ${\bf p}_1={\bf p}(\phi_1)$ for some $\phi_1\in [0,2\pi]$. Similarly, ${\bf p}'={\bf p}_2(\phi_2)$ for some $\phi_2\in [0,2\pi]$. 

By applying Lemmas \ref{commontangent}, \ref{commonnormal}, \ref{commontangent} respectively (with $\delta$ replaced by $\frac{\delta}{10}$), we construct $\partial$-framed segments $\mathfrak{s}_1$, $\mathfrak{s}_2$, $\mathfrak{s}_3$ such that the following hold.
\begin{itemize}

\item[(a)] The height of $p$ is at most $\log{t}$. The heights of $\mathfrak{s}_1$, $\mathfrak{s}_2$ and $\mathfrak{s}_3$ are all at most $2\log{\frac{t}{3}}<2\log{t}-1$.

\item[(b)] The lengths and phases of $\mathfrak{s}_1$, $\mathfrak{s}_2$ and $\mathfrak{s}_3$ are all $\frac{\delta}{10}$-close to $\frac{t}{3}$ and $0$ respectively. 

\item[(c)] The initial and terminal frames of $\mathfrak{s}_1$ are $\frac{\delta}{10}$-close to ${\bf p}$ and ${\bf p}_1$ respectively. The initial and terminal frames of $\mathfrak{s}_2$ are $\frac{\delta}{10}$-close to ${\bf p}_1$ and ${\bf p}_2$ respectively. The initial and terminal frames of $\mathfrak{s}_3$ are $\frac{\delta}{10}$-close to ${\bf p}_2$ and ${\bf p}'$ respectively. 

\item[(d)] The carriers of $\mathfrak{s}_1$, $\mathfrak{s}_2$ and $\mathfrak{s}_3$ are all null-homologous closed curves in $M$.
\end{itemize}

Then we take $\mathfrak{s}$ to be the reduced concatenation $\mathfrak{s}_1\mathfrak{s}_2\mathfrak{s}_3$. The height bound of $\mathfrak{s}$ in (1) follows from item (a) and Lemma \ref{distance}. The length and phase estimate of $\mathfrak{s}$ in (2) follows from items (b) (c) and Lemma \ref{lengthphase} (1). The conditions on initial and terminal frames of $\mathfrak{s}$ in (2) follow from item (c) and Lemma \ref{directiondifference}. The null-homologous condition on $\mathfrak{s}$ in (3) follows from item (d).

\end{proof}

Now we are ready to prove Theorem \ref{enhancedconnectionprinciple}.
\begin{proof}[Proof of Theorem \ref{enhancedconnectionprinciple}]

Let $\pi:\text{SO}(M)\to M$ be the projection of frame bundle, and let $\xi=\pi_*(\Xi)\in H_1(M,\{p,q\};\mathbb{Z})$. Then we take a geodesic segment $s_0$ from $p$ to $q$ representing $\xi$, and we can make sure that $s_0$ has length at least $10\log{\frac{10000}{\delta}}$ (first take any $s_0$ representing $\xi$, then concatenate it with a large power of a null-homologous closed curve based at $q$).

Then we equip $s_0$ with arbitrary initial and terminal frames ${\bf p}'$ and ${\bf q}'$ respectively, so that we get a $\partial$-framed segment $\mathfrak{s}_0$ with phase $0$.  Then we take 
$$T(M,\Xi,\delta)=\max\Big\{3\big(l(\mathfrak{s}_0)+T_2(M,q,\frac{\delta}{100})+T_3(M,p,\frac{\delta}{100})+T_3(M,q,\frac{\delta}{100})\big),e^{\frac{h(\mathfrak{s}_0)+2}{2}}\Big\}$$ 
and take any $t>T(M,\Xi,\delta)$.
Here $T_2,T_3$ are constants given in Lemmas \ref{commontangent} and \ref{nullhomologous} respectively, $l(\mathfrak{s}_0)$ and $h(\mathfrak{s}_0)$ denote the length and height of $\mathfrak{s}_0$ respectively.

By applying Lemma \ref{nullhomologous} (with $\delta$ replaced by $\frac{\delta}{100}$), we construct two $\partial$-framed segments $\mathfrak{s}_1,\mathfrak{s}_2$  such that the following hold.
\begin{itemize}
\item The heights of $p$ and $q$ are at most $\log{t}$. The heights of $\mathfrak{s}_1$ and $\mathfrak{s}_2$ are at most $2\log{\frac{t}{3}}<2\log{t}-2$.
\item The lengths and phases of $\mathfrak{s}_1$ and $\mathfrak{s}_2$ are $\frac{\delta}{100}$-close to $\frac{t-l(\mathfrak{s}_0)}{3}$ and $0$ respectively.
\item The initial and terminal points of $\mathfrak{s}_1$ are both $p$, while the initial and terminal points of $\mathfrak{s}_2$ are both $q$.
\item The initial and terminal frames of $\mathfrak{s}_1$ are $\frac{\delta}{100}$-close to ${\bf p}$ and ${\bf p}'$ respectively. The initial and terminal frames of $\mathfrak{s}_2$ are $\frac{\delta}{100}$-close to ${\bf q}'$ and ${\bf q}$ respectively.
\item The closed curves given by $\mathfrak{s}_1$ and $\mathfrak{s}_2$ are both null-homologous in $M$.
\end{itemize}

Then the reduced concatenation $\mathfrak{s}'=\mathfrak{s}_1\mathfrak{s}_0\mathfrak{s}_2$ satisfies the following conditions.
\begin{itemize}
\item The height of $\mathfrak{s}'$ is at most $2\log{t}-1$.
\item The length and phase of $\mathfrak{s}'$ are $\frac{\delta}{10}$-close to $\frac{2t+l(\mathfrak{s}_0)}{3}$ and $0$ respectively.
\item The initial and terminal frames of $\mathfrak{s}'$ are $\frac{\delta}{10}$-close to ${\bf p}$ and ${\bf q}$ respectively.
\item The relative homology class of the carrier of $\mathfrak{s}'$ equals $\xi=\pi_*(\Xi)\in H_1(M,\{p,q\};\mathbb{Z})$.
\end{itemize}
The conditions on height, length and phase, initial and terminal frames of $\mathfrak{s'}$ follow from corresponding conditions of $\mathfrak{s}_0,\mathfrak{s}_1,\mathfrak{s}_2$ and Lemmas \ref{distance}, \ref{lengthphase} (1) and \ref{directiondifference} respectively. The condition on the relative homology class follows from homological conditions on $\mathfrak{s}_0, \mathfrak{s}_1, \mathfrak{s}_2$.

For the relative homology class $\Xi'=[{\bf p}\xrightarrow{\mathfrak{s}'} {\bf q}]\in H_1(\text{SO}(M),\{{\bf p},{\bf q}\};\mathbb{Z})$, since $\pi_*(\Xi')=\xi=\pi_*(\Xi)\in H_1(M,\{p,q\};\mathbb{Z})$, we have $\Xi'\in (\pi_*)^{-1}(\xi) \subset H_1(\text{SO}(M),\{{\bf p},{\bf q}\};\mathbb{Z})$.

Since $(\pi_*)^{-1}(\xi)$ consists of only two elements, either $\Xi'=\Xi$ or $\Xi'=\Xi+\Lambda$ holds. Here $\Lambda$ denotes the nontrivial element in $\mathbb{Z}/2\mathbb{Z}\cong H_1(\text{SO}(3);\mathbb{Z})\subset H_1(\text{SO}(M);\mathbb{Z})$. Then we apply Lemma \ref{commonnormal} to construct a $\partial$-framed segment $\mathfrak{s}_3$, with $\phi=0$ when $\Xi'=\Xi$ and $\phi=2\pi$ when $\Xi'=\Xi+\Lambda$, such that the following hold.
\begin{itemize}
\item The height of $\mathfrak{s}_3$ is at most $2\log{t}-1$.
\item The length and phase of $\mathfrak{s}_3$ are $\frac{\delta}{10}$-close to $\frac{t-l(\mathfrak{s}_0)}{3}$ and $0$ respectively.
\item The initial and terminal frames of $\mathfrak{s}_3$ are both $\frac{\delta}{10}$-close to ${\bf q}$.
\item The carrier of $\mathfrak{s}_3$ gives a null-homologous closed curve in $M$.
\item If $\Xi'=\Xi$, we have
$[{\bf q}\xrightarrow{\mathfrak{s}_3}{\bf q}]=0\in H_1(\text{SO}(M);\mathbb{Z})$. If $\Xi'=\Xi+\Lambda$, we have
$[{\bf q}\xrightarrow{\mathfrak{s}_3}{\bf q}]=\Lambda \in H_1(\text{SO}(M);\mathbb{Z})$.
\end{itemize}

Then we take the desired $\partial$-framed segment $\mathfrak{s}$ to be the reduced concatenation $\mathfrak{s}'\mathfrak{s}_3$. Again, the conditions on height, length and phase, initial and terminal frames of $\mathfrak{s}$ follow from corresponding conditions of $\mathfrak{s}',\mathfrak{s}_3$ and Lemmas \ref{distance}, \ref{lengthphase} (1), \ref{directiondifference} respectively. By homological conditions on carriers of $\mathfrak{s}'$ and $\mathfrak{s}_3$, the carrier of $\mathfrak{s}$ represents $\xi\in H_1(M,\{p,q\};\mathbb{Z})$. Since our construction of $\mathfrak{s}_3$ depends on the relative homological condition of $\mathfrak{s}'$, by Lemma 3.2 of \cite{Liu}, $[{\bf p}\xrightarrow{\mathfrak{s}} {\bf q}]$ equals $\Xi\in H_1(\text{SO}(M),\{{\bf p}, {\bf q}\};\mathbb{Z})$ in both cases.

\end{proof}

\subsection{$(R,\epsilon)$-good multicurves that bound nearly geodesic subsurfaces}

In this section, we prove the following result that characterizes certain null-homologous $(R,\epsilon)$-good multicurve in a cusped hyperbolic $3$-manifold bounds a nearly geodesic subsurface. This result generalizes Corollary 2.11 of \cite{Sun2}. 

Note that \cite{KW1} requires that $R$ must be an integer, while we may start with a real number $R$ in this paper, so the statement of this result is a bit complicated.

\begin{theorem}\label{boundingsurface}
Let $M$ be an oriented cusped hyperbolic $3$-manifold. Then for any constant $\alpha\geq 4$, any small enough $\epsilon> 0$ depending on $M$ and any large enough real number $R > 0$ depending on $\epsilon$ and $M$, the following statement holds. For any null-homologous oriented $(R,\epsilon)$-multicurve $L \in \mathbb{Z}{\bf \Gamma}_{R,\epsilon}^{\alpha \log{R}}$, there is a nontrivial invariant $\sigma(L)\in \mathbb{Z}_2$ such that $\sigma(L_1\cup L_2)=\sigma(L_1)+\sigma(L_2)$ and the following hold.

If $\sigma(L)=0$, for any integer $R'\geq R$, $L$ is the oriented boundary of an (possibly disconnected) immersed subsurface $f:S\looparrowright M$ satisfying the following conditions.
\begin{enumerate}
\item If we write $L$ as a union of its components $L=L_1\cup \cdots\cup L_k$, then $S$ decomposes as oriented subsurfaces $S=(\cup_{i=1}^k \Pi_i)\cup S'$ with disjoint interior, such that $\Pi_i\cap \partial S$ is a single curve $c_i$ that is mapped to $L_i$.
\item The restriction $f:\Pi_i\looparrowright M$ gives an immersed pair of pants such that $|{\bf hl}_{\Pi_i}(L_i)-R|<\epsilon$, and $|{\bf hl}_{\Pi_i}(s_i)-R'|<\epsilon$ holds for the $f$-image of any other cuff $s_i$ of $\Pi_i$.
\item If we fix a normal vector $\vec{v}_i\in N^1(\sqrt{L_i})$ for each component $L_i$ of $L$, then we can construct $f:S\looparrowright M$ such that $|{\bf foot}_{L_i}(\Pi_i,c_i)-\vec{v}_i|<\epsilon$ holds.
\item The restriction $S'\looparrowright M$ is an oriented $(R',\epsilon)$-nearly geodesic subsurface.
\item For any component $s_i\subset S'\cap \Pi_i$ mapped to $\gamma_i\in {\bf \Gamma}_{R',\epsilon}$, we take its orientation induced from $\Pi_i$, then we have $$|{\bf foot}_{\gamma_i}(\Pi_i,s_i)-{\bf foot}_{\bar{\gamma}_i}(S',\bar{s}_i)-(1+\pi i)|<\frac{\epsilon}{R}.$$
\end{enumerate}
\end{theorem}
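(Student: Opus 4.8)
The plan is to mimic the proof of Corollary 2.11 of \cite{Sun2}, replacing inputs from closed hyperbolic geometry by their cusped analogues established earlier in this paper: Theorem \ref{cuspedprecisedescription} for the precise structure of Kahn--Wright subsurfaces, Theorem \ref{cuspedhomology} for the panted cobordism group, and Theorem \ref{enhancedconnectionprinciple} for the connection principle with homological control. The invariant $\sigma(L)\in\mathbb{Z}_2$ should be defined by taking canonical lifts $\hat{L}_i\subset\text{SO}(M)$ of the components of $L$, forming the class $\Phi^{-1}$-preimage considerations, and measuring the obstruction in $H_1(\text{SO}(M);\mathbb{Z})$ to $L$ bounding an $(R,\epsilon)$-panted subsurface (equivalently, representing $0$ in $\Omega_{R,\epsilon}^{\alpha\log R,\beta\log R}(M)$ for suitable $\beta$). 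Since $L$ is null-homologous in $M$, by Theorem \ref{cuspedhomology} its class in $\Omega_{R,\epsilon}^{\alpha\log R,\beta\log R}(M)\cong H_1(\text{SO}(M);\mathbb{Z})$ maps to $0$ under $\pi_*$, so it lies in $\ker\pi_*$, which is the $\mathbb{Z}_2$ generated by $\Lambda\in H_1(\text{SO}(3);\mathbb{Z})$; this $\mathbb{Z}_2$-valued class is $\sigma(L)$, and additivity under disjoint union is immediate from additivity of canonical lifts.

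First I would set up the geometry: choose $\epsilon$ small and $R$ large so that Theorems \ref{cuspedprecisedescription}, \ref{cuspedhomology}, \ref{boundingsurface}'s hypotheses (via \ref{enhancedconnectionprinciple}) all apply, and fix an integer $R'\geq R$. For each component $L_i$ with prescribed foot direction $\vec{v}_i\in N^1(\sqrt{L_i})$, I would use the connection principle (Theorem \ref{connectionprinciple}, or its homological enhancement \ref{enhancedconnectionprinciple}) to build an $(R,\epsilon)$-good pair of pants $\Pi_i$ with one cuff mapped to $L_i$ having ${\bf foot}_{L_i}(\Pi_i,c_i)$ within $\epsilon$ of $\vec{v}_i$, complex half-length near $R$ on $L_i$, and complex half-length near $R'$ on its other two cuffs $s_i$ — this is the standard pants-construction move, gluing together $\partial$-framed segments of appropriate lengths and phases. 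The other cuffs of all the $\Pi_i$ then form an $(R',\epsilon)$-good multicurve $L'$; I would then compute the class of $L'$ (with appropriate orientations and feet) in the panted cobordism group and check, using the homological control in the construction of the $\Pi_i$, that $\sigma(L')=\sigma(L)$ (the feet on the $s_i$ can be chosen so the discrepancy is exactly the $\mathbb{Z}_2$ defect carried by $L$).

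Next, when $\sigma(L)=0$, the class of $L'$ in $\Omega_{R',\epsilon}^{\alpha\log R',\beta\log R'}(M)$ is trivial, so by the discussion after Theorem \ref{cuspedhomology} $L'$ bounds an $(R',\epsilon)$-panted subsurface; but to upgrade this to an honest $\pi_1$-injective nearly geodesic subsurface $S'$ with the feet-matching condition (5), I would invoke Theorem \ref{cuspedprecisedescription}: use the $+1$ slack in part (1) to absorb the finitely many boundary feet coming from the $s_i$ into the Hall-marriage argument of part (2), so that Kahn--Wright's good-assembly construction produces $S'$ with boundary exactly $L'$ (with reversed orientations $\bar s_i$) and with the well-matched inequality $|{\bf foot}_{\gamma_i}(\Pi_i,s_i)-{\bf foot}_{\bar\gamma_i}(S',\bar s_i)-(1+\pi i)|<\frac{\epsilon}{R}$ holding along each $s_i$. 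Gluing $S=(\cup\Pi_i)\cup S'$ along the $s_i$ then gives the desired $f:S\looparrowright M$ with $\partial S=L$. The main obstacle I anticipate is the bookkeeping in part (5) / Theorem \ref{cuspedprecisedescription}(2): ensuring that the finitely many "external" feet from the $\Pi_i$ can be incorporated into the near-uniform distribution $\partial A_{\gamma_0}$ of Kahn--Wright feet without destroying the measure estimates — this is exactly why the extra $+1$ (and the remark that it can be any fixed positive integer) was built into Theorem \ref{cuspedprecisedescription}, and the care needed is to match orientations and the $1+\pi i$ shift consistently on both sides of each gluing curve while keeping heights below $\alpha\log R'$.
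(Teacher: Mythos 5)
Your proposal is correct and follows essentially the same route as the paper: $\sigma$ is defined as the $\mathbb{Z}_2$-component of $\Phi(L)$ under $H_1(\text{SO}(M);\mathbb{Z})\cong H_1(M;\mathbb{Z})\times\mathbb{Z}_2$, the pants $\Pi_i$ are built from the connection principle with prescribed feet, the leftover $(R',\epsilon)$-multicurve $L'$ is shown trivial in the panted cobordism group (the paper does this via Lemma 5.12 of \cite{LM} on canonical lifts of the cuffs of $\Pi_i$), and the resulting panted subsurface is upgraded to a nearly geodesic $S'$ by combining it with the Kahn--Wright assembly and applying Hall's marriage via Theorem \ref{cuspedprecisedescription}. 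The only bookkeeping difference is that the paper absorbs the non-matched feet of the panted subsurface $A'$ and the external feet $\vec{w}_{i,s}$ by adding $N(A+\bar{A})$ with $N\geq 3|A'|+3$, which is the concrete realization of the ``slack'' you invoke.
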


We call each immersed pants in item (2) a pair of {\it $(R,R',\epsilon)$-good pants}, and we call the immersed subsurface $f:S\looparrowright M$ in Theorem \ref{boundingsurface} {\it an $(R',\epsilon)$-nearly geodesic subsurface with $(R,\epsilon)$-good boundary}.

 \begin{proof}
 We take $\beta=\alpha+4$, $\epsilon>0$ and $R>0$ such that Theorem \ref{cuspedhomology} holds. 
 For any $(R,\epsilon)$-multicurve $L$ of height at most $\alpha \log{R}$, we consider $L$ as an element in $\Omega_{R,\epsilon}^{\alpha \log{R},\beta \log{R}}(M)$. Then we apply the isomorphism $\Phi$ in Theorem \ref{cuspedhomology} to get $$\Phi(L)\in H_1(\text{SO}(M);\mathbb{Z})\cong H_1(M;\mathbb{Z})\times H_1(\text{SO}(3);\mathbb{Z})\cong H_1(M;\mathbb{Z})\times \mathbb{Z}_2.$$
 Since $L$ is null-homologous, $\Phi(L)$ is trivial in the first component. So we define $\sigma(L)\in \mathbb{Z}_2$ to be the second component of $\Phi(L)$, and $\sigma$ obviously satisfies $\sigma(L_1)+\sigma(L_2)=\sigma(L_1\cup L_2)$.
 
 Now we fix a null-homologous $(R,\epsilon)$-multicurve $L$ of height at most $\alpha \log{R}$ such that $\sigma(L)=0$ holds. For each $i$, let $\hat{l}_i$ be one canonical lift of $L_i$, then it is an oriented closed curve in $\text{SO}(M)$. Since $\Phi(L)=0 \in H_1(\text{SO}(M);\mathbb{Z})$, $\hat{l}_1\cup \cdots \cup \hat{l}_k$ is null-homologous in $\text{SO}(M)$.

{\bf Step I.} In this step, we construct the pairs of pants $\Pi_i$.

 For each component $L_i$ of $L$, we take two antipodal points $p_i,q_i$ on $L_i$, such that their distances to the two basepoints of $\vec{v}_i$ are all $\frac{l(L_i)}{4}$. Then $p_i$ and $q_i$ divide $L_i$ to two oriented geodesic segments $\alpha_{i,1}$ (from $p_i$ to $q_i$) and $\alpha_{i,2}$ (from $q_i$ to $p_i$). We take two frames ${\bf p_i}=(p_i,\vec{v}_{p_i},\vec{n}_{p_i}),{\bf q_i}=(q_i,\vec{v}_{q_i},\vec{n}_{q_i})$ based at $p_i$ and $q_i$ respectively such that the following hold.
\begin{itemize}
\item The tangent vectors of ${\bf p_i},{\bf q_i}$ are both tangent to $L_i$.
\item The third vectors (cross products) of ${\bf p_i},{\bf q_i}$ both lie in the slow and constant turning vector field going through $\vec{v}_i$. So the complex distance between ${\bf p_i}$ and ${\bf q_i}$ along $L_i$ is ${\bf hl}(L_i)$.
 \end{itemize}
 
 Then we apply Theorem \ref{connectionprinciple} to construct a $\partial$-framed segment $\mathfrak{s}_i$ from $p_i$ to $q_i$ such that the following hold.
 \begin{itemize}
 \item The length and phase of $\mathfrak{s}_i$ are $\frac{\epsilon}{10}$-close to $2R'-\frac{l(L_i)}{2}+2\log{2}$ and $0$ respectively
 \item The initial and terminal frames of $\mathfrak{s}_i$ are $\frac{\epsilon}{10}$-close to $(p_i,\vec{v}_{p_i}\times \vec{n}_{p_i},\vec{n}_{p_i})$ and $(q_i,-\vec{v}_{q_i}\times \vec{n}_{q_i},\vec{n}_{q_i})$ respectively
 \item The height of $\mathfrak{s}_i$ is at most $(\alpha+1)\log{R'}-1$.
 \end{itemize}

Let $s_i$ be the carrier of $\mathfrak{s}_i$, and let $L_{i,1}, L_{i,2}$ be the closed geodesic homotopic to concatenations $\overline{\alpha_{i,1}}s_i$ and $\overline{s_i}\ \overline{\alpha_{i,2}}$ respectively. By Lemma \ref{lengthphase} (2), both $L_{i,1}$ and $L_{i,2}$ are $(R',\epsilon)$-good curves. By Lemma \ref{distance}, both $L_{i,1}$ and $L_{i,2}$ have height at most $(\alpha+1)\log{R'}$. We take canonical lifts $\hat{l}_{i,1}$ and $\hat{l}_{i,2}$ of $L_{i,1}$ and $L_{i,2}$ respectively, then the proof of Lemma 5.12 of \cite{LM} implies that $\hat{l}_{i,1}\cup \hat{l}_{i,2}\cup \hat{l}_i$ is null-homologous in $\text{SO}(M)$. Although Lemma 5.12 of \cite{LM} is only stated for $(R,\epsilon)$-good pants, the proof works well in our context.

By our construction of $\Pi_i$, its cuffs satisfy condition (2) and its foot on $L_i$ satisfies condition (3).

\bigskip
 
{\bf Step II.} Now we construct the $(R',\epsilon)$-nearly geodesic subsurface $S'\looparrowright M$.

Since $\hat{l}_1\cup \cdots \cup \hat{l}_k$ is null-homologous in $\text{SO}(M)$, $\hat{l}_{1,1}\cup \hat{l}_{1,2}\cup \cdots \cup \hat{l}_{k,1}\cup \hat{l}_{k,2}$ is also null-homologous in $\text{SO}(M)$. Let $L'=\bar{L}_{1,1}\cup \bar{L}_{1,2}\cup \cdots \cup \bar{L}_{k,1} \cup \bar{L}_{k,2}$, then $L'$ is a null-homologous $(R',\epsilon)$-multicurve in $M$. 

Let $\alpha'=\alpha+1$ and $\beta'=\beta+1$, then we take $h_c$ and $h_T$ with respect to $\alpha', \beta'$ and $R'$. We consider $L'$ as an element in $\Omega_{R,\epsilon}^{\alpha'\log{R'},\beta'\log{R'}}(M)$. Let $\Phi:\Omega_{R,\epsilon}^{\alpha'\log{R'},\beta'\log{R'}}(M)\to H_1(\text{SO}(M);\mathbb{Z})$ be the isomorphism given by Theorem \ref{cuspedhomology}. Since $\hat{l}_{1,1}\cup \hat{l}_{1,2}\cup \cdots \cup \hat{l}_{k,1}\cup \hat{l}_{k,2}$ is null-homologous in $\text{SO}(M)$ and is a component-wise canonical lift of $\bar{L}'$, we have $\Phi(L')=\Phi(\bar{L}')=0\in H_1(\text{SO}(M);\mathbb{Z})$. So $L'$ is trivial in $\Omega_{R,\epsilon}^{\alpha'\log{R'},\beta'\log{R'}}(M)$.

By definition of the panted cobordism group, there is an $(R',\epsilon)$-panted subsurface $\Sigma \to M$ of height at most $\beta' \log{R'}$, such that $L'$ is the oriented boundary of $\Sigma$. This $(R',\epsilon)$-panted subsurface $\Sigma$ may not be $(R',\epsilon)$-nearly geodesic, so we do the following process to make it to be $(R',\epsilon)$-nearly geodesic and satisfies condition (5). The construction is essentially same with the construction in Corollary 2.11 of \cite{Sun2}.

Let $A'$ be the $\mathbb{Z}$-linear combination of $(R',\epsilon)$-good pants given by $\Sigma$, then $\partial A'=L'\in \mathbb{Z}{\bf \Gamma}_{R,\epsilon}^{\alpha' \log{R'}}$ holds. For any component $\bar{L}_{i,s}$ of $L'$ with $s=1,2$, let $\vec{w}_{i,s}={\bf foot}_{L_{i,s}}(\Pi_i)\in N^1(\sqrt{L_{i,s}})$. We take a positive integer $N$ such that $N\geq 3|A'|+3$.

We take $h_T\geq \beta'\log{R'},h_c\geq h_T+44\log{R'}$ and apply Theorem \ref{cuspedprecisedescription}, to get a $\mathbb{Z}_+$-linear combination $A$ of $(R',\epsilon)$-good pants and umbrellas (after eliminating denominators).
Then we take two measures on $N^1(\sqrt{L_{i,s}})$:
$$M_{L_{i,s}}^1=\partial (NA+N\bar{A}+A')_{L_{i,s}} +\vec{w}_{i,s},\ M_{L_{i,s}}^2=\partial (NA+N\bar{A}+\bar{A'})_{L_{i,s}}$$ 
%Here $A$ is the $\mathbb{Z}_+$ linear combination of $(R',\epsilon)$-good pants and umbrellas in Theorem \ref{cuspedprecisedescription} (after eliminating denominators).
For any other $\gamma\in {\bf \Gamma}_{R',\epsilon}^{<h_c}$, we simply take the following two measures on $N^1(\sqrt{\gamma})$: $$M_{\gamma}^1=\partial (NA+N\bar{A}+A')_{\gamma},\ M_{\gamma}^2=\partial (NA+N\bar{A}+\bar{A'})_{\gamma}.$$

Since $A'$ corresponds to an $(R',\epsilon)$-panted subsurface with oriented boundary $L'$, the above pairs of measures ($M_{L_{i,s}}^1$ versus $M_{L_{i,s}}^2$ and $M_{\gamma}^1$ versus $M_{\gamma}^2$) have same total measure on $N^1(\sqrt{L_{i,s}})$ and $N^1(\sqrt{\gamma})$ respectively. For any $\gamma\in {\bf \Gamma}_{R',\epsilon}^{<h_c}$, the total measure of $\partial A'_{\gamma}$ is at most $3|A'|$. By Theorem \ref{cuspedprecisedescription} (1) and our choice of $N$, for any $\gamma \in {\bf \Gamma}_{R',\epsilon}^{<h_c}$ (possibly $\gamma=L_{i,s}$) and any $B\subset N^1(\sqrt{\gamma})$, we have 
$$M_{\gamma}^1(N_{\frac{\epsilon}{R'}}(B))\geq M_{\gamma}^2(\tau(B))\ \text{and}\ M_{\gamma}^2(N_{\frac{\epsilon}{R'}}(B))\geq M_{\gamma}^1(\tau(B)).$$

As in Theorem \ref{cuspedprecisedescription} (2), by the Hall's marriage theorem, we can paste pants and umbrellas in $NA+N\bar{A}+A'$ to obtain an $(R',\epsilon)$-nearly geodesic subsurface $S'\looparrowright M$ with oriented boundary $L'$. Moreover, for any component $\bar{L}_{i,s}$ of $L'$, the foot of $S'$ on $\bar{L}_{i,s}$ is $(R',\epsilon)$-well matched with $\vec{w}_{i,s}$. So the $(R',\epsilon)$-nearly geodesic subsurface $S'\looparrowright M$ satisfies conditions (4) and (5).

Step I and step II together give the desired $(R',\epsilon)$-nearly geodesic subsurface with $(R,\epsilon)$-good boundary $f:S\looparrowright M$.

 \end{proof}
 
 \begin{remark}
 In Corollary 2.11 of \cite{Sun2} for closed hyperbolic $3$-manifolds, it states that $\sigma(L)=0$ if and only if $L$ is the oriented boundary of an $(R,\epsilon)$-nearly geodesic subsurface. The author believes the if and only if condition also holds in Proposition \ref{boundingsurface}. However, in Proposition \ref{boundingsurface}, we allow $(R,R',\epsilon)$-good pants and hamster wheels as building blocks of the subsurface. So it requires extra work to prove it, and we skip it in this paper.
 \end{remark}
 
 \begin{remark}\label{combinatorialdistance}
For the $(R',\epsilon)$-nearly geodesic subsurface with $(R,\epsilon)$-good boundary $S\looparrowright M$ constructed in Corollary \ref{boundingsurface}, the set of decomposition curves $\mathcal{C}\subset S$ gives a graph of space structure on $S$ with dual graph $\Gamma$. Each component of $S\setminus \mathcal{C}$ gives a vertex of $\Gamma$, and each component of $\mathcal{C}$ gives an edge of $\Gamma$. Let $v_i$ be the vertex of $\Gamma$ corresponding to $\Pi_i\subset S$. Then we can further modify the nearly geodesic subsurface $S\looparrowright M$ as in Section 3.1 Step IV of \cite{Sun1}, such that the combinatorial distance from $v_i$ to any $v_j\ne v_i$ and the combinatorial length of any essential closed curve in $\Gamma$ going through $v_i$ are at least $R'e^{R'}$.
\end{remark}

\bigskip
\bigskip

\section{The topological construction of virtual $1$-domination}\label{topo1}

In this section, we prove our main result Theorem \ref{main1}, modulo a $\pi_1$-injectivity result (Theorem \ref{pi1injectivity}). The construction is similar to the construction in \cite{LS} for closed hyperbolic $3$-manifolds. For some lemmas that work for both closed and cusped hyperbolic $3$-manifolds, we only state the results and refer readers to \cite{LS} for their proofs.

\subsection{A topological reduction}

At first, we follow the strategy in \cite{LS} to reduce the target manifold in Theorem \ref{main1} to a closed oriented hyperbolic $3$-manifold. The reduction in \cite{LS} is not very complete, since the cited result in \cite{BW} only concerns irreducible $3$-manifolds, while it only requires a bit of work to fix it.

We first prove a lemma about compact oriented $3$-manifolds (with or without boundary).

\begin{lemma}\label{reductionlemma}
Let $N$ be a compact oriented $3$-manifold with empty or tori boundary, then there exits a compact oriented irreducible $3$-manifold $M$ with empty or tori boundary that $1$-dominates $N$ and $|\partial M|=|\partial N|$.
\end{lemma}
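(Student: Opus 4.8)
The plan is to reduce a general compact oriented $N$ with empty or tori boundary to an irreducible one by first handling the connected-sum decomposition and then handling the possibly non-hyperbolic-but-irreducible pieces. The key observation is that $1$-domination is transitive and behaves well under connected sum, so it suffices to produce, for each prime summand, an irreducible compact oriented manifold that $1$-dominates it (being careful to keep track of boundary). Concretely, write $N = N_1 \# \cdots \# N_k$ as a connected sum of prime $3$-manifolds, where we distribute the torus boundary components of $N$ among the $N_i$ (so $\sum_i |\partial N_i| = |\partial N|$); we may also absorb any $S^1\times S^2$ summands, since $S^1\times S^2$ is $1$-dominated by $S^3$ (a degree-$1$ collapse), hence can be ignored. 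If I first find, for each $i$, a compact oriented irreducible $M_i$ with $|\partial M_i| = |\partial N_i|$ that $1$-dominates $N_i$, then forming $M = M_1 \# \cdots \# M_k$ and mapping each $M_i$ to $N_i$ by the given degree-$1$ map (collapsing the connect-sum spheres appropriately) yields a degree-$1$ map $M \to N$; moreover a connected sum of irreducible manifolds each with nonempty... — wait, here is exactly where care is needed.

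The subtlety is that a connected sum of two (or more) manifolds is generally \emph{not} irreducible (the connect-sum sphere is essential unless one summand is $S^3$). So the naive ``take connected sum of the pieces'' destroys irreducibility. The fix: if $k \ge 2$ and none of the $N_i$ is $S^3$, I should not take the connected sum at all; instead I replace $N$ by a single irreducible manifold that dominates the whole connected sum. One clean way: use the fact (e.g. via the simplicial-volume/JSJ picture, or directly) that any prime $N_i$ is $1$-dominated by an irreducible $M_i$, and then glue the $M_i$ together along boundary components rather than along spheres. If each $M_i$ has at least one torus boundary component available, I can glue the $M_i$ in a cycle along tori to form a single irreducible graph-like manifold $M$ that maps onto $\vee_i N_i$ (hence degree-$1$ onto $N$) by crushing the gluing tori. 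To guarantee an available torus boundary component and to control $|\partial M| = |\partial N|$, I take each $M_i$ to be, say, $N_i$ itself when $N_i$ is already irreducible, and otherwise to be a suitable irreducible filling; if $N_i$ is reducible it must be $S^1\times S^2$ (the only prime non-irreducible $3$-manifold), which we have already discarded. So in fact \emph{every prime summand that is not $S^1\times S^2$ is already irreducible}, and the only real work is: (a) discard $S^1\times S^2$ summands via $S^3$, and (b) assemble the remaining irreducible prime pieces into a single irreducible $M$ while preserving the boundary count.

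For step (b) I would proceed as follows. If after discarding $S^1\times S^2$ summands there is exactly one prime piece left, we are done: $N$ itself (that piece, possibly minus the discarded $S^1\times S^2$'s) is irreducible with the right boundary, take $M = N$ and $f = \mathrm{id}$. If there are $\ge 2$ irreducible prime pieces $N_1,\dots,N_k$ (with the boundary tori distributed so $\sum|\partial N_i| = |\partial N|$), pick base points and a ball in each, and instead of connect-summing, attach $k$ one-handles connecting $N_1,\dots,N_k$ in a chain and then... no — this again creates essential spheres around the handles' ends only if, hmm. The cleanest route: take $M$ to be the manifold obtained by removing an open ball from each $N_i$ and gluing the resulting $S^2$ boundary components in a cycle \emph{but then filling}; equivalently, I will instead invoke that a connected sum $N_1 \# \cdots \# N_k$ with all $N_i$ irreducible and at least one $N_i \neq S^3$ is $1$-dominated by the irreducible manifold obtained from the disjoint union $\bigsqcup N_i$ by removing a ball from each and gluing all the $S^2$'s to the boundary spheres of a single $(k)$-punctured $S^3$ — which is just $N_1\#\cdots\# N_k$ again, so that does not help. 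The genuinely correct and simplest construction, and the one I would write up, is: glue the $N_i$ along torus boundary components (each $N_i$ being an irreducible prime $3$-manifold with some torus boundary — and if some prime summand is closed, first remove a knot complement's worth, i.e. drill a null-homotopic... no, drill an arbitrary knot to create a torus boundary, which yields a manifold $1$-dominating the original closed one by Dehn filling back, and is still irreducible for a suitable knot). Concretely: for each prime summand $N_i$, choose a hyperbolic knot $K_i$ in $N_i$ (exists since $N_i$ has positive simplicial volume or we may take any knot making the complement irreducible) so that $N_i \setminus K_i$ is irreducible with (one more) torus boundary and Dehn-fills back to $N_i$ along $\partial$-framing with a degree-$1$ filling map; then glue $N_1\setminus K_1, \dots, N_k \setminus K_k$ along these new torus boundaries in a cycle (gluing $n$ tori pairwise uses $n$ of them; since each contributed $2$ we can arrange the remaining tori to number $|\partial N|$ by drilling/filling bookkeeping). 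The glued manifold $M$ is irreducible (a union of irreducible pieces along incompressible tori is irreducible, as the gluing tori are incompressible when the pieces are not solid tori — arrange this), has $|\partial M| = |\partial N|$ tori boundary components, and $1$-dominates $N$ by crushing the gluing tori to get a map to $\vee_i N_i$, then mapping each wedge factor $N_i$ to $N$'s corresponding summand by the degree-$1$ filling-and-pinching map. The transitivity of $1$-domination finishes it.

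The main obstacle I anticipate is the bookkeeping of boundary components together with the irreducibility of the glued manifold: I must choose the drilled knots $K_i$ so that (1) the complements are irreducible and $\partial$-irreducible enough that gluing along the new tori keeps $M$ irreducible (i.e. no new essential spheres or compressions), and (2) the total number of leftover torus boundary components is exactly $|\partial N|$ — which forces a careful count of how many new tori I create by drilling versus how many I consume by gluing in the cycle, and may require an extra auxiliary drilling/filling when $k=1$ with nonempty boundary or when all $N_i$ are closed. I expect this to be routine $3$-manifold topology but fiddly, and I would present it with a small case analysis on whether $N$ is closed, connected sum, or already prime.
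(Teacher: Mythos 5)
Your construction breaks down at the very last step, where the degree-$1$ map onto $N$ is supposed to appear. After gluing the drilled pieces $X_i=N_i\setminus \mathcal{N}(K_i)$ along tori, you propose to map $M$ to $N=\#_i N_i$ by ``crushing the gluing tori to get a map to $\vee_i N_i$'' and then sending each wedge factor into $N$. This is backwards: there is a natural degree-$1$ map \emph{from} a connected sum \emph{onto} the wedge of its summands (that is what the paper's Corollary uses), but no map in the other direction can carry the fundamental class correctly. Indeed, if $f\colon M\to \#_i N_i$ factors through a wedge-like quotient, then $f_*[M]=\sum_i \alpha_i[\#_j N_j]$ where $\alpha_i$ is the degree of a map from (a quotient of) $N_i$ to $\#_j N_j$; since $\|\#_j N_j\|=\sum_j\|N_j\|$, the inequality $\|N_i\|\geq|\alpha_i|\sum_j\|N_j\|$ forces $\alpha_i=0$ whenever some other summand has positive simplicial volume, so the total degree cannot be $1$ as soon as two summands have positive simplicial volume. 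Also note that collapsing the boundary torus of a knot exterior to a point does not produce $N_i$; recovering $N_i$ requires filling with a solid torus, i.e.\ you would need the \emph{neighboring} piece to admit a proper degree-$1$ map onto that solid torus compatible with your gluing, which you never arrange. A separate, smaller error: $S^3$ does not $1$-dominate $S^1\times S^2$ (degree-$1$ maps are $\pi_1$-surjective), so $S^1\times S^2$ summands cannot simply be discarded.

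You also missed the simplification that makes the prime decomposition unnecessary. Myers' theorem gives a hyperbolic knot $K$ in \emph{any} compact orientable $3$-manifold $N$ with empty or toral boundary, reducible or not, so $N\setminus\mathcal{N}(K)$ is already irreducible (indeed hyperbolic) with one extra torus boundary component. The paper's proof then refills this torus not with a solid torus (which would just give back $N$) but with $\Sigma_{1,1}\times S^1$, glued so that $\partial\Sigma_{1,1}\times S^1$ matches $\partial D^2\times S^1$: the result $M$ is irreducible because both pieces are irreducible and the gluing torus is incompressible on both sides, $|\partial M|=|\partial N|$, and the degree-$1$ map $M\to N$ is the identity on $N\setminus\mathcal{N}(K)$ together with the pinch $\Sigma_{1,1}\times S^1\to D^2\times S^1=\mathcal{N}(K)$. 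This single drilling-and-regluing replaces all of your summand-by-summand bookkeeping and avoids the wedge issue entirely.
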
 

\begin{proof}
By Corollary 6.3 of \cite{Mye}, $N$ contains a hyperbolic knot $K$, i.e. $N\setminus \mathcal{N}(K)$ admits a complete finite volume hyperbolic structure. Here $\mathcal{N}(K)$ denotes a tubular neighborhood of $K$ homeomorphic to $D^2\times S^1$. 

Let $\Sigma_{1,1}$ be the one-punctured torus, and let $h:\partial(\Sigma_{1,1}\times S^1)=\partial \Sigma_{1,1} \times S^1\to \partial(\mathcal{N}(K))=\partial D^2 \times S^1$ be a homeomorphism defined by the product of two homeomorphisms on two components. Then we take $M=(N\setminus \mathcal{N}(K))\cup_{h}(\Sigma_{1,1}\times S^1)$, and $|\partial M|=|\partial N|$ clearly holds.

Since both $\partial (\mathcal{N}(K))\subset N\setminus \mathcal{N}(K)$ and $\partial (\Sigma_{1,1}\times S^1)\subset \Sigma_{1,1}\times S^1$ are  incompressible, while both $N\setminus \mathcal{N}(K)$ and $\Sigma_{1,1}\times S^1$ are irreducible, $M$ is irreducible. We take the desired degree-$1$ map $f:M=(N\setminus \mathcal{N}(K))\cup_{h}(\Sigma_{1,1}\times S^1)\to N$ to be identity on $N\setminus \mathcal{N}(K)$, and to be a pinching map cross the identity on $\Sigma_{1,1}\times S^1\to D^2\times S^1=\mathcal{N}(K)$.
\end{proof}

The following result is essentially proved by Boileau and Wang in Proposition 3.3 of \cite{BW}, while they assumed irreducibility of the target manifold.

\begin{proposition}\label{reductionclosed}
For any closed oriented $3$-manifold $N$, there exists a closed oriented hyperbolic $3$-manifold $M$, such that $M$ $1$-dominates $N$.
\end{proposition}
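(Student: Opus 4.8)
\textbf{Proof proposal for Proposition \ref{reductionclosed}.}

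The plan is to reduce to the case of irreducible target, and then invoke the result of Boileau and Wang. First I would dispose of the orientability and basic structure: given a closed oriented $3$-manifold $N$, apply Lemma \ref{reductionlemma} with $\partial N = \emptyset$ to produce a closed oriented \emph{irreducible} $3$-manifold $N_0$ together with a degree-$1$ map $N_0 \to N$. Concretely, Lemma \ref{reductionlemma} drills out a hyperbolic knot $K \subset N$ and glues in $\Sigma_{1,1}\times S^1$ along the torus boundary, yielding an irreducible manifold that $1$-dominates $N$; since degree-$1$ maps compose (the composite of a degree-$1$ map with a degree-$1$ map has degree $1$), it suffices to find a closed oriented hyperbolic $3$-manifold $M$ that $1$-dominates $N_0$.

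Next I would apply Proposition 3.3 of \cite{BW} to the irreducible manifold $N_0$. That result, in the irreducible case, produces a closed oriented hyperbolic $3$-manifold $M$ admitting a degree-$1$ map $M \to N_0$; the idea there is to realize $N_0$ as built from pieces that each receive a degree-$1$ map from a hyperbolic piece (using, e.g., that every closed orientable $3$-manifold is dominated by a hyperbolic one via a suitable Dehn filling / branched covering construction, combined with the fact that one may arrange the resulting manifold to be hyperbolic by choosing the filling slopes generically, appealing to Thurston's hyperbolic Dehn surgery theorem). Composing $M \to N_0 \to N$ gives the desired degree-$1$ map $M \to N$.

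The main obstacle is precisely the gap that the text flags: the cited Proposition 3.3 of \cite{BW} is stated only for irreducible targets, so the real content of the proof is the reduction step, i.e.\ verifying that one does not lose the hyperbolicity of the dominating manifold when passing from $N$ to the irreducible model $N_0$ and back. This is handled cleanly by Lemma \ref{reductionlemma}: the key points are that (i) the drilling-and-refilling construction there genuinely produces an irreducible manifold (because $\partial\mathcal{N}(K)$ is incompressible in the hyperbolic complement $N\setminus\mathcal{N}(K)$ and in $\Sigma_{1,1}\times S^1$, and both pieces are irreducible, so any essential sphere could be isotoped off the gluing torus and would lie in an irreducible piece, a contradiction), and (ii) the map $N_0\to N$ that is identity on $N\setminus\mathcal{N}(K)$ and a pinch-times-identity on $\Sigma_{1,1}\times S^1 \to D^2\times S^1$ has degree $1$. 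With these in hand, the proposition follows by one application of Lemma \ref{reductionlemma}, one application of \cite[Proposition 3.3]{BW} in the irreducible case, and composition of degree-$1$ maps.
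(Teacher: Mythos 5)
Your proposal is correct and follows essentially the same route as the paper: reduce to an irreducible target via Lemma \ref{reductionlemma}, then apply Proposition 3.3 of \cite{BW} to that irreducible manifold and compose the two degree-$1$ maps. The extra commentary on how \cite{BW} is proved and on the irreducibility check inside Lemma \ref{reductionlemma} is fine but not needed, since both are used here only as black boxes.
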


\begin{proof}
By Lemma \ref{reductionlemma}, $N$ is $1$-dominated by a closed oriented irreducible $3$-manifold $N'$. Since $N'$ is closed oriented and irreducible, Proposition 3.3 of \cite{BW} implies that $N'$ is $1$-dominated by a closed oriented hyperbolic $3$-manifold $M$. So the proof is done.
\end{proof}

By Proposition \ref{reductionclosed}, we can assume the target manifold $N$ in Theorem \ref{main1} is a closed oriented hyperbolic $3$-manifold.

\subsection{The construction of an immersed $2$-complex $j:Z\looparrowright M$}\label{constructZ1}

If $M$ is a closed oriented hyperbolic $3$-manifold, then Theorem \ref{main1} was already proved in \cite{LS}. So we can assume $M$ is not hyperbolic, and we also assume irreducibility of $M$ in this section. Since $M$ has positive simplicial volume, by \cite{Soma1}, $M$ has a hyperbolic JSJ piece $M_0$ with tori boundary, thus its interior admits a complete hyperbolic metric of finite volume. We also use $M_0$ to denote the corresponding non-compact oriented cusped hyperbolic $3$-manifold.

In this section, we will construct a $\pi_1$-injective map $j:Z\looparrowright M_0$ from a $2$-complex $Z$ to $M_0$, where $Z$ is modeled by a $2$-skeleton of $N$. The construction is similar to the construction in \cite{LS}, except that we use new tools adapted to cusped hyperbolic $3$-manifolds: Theorem \ref{enhancedconnectionprinciple} and Proposition \ref{boundingsurface}. 

We start with setting up the following initial data. 
\begin{data}\label{data}
\begin{enumerate}
\item A geometric triangulation of $N$. It means that, each edge of the triangulation is a geodesic segment in $N$, and each triangle is a totally geodesic triangle in $N$. By doing barycentric subdivision, we can assume that each edge has two distinct vertices and any two vertices are connected by at most one edge. The $1$- and $2$-skeletons of $N$ are denoted by $N^{(1)}$ and $N^{(2)}$ respectively.
\item A homological homomorphism and a homological lift: 
$$i_*:H_1(N;\mathbb{Z})\to H_1(M_0;\mathbb{Z})\ \text{and}\ i_1:N^{(1)}\to M_0.$$ 
Here $i_*$ can be any homomorphism between these two homology groups, and $i_1$ is an embedding such that $(i_1)_*:H_1(N^{(1)};\mathbb{Z})\to H_1(M_0;\mathbb{Z})$ factors through $i_*$.  The map $i_1$ can be constructed by first defining it on a spanning tree of $N^{(1)}$.
\item A pair of trivializations of $\text{SO}(3)$-principal bundles: $$t_N:\text{SO}(N)\to N\times \text{SO}(3)\ \text{and}\ t_{M_0}:\text{SO}(M_0)\to M_0\times \text{SO}(3).$$ Such trivializations exist since tangent bundles of orientable $3$-manifolds are trivial.
\end{enumerate}
\end{data}

Then we take the following geometric notations. 
\begin{notation}\label{notation}
\begin{enumerate}
\item We denote the set of vertices of the triangulation of $N$ by $V_N=\{n_1,n_2,\cdots,n_l\}.$ If there is an edge in $N^{(1)}$ between $n_i$ and $n_j$, we denote the oriented edge from $n_i$ to $n_j$ by $e_{ij}$, and $e_{ji}$ is its orientation reversal. If there is a triangle in $N^{(2)}$ spanned by (distinct) vertices $n_i,n_j,n_k$, we have a marked $2$-simplex $\Delta_{ijk}$.

\item For any oriented edge $e_{ij}$, let $\vec{v}_{ij}$ be the unit tangent vector of $e_{ij}$ based at $n_i$. For any marked $2$-simplex $\Delta_{ijk}$, we denote 
$$\vec{n}_{ijk}=\frac{\vec{v}_{ij}\times \vec{v}_{ik}}{|\vec{v}_{ij}\times \vec{v}_{ik}|},$$ 
which is a normal vector of $\Delta_{ijk}$ at $n_i$. Then we have a frame 
$${\bf F}_{ijk}=(n_i,\vec{v}_{ij},\vec{n}_{ijk})\in \text{SO}(N)_{n_i}$$ 
and we denote  
$$-{\bf F}_{ijk}=(n_i,-\vec{v}_{ij},-\vec{n}_{ijk}).$$ 
Now we have a finite collection of frames in $N$:
$$F_N=\{\pm {\bf F}_{ijk}\ |\ \Delta_{ijk}\ \text{is\ a\ marked\ 2-simplex\ in\ }N^{(2)}\}.$$

\item Let ${\bf i}_1:\text{SO}(N)|_{N^{(1)}}\to \text{SO}(M_0)$ be the morphism of $\text{SO}(3)$-principal bundles defined by 
$$\text{SO}(N)|_{N^{(1)}}\xrightarrow{t_N|}N^{(1)}\times \text{SO}(3)\xrightarrow{i_1\times \text{id}}M_0\times \text{SO}(3)\xrightarrow{t_{M_0}^{-1}}\text{SO}(M_0).$$ 
For the finite collection of points $V_N$ and frames $F_N$ in $N$, we define the corresponding collections in $M_0$ as following. We define $m_k=i_1(n_k)\in M_0$, and define ${\bf F}^{M_0}_{ijk}=(m_i,\vec{v}^{M_0}_{ij}, \vec{n}^{M_0}_{ijk})={\bf i}_1({\bf F}_{ijk})\in \text{SO}(M_0)$. Then we have finite collections 
$$V_{M_0}=i_1(V_N)\subset M_0,\ F_{M_0}={\bf i}_1(F_N)\subset \text{SO}(M_0).$$
Note that ${\bf i}_1(-{\bf F}_{ijk})=-{\bf i}_1({\bf F}_{ijk})=-{\bf F}_{ijk}^{M_0}$ holds since ${\bf i}_1$ is a morphism of $\text{SO}(3)$-principal bundles.

\item Since there are only finitely many marked triangles $\Delta_{ijk}$ and each $\Delta_{ijk}$ is a geodesic triangle, there exists $\phi_0\in(0,\frac{\pi}{2})$, such that all inner-angles of all $\Delta_{ijk}$ lie in $[\phi_0,\pi-\phi_0]$, and any two triangles in $N$ that share a vertex or an edge have dihedral angle in $[\phi_0,\pi]$.

\item For any vertex $n_i$ of $N$, the geometric triangulation of $N$ gives a subset $S_i\subset T_{n_i}^1N=S^2$, consisting of all unit vectors based at $n_i$ and tangent to some triangle $\Delta_{ijk}$. Then $S_i$ is a union of finitely many geodesic arcs in $T_{n_i}^1N=S^2$ and has an induced path metric.
Since each $S_i$ is compact and there are finitely many of them, there exists $\theta_0>0$, such that for any vertex $n_i$ of $N$ and any two vectors $\vec{v}_1,\vec{v}_2\in S_i$, if the angle between them in $T_{n_i}^1N$ is at most $\theta_0$, then their distance under the path metric of $S_i$ is at most $\phi_0$.
\end{enumerate}
\end{notation}

The construction of the immersed $2$-complex $j:Z\looparrowright M_0$ is instructed by a group homomorphism called {\it homological instruction}, which is constructed in the following proposition.

\begin{proposition}\label{homologicalinstruction}
There exists a group homomorphism $${\bf j}_*:H_1(\text{SO}(N),F_N;\mathbb{Z})\to H_1(\text{SO}(M_0),F_{M_0};\mathbb{Z})$$ such that the following diagram commutes:
\begin{diagram}
H_1(\text{SO}(N)|_{V_N},F_N;\mathbb{Z}) & \rTo & H_1(\text{SO}(N)|_{N^{(1)}},F_N;\mathbb{Z}) & \rTo& H_1(\text{SO}(N),F_N;\mathbb{Z}) & \rTo^{\partial} & H_0(F_N;\mathbb{Z}) \\
\dTo^{({\bf i}_1)_*}                                  &         & \dTo^{({\bf i}_1)_*}                &       & \dTo^{{\bf j}_*}                           &                             &  \dTo^{({\bf i}_1)_*}\\
H_1(\text{SO}(M_0)|_{V_{M_0}},F_{M_0};\mathbb{Z}) & \rTo & H_1(\text{SO}(M_0),F_{M_0};\mathbb{Z}) &\rTo^{id_*} & H_1(\text{SO}(M_0),F_{M_0};\mathbb{Z}) & \rTo^{\partial} & H_1(F_{M_0};\mathbb{Z}).
\end{diagram}
Here horizontal homomorphisms $\partial$ are boundary homomorphisms in homological long exact sequences of pairs, horizontal homomorphisms that have no names are induced by inclusions. Vertical maps $({\bf i}_1)_*$ are induced by restrictions of ${\bf i}_1$ (defined in Notation \ref{notation} (3)).
\end{proposition}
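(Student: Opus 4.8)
The plan is to construct ${\bf j}_*$ by a diagram chase, building it first on the image of the ``edge-level'' relative homology and then extending over all of $H_1(\mathrm{SO}(N),F_N;\mathbb{Z})$ using the long exact sequences of the pairs. First I would record the relevant long exact sequences: for the pair $(\mathrm{SO}(N),F_N)$ one has
\begin{equation*}
H_1(F_N)\to H_1(\mathrm{SO}(N))\to H_1(\mathrm{SO}(N),F_N)\xrightarrow{\partial} H_0(F_N)\to H_0(\mathrm{SO}(N)),
\end{equation*}
and similarly for $(\mathrm{SO}(M_0),F_{M_0})$, together with the excision/inclusion comparison with the subcomplex $\mathrm{SO}(N)|_{N^{(1)}}$. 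Since $F_N$ is a finite collection of points and $\mathrm{SO}(N)\simeq N\times\mathrm{SO}(3)$ via $t_N$, the group $H_1(\mathrm{SO}(N),F_N;\mathbb{Z})$ is a finitely generated abelian group and is generated by classes coming from $H_1(\mathrm{SO}(N)|_{N^{(1)}},F_N;\mathbb{Z})$ together with classes of $2$-simplex ``filling'' discs: a triangle $\Delta_{ijk}$ lifts (via a section over the simply-connected $\Delta_{ijk}$) to a relative $2$-chain in $\mathrm{SO}(N)$ whose boundary is a relative $1$-cycle supported over $N^{(1)}\cup F_N$. Thus the leftmost horizontal map in the top row is surjective, so if a lift exists making the left square and the $\partial$-squares commute, it is forced on generators and one only needs consistency.

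The key step is defining the lift on the edge level and checking the square over $H_1(\mathrm{SO}(N)|_{V_N},F_N;\mathbb{Z})$. On vertices and frames the map is already given: $({\bf i}_1)_*$. For an oriented edge $e_{ij}$ of $N^{(1)}$, $i_1$ carries it to an honest path in $M_0$ from $m_i$ to $m_j$, and ${\bf i}_1$ carries its canonical frame-bundle lift to a path in $\mathrm{SO}(M_0)$ from ${\bf F}_{ij*}^{M_0}$ to ${\bf F}_{ji*}^{M_0}$ (up to the short vertical connecting paths in the fibres over $m_i,m_j$, which live in $\mathrm{SO}(3)$ and are absorbed into $H_1$ of fibres). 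This defines $({\bf i}_1)_*$ on $H_1(\mathrm{SO}(N)|_{N^{(1)}},F_N;\mathbb{Z})$, and by functoriality of the long exact sequence of a pair applied to the map of pairs $\big(\mathrm{SO}(N)|_{N^{(1)}},F_N\big)\to\big(\mathrm{SO}(M_0),F_{M_0}\big)$ induced by ${\bf i}_1$, the left square and the $\partial$ compatibility with $H_0(F_N)\to H_1(F_{M_0})$ both commute automatically — they are just naturality squares. (Note the asserted target of $\partial$ in the statement is $H_1(F_{M_0})$ rather than $H_0(F_{M_0})$, reflecting that in the bottom row one uses the $\mathrm{SO}(3)$-fibre homology of the basepoints; I would interpret the right-hand vertical $({\bf i}_1)_*$ accordingly, sending each generator of $H_0(F_N)$ to the class of the $2\pi$-rotation loop in the fibre of $\mathrm{SO}(M_0)$ over the corresponding image point, which is exactly what the canonical-lift normalization forces.)

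The main obstacle — and the only real content — is extending $({\bf i}_1)_*$ from the $1$-skeleton level to $H_1(\mathrm{SO}(N),F_N;\mathbb{Z})$, i.e.\ choosing where to send the class $[\widehat\Delta_{ijk}]$ of the lifted filling disc of each $2$-simplex. The boundary $\partial[\widehat\Delta_{ijk}]\in H_0(F_N;\mathbb{Z})$ is determined, and to define ${\bf j}_*[\widehat\Delta_{ijk}]$ I need a relative $1$-cycle in $\mathrm{SO}(M_0)$ with the correct boundary; any two choices differ by an element of the image of $H_1(\mathrm{SO}(M_0))\to H_1(\mathrm{SO}(M_0),F_{M_0})$, i.e.\ by an honest homology class. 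So I would choose, for each triangle, an arbitrary relative $1$-cycle $w_{ijk}$ in $\mathrm{SO}(M_0)$ with $\partial w_{ijk}=({\bf i}_1)_*(\partial[\widehat\Delta_{ijk}])$ — this is possible since $H_0$ of $F_{M_0}\to\mathrm{SO}(M_0)$ is injective onto the relevant summand (both are connected in the needed sense, or more precisely the class is in the kernel of $H_0(F_{M_0})\to H_0(\mathrm{SO}(M_0))$ because $({\bf i}_1)_*\partial$ already kills it on the $N$ side and naturality transports this). Then one must check well-definedness: relations among the $[\widehat\Delta_{ijk}]$ and edge classes in $H_1(\mathrm{SO}(N),F_N;\mathbb{Z})$ come precisely from $2$-cycles of $N$ (closed surfaces in the triangulation) and from $\pi_1$-relations on the $1$-skeleton; on these the already-defined edge-level part of ${\bf j}_*$ together with the freedom in choosing $w_{ijk}$ must be reconciled. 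Here one uses that $H_2(\mathrm{SO}(N),F_N;\mathbb{Z})\to H_1$ of the relevant complex is controlled, and that the obstruction to consistency lives in $H_2(\mathrm{SO}(M_0))$ — but since we are mapping into a \emph{relative} group $H_1(\mathrm{SO}(M_0),F_{M_0};\mathbb{Z})$ and the bottom-row comparison maps $\mathrm{id}_*$ and the inclusion are isomorphisms on the relevant summand, this obstruction is absorbed. Concretely I would verify commutativity generator-by-generator: pick generators of $H_1(\mathrm{SO}(N),F_N;\mathbb{Z})$ as (i) images of edge classes and (ii) triangle classes; on type (i) everything is naturality; on type (ii) the right-hand $\partial$-square holds by construction of $w_{ijk}$, and the middle square (with $\mathrm{id}_*$) is vacuous; finally check that any two type-(ii) generators mapping to homologous classes in $N$ are sent to homologous classes, adjusting the $w_{ijk}$ if necessary. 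This bookkeeping is routine homological algebra once the long exact sequences and the triviality of the frame bundles are in place, so I expect the proof to be short, with the only subtlety being the careful identification of the right-hand vertical map and the statement that the choices $w_{ijk}$ can be made consistently — which follows because the discrepancy lands in the image of $H_1(\mathrm{SO}(M_0);\mathbb{Z})$ and hence can be corrected without changing boundaries.
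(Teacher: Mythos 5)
Your overall frame is the right one, and it is the same reduction the paper uses (the paper defers to Proposition 3.1 of \cite{LS}): since the inclusion-induced map $\iota_*:H_1(\text{SO}(N)|_{N^{(1)}},F_N;\mathbb{Z})\to H_1(\text{SO}(N),F_N;\mathbb{Z})$ is surjective and the middle square forces ${\bf j}_*\circ\iota_*=({\bf i}_1)_*$, the map ${\bf j}_*$ is uniquely determined and the entire content is well-definedness, i.e.\ the factoring $\ker\iota_*\subseteq\ker({\bf i}_1)_*$. But your execution misses exactly this step. The $2$-simplices of $N$ do not contribute new generators of $H_1(\text{SO}(N),F_N;\mathbb{Z})$ --- a lifted filling disc is a relative $2$-chain, so its boundary class is a \emph{relation}, not a generator --- hence there are no classes $[\widehat\Delta_{ijk}]$ whose images you are free to choose, no cycles $w_{ijk}$ to pick, and no ``discrepancy that can be absorbed into the image of $H_1(\text{SO}(M_0);\mathbb{Z})$'': the value of ${\bf j}_*$ on every class is already pinned down by $({\bf i}_1)_*$, and what must be \emph{proved} is that the forced assignment kills $\ker\iota_*$. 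Concretely, since $H_1(F_N)=0$ and the bundles are trivialized, $\ker\iota_*$ is identified with $\ker\big(H_1(N^{(1)};\mathbb{Z})\to H_1(N;\mathbb{Z})\big)\oplus 0\subset H_1(N^{(1)};\mathbb{Z})\oplus H_1(\text{SO}(3);\mathbb{Z})$, and it is annihilated by $({\bf i}_1)_*$ precisely because Data \ref{data} (2) stipulates that $(i_1)_*$ factors through $i_*:H_1(N;\mathbb{Z})\to H_1(M_0;\mathbb{Z})$, while the $\text{SO}(3)$-summand is matched exactly because ${\bf i}_1$ is built from the compatible trivializations of Notation \ref{notation} (3). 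Your proposal never invokes Data \ref{data} (2), and without it the statement is simply false for a general embedding $i_1:N^{(1)}\to M_0$ (a curve in $N^{(1)}$ bounding in $N$ need not have null-homologous image in $M_0$), so the well-definedness is not ``routine bookkeeping'' --- it is the one place the hypotheses enter, and it is absent from your argument.

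A smaller point: your reinterpretation of the right-hand vertical map as sending a generator of $H_0(F_N;\mathbb{Z})$ to a ``$2\pi$-rotation loop in the fibre'' does not work, since $F_{M_0}$ is a finite set of frames and $H_1(F_{M_0};\mathbb{Z})=0$; the bottom-right entry is evidently a misprint for $H_0(F_{M_0};\mathbb{Z})$, and the right vertical map is just the one induced by the bijection ${\bf i}_1|_{F_N}:F_N\to F_{M_0}$. With that reading, your observation that the left square and the $\partial$-squares commute by naturality (once ${\bf j}_*$ exists) is correct and agrees with the paper.
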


The proof of Proposition \ref{homologicalinstruction} is given in Proposition 3.1 of \cite{LS}, which works well for cusped hyperbolic $3$-manifolds, so we skip the proof. Here ${\bf j}_*$ is constructed by proving that $({\bf i}_1)_*:H_1(\text{SO}(N)|_{N^{(1)}},F_N;\mathbb{Z})\to H_1(\text{SO}(M_0),F_{M_0};\mathbb{Z})$ factors through $H_1(\text{SO}(N),F_N;\mathbb{Z})$, by using Data \ref{data} (2) and Notation \ref{notation} (3).

Given the homological instruction 
$${\bf j}_*:H_1(\text{SO}(N),F_N;\mathbb{Z})\to H_1(\text{SO}(M_0),F_{M_0};\mathbb{Z})$$
in Proposition \ref{homologicalinstruction}, we are ready to construct a $2$-complex $Z$ and a map $j:Z\looparrowright M$.

The $2$-complex $Z$ consists of vertices, edges and $2$-dim pieces that are compact orientable surfaces with connected boundaries. These building blocks have a one-to-one correspondence with $0$-, $1$- and $2$-cells of $N^{(2)}$. We will build complexes $Z^{(0)}$, $Z^{(1)}$, $Z$ and maps $j^0:Z^{(0)}\to M_0$, $j^1:Z^{(1)}\to M_0$, $j:Z\to M_0$ inductively.

We construct the vertex set $Z^{(0)}$ of $Z$ and $j^0:Z^{(0)}\to M_0$ as following.
\begin{construction}\label{constructionZ0}
We define $Z^{(0)}$ to be a finite set $\{v_1,\cdots,v_l\}$ that has the same cardinality as $N^{(0)}$. We define the map $j^0:Z^{(0)}\to M_0$ by sending each $v_k$ to $m_k=i_1(n_k)\in V_{M_0}$.
\end{construction}

The indices of vertices induce a total order on $N^{(0)}$ and a total order on $Z^{(0)}$. Given this order, for any (unoriented) edge in $N^{(1)}$ between $n_i$ and $n_j$ with $i<j$, it has a preferred orientation from $n_i$ to $n_j$ and this oriented edge is denoted by $e_{ij}$.
% For such an oriented edge $e_{ij}$ in $N^{(1)}$, we take the smallest $k$ such that $n_i,n_j,n_k$ are the vertices of a triangle $\Delta_{ijk}$ in $N^{(2)}$.

Since there are only finitely many edges $e_{ij}$ in $N$, for any small $\epsilon>0$, we take $R>3I(\pi-\phi_0)$ and large enough so that we can apply Theorem \ref{enhancedconnectionprinciple} to do the following constructions.

\begin{construction}\label{constructionZ1}
\begin{enumerate}
\item For indices $i<j$, if $n_i$ and $n_j$ give an oriented edge $e_{ij}$ in $N^{(1)}$, $Z$ has an oriented edge $e_{ij}^Z$ from $v_i$ to $v_j$. So we obtain the $1$-skeleton $Z^{(1)}$.

\item To construct the map $j^1:Z^{(1)}\to M_0$, as an extension of $j^0:Z^{(0)}\to M_0$, we define a map on each oriented segment $e_{ij}^Z$ with $i<j$. We apply Theorem \ref{enhancedconnectionprinciple} to construct a $\partial$-framed segment $\mathfrak{s}_{ij}$ in $M_0$ from $m_i=j^0(v_i)$ to $m_j=j^0(v_j)$ such that the following conditions hold, and we map $e_{ij}^Z$ to the carrier of $\mathfrak{s}_{ij}$ via an orientation preserving linear map.
\begin{enumerate}
\item Then length and phase of $\mathfrak{s}_{ij}$ are $\frac{\epsilon}{10}$-close to $R$ and $0$ respectively, and the height of $\mathfrak{s}_{ij}$ is at most $2\log{R}-1$.

\item Let $k$ be the smallest index so that $n_i,n_j,n_k$ span a triangle in $N^{(2)}$, then the initial and terminal frames of $\mathfrak{s}_{ij}$ are $\frac{\epsilon}{10}$-close to ${\bf F}_{ijk}^{M_0}\in \text{SO}(M)_{m_i}$ and $-{\bf F}_{jik}^{M_0}\in \text{SO}(M)_{m_j}$ respectively.

\item  Let $s_{ij}$ be the carrier of $\mathfrak{s}_{ij}$, then we have
$$[{\bf F}_{ijk}^{M_0}\xrightarrow{s_{ij}}(-{\bf F}_{jik}^{M_0})]={\bf j}_*[{\bf F}_{ijk}\xrightarrow{e_{ij}}(-{\bf F}_{jik})]\in H_1(\text{SO}(M_0),\{{\bf F}_{ijk}^{M_0},-{\bf F}_{jik}^{M_0}\};\mathbb{Z}).$$
\end{enumerate}
\end{enumerate}
\end{construction}

Note that although $Z^{(1)}$ is isomorphic to $N^{(1)}$, $i_1:N^{(1)}\to M_0$ and $j^1:Z^{(1)}\to M_0$ are two different maps, while they induce the same homomorphism on $H_1$ (by Proposition \ref{homologicalinstruction}).

The construction of the geodesic segment $s_{ij}$ seems depend on the choice of the smallest $k$ in Construction \ref{constructionZ1} (2) (b). The following lemma proves that, once $s_{ij}$ is constructed with respect to the smallest $k$, it actually works for any other $k'$.

\begin{lemma}\label{welldefine}
For any $k'\ne k$ such that $n_i,n_j,n_{k'}$ span a triangle in $N$, $s_{ij}$ satisfies 
$$[{\bf F}_{ijk'}^{M_0}\xrightarrow{s_{ij}}(-{\bf F}_{jik'}^{M_0})]={\bf j}_*[{\bf F}_{ijk'}\xrightarrow{e_{ij}}(-{\bf F}_{jik'})]\in H_1(\text{SO}(M_0),\{{\bf F}_{ijk'}^{M_0},-{\bf F}_{jik'}^{M_0}\};\mathbb{Z}).$$ 
\end{lemma}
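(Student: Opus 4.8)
The plan is to compare the two relative homology classes
$[{\bf F}_{ijk'}^{M_0}\xrightarrow{s_{ij}}(-{\bf F}_{jik'}^{M_0})]$ and ${\bf j}_*[{\bf F}_{ijk'}\xrightarrow{e_{ij}}(-{\bf F}_{jik'})]$ by relating each of them to the corresponding class for the known index $k$, using the fact that all the frames ${\bf F}_{ijk}, {\bf F}_{ijk'}$ are based at the same point $m_i$ (respectively $-{\bf F}_{jik}, -{\bf F}_{jik'}$ at $m_j$) and differ only by a rotation of the frame inside the fiber $\text{SO}(M_0)_{m_i}$ (respectively $\text{SO}(M_0)_{m_j}$). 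The key observation is that, for a fixed oriented geodesic segment $s_{ij}$ from $m_i$ to $m_j$, changing the initial frame by a path $\sigma_i$ in the fiber over $m_i$ and the terminal frame by a path $\sigma_j$ in the fiber over $m_j$ changes the relative class $[{\bf p}\xrightarrow{s_{ij}}{\bf q}]$ by exactly the difference $[\sigma_j]-[\sigma_i]$ of these fiber classes; this is a formal consequence of the definition of $[{\bf p}\xrightarrow{s}{\bf q}]$ given in the excerpt (it is the concatenation of a short fiber path, the parallel transport along $s$, and another short fiber path), together with the fact that such short fiber paths are determined up to homotopy rel endpoints by the endpoints.

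Concretely, first I would choose explicit fiber paths $\sigma_i$ in $\text{SO}(M_0)_{m_i}$ from ${\bf F}_{ijk}^{M_0}$ to ${\bf F}_{ijk'}^{M_0}$ and $\sigma_j$ in $\text{SO}(M_0)_{m_j}$ from $-{\bf F}_{jik}^{M_0}$ to $-{\bf F}_{jik'}^{M_0}$; since both $\sigma_i,\sigma_j$ can be taken in the image of $\text{SO}(N)$-fiber paths under ${\bf i}_1$ (the frames in question are all ${\bf i}_1$-images of frames in $F_N$ based at $n_i$, $n_j$), their classes are controlled by ${\bf i}_1$. Then, by the fiber-translation principle above applied to $s_{ij}$ in $M_0$ and to $e_{ij}$ in $N$, I get
\[
[{\bf F}_{ijk'}^{M_0}\xrightarrow{s_{ij}}(-{\bf F}_{jik'}^{M_0})] = [{\bf F}_{ijk}^{M_0}\xrightarrow{s_{ij}}(-{\bf F}_{jik}^{M_0})] + [\sigma_j]-[\sigma_i]
\]
and
\[
[{\bf F}_{ijk'}\xrightarrow{e_{ij}}(-{\bf F}_{jik'})] = [{\bf F}_{ijk}\xrightarrow{e_{ij}}(-{\bf F}_{jik})] + [\tau_j]-[\tau_i],
\]
where $\tau_i,\tau_j$ are the corresponding fiber paths in $\text{SO}(N)$ with ${\bf i}_1(\tau_i)=\sigma_i$, ${\bf i}_1(\tau_j)=\sigma_j$. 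Applying ${\bf j}_*$ to the second identity, using Construction \ref{constructionZ1}(2)(c) for the base index $k$, and using that ${\bf j}_*$ agrees with $({\bf i}_1)_*$ on classes supported in the fibers over $V_N$ (this is the left square of the commuting diagram in Proposition \ref{homologicalinstruction}, since fiber classes over $m_i$ come from $H_1(\text{SO}(N)|_{V_N},F_N;\mathbb{Z})$), I conclude ${\bf j}_*[\tau_j-\tau_i] = ({\bf i}_1)_*[\tau_j-\tau_i] = [\sigma_j-\sigma_i]$, and the two sides match.

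The main obstacle, and the step requiring the most care, is the bookkeeping of relative homology classes in $\text{SO}(M_0)$ with the moving basepoint set $\{{\bf F}_{ijk}^{M_0}, -{\bf F}_{jik}^{M_0}\}$ versus $\{{\bf F}_{ijk'}^{M_0}, -{\bf F}_{jik'}^{M_0}\}$: one must be precise that the "difference of fiber paths" formula is compatible with the change-of-basepoint isomorphisms between these various relative homology groups, and that the identification of fiber classes over $m_i$ with classes in $H_1(\text{SO}(M_0)_{m_i})\cong\mathbb{Z}/2$ (via the $\text{SO}(3)$-fiber) is the one through which ${\bf j}_*$ and $({\bf i}_1)_*$ are forced to agree by the left square of Proposition \ref{homologicalinstruction}. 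Once this is set up carefully — which is essentially the same diagram-chase as in the proof of the analogous lemma in \cite{LS} for closed hyperbolic $3$-manifolds — the conclusion is immediate, and I would simply point the reader to \cite{LS} for these routine verifications since the argument there transfers verbatim.
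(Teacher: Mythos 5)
Your overall skeleton (compare the $k'$-classes with the known identity for the base index $k$, correct by fiber paths at $m_i$ and $m_j$, and transfer the corrections through ${\bf j}_*$ via the commuting diagram of Proposition \ref{homologicalinstruction}) is indeed the right one, but the step you call a ``formal consequence of the definition'' is exactly where the content of the lemma lies, and as stated it is false. For a fixed segment $s$, replacing the endpoint frames $({\bf p},{\bf q})$ by $({\bf p}_1,{\bf q}_1)$ along fiber paths $\sigma_i,\sigma_j$ changes $[{\bf p}\xrightarrow{s}{\bf q}]$ by $[\sigma_j]-[\sigma_i]$ \emph{only if} the parallel transport of $\sigma_i$ along $s$ is homotopic rel endpoints (up to the short closing arcs) to $\sigma_j$; otherwise the two sides differ by the nontrivial fiber class $\Lambda\in H_1(\text{SO}(3);\mathbb{Z})\cong\mathbb{Z}/2$. (Take ${\bf p}_1={\bf p}$, ${\bf q}_1={\bf q}$, $\sigma_i$ a noncontractible fiber loop and $\sigma_j$ constant: your formula would force $[\sigma_i]=0$.) Since the $H_1(M_0)$-part of the class is already determined by the carrier segment, the whole point of Lemma \ref{welldefine} is to control precisely this $\mathbb{Z}/2$ ambiguity, so invoking the translation principle for arbitrary fiber paths, and deferring the verification to \cite{LS} as ``routine bookkeeping,'' begs the question. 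Note also that you never verify that the left-hand class is even defined, i.e.\ that the parallel transport of ${\bf F}_{ijk'}^{M_0}$ along $s_{ij}$ lands close to $-{\bf F}_{jik'}^{M_0}$; this is not automatic from the construction, which only controls the frames associated to the index $k$.

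The missing input is the geometric observation the paper uses: because $\Delta_{ijk}$ and $\Delta_{ijk'}$ are totally geodesic triangles sharing the geodesic edge $e_{ij}$, there is a \emph{single} element $A\in\text{SO}(3)$ with ${\bf F}_{ijk}={\bf F}_{ijk'}\cdot A$ and $(-{\bf F}_{jik})=(-{\bf F}_{jik'})\cdot A$, and since ${\bf i}_1$ is a morphism of $\text{SO}(3)$-principal bundles the same relation holds for the $M_0$-frames. Choosing one path $A_t$ from $id$ to $A$ in $\text{SO}(3)$ and taking $\tau_i(t)={\bf F}_{ijk'}\cdot A_t$, $\tau_j(t)=(-{\bf F}_{jik'})\cdot A_t$, $\sigma_i={\bf i}_1(\tau_i)$, $\sigma_j={\bf i}_1(\tau_j)$, the $\text{SO}(3)$-equivariance of parallel transport shows that transport of $\tau_i$ along $e_{ij}$ equals $\tau_j$, and transport of $\sigma_i$ along $s_{ij}$ stays uniformly close to $\sigma_j$ (this also gives the well-definedness of the left-hand class). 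With these compatible choices your fiber-translation identities do hold on both sides simultaneously, and the rest of your diagram chase through Construction \ref{constructionZ1} (2)(c) and Proposition \ref{homologicalinstruction} goes through; without the same-$A$ input, nothing in your argument excludes a discrepancy by $\Lambda$.
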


The proof of this statement is given in Lemma 3.1 of \cite{LS}, which works well for oriented cusped hyperbolic $3$-manifolds, so we skip the proof. The idea of proof works as following. Since the two geodesic triangles $\Delta_{ijk}$ and $\Delta_{ijk'}$ in $N$ share a geodesic segment $e_{ij}$, there is a unique $A\in \text{SO}(3)$ such that ${\bf F}_{ijk}={\bf F}_{ijk'}\cdot A$ and $(-{\bf F}_{jik})=(-{\bf F}_{jik'})\cdot A$ hold. Since ${\bf i}_1$ is a morphism of $\text{SO}(3)$-principal bundles, we have ${\bf F}_{ijk}^{M_0}={\bf F}_{ijk'}^{M_0}\cdot A$ and $(-{\bf F}_{jik}^{M_0})=(-{\bf F}_{jik'}^{M_0})\cdot A$.

For any $n_i,n_j,n_k$ in $N^{(0)}$ that span a geodesic triangle $\Delta_{ijk}$ in $N^{(2)}$, we have constructed a triple of oriented geodesic segments $s_{ij}, s_{jk}, s_{ki}$ in $M_0$ ($s_{ij}$ denotes the orientation reversal of $s_{ji}$ if $i>j$). The following lemma shows that the cyclic concatenation $s_{ij}s_{jk}s_{ki}$ is homtopic to a null-homologous good curve in $M_0$ that bounds a nearly geodesic subsurface.

\begin{lemma}\label{triplenullhomologous}
For any  $n_i,n_j,n_k\in N^{(0)}$ that span a geodesic triangle $\Delta_{ijk}$ in $N^{(2)}$, the cyclic concatenation of geodesic segments $s_{ij} s_{jk} s_{ki}$ in $M_0$ is homotopic to an oriented closed geodesic $\gamma_{ijk}$ such that the following hold.
\begin{enumerate}
\item There exists an immersed annulus $A_{ijk}$ in $M_0$ bounded by the cyclic concatenation $s_{ij}s_{jk}s_{ki}$ and $\gamma_{ijk}$, and it is nearly geodesic in the following sense.
\begin{enumerate}
\item The annulus $A_{ijk}$ has a triangulation with six vertices. Three of them are $m_i,m_j,m_k$ and three of them lie on $\gamma_{ijk}$.

\item The trianguation of $A_{ijk}$ consists of six totally geodesic triangles in $M_0$, and any two that share an edge have bending angle at most $\epsilon$.

\item At vertex $m_i$, the normal vector of ${\bf F}_{ijk}^{M_0}$ is $\epsilon$-close to the normal vector of any totally geodesic triangle with vertex $m_i$. The same holds for $m_j$ and $m_k$.

\end{enumerate}
\item The oriented closed geodesic $\gamma_{ijk}$ is good and null-homologous in the following sense.
\begin{enumerate}

\item The closed geodesic $\gamma_{ijk}$ is an $(R_{ijk},\epsilon)$-good curve of height at most $2\log{R_{ijk}}$. Here 
$R_{ijk}=\frac{3}{2}R-\frac{1}{2}(I(\pi-\theta_{ijk})+I(\pi-\theta_{jki})+I(\pi-\theta_{kij}))\in [\frac{3}{2}R-\frac{3}{2}I(\pi-\phi_0),\frac{3}{2}R],$
and $R_{ijk}>R$ holds. Here $\theta_{ijk}$ is the inner angle of $\Delta_{ijk}$ at vertex $n_j$.

\item The closed geodesic $\gamma_{ijk}$ is null-homologous in $M$, and $\sigma(\gamma_{ijk})=0\ \text{mod}\ 2$ for the invariant $\sigma$ in Proposition \ref{boundingsurface}.
\end{enumerate}
\end{enumerate}
\end{lemma}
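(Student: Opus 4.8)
The plan is to combine the geometric estimates for concatenations of $\partial$-framed segments (Lemmas \ref{lengthphase}, \ref{distance}) with the homological bookkeeping already assembled in Step I of the proof of Theorem \ref{boundingsurface}, where the canonical lift of $\gamma_{ijk}$-type curves was shown to be null-homologous. Concretely, I would first observe that by Construction \ref{constructionZ1}(2)(a) each $s_{ij}$ has length $\frac{\epsilon}{10}$-close to $R$ and, by items (b) for the three segments together with Notation \ref{notation}(4), the three bending angles of the cyclic concatenation $s_{ij}s_{jk}s_{ki}$ are $\frac{\epsilon}{5}$-close to $\pi-\theta_{jki}$, $\pi-\theta_{kij}$, $\pi-\theta_{ijk}$ respectively (the terminal frame of $\mathfrak{s}_{ij}$ is close to $-{\bf F}_{jik}^{M_0}$, the initial frame of $\mathfrak{s}_{jk}$ is close to ${\bf F}_{jki}^{M_0}={\bf F}_{jik}^{M_0}\cdot A$ for the rotation $A$ fixing $e_{jk}$... — here I would use Lemma \ref{welldefine} and the dihedral-angle bound $\phi_0$ to see these two directions differ by the inner angle of $\Delta_{ijk}$ at $n_j$, up to $\frac{\epsilon}{5}$). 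Since $R>3I(\pi-\phi_0)$ and $R$ is large, the cycle is $(L,\theta)$-tame with $L\sim R/2$ and $\theta=\pi-\phi_0+\frac{\epsilon}{5}<\pi$, so Lemma \ref{lengthphase}(2) applies and yields a closed geodesic $\gamma_{ijk}$ with length $\frac{\epsilon}{2}$-close to $3R-(I(\pi-\theta_{jki})+I(\pi-\theta_{kij})+I(\pi-\theta_{ijk}))=2R_{ijk}$ and phase $\frac{\epsilon}{2}$-close to $0$; this is precisely the statement that $\gamma_{ijk}$ is $(R_{ijk},\epsilon)$-good, establishing (2)(a) after checking $R_{ijk}\in[\frac32 R-\frac32 I(\pi-\phi_0),\frac32 R]$ (immediate from $\theta_{ijk}\in[\phi_0,\pi-\phi_0]$ and $I\geq 0$) and $R_{ijk}>R$ (follows once $R>3I(\pi-\phi_0)$).

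For the height bound in (2)(a), each $s_{ij}$ has height at most $2\log R-1$ by Construction \ref{constructionZ1}(2)(a), the number of segments is $3\leq L$, and Lemma \ref{distance} (applicable since $R/2\geq 4(I(\pi-\phi_0)+10\log 2)$ for large $R$) says $\gamma_{ijk}$ lies in the $1$-neighborhood of $s_{ij}\cup s_{jk}\cup s_{ki}$; since height is $1$-Lipschitz the height of $\gamma_{ijk}$ is at most $2\log R$, which is $\leq 2\log R_{ijk}$ because $R_{ijk}>R$. For part (1), the annulus $A_{ijk}$ is built exactly as in the corresponding closed-manifold construction: the cyclic concatenation and $\gamma_{ijk}$ cobound an immersed annulus (they are freely homotopic by construction of reduced cyclic concatenation), and one triangulates it with the six vertices $m_i,m_j,m_k$ and three points on $\gamma_{ijk}$, filling in totally geodesic triangles; the $\epsilon$-bending control between adjacent triangles and the $\epsilon$-closeness of the normal of ${\bf F}_{ijk}^{M_0}$ to the triangle normals at $m_i,m_j,m_k$ follow from the frame conditions in Construction \ref{constructionZ1}(2)(b) and the fact that the geodesic triangles in $M_0$ interpolating between nearly-parallel framed segments stay nearly totally geodesic — this is the same argument as in \cite{LS}, so I would cite it rather than reproduce it.

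The homological statement (2)(b) is where the homological instruction ${\bf j}_*$ and Proposition \ref{homologicalinstruction} do the work. That $\gamma_{ijk}$ is null-homologous in $M$ follows because $[\gamma_{ijk}]=[s_{ij}]+[s_{jk}]+[s_{ki}]$ in $H_1(M_0;\mathbb{Z})$ (the annulus gives the homology), and the homology class of the cyclic concatenation equals $\partial$ applied to the relative class $[{\bf F}_{ijk}^{M_0}\xrightarrow{s_{ij}}(-{\bf F}_{jik}^{M_0})]+\cdots$, which by Construction \ref{constructionZ1}(2)(c), Lemma \ref{welldefine}, and the commuting diagram of Proposition \ref{homologicalinstruction} equals ${\bf j}_*$ of the corresponding class over $N$; but the boundary of a triangle $\Delta_{ijk}$ in $N^{(2)}$ is null-homologous in $N^{(1)}\hookrightarrow N^{(2)}$, and pushing through the left and middle columns of the diagram shows its image under $({\bf i}_1)_*$ hence under ${\bf j}_*\circ(\text{inclusion})$ is null-homologous in $H_1(M_0;\mathbb{Z})$, which maps to $0$ in $H_1(M;\mathbb{Z})$. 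The invariant $\sigma(\gamma_{ijk})=0\in\mathbb{Z}_2$ is sharper: by the definition of $\sigma$ in the proof of Theorem \ref{boundingsurface} it is the $H_1(\mathrm{SO}(3);\mathbb{Z})$-component of $\Phi[\gamma_{ijk}]\in H_1(\mathrm{SO}(M_0);\mathbb{Z})$, and since $\Phi\circ(\text{canonical lift})$ is the identity on the level of $\mathrm{SO}(M_0)$-homology, it suffices to show the canonical lift $\hat\gamma_{ijk}$ is null-homologous in $\mathrm{SO}(M_0)$. That in turn follows from the relative-homology identity $[{\bf F}_{ijk}^{M_0}\xrightarrow{s_{ij}}\cdot]+[\cdot\xrightarrow{s_{jk}}\cdot]+[\cdot\xrightarrow{s_{ki}}{\bf F}_{ijk}^{M_0}]={\bf j}_*[\partial\Delta_{ijk}\text{-class}]$ together with the right square of Proposition \ref{homologicalinstruction}'s diagram (matching the $2\pi$-rotation loops and short-path corrections in the definition of canonical lifts versus the $[\ \cdot\ \xrightarrow{\ \cdot\ }\ \cdot\ ]$ classes), exactly as in the proof of Lemma 5.12 of \cite{LM} invoked in Step I — so I would follow that reference for the frame-rotation bookkeeping. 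I expect this last matching — reconciling the canonical-lift normalization (which rotates $2\pi$ about the normal and closes up by a $2\epsilon$-short path in the fiber) with the $\partial$-framed-segment homology classes, so that $\sigma=0$ rather than $\sigma=1$ — to be the most delicate point, and it is the one place where I'd lean hardest on the cited arguments in \cite{LM} and \cite{LS}.
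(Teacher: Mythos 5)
Your proposal is correct and follows essentially the same route as the paper, which likewise derives (2)(a) from Construction \ref{constructionZ1} (2)(a)(b) together with Lemmas \ref{lengthphase} (2) and \ref{distance}, handles the annulus in (1) by citing the closed-manifold construction (Lemma 3.5 of \cite{Sun2}), and obtains (2)(b) from Construction \ref{constructionZ1} (2)(c), Proposition \ref{homologicalinstruction} and the frame-rotation bookkeeping of \cite{LS} and \cite{LM}. The only slips are cosmetic: your labelling of the three bending angles is cyclically permuted (the junction at $m_j$ contributes $\pi-\theta_{ijk}$), which does not affect the sum entering $R_{ijk}$, and the annulus estimate is cited in the paper from \cite{Sun2} rather than \cite{LS}.
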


Estimates of the immersed annulus in item (1) follows from Lemma 3.5 of \cite{Sun2}. Estimates of the good curve $\gamma_{ijk}$ in item (2) follow from Construction \ref{constructionZ1} (2) (a) (b), Lemmas \ref{lengthphase} (2) and \ref{distance}.  The homological properties of $\gamma_{ijk}$ in item (2) follow from Construction \ref{constructionZ1} (2) (c) and the proof of Lemma 3.2 of \cite{LS}, which works well for oriented cusped hyperbolic $3$-manifolds.

Now we are ready to construct the map $j:Z\looparrowright M$. Here we take a large integer $R'$ greater than all the $R_{ijk}$.

\begin{construction}\label{constructionZ}
Let $n_i,n_j,n_k\in N^{(0)}$ be a triple of vertices that span a geodesic triangle $\Delta_{ijk}$ in $N^{(2)}$. By Lemma \ref{triplenullhomologous} (1), the cyclic concatenation $s_{ij}s_{jk}s_{ki}$ and the $(R_{ijk},\epsilon)$-good curve $\gamma_{ijk}$ cobound a nearly geodesic annulus $A_{ijk}$ in $M_0$. We take the shortest geodesic segment in $A_{ijk}$ from $\gamma_{ijk}$ to a preferred vertex $m_i$, let its tangent vector at the intersection with $\gamma_{ijk}$ be $\vec{w}_{ijk}$, and let $\vec{v}_{ijk}=\vec{w}_{ijk}+(1+\pi i)\in N^1(\sqrt{\gamma_{ijk}})$. 

By Lemma \ref{triplenullhomologous} (2), for the $(R_{ijk},\epsilon)$-good curve $\gamma_{ijk}$ in $M_0$ with height at most $2\log{R_{ijk}}$, we have $\sigma(\gamma_{ijk})=0\ \text{mod}\ 2$. Then we apply Proposition \ref{boundingsurface} and Remark \ref{combinatorialdistance} to construct an $(R',\epsilon)$-nearly geodesic subsurface with $(R_{ijk},\epsilon)$-good boundary $S_{ijk}\looparrowright M$, such that the following hold. 
\begin{enumerate}
\item The oriented boundary of $S_{ijk}$ is $\gamma_{ijk}$, and its foot on $\gamma_{ijk}$ is $\frac{\epsilon}{R}$-close to $\vec{v}_{ijk}$.
\item Any essential path in $S_{ijk}$ from $\gamma_{ijk}$ to itself has combinatorial length (with respect to the decomposition of $S_{ijk}$ to pants and hamster wheels) at least $R'e^{R'}$.
\end{enumerate}
For the corresponding triple of vertices $v_i,v_j,v_k\in Z^{(0)}$, we paste the annulus $A_{ijk}$ to $Z^{(1)}$ along $e_{ij}^Z\cup e_{jk}^Z \cup e_{ki}^Z$ and paste the surface $S_{ijk}$ with $A_{ijk}$ along $\gamma_{ijk}$. After doing this process for all such triple of vertices in $N$, we obtain the desired $2$-complex $Z$. The map $j:Z\looparrowright M_0$ is given by $j^1:Z^{(1)}\looparrowright M_0$ and the maps from $A_{ijk}$ and $S_{ijk}$ to $M_0$.
\end{construction}

\bigskip

\subsection{Construction of the virtual $1$-domination}\label{contructdomination1}

In this section, we construct the desired finite cover $M'$ of $M$ and the degree-$1$ map $f:M'\to N$, modulo a $\pi_1$-injectivity result (Theorem \ref{pi1injectivity}).

To construct the finite cover $M'$ of $M$, we need to prove that $j:Z\looparrowright M_0$ is $\pi_1$-injective and $j_*(\pi_1(Z))<\pi_1(M_0)$ is a convex cocompact subgroup of $\text{Isom}_+(\mathbb{H}^3)$. For a torsion-free discrete subgroup $\Gamma<\text{Isom}_+(\mathbb{H}^3)$ , it has a limit set $\Lambda_{\Gamma}\subset \partial \mathbb{H}^3$ (the set of limit points of a $\Gamma$-orbit) and a convex hull $\text{Hull}_{\Gamma}\subset \mathbb{H}^3$ (the intersection between $\mathbb{H}^3$ and the minimal convex subset of $\mathbb{H}^3\cup \partial \mathbb{H}^3$ containing $\Lambda_{\Gamma}$). Recall that $\Gamma<\text{Isom}_+(\mathbb{H}^3)$ is a {\it convex cocompact} subgroup if $C_{\Gamma}=\text{Hull}_{\Gamma}/\Gamma$ is compact.

To state the $\pi_1$-injectivity result, we also need to define a compact oriented $3$-manifold $\mathcal{Z}$. The geodesic triangulation of the closed hyperbolic $3$-manifold $N$ also gives a handle decomposition of $N$. Let $\mathcal{N}^{(1)}$ be the union of $0$- and $1$-handles of $N$, which is naturally oriented. Each triangle $\Delta_{ijk}$ in $N^{(2)}$ gives a $2$-handle, and it is pasted to $\mathcal{N}^{(1)}$ along an annulus $L_{ijk}\subset \partial \mathcal{N}^{(1)}$, while these annuli are disjoint from each other. For each $\Delta_{ijk}$, we take a homeomorphism $\partial S_{ijk}\times I\to L_{ijk}$ ($S_{ijk}$ is defined in Construction \ref{constructionZ}). By using these homeomophisms to paste $\mathcal{N}^{(1)}$ and $\{S_{ijk}\times I\}$ together, we obtain a compact oriented $3$-manifold $\mathcal{Z}$.

Our technical result on the $\pi_1$-injectivity of $j:Z\looparrowright M_0$ is stated as following.

\begin{theorem}\label{pi1injectivity}
For any small $\epsilon>0$, there exists $R_0>0$ depend on $M_0$ and $\epsilon$, such that the following hold for any $R>R_0$. If we construct $j:Z\looparrowright M_0$ as in Constructions \ref{constructionZ1} and \ref{constructionZ} with respect to $R$ and $\epsilon$, then $j:Z\looparrowright M_0$ is $\pi_1$-injective and $j_*(\pi_1(Z))<\pi_1(M_0)$ is a convex cocompact subgroup. Moreover, the convex core of the covering space of $M_0$  corresponding to $j_*(\pi_1(Z))<\pi_1(M_0)$ is homeomorphic to the compact $3$-manifold $\mathcal{Z}$, as oriented manifolds.
\end{theorem}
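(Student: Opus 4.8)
The plan is to establish $\pi_1$-injectivity and convex cocompactness simultaneously by analyzing the action of $\pi_1(Z)$ on a Gromov-hyperbolic complex built from the universal cover $\mathbb{H}^3$ of $M_0$, following the strategy of \cite{Sun2}. First I would set up the geometric model: the $2$-complex $Z$ is itself a graph of spaces, where the vertex spaces are the nearly geodesic subsurfaces $S_{ijk}$ (each of which is $\pi_1$-injective by \cite{KW1}, and whose $\pi_1$-image is $K$-quasi-Fuchsian hence convex cocompact), the annuli $A_{ijk}$, and the $1$-skeleton $Z^{(1)}$; the edge spaces are the good curves $\gamma_{ijk}$ and the carrier segments $s_{ij}$. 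By standard Bass--Serre/graph-of-groups theory, to prove $j_*$ is injective it suffices to show that (i) each vertex space maps $\pi_1$-injectively (already known), (ii) the edge maps are $\pi_1$-injective (clear, since the edge groups are cyclic or trivial and generated by nontrivial loops), and (iii) there is no ``folding'': the images of adjacent vertex groups intersect only in the image of the common edge group. Then I would build, over the universal cover, a tree-like decomposition of the preimage $\tilde{Z} \subset \mathbb{H}^3$ (a union of lifted nearly geodesic planes, lifted annuli, and lifted geodesic segments) and show this collection is ``uniformly separated and uniformly thick,'' so that a distance-like function along $\tilde{Z}$ satisfies a quasi-isometric-embedding estimate into $\mathbb{H}^3$.

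The key steps in order: (1) Record the geometric estimates on the pieces --- each $S_{ijk}$ is $(R',\epsilon)$-nearly geodesic, so its lift is within bounded Hausdorff distance of a totally geodesic plane $P_{ijk}$; each $A_{ijk}$ is within $O(\epsilon)$ of a totally geodesic hexagon; each $s_{ij}$ is a genuine geodesic segment of length $\approx R$. This is where Proposition \ref{estimateonsurface} (the estimate on nearly geodesic subsurfaces in cusped manifolds, referenced in the introduction) enters, replacing the closed-manifold estimates of \cite{Sun2}. (2) Establish the ``long and straight'' / ``corner turning'' lemma: whenever the path in $\tilde{Z}$ passes from one piece to an adjacent one, it turns by an angle bounded away from $\pi$ --- this uses the $(1+\pi i)$-shift feet-matching condition \eqref{2.1} built into every pasting (via Construction \ref{constructionZ} and the choice of $\vec v_{ijk}$ and $\vec v_i$), together with the combinatorial-distance lower bound $R'e^{R'}$ from Remark \ref{combinatorialdistance}, which guarantees that consecutive turning points are far apart along each surface piece. (3) Chain the local estimates: a concatenation of segments/surface-geodesics that is $(L,\theta)$-tame with $L \gtrsim R$ and $\theta$ bounded away from $\pi$ is, by Lemma \ref{lengthphase} and an Anosov-closing / thin-triangles argument, a $(1+\delta, C)$-quasigeodesic in $\mathbb{H}^3$; hence $j_*$ is injective and $j_*(\pi_1 Z)$ acts on $\mathbb{H}^3$ as a convex cocompact group, because quasigeodesics with uniform constants have a well-defined, compact-mod-$\Gamma$ convex hull. (4) Identify the convex core: the convex hull of $\tilde Z$ modulo $j_*(\pi_1 Z)$ deformation retracts onto a regular neighborhood of $Z$, whose thickened structure --- a $1$-handlebody $\mathcal{N}^{(1)}$ (coming from $Z^{(1)}$ and the annuli) with $S_{ijk}\times I$ attached along $\partial S_{ijk}\times I$ --- is exactly the description of $\mathcal Z$; one checks the homeomorphism respects orientations because all pieces were taken oriented and the pasting maps are orientation-compatible by construction.

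The main obstacle will be step (2)--(3) in the cusped setting, specifically controlling the geometry of the nearly geodesic subsurfaces $S_{ijk}$ where they run deep into cusps. In the closed case of \cite{Sun2}, positive injectivity radius gives uniform thickness of the surface pieces and uniform bounds on how geodesics in them behave; here the hamster-wheel/umbrella building blocks of \cite{KW1} can wind far up the cusps, so a geodesic arc in $S_{ijk}$ that is short combinatorially might still be long and ``curved'' metrically, or two sheets of $\tilde Z$ could come close together high in a cusp. The resolution is to exploit the height bounds carried through every construction --- the good curves $\gamma_{ijk}$ have height $\le 2\log R_{ijk}$, the segments $\mathfrak s_{ij}$ have height $\le 2\log R - 1$, and Theorem \ref{boundingsurface} was stated with the explicit $\alpha\log R$ / $R'$ height parameters precisely so that $S_{ijk}$ has controlled height --- combined with the fact that in a cusp the geometry is that of a horoball quotient, where a ``local-to-global'' quasigeodesic criterion still holds once one passes to the thick part at the cost of bounded, $\log$-controlled detours (exactly the estimates packaged in Theorem \ref{connectionprinciple}(3) and Proposition \ref{estimateonsurface}). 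Assembling these height-controlled estimates into a single uniform quasigeodesic constant, independent of $R$ for $R$ large, is the technical heart of Section \ref{pi1inj1}, and I would expect to mirror the bookkeeping of \cite{Sun2} closely while inserting the cusped estimates at each place a ``bounded geometry'' fact was previously invoked.
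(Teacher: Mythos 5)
Your route to $\pi_1$-injectivity and convex cocompactness---local geometric control on the pieces via Theorem \ref{estimateonsurface}, angle lower bounds at transitions between pieces, and chaining with Lemma \ref{lengthphase} to show that paths in $\tilde{Z}$ map to uniform quasigeodesics in $\mathbb{H}^3$---is essentially the argument of Section \ref{pi1inj1} (the opening Bass--Serre framing is subsumed by, and not separately needed for, that quasi-isometric-embedding estimate; also, the corner-angle bound comes from the triangulation constants $\phi_0,\theta_0$ of $N$ rather than from the $(1+\pi i)$-shift, which instead feeds into Theorem \ref{estimateonsurface}). The genuine gap is in your step (4), i.e.\ the ``moreover'' clause. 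An equivariant quasi-isometric embedding $\tilde{j}:\tilde{Z}\to\mathbb{H}^3$ gives injectivity of $j_*$ and convex cocompactness of the image, but it does \emph{not} give injectivity of $\tilde{j}$ itself, and without embeddedness of the lifted complex in the cover corresponding to $j_*(\pi_1(Z))$ your assertion that ``the convex hull of $\tilde{Z}$ modulo $j_*(\pi_1 Z)$ deformation retracts onto a regular neighborhood of $Z$'' with the handle structure of $\mathcal{Z}$ has no justification: a priori the image could have self-intersections or unexpected identifications, so a regular neighborhood with the asserted product structure over the $S_{ijk}$ is not even defined, let alone homeomorphic to $\mathcal{Z}$ as an oriented manifold.

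The paper resolves exactly this point by an interpolation that your outline lacks. Construction \ref{family} produces a family $\rho_t$, $\tilde{j}_t$ ($t\in[0,1]$) in which $\tilde{j}_0$ sends each component of $\tilde{Z}\setminus\tilde{Z}^{(1)}$ to a totally geodesic plane; for this ideal model one proves not only a quasi-isometric embedding but genuine injectivity of $\tilde{j}_0$ (Proposition \ref{model}), which is what permits identifying the convex core of $\mathbb{H}^3/\rho_0(\pi_1(Z))$ with $\mathcal{Z}$ via an invariant neighborhood of $\tilde{j}_0(\tilde{Z})$. Proposition \ref{quasiisometry} then shows every $\tilde{j}_t$ is a quasi-isometric embedding, so $\{\rho_t(\pi_1(Z))\}$ is a continuous family of convex cocompact subgroups and the convex core at $t=1$ (which is $j_*(\pi_1(Z))$) is homeomorphic, as an oriented manifold, to the one at $t=0$. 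The introduction to Section \ref{pi1inj1} states explicitly that this interpolation is needed precisely because---unlike in \cite{KW1} or \cite{CG}---one must determine the topology of the convex core. To repair your proposal you would either have to prove that $\tilde{j}$ itself is an embedding with the claimed neighborhood structure (which the $\epsilon/R$-level well-matching available in the cusped setting does not obviously yield, and which the paper never establishes for $t=1$), or adopt the interpolation-and-continuity argument.
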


The proof of Theorem \ref{pi1injectivity} is in different flavor from other material in this section, so we defer the proof to Section \ref{pi1inj1}.

\begin{lemma}\label{properdomination}
There exists a degree-$1$ proper map $(\mathcal{Z},\partial \mathcal{Z})\to (\mathcal{N}^{(2)}, \partial \mathcal{N}^{(2)})$.
\end{lemma}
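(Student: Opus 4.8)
The plan is to build the degree-$1$ proper map $(\mathcal{Z},\partial\mathcal{Z})\to(\mathcal{N}^{(2)},\partial\mathcal{N}^{(2)})$ handle-by-handle, using the fact that $\mathcal{Z}$ and $\mathcal{N}^{(2)}$ are glued along the \emph{same} pasting annuli $L_{ijk}\subset\partial\mathcal{N}^{(1)}$. First I would recall the structure: $\mathcal{N}^{(2)}$ is $\mathcal{N}^{(1)}$ (the union of $0$- and $1$-handles of $N$) together with one $2$-handle $D_{ijk}^2\times I$ for each triangle $\Delta_{ijk}$, attached along $L_{ijk}$; and $\mathcal{Z}$ is the same $\mathcal{N}^{(1)}$ together with $S_{ijk}\times I$ for each triangle, attached along the same $L_{ijk}$ via a fixed homeomorphism $\partial S_{ijk}\times I\to L_{ijk}$ (matching the one implicitly giving $\partial D_{ijk}^2\times I\cong L_{ijk}$). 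On $\mathcal{N}^{(1)}\subset\mathcal{Z}$ I would take the map to be the identity onto $\mathcal{N}^{(1)}\subset\mathcal{N}^{(2)}$. On each block $S_{ijk}\times I$ I would take the product with the identity on $I$ of a pinching map $S_{ijk}\to D_{ijk}^2$: since $S_{ijk}$ is a compact orientable surface with a single boundary circle, there is a degree-$1$ map $S_{ijk}\to D^2$ that is the identity (orientation-preserving homeomorphism) on the boundary $\partial S_{ijk}\to\partial D^2$ and collapses a spanning complement to the center; concretely, realize $S_{ijk}$ as a disk with $2g$ bands attached and pinch each band to an arc. These two pieces agree along $L_{ijk}$ because both restrict to the identity there, so they glue to a well-defined continuous map $f:\mathcal{Z}\to\mathcal{N}^{(2)}$.

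Next I would check the two claimed properties. For \emph{properness}, observe that $\partial\mathcal{Z}$ consists of the part of $\partial\mathcal{N}^{(1)}$ not covered by the annuli $L_{ijk}$ together with the free boundary pieces $S_{ijk}\times\partial I$ and $(\partial S_{ijk}\setminus\text{glued part})\times I$ — but since the whole boundary circle $\partial S_{ijk}$ is glued, the lateral free boundary of $S_{ijk}\times I$ is just $S_{ijk}\times\partial I$; similarly $\partial\mathcal{N}^{(2)}$ is the uncovered part of $\partial\mathcal{N}^{(1)}$ together with $D_{ijk}^2\times\partial I$. On $\mathcal{N}^{(1)}$-boundary the map is the identity, and on $S_{ijk}\times\partial I$ it is the pinching map into $D_{ijk}^2\times\partial I$; in both cases $\partial\mathcal{Z}$ maps into $\partial\mathcal{N}^{(2)}$ and interior maps to interior, so $f$ is proper. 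For \emph{degree $1$}, I would argue locally: pick a regular value $x$ in the interior of a $0$-handle of $\mathcal{N}^{(1)}\subset\mathcal{N}^{(2)}$ that is disjoint from all the annuli and $2$-handles; since $f$ is the identity on $\mathcal{N}^{(1)}$, the preimage $f^{-1}(x)$ is the single point $x\in\mathcal{N}^{(1)}\subset\mathcal{Z}$, with local degree $+1$ because the identity is orientation-preserving and $\mathcal{Z},\mathcal{N}^{(2)}$ carry the compatible orientations induced from the orientation of $N$ (here I use that the $S_{ijk}$ and the gluing homeomorphisms were chosen orientation-compatibly, which is part of the setup in Construction \ref{constructionZ} and the definition of $\mathcal{Z}$). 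Hence $f_*[\mathcal{Z},\partial\mathcal{Z}]=[\mathcal{N}^{(2)},\partial\mathcal{N}^{(2)}]$ in $H_3(\mathcal{N}^{(2)},\partial\mathcal{N}^{(2)};\mathbb{Z})\cong\mathbb{Z}$.

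The only mildly delicate point — the step I would be most careful about — is the \emph{orientation bookkeeping}: one must verify that the pinching map $S_{ijk}\to D_{ijk}^2$ can be taken orientation-preserving on the boundary with respect to the orientations that $\partial S_{ijk}$ and $\partial D_{ijk}^2$ inherit inside $\mathcal{Z}$ and $\mathcal{N}^{(2)}$ respectively, so that the two pieces of $f$ glue to an orientation-compatible map of the same local degree everywhere it is regular. This follows because in both $\mathcal{Z}$ and $\mathcal{N}^{(2)}$ the block over $\Delta_{ijk}$ is glued to $\mathcal{N}^{(1)}$ along the \emph{same} framed annulus $L_{ijk}$ with the \emph{same} induced co-orientation, and the surface $S_{ijk}$ was constructed as an oriented surface replacing the oriented disk $D_{ijk}^2$ (see the sketch in the introduction and Construction \ref{constructionZ}); one then just checks that a standard handle-to-handle pinching is orientation-preserving, which is routine. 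Everything else — continuity of the glued map, properness, and computation of the degree from a single regular point in a $0$-handle — is formal once the pieces are matched up, so I would not belabor those computations.
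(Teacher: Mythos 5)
Your construction is exactly the paper's: the map is taken to be the identity on $\mathcal{N}^{(1)}$ and $p_{ijk}\times\mathrm{id}_I$ on each $S_{ijk}\times I$, where $p_{ijk}:S_{ijk}\to\Delta_{ijk}$ is a degree-$1$ pinching map (the paper defers the details to Lemma 3.4 of \cite{LS}). Your verification of properness, orientation compatibility, and the degree via a regular value in a $0$-handle matches that argument, so the proposal is correct and follows essentially the same route.
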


The construction of such a degree-$1$ proper map is described in the proof of Lemma 3.4 of \cite{LS}. This map maps $\mathcal{N}^{(1)}\subset \mathcal{Z}$ to $\mathcal{N}^{(1)}\subset \mathcal{N}^{(2)}$ by the identity map. Then it maps each $S_{ijk}\times I\subset \mathcal{Z}$ to the $2$-handle in $\mathcal{N}^{(2)}$ corresponding to $\Delta_{ijk}$ (homeomorphic to $\Delta_{ijk}\times I$) by $p_{ijk}\times \text{id}_I$, where $p_{ijk}:S_{ijk}\to \Delta_{ijk}$ is a degree-$1$ pinching map.

Now we are ready to construct the virtual degree-$1$ map, thus finish the proof of Theorem \ref{main1}.

\begin{proof}[Proof of Theorem \ref{main1}]
At first, by Lemma \ref{reductionclosed}, we can assume that $N$ is a closed oriented hyperbolic $3$-manifold. 

We can assume that $M$ is a prime $3$-manifold by the following argument. If $M$ is not prime, then $M=M_1\#M_2$ for a closed oriented prime $3$-manifold $M_1$ with positive simplicial volume. If we know that $M_1$ has a degree-$d$ finite cover $M_1'$ that admits a degree-$1$ map $f':M_1'\to N$, then we take 
$$M'=M_1'\# (\#_{i=1}^d M_2).$$ 
We can assume that the $d$ decomposition spheres in $M_1'$ are mapped to the same $2$-sphere in $M_1$ that bounds a $3$-ball. Then $M'$ is a degree-$d$ cover of $M$, by mapping $M_1'$ to $M_1$ via the degree-$d$ covering map and mapping each $M_2$ prime summand of $M'$ to the $M_2$ prime summand of $M$ by identity. Moreover, $M'$ admits a degree-$1$ map to $N$ defined by $M'\to M_1'\xrightarrow{f'} N$, where the first map pinches each $M_2$ prime summand of $M'$ to a $3$-ball in $M_1'$.

So we can assume that $M$ is a prime $3$-manifold. If $M$ is a closed oriented hyperbolic $3$-manifold, the result was already proved in \cite{LS}, so we can assume that $M$ is not hyperbolic. Since $M$ has positive simplicial volume, it has nontrivial JSJ decomposition and has a hyperbolic JSJ piece $M_0$. We also use $M_0$ to denote the non-compact finite volume hyperbolic $3$-manifold homeomorphic to the interior of $M_0$.

We first fix a geometric triangulation of $N$ as in the beginning of Section \ref{constructZ1}. Then we take a small constant $\epsilon>0$, and take a large enough $R>0$ such that Constructions \ref{constructionZ0}, \ref{constructionZ1} and \ref{constructionZ} and Theorem \ref{pi1injectivity} work.  Based on the triangulation of $N$, by Constructions \ref{constructionZ0}, \ref{constructionZ1} and \ref{constructionZ}, we construct a map $j:Z\looparrowright M_0$. By Theorem \ref{pi1injectivity}, $j:Z\looparrowright M_0$ is $\pi_1$-injective with convex cocompact image $j_*(\pi_1(Z))<\pi_1(M_0)<\text{Isom}_+(\mathbb{H}^3)$. Moreover, let $\tilde{M}_0$ be the covering space of $M_0$ corresponding to $j_*(\pi_1(Z))<\pi_1(M_0)$, then its convex core $C$ is homeomorphic to the compact $3$-manifold $\mathcal{Z}$ in Lemma \ref{properdomination}, as oriented manifolds.

Since the convex core $C$ of $\tilde{M}_0$ is compact, we delete some cusp ends of $M_0$ to get a compact submanifold $M_0^c\subset M_0$, so that the covering map $\tilde{M}_0\to M_0$ maps $C$ into $M_0^c$. We identify the compact manifold $M_0^c$ with the JSJ piece of $M$ corresponding to $M_0$.

Now we consider the subgroup $j_*(\pi_1(Z))<\pi_1(M_0^c)<\pi_1(M)$. Let $\tilde{M}$ be the covering space of $M$ corresponding to $j_*(\pi_1(Z))<\pi_1(M)$, then it contains a submanifold homeomorphic to $\tilde{M}_0^c$, the preimage of $M_0^c$ in $\tilde{M}_0$, and we have a compact submanifold $C\subset \tilde{M}_0^c\subset \tilde{M}$. Since $j_*(\pi_1(Z))$ is contained in a vertex subgroup of $\pi_1(M)$ (under the JSJ decomposition), by the characterization of separable subgroups in \cite{Sun3} (which heavily relies on the LERFness of hyperbolic $3$-manifold groups in \cite{Agol}), $j_*(\pi_1(Z))$ is separable in $\pi_1(M)$.

By Scott's topological characterization of separable subgroups (\cite{Sco}), there is an intermediate finite cover $M'\to M$ of $\tilde{M}\to M$, such that the compact submanifold $C\subset \tilde{M}$ is mapped into $M'$ via embedding, thus $M'$ contains a submanifold $C$ homeomorphic to $\mathcal{Z}$, as oriented manifolds. By Lemma \ref{properdomination}, there is a degree-$1$ proper map $f_0:C\to \mathcal{N}^{(2)}$. We take a neighborhood $\mathcal{N}(C)$ of $C$ in $M'$, then $\mathcal{N}(C)\setminus C$ is homeomorphic to $\partial C\times I$. Since each component of $N\setminus \mathcal{N}^{(2)}$ is a $3$-handle, there is a map $f_1:\mathcal{N}(C)\to N$ extending $f_0$, that maps each component of $\mathcal{N}(C)\setminus C$ to a $3$-handle of $N$, and maps each component of $\partial \mathcal{N}(C)$ to the center of the corresponding $3$-handle. 

Then we construct $f:M'\to N$ as an extension of $f_1$, by sending $M'\setminus \mathcal{N}(C)$ to a graph in $N$ that connects centers of $3$-handles of $N$. Moreover, we can assume that this graph in $N$ misses $\mathcal{N}^{(1)}$. Then for any point $n\in \mathcal{N}^{(1)}$, $f^{-1}(n)$ consists of a unique point in the submanifold of $C$ homeomorphic to $\mathcal{N}^{(1)}$, and $f$ is locally orientation preserving at $f^{-1}(n)$. So the degree of $f$ is $1$ and the proof is done.

\end{proof}

\bigskip
\bigskip

\section{Proof of Theorem \ref{pi1injectivity}}\label{pi1inj1}

In this section, we prove that the map $j:Z\looparrowright M_0$ constructed in Section \ref{constructZ1} is $\pi_1$-injective (Theorem \ref{pi1injectivity}), thus finish the proof of Theorem \ref{main1}.

For a $2$-complex $Z$ and a map $j:Z\looparrowright M$ to a closed hyperbolic $3$-manifold  arised from the good pants construction, many previous works (in \cite{Sun1}, \cite{Sun2}, \cite{Liu}, \cite{LS}) on the $\pi_1$-injectivity of $j$ assumed a finer estimate: 
$$|{\bf hl}(\gamma)-R|<\frac{\epsilon}{R} \ \text{and}\ |{\bf foot}_{\gamma}(\Pi_1,c_1)-{\bf foot}_{\bar{\gamma}}(\Pi_2,c_2)-(1+\pi i)|<\frac{\epsilon}{R^2},$$ instead of the condition
$$|{\bf hl}(\gamma)-R|<\epsilon \ \text{and}\ |{\bf foot}_{\gamma}(\Pi_1,c_1)-{\bf foot}_{\bar{\gamma}}(\Pi_2,c_2)-(1+\pi i)|<\frac{\epsilon}{R}$$ given in Section \ref{prekahnmarkovic}.

This finer estimate was essential for proving the $\pi_1$-injectivity in aforementioned works, but we do not have it in cusped hyperbolic $3$-manifolds anymore. At first, $(R,\epsilon)$-nearly geodesic subsurfaces in cusped hyperbolic $3$-manifolds also contain $(R,\epsilon)$-hamster wheels, so previous works in closed hyperbolic $3$-manifolds are not applicable. Moreover, in \cite{KW1}, when pasting an $(R,\epsilon)$-hamster wheel along its outer boundary, the $(R,\epsilon)$-well-matched condition only requires that two slow and constant turning normal fields are matched up to a $(100\epsilon)$-error. This $(100\epsilon)$-error can not be replaced by $\frac{100\epsilon}{R}$ easily, since it was essential for proving finiteness of  umbrellas in \cite{KW1}.

The proof of Theorem \ref{pi1injectivity} consists of three parts. In Section \ref{estimatesurface}, we give an estimate of the geometry of $(R,\epsilon)$-nearly geodesic subsurfaces, based on the work in \cite{KW1}. In Section \ref{estimate1}, we define an ideal model of $\tilde{j}:\tilde{Z}\looparrowright \tilde{M}_0=\mathbb{H}^3$ on universal covers, consisting of a representation $\rho_0:\pi_1(Z)\to \text{Isom}_+(\mathbb{H}^3)$ and a $\rho_0$-equivariant map $\tilde{j}_0:\tilde{Z}\to \mathbb{H}^3$, such that $\tilde{j}_0$ maps each component of $\tilde{Z}\setminus \tilde{Z}^{(1)}$ to a totally geodesic subsurface in $\mathbb{H}^3$. Then we prove that $\tilde{j}_0$ is both a quasi-isometric embedding and an embedding, and figure out the topology of the convex core of $\rho_0(\pi_1(Z))$. In Section \ref{qi1}, we finish the prove of Theorem \ref{pi1injectivity}, by proving that $\tilde{j}:\tilde{Z}\to \mathbb{H}^3$ is close to $\tilde{j}_0:\tilde{Z}\to \mathbb{H}^3$.

We give one more technical comment on the proof of Theorem \ref{pi1injectivity}. In our proof, we also need an interpolation between $\tilde{j}_0:\tilde{Z}\to \mathbb{H}^3$ and $\tilde{j}:\tilde{Z}\to \mathbb{H}^3$, since we need to figure our the topology of the convex core of $j_*(\pi_1(Z))$. The proof of the $\pi_1$-injectivity result in \cite{KW1} does not need such an interpolation, since the topology of the convex core is simple for surface groups. In \cite{CG}, Chu and Groves used a local-global argument to prove the $\pi_1$-injectivity of mapped-in $2$-complexes in cusped hyperbolic $3$-manifolds, but they do not need to figure out the topology of the convex core, so they did not use an interpolation either.

\subsection{An estimate on $(R,\epsilon)$-nearly geodesic subsurfaces}\label{estimatesurface}

In this section, we prove an estimate on $(R,\epsilon)$-near geodesic subsurfaces. 

At first, we review some works in \cite{KW1}.
By considering the left action of $\text{Isom}_+(\mathbb{H}^3)$ on $\text{SO}(\mathbb{H}^3)$, once we fix a base frame $v_0\in \text{SO}(\mathbb{H}^3)$, we have a bijection between  $\text{Isom}_+(\mathbb{H}^3)$ and
$\text{SO}(\mathbb{H}^3)$. The canonical Riemannian metric on  $\text{SO}(\mathbb{H}^3)$ induces a metric on $\text{Isom}_+(\mathbb{H}^3)$. For two frames $u,v\in \text{SO}(\mathbb{H}^3)$, by identifying them with elements in $\text{Isom}_+(\mathbb{H}^3)$, there is a unique element $(u\to v)\in \text{Isom}_+(\mathbb{H}^3)$ such that $u\cdot (u\to v)=v$ holds. Note that for any $g\in \text{Isom}_+(\mathbb{H}^3)$, we have $(u\to v)=(gu\to gv)\in \text{Isom}_+(\mathbb{H}^3)$. The following definition (in Section A.3 of \cite{KW1}) describes a map that locally behaves like an isometry.

\begin{definition}\label{boundeddistortion}
For a subset $X\subset \text{SO}(\mathbb{H}^3)$, a map $\tilde{e}:X\to \text{SO}(\mathbb{H}^3)$ is {\it $\epsilon$-distortion to distance $D$} if for any $u,v\in X$ with $d(u,v)<D$, we have $$d\big((u\to v),(\tilde{e}(u)\to \tilde{e}(v))\big)<\epsilon.$$
\end{definition}

For an $(R,\epsilon)$-nearly geodesic subsurface (possibly with boundary) $i:S\looparrowright M_0$ in a cusped hyperbolic $3$-manifold, it is given by an $(R,\epsilon)$-assembly $\mathcal{A}$. Let $\mathcal{C}$ be the family of unoriented curves that decomposes $S$ into pants and hamster wheels. In Section A.5 of \cite{KW1}, Kahn and Wright constructed a hyperbolic structure on $S$ such that each component of $S\setminus \mathcal{C}$ is isometric to the perfect pants or the perfect hamster wheel. 
This construction gives a perfect assembly $\hat{\mathcal{A}}$ corresponding to $\mathcal{A}$. 

Now we equip $S$ with this hyperbolic structure, and use $\hat{\mathcal{C}}$ to denote the the union of curves in $\mathcal{C}$ and $\partial S$. We define the following two subsets of $\text{SO}(S)$ and $\text{SO}(M_0)$:
$$\partial^{\mathcal{F}} \hat{\mathcal{A}}=\{(p,\vec{v},\vec{n})\in \text{SO}(S) \ |\ p\in \hat{\mathcal{C}},\ \vec{v}\ \text{is\ tangent\ to\ }\hat{\mathcal{C}}, \vec{n} \ \text{is\ inward\ pointing\ if\ }p\in \partial S \},$$
$$\partial^{\mathcal{F}} \mathcal{A}=\{(p,\vec{v},\vec{n})\in \text{SO}(M_0) \ |\ p\in i(\hat{\mathcal{C}}),\ \vec{v}\ \text{is\ tangent\ to\ }i(\hat{\mathcal{C}})\}.$$
Note that $\partial^{\mathcal{F}} \hat{\mathcal{A}}$ is a compact $1$-dim manifold and each component is homeomorphic to $S^1$, while $\partial^{\mathcal{F}} \mathcal{A}$ is a compact $2$-dim manifold and each component is homeomorphic to $T^2$. Since each frame in $\partial^{\mathcal{F}}\hat{\mathcal{A}}$ is determined by a
normal vector of $\hat{\mathcal{C}}$, we simply use normal vectors to represent frames in $\partial^{\mathcal{F}}\hat{\mathcal{A}}$.

Now we define a map $e:\partial^{\mathcal{F}}\hat{\mathcal{A}}\to  \partial^{\mathcal{F}}\mathcal{A}$ as in \cite{KW1}. For each component $c$ of $\mathcal{C}$, it is adjacent to two pieces $\Sigma_1,\Sigma_2$ of $S\setminus \mathcal{C}$, and we give $c$ an orientation so that $\Sigma_1$ lies to its left. There are two $S^1$-components $c_1,c_2$ of $\partial^{\mathcal{F}}\hat{\mathcal{A}}$ corresponding to $c$, and we assume that normal vectors of $c_1$ point into $\Sigma_1$ and normal vectors of $c_2$ point into $\Sigma_2$. Suppose that $c$ is mapped to $\gamma\in {\bf \Gamma}_{R,\epsilon}$, then $e$ maps $c_1$ and $c_2$ to slow and constant turning normal fields in $N^1(\gamma), N^1(\bar{\gamma})\subset \partial^{\mathcal{F}}\mathcal{A}$, such that their tangent vectors give the orientation of $\gamma$ and $\bar{\gamma}$, and it maps ${\bf foot}_c(\Sigma_1,c)\in c_1$ and ${\bf foot}_c(\bar{\Sigma}_2,c)\in c_2$ to ${\bf foot}_{\gamma}(\Sigma_1,c)$ and ${\bf foot}_{\gamma}(\bar{\Sigma}_2,c)$ respectively. For $\vec{n}_1\in c_1$ and $\vec{n}_2\in c_2$ based at the same point in $c$, note that $e(\vec{n}_1)$ and $e(\vec{n}_2)$ may not be based at the same point of $\gamma$, but these two basepoints have distance at most $\frac{2\epsilon}{R}$. Similarly, for any component $c\subset \partial S$, it gives a unique component of $\partial^{\mathcal{F}}\hat{\mathcal{A}}$, and we define $e$ on this component similarly.

Let $\tilde{i}:\tilde{S}\subset \mathbb{H}^2\to \tilde{M}_0=\mathbb{H}^3$ be the lifting of $i:S \looparrowright M_0$ to the universal cover. Let $\partial^{\mathcal{F}}\tilde{\hat{\mathcal{A}}}$ and $\partial^{\mathcal{F}}\tilde{\mathcal{A}}$ be the preimages of  $\partial^{\mathcal{F}}\hat{\mathcal{A}}$ and $\partial^{\mathcal{F}}\mathcal{A}$ in $\text{SO}(\mathbb{H}^2)$ and $\text{SO}(\mathbb{H}^3)$ respectively.  On the universal cover, the map $e:\partial^{\mathcal{F}}\hat{\mathcal{A}}\to  \partial^{\mathcal{F}}\mathcal{A}$ and the lifting  $\tilde{i}:\tilde{S}\to \tilde{M}_0$ induce a map 
$$\tilde{e}:\partial^{\mathcal{F}}\tilde{\hat{\mathcal{A}}}\subset \text{SO}(\mathbb{H}^2)\to  \partial^{\mathcal{F}}\tilde{\mathcal{A}}\subset \text{SO}(\mathbb{H}^3).$$

We say that $e:\partial^{\mathcal{F}}\hat{\mathcal{A}}\to  \partial^{\mathcal{F}}\mathcal{A}\subset \text{SO}(\mathbb{H}^3)$ is {\it $\epsilon$-distorted to distance $D$} if $\tilde{e}:\partial^{\mathcal{F}}\tilde{\hat{\mathcal{A}}}\to  \partial^{\mathcal{F}}\tilde{\mathcal{A}}$ is $\epsilon$-distorted to distance $D$ as in Definition \ref{boundeddistortion}. Then Kahn and Wright proved the following result in \cite{KW1}.

\begin{theorem}\label{localestimateKW}[Theorem A.18 of \cite{KW1}]
For any $D>0$, there are constants $C,R_0>1$, such that for any $\epsilon>0$ and any $R>R_0$, the following statement holds. Let $\mathcal{A}$ be an $(R,\epsilon)$-good assembly, then there is a perfect assembly $\hat{\mathcal{A}}$ and a map $e:\partial^{\mathcal{F}}\hat{\mathcal{A}}\to  \partial^{\mathcal{F}}\mathcal{A}$ that is $C\epsilon$-distorted at distance $D$.
\end{theorem}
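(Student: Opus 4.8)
Since this statement is quoted as Theorem~A.18 of \cite{KW1}, the plan is to follow the local-to-global scheme of Appendix~A of that paper. The starting point is a local estimate on a single piece. First I would show that when a component of $S\setminus\mathcal{C}$ is equipped with its perfect hyperbolic structure (a perfect pants or a perfect hamster wheel), the restriction of $i$ to it is $C\epsilon$-close, in the natural sense on the relevant frame bundles, to a totally geodesic isometric immersion of the corresponding perfect model into $\mathbb{H}^3$; this is essentially built into the definitions of $(R,\epsilon)$-good pants and $(R,\epsilon)$-good hamster wheels (for the latter, the defining condition is precisely that $f$ approximates such an immersion). Since a genuine isometric immersion induces a zero-distortion map on frame bundles, this already yields $d\big((u\to v),(\tilde e(u)\to\tilde e(v))\big)<C\epsilon$, \emph{with no dependence on $d(u,v)$}, whenever $u,v\in\partial^{\mathcal{F}}\tilde{\hat{\mathcal{A}}}$ lie over the boundary of lifts of one and the same component.

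Next I would handle the gluings. When two components are pasted along a cuff mapped to $\gamma\in{\bf \Gamma}_{R,\epsilon}$, the $(R,\epsilon)$-well-matched condition forces the two sides to agree up to $\epsilon/R$ between formal feet, or up to $100\epsilon$ between slow and constant turning normal fields when an outer cuff of a hamster wheel is involved, while the basepoints of $e$ on the two sides differ by at most $2\epsilon/R$. Transported into $\text{SO}(\mathbb{H}^3)$, crossing a single cuff lift therefore contributes at most $C\epsilon$ to the distortion. To conclude, I would take $u,v\in\partial^{\mathcal{F}}\tilde{\hat{\mathcal{A}}}$ with $d(u,v)<D$, connect them by a path in $\partial^{\mathcal{F}}\tilde{\hat{\mathcal{A}}}$ that tracks a length-$O(D)$ geodesic in $\tilde S$, cut this path at its (finitely many) intersections with lifts of $\hat{\mathcal{C}}$ to obtain sub-arcs each supported on the boundary of a single lifted component, apply the local estimate on each sub-arc and the cuff estimate at each crossing, and add up; choosing $R_0$ large enough that all the perfect-model approximations are valid then gives the desired $C\epsilon$-distortion to distance $D$ after renaming the constant.

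The hard part is the last step. Because adjacent inner cuffs of a perfect hamster wheel — and, for large $R$, the cuffs of a perfect pants with long cuffs — lie within $Ce^{-R/2}$ of one another in the universal cover, a segment of fixed length $D$ can cross arbitrarily many cuff lifts as $R\to\infty$, so one cannot simply add $C\epsilon$ per crossing and retain a bound independent of $R$. The way around this, following \cite{KW1}, is that all cuff lifts encountered inside a single lifted component are governed simultaneously by the one local estimate of the first step, so what actually has to be bounded (or rather, organized) is not the number of cuff crossings but the way the local models of the distinct components visited fit together, using the coherence of the slow and constant turning data along the intervening cuffs. Making this precise from the large-scale geometry of perfect pants and perfect hamster wheels is the technical core of the argument in \cite{KW1}, and I would import that analysis rather than redo it here.
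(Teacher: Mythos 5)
The paper does not actually prove this statement: it is imported verbatim as Theorem A.18 of \cite{KW1}, so the ``proof'' in the paper is nothing more than the citation. Your sketch of the Kahn--Wright local-to-global scheme (comparison of each good piece with its perfect model, control at well-matched cuffs, and the honestly flagged difficulty that a length-$D$ segment may cross unboundedly many exponentially close cuff lifts as $R\to\infty$, so one cannot just sum a $C\epsilon$ error per crossing) is a fair account of where the real work lies --- though your claim of a distortion bound on a single component \emph{independent} of $d(u,v)$ is stronger than needed and not obviously true for points far apart --- and your concluding move of importing that core analysis from \cite{KW1} is exactly what the paper itself does.
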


Now we state our estimate on $(R,\epsilon)$-nearly geodesic subsurfaces, which describes their global geometry. At first, we need to set up some notations.

Recall that $\hat{\mathcal{C}}$ denotes the union of curves in $\mathcal{C}$ and $\partial S$. Let $C$ be the $i$-image of $\hat{\mathcal{C}}$ in $M_0$, consisting of finitely many $(R,\epsilon)$-good curves. The restriction $i|_{\hat{\mathcal{C}}}:\hat{\mathcal{C}}\to C$ is not precisely defined from the definition of $i$, since good pants/hamster wheels are only defined up to homotopy. We will define a map $E:\hat{\mathcal{C}}\to C$ as following. 

For any component $c$ of $\hat{\mathcal{C}}$, let $i(c)=\gamma \subset C$. If $c$ is a component of $\mathcal{C}$, the map $e$ does not descend to a map from $c$ to $\gamma$, since $c$ corresponds to two components of $\partial^{\mathcal{F}}\hat{\mathcal{A}}$. We have seen this phenomenon in the definition of $e$, and we describe it in an alternative way here. By considering basepoints of vectors, $e$ induces two maps $e_1,e_2:c\to \gamma$ and $d(e_1(x),e_2(x))<2\frac{\epsilon}{R}$ holds. We define the map $E$ on $c$ to be the average of $e_1$ and $e_2$, and $d(e_1(x),E(x)),d(e_2(x),E(x))<\frac{\epsilon}{R}$ hold. If $c$ is a component of $\partial S$, then  $e$ descends to a (unique) map from $c$ to $\gamma$, which is the restriction of $E$ on $c$.

%For each $c\in \mathcal{C}\subset \hat{\mathcal{C}}$, it is adjacent to two pieces $\Sigma_1,\Sigma_2$ in $S$. Then the feet  of $\Sigma_1$ and $\Sigma_2$ give four consecutive points $a_1,a_2,a_3,a_4$ on $c$, such that $a_1,a_3$ are arised from $\Sigma_1$ and $a_2,a_4$ are arised from $\Sigma_2$. Moreover, the segments of $c$ bounded by $a_1,a_2$ and $a_3,a_4$ have length $1$, while the segments on $c$ bounded by $a_2,a_3$ and $a_4,a_1$ have length $R-1$. Suppose that $c$ is mapped to $\gamma\in {\bf \Gamma}_{R,\epsilon}$, then the feet of good components $(\Sigma_1,c)$ and $(\Sigma_2,c)$ give a similar picture on $\gamma$. In particular, their feet give four consecutive points $b_1,b_2,b_3,b_4$ on $\gamma$ and they divide $\gamma$ into four segments, of length $\frac{2\epsilon}{R}$-close to $1,R-1,1,R-1$ respectively. (We require that $a_1$ and $b_1$ correspond to the feet of the same boundary component of $\Sigma_1$, and the same for $a_2$ and $b_2$.) Then we define $E$ so that it maps $a_1,a_2,a_3,a_4\in c$ to $b_1,b_2,b_3,b_4\in \gamma$ respectively, then extend it to a piecewise linear homeomorphism from $c$ to $\gamma$.

Let $\tilde{\hat{\mathcal{C}}}$ and $\tilde{C}$ be preimages of $\hat{\mathcal{C}}$ and $C$ in universal covers $\tilde{S}\subset \mathbb{H}^2$ and $\tilde{M_0}=\mathbb{H}^3$ respectively. Then $i:S\looparrowright M_0$ and $E:\hat{\mathcal{C}}\to C$ induce a map $\tilde{E}:\tilde{\hat{\mathcal{C}}}\to \tilde{C}$. 
For each curve $c\subset \hat{\mathcal{C}}$, we give it an orientation, such that $S$ lies to the left of $c$ if $c\subset \partial S$. This orientation induces an orientation on $\tilde{\hat{\mathcal{C}}}$ and an orientation on $\tilde{C}$, so that $\tilde{E}$ is orientation preserving. 

For any $x\in \tilde{\hat{\mathcal{C}}}$, let $\tilde{c}$ be the component of $\tilde{\hat{\mathcal{C}}}$ containing $x$. We take the frame $(x,\vec{v},\vec{n})\in \partial^{\mathcal{F}}\tilde{\hat{\mathcal{A}}}$ based at $x$ such that $\vec{v}$ gives the orientation of $\tilde{c}$. A frame $(\tilde{E}(x),\vec{v}',\vec{n}')\in \partial^{\mathcal{F}}\tilde{\mathcal{A}}$ is obtained by parallel transporting $\tilde{e}(x,\vec{v},\vec{n})$ to $\tilde{E}(x)$, along an $\frac{\epsilon}{R}$-short geodesic segment in $\tilde{E}(\tilde{c})$. For any $x,y\in \tilde{\hat{\mathcal{C}}}$, we use $\overline{xy}$ to denote the oriented geodesic segment from $x$ to $y$. Under the coordinate of $T_x(\mathbb{H}^2)\subset T_x(\mathbb{H}^3)$ given by $(\vec{v},\vec{n},\vec{v}\times \vec{n})$, the tangent vector of $\overline{xy}$ based at $x$ gives a point $\Theta(x,y,\tilde{c})\in S^2=T_x^1(\mathbb{H}^3)$. Note that $\Theta(x,y,\tilde{c})\in S^2$ has zero third-coordinate, since $\tilde{\hat{\mathcal{C}}}$ lies in $\mathbb{H}^2$. Similarly, the tangent vector of $\overline{\tilde{E}(x)\tilde{E}(y)}$ based at $\tilde{E}(x)$ gives a point 
$\Theta(\tilde{E}(x),\tilde{E}(y),\tilde{E}(\tilde{c}))\in S^2=T^1_{\tilde{E}(x)}(\mathbb{H}^3)$, with respect to the coordinate  of $T_{\tilde{E}(x)}(\mathbb{H}^3)$ given by $(\vec{v}',\vec{n}',\vec{v}'\times \vec{n}')$. 

The following result gives our geometric estimate on $(R,\epsilon)$-nearly geodesic subsurfaces. 
%The theme of this result is similar to Theorems 4.8 and 4.10 of \cite{Sun1} and Theorem 3.3 of \cite{CG}.

\begin{theorem}\label{estimateonsurface}
For any $\delta\in (0,10^{-6})$, there are $\epsilon_0>0$ and $R_0>0$, such that for any $\epsilon\in (0,\epsilon_0)$, any $R>R_0$ and any $(R,\epsilon)$-good assembly $\mathcal{A}$, the following hold for $\tilde{E}:\tilde{\hat{\mathcal{C}}}\to \tilde{C}\subset \mathbb{H}^3$.
\begin{enumerate}
\item The map $\tilde{E}:\tilde{\hat{\mathcal{C}}}\to \tilde{C}$ is a $(1+1000\delta,\delta)$-quasi-isometric embedding. 
\item For any $x,y\in \tilde{\hat{\mathcal{C}}}$ such that $d(x,y)>10$, with $x\in \tilde{c}\subset \tilde{\hat{\mathcal{C}}}$, we have $$d_{S^2}\big(\Theta(x,y,\tilde{c}),\Theta(\tilde{E}(x),\tilde{E}(y),\tilde{E}(\tilde{c}))\big)<\delta.$$
\end{enumerate}
Moreover, for any $\tilde{E}':\tilde{\hat{\mathcal{C}}}\to \tilde{C}$ such that $d(\tilde{E}(x),\tilde{E}'(x))<\epsilon$ for any $x\in \tilde{\hat{\mathcal{C}}}$, the above properties hold with $\delta$ replaced by $2\delta$.
\end{theorem}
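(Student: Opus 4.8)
The plan is to reduce the global statement to the local estimate of Kahn and Wright (Theorem~\ref{localestimateKW}) plus a comparison between the hyperbolic structure on $S$ coming from the perfect assembly $\hat{\mathcal{A}}$ and the immersed image in $M_0$. First I would fix $D$ large (say $D=10$ or somewhat larger, depending on the pants/hamster-wheel geometry), and apply Theorem~\ref{localestimateKW} to get constants $C,R_0$ and a map $e:\partial^{\mathcal F}\hat{\mathcal{A}}\to\partial^{\mathcal F}\mathcal{A}$ that is $C\epsilon$-distorted to distance $D$; shrink $\epsilon_0$ so that $C\epsilon_0$ is as small as we like relative to $\delta$. The key point is that the lifted map $\tilde e:\partial^{\mathcal F}\tilde{\hat{\mathcal{A}}}\to\partial^{\mathcal F}\tilde{\mathcal{A}}$, being $C\epsilon$-distorted at distance $D$, sends nearby frames along $\tilde{\hat{\mathcal{C}}}$ to frames whose relative position in $\mathrm{Isom}_+(\mathbb{H}^3)$ is within $C\epsilon$ of the model; passing to basepoints and averaging (as in the definition of $E$ and $\tilde E$) only costs an extra $\epsilon/R$, which is negligible. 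So on the scale $D$, the map $\tilde E$ looks like the inclusion of a geodesic piece of $\tilde{\hat{\mathcal{C}}}\subset\mathbb{H}^2$, and item~(2) for points at distance between $10$ and $D$ follows from the hyperbolic sine/cosine laws comparing the model triangle in $\mathbb{H}^2$ with its nearly-isometric image.

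For item~(1) and for item~(2) at large distances, I would run a \emph{local-to-global} (fellow-traveling / guessing-geodesics) argument. Take $x,y\in\tilde{\hat{\mathcal{C}}}$ far apart, and let $\alpha$ be the path in $\tilde C$ which is the $\tilde E$-image of the geodesic segment of $\tilde{\hat{\mathcal{C}}}$ from $x$ to $y$ (this segment exists and stays in $\mathbb{H}^2$ because the perfect assembly metric on $S$ is hyperbolic and $\tilde{\hat{\mathcal{C}}}$ is a union of complete geodesics and closed geodesics in $\tilde S$). Chop $\alpha$ into subarcs of length roughly $D/2$ each joining two points of $\tilde{\hat{\mathcal{C}}}$. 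On each subarc the local estimate says $\tilde E$ is $(1+O(C\epsilon),O(C\epsilon))$-bi-Lipschitz onto a \emph{nearly geodesic} arc of $\mathbb{H}^3$, with consecutive subarcs meeting at an angle that is $O(C\epsilon)$-close to $\pi$ (straightness in the model $\mathbb{H}^2$ transported by a map of small distortion). A standard broken-geodesic / Lemma~\ref{lengthphase}-type argument in $\mathbb{H}^3$ — a concatenation of long geodesic segments with bending angles near $\pi$ is uniformly quasigeodesic — then shows the whole of $\alpha$ is a $(1+1000\delta,\delta)$-quasigeodesic, and in particular the endpoints satisfy $d(\tilde E(x),\tilde E(y))\ge (1+1000\delta)^{-1}d_{\tilde{\hat{\mathcal{C}}}}(x,y)-\delta$; the upper bound is immediate from the per-subarc Lipschitz constant. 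Since the intrinsic metric on $\tilde{\hat{\mathcal{C}}}$ dominates the restriction of the $\mathbb{H}^2$ metric, this gives~(1). For~(2) at large distance, the quasigeodesic $\alpha$ fellow-travels the true geodesic $\overline{\tilde E(x)\tilde E(y)}$ within a bounded Hausdorff distance that is $o(d(x,y))$, so the initial direction of $\alpha$ (which by the $D$-scale estimate is $\delta/2$-close to $\Theta(x,y,\tilde c)$) is $\delta$-close to the direction of the genuine geodesic $\overline{\tilde E(x)\tilde E(y)}$; here one uses that exponential convergence in $\mathbb{H}^3$ makes the directional error decay, so only the $d(x,y)>10$ hypothesis is needed.

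The ``moreover'' clause is then a soft perturbation statement: if $\tilde E'$ is $\epsilon$-close to $\tilde E$ pointwise, then $d(\tilde E'(x),\tilde E'(y))$ differs from $d(\tilde E(x),\tilde E(y))$ by at most $2\epsilon$, so a $(1+1000\delta,\delta)$-quasi-isometric embedding becomes a $(1+1000\delta,\delta+2\epsilon)$-one, which we absorb into $(1+1000\cdot 2\delta,2\delta)$ after possibly shrinking $\epsilon_0$; and the direction $\Theta(\tilde E'(x),\tilde E'(y),\tilde E'(\tilde c))$ moves by $O(\epsilon/d(x,y))=O(\epsilon)$ since $d(x,y)>10$, again absorbed into the extra factor of $2$.

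I expect the main obstacle to be the step where one must rule out ``backtracking'' of the path $\alpha=\tilde E(\text{segment})$ in $\mathbb{H}^3$: the local estimate only controls $\tilde E$ at scale $D$, so a priori the image could fold back over distances larger than $D$. The fix is the standard one — long-segments-with-near-$\pi$-bending are globally quasigeodesic — but making the bending-angle bound uniform requires care, because on a hamster wheel the natural ``straight'' reference is the slow-and-constant-turning field whose match across outer cuffs is only controlled up to $100\epsilon$ (not $100\epsilon/R$), as emphasized in Section~\ref{pi1inj1}. Crucially, this $100\epsilon$ is still an \emph{angle} error, not an error in the bending being near $\pi$; one checks that it contributes only $O(\epsilon)$ to each bending angle along $\tilde{\hat{\mathcal{C}}}$, and since we only need the bending to be within $\delta$ of $\pi$ (with $\delta\gg\epsilon$) rather than within $\delta/R$, the argument goes through. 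Verifying this compatibility — that all the curves in $\hat{\mathcal{C}}$ carry a coherent ``straight direction'' along which consecutive segments bend by at most $O(\epsilon)$ — is the technical heart, and it is exactly what Theorem~\ref{localestimateKW} is designed to provide once one unwinds the definition of $\tilde e$ on the various pieces.
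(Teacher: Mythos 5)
Your overall plan (local Kahn--Wright distortion plus a local-to-global chaining argument) is the right general shape, and your treatment of the ``moreover'' clause and of item (2) at moderate distances is fine. But there is a genuine gap at the heart of the local-to-global step. You define $\alpha$ as the $\tilde{E}$-image of ``the geodesic segment of $\tilde{\hat{\mathcal{C}}}$ from $x$ to $y$,'' and you justify its existence by saying $\tilde{\hat{\mathcal{C}}}$ is a union of complete geodesics in $\tilde{S}$. That justification does not give you such a segment: $\tilde{\hat{\mathcal{C}}}$ is a \emph{disjoint} union of complete geodesics (lifts of the decomposition curves and of $\partial S$), so two arbitrary points $x,y\in\tilde{\hat{\mathcal{C}}}$ generically lie on different components and cannot be joined inside $\tilde{\hat{\mathcal{C}}}$ at all; the metric in item (1) is the restriction of the ambient $\mathbb{H}^2$ (equivalently $\tilde S$) distance, and the whole content of the theorem is the comparison of \emph{different} components of $\tilde{\hat{\mathcal{C}}}$, where $\tilde E$ is a priori only controlled curve-by-curve. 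Consequently your key claim that consecutive image subarcs bend by only $O(C\epsilon)$ from straight is unavailable: it would be true for points along a single component, but that case is not the one that matters.

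The paper's proof fixes exactly this point: for $d(x,y)\geq \delta^{-1}/2$ it subdivides the ambient $\mathbb{H}^2$-geodesic $\overline{xy}$ at scale about $\delta^{-1}/12$, uses the $2$-density of $\tilde{\hat{\mathcal{C}}}$ in $\tilde S$ to replace each subdivision point $x_i$ by a point $y_i\in\tilde{\hat{\mathcal{C}}}$ with $d(x_i,y_i)<2$, and then applies the short-range case (Kahn--Wright distortion) to consecutive pairs $y_i,y_{i+1}$. The price of the $2$-dense approximation is that the angles $\angle y_{i-1}y_iy_{i+1}$ (and their images) are only bounded \emph{below} by a universal constant ($\approx 0.26$, resp.\ $0.24$), not close to $\pi$; the chain lemma (Lemma \ref{lengthphase}) then still gives the multiplicative constant $1+O(\delta)$ only because the segments have length $\sim\delta^{-1}$, which dwarfs the constant per-vertex defect $I(\pi-0.24)$. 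This is also why the paper takes $D=\delta^{-1}$ in Theorem \ref{localestimateKW}, whereas your fixed choice $D\approx 10$ cannot work once the bendings are not near $\pi$: with subarcs of length $\sim 5$ and per-vertex defect of order $I(\pi-0.26)\approx 4$, Lemma \ref{lengthphase} does not even apply and the multiplicative loss would be of order $1$, not $1+1000\delta$. So you need both ingredients you are missing: the $2$-density approximation to make sense of a chain through $\tilde{\hat{\mathcal{C}}}$ joining points on different components, and a $\delta$-dependent scale $D=\delta^{-1}$ so that long segments absorb the resulting bounded (but not small) bending defects. The direction estimate in (2) is then obtained from the short-range case on the first segment together with Lemma \ref{directiondifference}, rather than from a fellow-traveling argument for your (undefined) path $\alpha$.
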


\begin{proof}
We first take $D=\delta^{-1}$ and apply Theorem \ref{localestimateKW} to obtain $C$ and $R_0$, then we take $\epsilon_0=\frac{\delta}{10C}$. For any $\epsilon\in (0,\epsilon_0)$, any $R>R_0$ and any $(R,\epsilon)$-good assembly $\mathcal{A}$, $e:\partial^{\mathcal{F}}\mathcal{A}\to  \partial^{\mathcal{F}}\hat{\mathcal{A}}$ is $\frac{\delta}{10}$-distorted at distance $\delta^{-1}$. 

We first prove the result for $\tilde{E}$.
For each geodesic $\tilde{c} \in \tilde{\hat{\mathcal{C}}}$, recall that it may correspond to two components of $\partial^{\mathcal{F}}\hat{\mathcal{A}}$. As for the map $e$, $\tilde{e}:\partial^{\mathcal{F}}\tilde{\hat{\mathcal{A}}}\to  \partial^{\mathcal{F}}\tilde{\mathcal{A}}$ gives two maps $\tilde{e}_1, \tilde{e}_2:\tilde{c} \to \tilde{E}(\tilde{c})$,
%For each point $x\in \tilde{\hat{\mathcal{C}}}$, there might be two vectors $\vec{n}_1,\vec{n}_2\in \partial^{\mathcal{F}}\tilde{\hat{\mathcal{A}}}$ based at $x$, such that $\vec{n}_1$ points to the left of $\tilde{c}$, and $\vec{n}_2$ points to the right. Then we use $\tilde{e}_1(x)$ and $\tilde{e}_2(x)$ to denote the basepoints of $\tilde{e}(\vec{n}_1)$ and $\tilde{e}(\vec{n}_2)$ in $\tilde{E}(\tilde{c})\subset \tilde{C}$ respectively. By our definition of $e$ and $E$, 
and we have 
\begin{align}\label{5.0}
d(\tilde{e}_1(x),\tilde{E}(x)),d(\tilde{e}_2(x),\tilde{E}(x))<\frac{\epsilon}{R}.
\end{align}

For any $x,y\in \hat{\mathcal{A}}$, we need to prove that 
\begin{align}\label{5.1}
(1+1000\delta)^{-1}d(x,y)-\delta<d(\tilde{E}(x),\tilde{E}(y))<(1+1000\delta)d(x,y)+\delta
\end{align} 
and estimate the angle difference in condition (2).

{\bf Case I}. Suppose that $d(x,y)<\delta^{-1}/2$ holds. Then we have 
\begin{align*}
& |d(\tilde{E}(x),\tilde{E}(y))-d(x,y)|\\
\leq \ & |d(\tilde{E}(x),\tilde{E}(y))-d(\tilde{e}_1(x),\tilde{e}_1(y))|+|d(\tilde{e}_1(x),\tilde{e}_1(y))-d(x,y)|\\
\leq \ & 2\frac{\epsilon}{R}+\frac{\delta}{10}<\frac{\delta}{2}.
\end{align*}
Here the second inequality follows from equation (\ref{5.0}) and Theorem \ref{localestimateKW}, the third inequality follows from the choice of $\epsilon_0$. So equation (\ref{5.1}) holds in this case.

Now we suppose that $d(x,y)>10$. We take a frame ${\bf p}=(x,\vec{v},\vec{n})\in \partial^{\mathcal{F}}\tilde{\hat{\mathcal{A}}}$ based at $x$ as above, and take a frame ${\bf q}\in \partial^{\mathcal{F}}\tilde{\hat{\mathcal{A}}}$ based at $y$. Since $d(x,y)<\delta^{-1}/2$, we have $d({\bf p},{\bf q})<\delta^{-1}$. By Theorem \ref{localestimateKW}, we have 
\begin{align}\label{5.2}
d\big(({\bf p}\to {\bf q}),(\tilde{e}({\bf p})\to \tilde{e}({\bf q}))\big)<\frac{\delta}{10}.
\end{align}

Let $x',y'$ be the base points of $\tilde{e}({\bf p})$ and $\tilde{e}({\bf q})$ respectively, and let $\Theta(x',y',\tilde{E}(\tilde{c}))\in S^2$ be defined with respect to the frame $\tilde{e}({\bf p})$. By equation (\ref{5.2}) and the fact that $d(x,y)>10$, a computation in hyperbolic geometry gives
\begin{align}\label{5.3}
d_{S^2}\big(\Theta(x,y,\tilde{c}),\Theta(x',y',\tilde{E}(\tilde{c}))\big)<\frac{\delta}{100}.
\end{align}

By equation (\ref{5.0}), we have $d(x',\tilde{E}(x)),d(y',\tilde{E}(y))<\frac{\epsilon}{R}$. By equation (\ref{5.2}), we have $d(x',y')>9$. Then computations in hyperbolic geometry gives
\begin{align}\label{5.4}
d_{S^2}\big(\Theta(x',y',\tilde{E}(\tilde{c})),\Theta(x',\tilde{E}(y),\tilde{E}(\tilde{c}))\big)<\frac{2\epsilon}{R}, \ d_{S^2}\big(\Theta(x',\tilde{E}(y),\tilde{E}(\tilde{c})),\Theta(\tilde{E}(x),\tilde{E}(y),\tilde{E}(\tilde{c}))\big)<\frac{4\epsilon}{R}.
\end{align}
%The second inequality is more involved to prove, since the framed used to define $\Theta$ is changed.

Then equations (\ref{5.3}) and (\ref{5.4}) imply that $$d_{S^2}\big(\Theta(x,y, \tilde{c}),\Theta(\tilde{E}(x),\tilde{E}(y),\tilde{E}(\tilde{c}))\big)<\frac{\delta}{100}+6\frac{\epsilon}{R}<\frac{\delta}{10}.$$

{\bf Case II}. Now we suppose that $d(x,y)\geq \delta^{-1}/2$ holds. We take points $x_0=x,x_1,\cdots,x_n=y\in \mathbb{H}^3$ on $\overline{xy}$ following the orientation of $\overline{xy}$, such that $d(x_i,x_{i+1})\in [\frac{\delta^{-1}}{12},\frac{\delta^{-1}}{6}]$. Since $\tilde{\hat{C}}$ is $2$-dense in $\tilde{S}$, for each $x_i$, we take $y_i\in \tilde{\hat{C}}$ such that $d(y_i,x_i)<2$. We can and will take $y_0=x_0=x$ and $y_n=x_n=y$. Then we have 
$$d(y_i,y_{i+1})\in[\frac{\delta^{-1}}{12}-4,\frac{\delta^{-1}}{6}+4]\subset (\frac{\delta^{-1}}{16},\frac{\delta^{-1}}{4}).$$

Let $z_i$ be the point on $\overline{x_{i-1}x_{i+1}}$ closest to $y_i$, then we have $d(y_i,z_i)<2$ and $d(z_i,x_{i-1}),d(z_i,x_{i+1})>\frac{\delta^{-1}}{12}-2$. By hyperbolic sine law, we have
\begin{align*}
\angle z_iy_ix_{i-1} >\ &\sin \angle z_iy_ix_{i-1}=\frac{\sinh{d(x_{i-1},z_i)}}{\sqrt{\cosh^2{d(x_{i-1},z_i)}\cosh^2(d(y_i,z_i))-1}}\\
>\ & \frac{1}{2\cosh{d(y_i,z_i)}}> \frac{1}{2\cosh{2}}>0.132.
\end{align*}
Similarly, we have $\angle z_iy_ix_{i+1} >0.132$. So we have 
$$\angle{x_{i-1}y_ix_{i+1}}=\angle{x_{i-1}y_iz_i}+\angle{z_iy_ix_{i+1}}>0.264.$$ Moreover, since $d(x_{i-1},y_{i-1})<2$ and $d(x_{i-1},y_i)>d(x_{i-1},x_i)-2>\frac{\delta^{-1}}{12}-2$, we have 
$$ \angle{x_{i-1}y_iy_{i-1}}<2\sin{\angle{x_{i-1}y_iy_{i-1}}}\leq 2\frac{\sinh{d(x_{i-1},y_{i-1})}}{\sinh{d(x_{i-1},y_i)}}<2\frac{\sinh{2}}{\sinh{(\frac{\delta^{-1}}{12}-2)}}<0.002.$$ Similarly, we have $\angle{x_{i-1}y_iy_{i-1}}<0.002$. So we have $$\angle{y_{i-1}y_iy_{i+1}}>0.264-0.002\times 2=0.26.$$

For simplicity, we use $y_i'$ to denote $\tilde{E}(y_i)$.
Since $d(y_i,y_{i+1}), d(y_{i-1},y_{i+1})<\frac{\delta^{-1}}{2}$, by Case I, we have
\begin{align}\label{5.5}
|d(y_i',y_{i+1}')-d(y_i,y_{i+1})|<\frac{\delta}{2},\ |d(y_{i-1}',y_{i+1}')-d(y_{i-1},y_{i+1})|<\frac{\delta}{2}.
\end{align}
So we have 
\begin{align*}
& \cos{\angle{y_{i-1}'y_i'y_{i+1}'}}= \frac{\cosh{d(y_{i-1}',y_i')}\cdot \cosh{d(y_i',y_{i+1}')}-\cosh{d(y_{i-1}',y_{i+1}')}}{\sinh{d(y_{i-1}',y_i')}\cdot \sinh{d(y_i',y_{i+1}')}}\\
\leq\ & \frac{\cosh{(d(y_{i-1},y_i)+\frac{\delta}{2})}\cdot \cosh{(d(y_i,y_{i+1})+\frac{\delta}{2})}-\cosh{(d(y_{i-1},y_{i+1})-\frac{\delta}{2})}}{\sinh{(d(y_{i-1},y_i)-\frac{\delta}{2})}\cdot \sinh{(d(y_i,y_{i+1})-\frac{\delta}{2})}}\\
\leq\ & \frac{e^{\delta}\cosh{d(y_{i-1},y_i)}\cdot \cosh{d(y_i,y_{i+1})}-e^{-\delta}\cosh{d(y_{i-1},y_{i+1})}}{e^{-2\delta}\sinh{d(y_{i-1},y_i)}\cdot \sinh{d(y_i,y_{i+1})}}\\
=\ & (e^{3\delta}-e^{\delta})\frac{\cosh{d(y_{i-1},y_i)}\cdot \cosh{d(y_i,y_{i+1})}}{\sinh{d(y_{i-1},y_i)}\cdot \sinh{d(y_i,y_{i+1})}}+e^{\delta}\frac{\cosh{d(y_{i-1},y_i)}\cdot \cosh{d(y_i,y_{i+1})}-\cosh{d(y_{i-1},y_{i+1})}}{\sinh{d(y_{i-1},y_i)}\cdot \sinh{d(y_i,y_{i+1})}}\\
\leq\ & (e^{3\delta}-e^{\delta})\coth^2{\frac{\delta^{-1}}{16}}+e^{\delta}\cos{\angle{y_{i-1}y_iy_{i+1}}}< 0.97.
\end{align*} 
Then we have $\angle{y_{i-1}'y_i'y_{i+1}'}> 0.24$.

Now we apply Theorem \ref{lengthphase} (1) to the concatenation of geodesic segments $\overline{y_0'y_1'},\cdots, \overline{y_{n-1}'y_n'}$. For $L=\frac{\delta^{-1}}{32}$, we have
\begin{align*}
|d(y_0',y_n')-\sum_{i=1}^nd(y_{i-1}',y_i')+\sum_{i=1}^{n-1}I(\pi-\angle{y_{i-1}'y_i'y_{i+1}'})|<\frac{(n-1)e^{(-L+10\log{2})/2}}{L-\log{2}}.
\end{align*}
Then we get 
\begin{align*}
& |d(\tilde{E}(x),\tilde{E}(y))-d(x,y)|=|d(y_0',y_n')-\sum_{i=1}^nd(x_{i-1},x_i)|\\
\leq\ & |\sum_{i=1}^nd(y_{i-1}',y_i')-\sum_{i=1}^nd(x_{i-1},x_i)|+\sum_{i=1}^{n-1}I(\pi-\angle{y_{i-1}'y_i'y_{i+1}'})+\frac{(n-1)e^{(-L+10\log{2})/2}}{L-\log{2}}\\
\leq\ & n(4+\frac{\delta}{2})+(n-1)I(\pi-0.24)+(n-1)\frac{e^{(-L+10\log{2})/2}}{L-\log{2}}\\
\leq\ & n(4+\frac{\delta}{2}+I(\pi-0.24)+\frac{e^{(-L+10\log{2})/2}}{L-\log{2}})<10n \leq 200\delta \sum_{i=1}^n\frac{\delta^{-1}}{20}\\
\leq\ & 200\delta\sum_{i=1}^nd(x_{i-1},x_i)=200\delta \cdot d(x,y).
\end{align*}
Here the second inequality follows from $$|d(y_{i-1}',y_i')-d(x_{i-1},x_i)|\leq |d(y_{i-1}',y_i')-d(y_{i-1},y_i)|+d(x_{i-1},y_{i-1})+d(x_i,y_i)<\frac{\delta}{2}+4.$$
Then equation (\ref{5.1}) holds. 

Now we work on the angle comparison. By Case I, we have 
\begin{align}\label{5.6}
d_{S^2}\big(	\Theta(y_0,y_1,\tilde{c}),\Theta(y_0',y_1',\tilde{E}(\tilde{c}))\big)<\frac{\delta}{10}.
\end{align} 
Since $y_0=x_0$, $d(x_0,x_1)>\frac{\delta^{-1}}{12}$ and $d(x_1,y_1)<2$, we have 
\begin{align*}
\angle {x_1x_0y_1}<2\sin{\angle {x_1x_0y_1}}\leq 2\frac{\sinh{d(x_1,y_1)}}{\sinh{d(x_0,x_1)}}\leq 2\frac{\sinh{2}}{\sinh{\frac{\delta^{-1}}{12}}}<\frac{\delta}{10}.
\end{align*}
Since $x_1$ lies in the geodesic $\overline{xy}$, we have 
\begin{align}\label{5.7}
d_{S^2}\big(\Theta(y_0,y_1,\tilde{c}),\Theta(x,y,\tilde{c})\big)<\frac{\delta}{10}.
\end{align}
Then we apply Lemma \ref{directiondifference} to estimate the angle $\angle{y_1'y_0'y_n'}$ and get
\begin{align*}
\angle y_1'y_0'y_n'<\frac{\delta}{2}.
\end{align*}
Since $y_0'=\tilde{E}(x)$ and $y_n'=\tilde{E}(y)$, we have 
\begin{align}\label{5.8}
d_{S^2}\big(\Theta(y_0',y_1',\tilde{E}(\tilde{c})),\Theta(\tilde{E}(x),\tilde{E}(y),\tilde{E}(\tilde{c}))\big)< \frac{\delta}{2}.
\end{align}

Equations (\ref{5.6}), (\ref{5.7}) and (\ref{5.8}) imply the desired inequality 
$$d\big(\Theta(x,y,\tilde{c}),\Theta(\tilde{E}(x),\tilde{E}(y),\tilde{E}(\tilde{c}))\big)<\delta.$$

The moreover part of this result follows from a similar argument as above, and we skip it here.

\end{proof}

\bigskip

\subsection{Estimation on the ideal model of $Z$}\label{estimate1}

In this section, we first define a family of representations $\{\rho_t:\pi_1(Z)\to \text{Isom}_+(\mathbb{H}^3)\ |\ t\in[0,1]\}$ and a family of $\rho_t$-equivariant maps $\{\tilde{j}_t:\tilde{Z}\to \mathbb{H}^3\ |\ t\in[0,1]\}$, such that the following hold.
\begin{itemize}
\item $\tilde{j}_0$ maps each component of $\tilde{Z}\setminus \tilde{Z}^{(1)}$ to a totally geodesic subsurface of $\mathbb{H}^3$.
\item $\rho_1:\pi_1(Z)\to \text{Isom}_+(\mathbb{H}^3)$ and $j_*:\pi_1(Z)\to \pi_1(\tilde{M_0})$ are the same representation (up to conjugation), and $\tilde{j}_1:\tilde{Z}\to \mathbb{H}^3$ is the lifting of $j:Z\looparrowright M_0$ to universal covers (up to equivariant homotopy).
\end{itemize}
We will equip $\tilde{Z}$ with a metric and prove that $\tilde{j}_0:\tilde{Z}\to \mathbb{H}^3$ is both an embedding and a quasi-isometric embedding. We will also prove that the convex core of $\mathbb{H}^3/\rho_0(\pi_1(Z))$ is homeomorphic to the compact oriented $3$-manifold $\mathcal{Z}$ in Theorem \ref{pi1injectivity}.

The construction of the two families $\{\rho_t\}_{t\in[0,1]}$ and $\{\tilde{j}_t\}_{t\in [0,1]}$ is similar to the construction of corresponding objects in Construction 3.6 of \cite{Sun2}.

Note that when we constructed $j:Z\looparrowright M$ in Section \ref{constructZ1}, we get the following parameters. 
\begin{parameter}\label{parameter}
\begin{enumerate}
\item For each vertex $v_i\in Z^{(0)}$ and each edge $e_{ij}^Z\subset Z^{(1)}$ adjacent to $v_i$, the initial frame of $\mathfrak{s}_{ij}$ equals $F_{ijk}^{M_0}\cdot A_{ij}$ for some  $A_{ij}\in \text{SO}(3)$ that is $\frac{\epsilon}{10}$-close to $id\in \text{SO}(3)$ (Construction \ref{constructionZ1} (2) (b)).

\item For each edge $e_{ij}^{Z}\in Z^{(1)}$, the complex length of its associated $\partial$-framed segment $\mathfrak{s}_{ij}$ equals $R+\lambda_{ij}$ for some complex number $\lambda_{ij}$ with $|\lambda_{ij}|<\frac{\epsilon}{5}$ (Construction \ref{constructionZ1} (2) (a)).

\item For each decomposition curve $C$ of the surface $S_{ijk}$ that is not an inner cuff of any hamster wheel, the corresponding good curve has complex length $2R'+\xi_C$ for some complex number $\xi_C$ with $|\xi_C|<2\epsilon$ (the condition of $(R',\epsilon)$-good curves).

\item For each hamster wheel $H$ in some $S_{ijk}$, it has $R'$ rungs $r_{H,1},\cdots, r_{H,R}$, and these rungs divide two outer cuffs $c,c'$ to $R'$ geodesic segments $s_{H,1},\cdots,s_{H,R'}$ and $s_{H,1}',\cdots,s_{H,R'}'$ respectively. Then for any $i=1,\cdots,R'$, the complex distance between $c$ and $c'$ along $r_{H,i}$ is $R'-2\log{\sinh{1}}+\mu_{H,i}$ with $|\mu_{H,i}|<\frac{\epsilon}{R'}$ (equation (2.9.1) of \cite{KW1}). For any $i=1,\cdots,R'-1$, the complex distance between $r_{H,i}$ and $r_{H,i+1}$ along $s_{H,i}$ and $s_{H,i}'$ are $2+\nu_{H,i}$ and $2+\nu_{H,i}'$ respectively, with $|\nu_{H,i}|,|\nu_{H,i}'|<\frac{\epsilon}{R'}$ (equation (2.9.3) of \cite{KW1}). 

\item For each curve $C$ in the decomposition of $S_{ijk}$ or $\partial S_{ijk}$, the feet of its two adjacent good components differ by $1+\pi i+\eta_C$, for some complex number $\eta_C$ with $|\eta_C|<200\epsilon$ (the $(R',\epsilon)$-well-matched condition), and $|\eta_C|<\frac{\epsilon}{R}$ if formal feet are defined on both sides of $C$. Here if $C=\partial S_{ijk}$, the foot from the three-cornered annulus $A_{ijk}$ is the foot of the shortest geodesic segment from $\gamma_{ijk}$ to a preferred vertex $v_i$, as in Construction \ref{constructionZ}.
\end{enumerate}
\end{parameter}

So we have parameters $A_{ij}\in \text{SO}(3),\lambda_{ij},\xi_C,\mu_{H,i}, \nu_{H,i},\nu_{H,i}', \eta_C\in \mathbb{C}$ associated to $j:Z\looparrowright M_0$, and these parameters are very small with respect to metrics of $\text{SO}(3)$ and $\mathbb{C}$. 

The data in Parameter \ref{parameter} (3) and (4) determine shapes of all $(R',\epsilon)$-components in $Z$. For an $(R',\epsilon)$-hamster wheel, if the complex lengths of both outer cuffs are given in (3), then the data in (4) determines complex distances between $r_{H,R}$ and $r_{H,1}$ along $s_{H,R}$ and $s_{H,R}'$. In this case, the rungs divide a hamster wheel to $R'$ right-angled quadrilaterals, and their side lengths give complex lengths of all inner cuffs. If the complex length of one outer cuff is not given in (3), then it is the inner cuff of another hamster wheel (in an umbrella), and its complex length is determined by the data of that hamster wheel. Note that the dual graph of an umbrella (as a union of hamster wheels) is a tree, so the data in (3) and (4) determines shapes of all $(R',\epsilon)$-hamster wheels. Once the length of all $(R',\epsilon)$-good curves are determined, the shape of all $(R',\epsilon)$-good components and $(R_{ijk},R',\epsilon)$-good pants are determined.

For each $t\in [0,1]$, we take $t A_{ij}\in \text{SO}(3)$ and $t\lambda_{ij},t\xi_C,t\mu_{H,i}, t\nu_{H,i}, t\nu_{H,i}', t\eta_C\in \mathbb{C}$. Here $t A_{ij}$ denotes the image of $t\in [0,1]$ under the shortest geodesic $[0,1]\to \text{SO}(3)$ from $id$ to $A_{ij}$. These parameters give rise to a map $\tilde{j}_t:\tilde{Z}\to \mathbb{H}^3$, which can be defined by a developing argument as following. 

We first set up some notations. We use $\pi:\tilde{Z}\to Z$ to denote the universal cover of $Z$. Then each component of $\tilde{Z}^{(1)}=\pi^{-1}(Z^{(1)})\subset \tilde{Z}$ is a tree, and each component of $\tilde{Z}\setminus \tilde{Z}^{(1)}$ is simply connected. We use $\pi_H:\mathbb{H}^3\to M_0$ to denote the universal cover of $M_0$. We also take a subset $Z'$ of $Z$ as following. Here $Z'$ contains $Z^{(1)}$, $\partial S_{ijk}$, all decomposition curves in $S_{ijk}$ and following edges. For each three-cornered annulus $A_{ijk}$ in $Z$, $Z'$ includes edges from vertices of $A_{ijk}$ to $\partial S_{ijk}$. For each pair of pants in $Z$, $Z'$ includes its three seams. For each hamster wheel in $Z$, $Z'$ includes all short seams between its adjacent inner cuffs, and $2R'$ seams from its two outer cuffs to all inner cuffs. Then each component of $Z\setminus Z'$ is a disc.

\begin{construction}\label{family}
We first define $\tilde{j}_t:\tilde{Z}\to \mathbb{H}^3$ on $\tilde{Z}'=\pi^{-1}(Z')\subset \tilde{Z}$, by the following steps.
\begin{enumerate}

\item We start with a fixed vertex $\tilde{v}_i\in \tilde{Z}$ such that $\pi(\tilde{v}_i)=v_i$, and a point $p\in \mathbb{H}^3$ such that $\pi_H(p)=j(v_i)\in M_0$. Then we define $\tilde{j}_t(\tilde{v}_i)=p$, and we have an isometry of tangent spaces $(d\pi_H)_p:T_p\mathbb{H}^3\to T_{j(v_i)}(M_0)$. Let $\tilde{e}_{ij}^Z\subset \tilde{Z}$ be an edge from $\tilde{v}_i$ to another vertex $\tilde{v}_j$, such that it projects to $e_{ij}^Z\subset Z$. Then we map $\tilde{e}_{ij}^Z$ to a geodesic segment in $\mathbb{H}^3$ of length $R+t\text{Re}(\lambda_{ij})$ from $p$ to some $q\in \mathbb{H}^3$, such that it is tangent to the tangent vector of 
$$\tilde{\bf F}_{ijk}^{M_0}(t)=(d\pi_H)_p^{-1}({\bf F}_{ijk}^{M_0}\cdot tA_{ij}).$$
Let $C_{ij}(t)=
\begin{pmatrix}
1 & 0 & 0 \\
0 & \cos{(t\text{Im}(\lambda_{ij}))} & - \sin{(t\text{Im}(\lambda_{ij}))} \\
0 &  \sin{(t\text{Im}(\lambda_{ij}))} &  \cos{(t\text{Im}(\lambda_{ij}))}
\end{pmatrix}.$ We parallel transport $-\tilde{\bf F}_{ijk}^{M_0}(t)\cdot C_{ij}(t) \cdot (tA_{ji})^{-1}$ along this geodesic segment to obtain a frame $\tilde{\bf F}_{jik}^{M_0}(t)\in \text{SO}_q(\mathbb{H}^3)$. Then we take an isometry $T_q\mathbb{H}^3\to T_{j(v_j)}M_0$
that identifies $\tilde{\bf F}_{jik}^{M_0}(t)$ with ${\bf F}_{jik}^{M_0}\in \text{SO}_{j(v_j)}(M_0)$. By using this isometry, we define the map $\tilde{j}_t$ on edges adjacent to $\tilde{v}_j$ similarly. By applying this process inductively, $\tilde{j}_t$ is defined on the component $\tilde{W}\subset \tilde{Z}^{(1)}$ containing $\tilde{v}_i$. 

Note that when $t=0$, for any triangle $\Delta_{ijk}$ in $N$, the $\tilde{j}_0$ image of any component of $\pi^{-1}(e_{ij}^Z\cup e_{jk}^Z\cup e_{ki}^Z)\cap \tilde{W}$ lies in a hyperbolic plane in $\mathbb{H}^3$. When $t=1$, $\tilde{j}_1$ is exactly the restriction of $\tilde{j}:\tilde{Z}\to \mathbb{H}^3$ on $\tilde{W}$. Each $\tilde{j}_t$ also induces a representation $\rho_t^{\tilde{W}}:\pi_1(Z^{(1)})\to \text{Isom}_+(\mathbb{H}^3)$, such that $\tilde{j}_t|_{\tilde{W}}$ is $\rho_t^{\tilde{W}}$-equivariant.

% We fix a vertex $\tilde{v}\in \tilde{Z}$, a frame ${\bf q}\in \text{SO}_{j(p(\tilde{v}))}(M_0)$, a point $p\in \mathbb{H}^3$ that projects to $q=j(p(\tilde{v}))$, and a frame ${\bf p}\in \text{SO}_p(\mathbb{H}^3)$ that projects to ${\bf q}$. We first define $\tilde{j}_t(\tilde{v})=p$. For each edge of $\tilde{Z}$ adjacent to $\tilde{v}$, we map them to geodesic segments of length $R+t\text{Re}(\lambda_{ij})$ in $\mathbb{H}^3$ adjacent to $p$. We use the coordinates given by ${\bf q}$ and ${\bf p}$ to define the directions of these geodesic segments, with directions modified by $t\cdot A_{ij}\in \text{SO}(3)$. For end points of these edges, we apply parallel transportation and $t\text{Im}(\lambda_{ij})$-rotation to ${\bf q}$, to get frames at these points. By applying this process inductively, we get a map $\tilde{j}_t^{(1)}:\tilde{Z}^{(1)}\to \mathbb{H}^3$. Then $\tilde{j}_t^{(1)}$ induces a representation $\rho_t^{1}:\pi_1(Z^{(1)})\to \text{Isom}(\mathbb{H}^3)$, and $\tilde{i}_t^{(1)}$ is $\rho_t^1$-equivariant.

\item For any line component $l\subset \tilde{Z}'$ of the preimage of $C=\partial S_{ijk}$ that is adjacent to $\tilde{W}$, it corresponds to a bi-infinite concatenation of edges in $\tilde{W}$, and $\tilde{j}_t$ maps this concatenation to a bi-infinite quasi-geodesic in $\mathbb{H}^3$. Then $\tilde{j}_t$ maps $l$ to the bi-infinite geodesic with same end points as the above quasi-geodesic, equivariant under the $\pi_1(C)$-action. For each edge $e$ of $\tilde{Z'}$ adjacent to $\tilde{W}$ that is mapped into a three-cornered annulus in $Z$, we map it to the shortest geodesic segment between the corresponding vertex in $\tilde{W}$ and $\tilde{j}_t(l)\subset \mathbb{H}^3$.

\item For each line component $l'\subset \tilde{Z'}$ in the preimage of an $(R',\epsilon)$-good curve $C'$ in $S_{ijk}$ and adjacent to $l$ in $\tilde{Z'}$, we map the seam $s$ between $l$ and $l'$ to the geodesic segment whose foot is the $(1+\pi i +t\eta_C)$-shift of the closest feet on $\tilde{j}_t(l)$. The complex length of $\tilde{j}_t(s)$ is determined by the parameters of this good component. Then the complex length of $\tilde{j}_t(s)$ determines the image of $l'$, which is a bi-infinite geodesic in $\mathbb{H}^3$. We let $\pi_1(C')$ acts on $\mathbb{H}^3$ by translation along $\tilde{j}_t(l')$, with translation length $2R'+\xi_{C'}$, and we define $\tilde{j}_t$ on $l'$ to be $\pi_1(C')$-equivarient. 

Then we apply this process inductively to define $\tilde{j}_t$ on one component of $\tilde{Z}\setminus \tilde{Z}^{(1)}$. For a hamster wheel, the complex lengths of its seams are determined by parameters in Parameter \ref{parameter}  (4) and complex lengths of its outer cuffs.

\item Then we can define $\tilde{j}_t$ on $\tilde{Z'}$ inductively by the process (and inverse process) in (3) and (4). It is easy to check that $\tilde{j}_t:\tilde{Z}\to \mathbb{H}^3$ induces a representation $\rho_t:\pi_1(Z)\to \text{Isom}(\mathbb{H}^3)$ and $\tilde{j}_t$ is $\rho_t$-equivariant. When $t=0$, $\tilde{j}_0$ maps each component of $\tilde{Z}\setminus \tilde{Z}^{(1)}$ to a totally geodesic subsurface of $\mathbb{H}^3$. When $t=1$, by our choice of parameters in Parameter \ref{parameter}, $\tilde{j}_1:\tilde{Z}\to \mathbb{H}^3$ is the lifting of $j:Z\looparrowright M_0$ to universal covers, up to equivariant homotopy.
\end{enumerate}

Since each component of $Z\setminus Z'$ is topologically a disc, we can further triangulate $Z$ and map each triangle in $Z$ to a totally geodesic triangle in $\mathbb{H}^3$, to get a $\rho_t$-equivariant map $\tilde{j}_t:\tilde{Z}\to \mathbb{H}^3$.
\end{construction}

By construction, $\tilde{j}_0$ maps each component of $\tilde{Z}\setminus \tilde{Z}^{(1)}$ to a totally geodesic subsurface in $\mathbb{H}^3$. The metrics on these totally geodesic subsurfaces pull-back to metrics on components of $\tilde{Z}\setminus \tilde{Z}^{(1)}$, and further induces a path metric on $\tilde{Z}$. This is the desired metric on $\tilde{Z}$ we will work with.

Now we state our result on $\tilde{j}_0:\tilde{Z}\to \mathbb{H}^3$.

\begin{proposition}\label{model}
Given the above metric on $\tilde{Z}$, the map $\tilde{j}_0:\tilde{Z}\to \mathbb{H}^3$ is both an embedding and a quasi-isometric embedding. The representation $\rho_0:\pi_1(Z)\to \text{Isom}_+(\mathbb{H}^3)$ is injective and the image is a convex cocompact subgroup of $\text{Isom}_+(\mathbb{H}^3)$.\\
Moreover, the convex core of $\mathbb{H}^3/\rho_0(\pi_1(Z))$ is homeomorphic to the compact oriented $3$-manifold $\mathcal{Z}$ in Theorem \ref{pi1injectivity}.
\end{proposition}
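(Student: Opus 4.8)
The plan is to prove Proposition \ref{model} by exploiting the totally geodesic structure that $\tilde{j}_0$ gives to the pieces of $\tilde{Z}\setminus\tilde{Z}^{(1)}$, and to run a hierarchy/gluing argument along the graph-of-spaces structure of $Z$. First I would record what $\tilde{j}_0$ looks like locally: each component $\tilde{P}$ of $\tilde{Z}\setminus\tilde{Z}^{(1)}$ is mapped isometrically onto a totally geodesic copy of the universal cover of a pair of pants, a hamster wheel, or an $(R_{ijk},R',\epsilon)$-good pants in some hyperbolic plane $\mathbb{H}^2_{\tilde{P}}\subset\mathbb{H}^3$ (since at $t=0$ all the small parameters are switched off and the seam data degenerate to the perfect model). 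The bi-infinite geodesics of $\tilde{Z}^{(1)}$ that bound $\tilde{P}$ are mapped to actual geodesics of $\mathbb{H}^3$ lying in $\mathbb{H}^2_{\tilde{P}}$, and each such geodesic $\tilde{c}$ is shared by exactly two planar pieces whose planes meet along $\tilde{j}_0(\tilde{c})$ at a definite dihedral angle. The key geometric input is that this dihedral angle is bounded away from $0$ and $\pi$: along the decomposition curves of the $S_{ijk}$ the feet of the two adjacent pieces differ by exactly $1+\pi i$ (the $t=0$ version of Parameter \ref{parameter} (5)), which forces the two planes to cross transversally with angle bounded below in terms of the half-length $R'$ (a computation in hyperbolic geometry: a $1$-shift of feet on a geodesic of length $\approx 2R'$ produces a bending of order $1$, not $o(1)$); and along the edges $e_{ij}^Z$ issuing from a vertex $v_i$ the relevant planes are the planes of the geodesic triangles $\Delta_{ijk}$, whose mutual dihedral angles lie in $[\phi_0,\pi]$ by Notation \ref{notation} (4), with the seam directions controlled by the frames ${\bf F}_{ijk}^{M_0}$.

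Second, I would assemble these pieces into a ``fan'' or CAT($-1$)-type local picture: around each geodesic $\tilde{j}_0(\tilde{c})$ the finitely many totally geodesic half-planes attached to it form a book-of-half-planes with all dihedral angles bounded away from $0$; around each vertex orbit the planes of the incident triangles have dihedral angles in $[\phi_0,\pi]$. This is exactly the hypothesis of the standard ``flat/bent $2$-complex in $\mathbb{H}^3$ is quasiconvex'' arguments — one checks the link condition and then shows $\tilde{j}_0$ is a local isometric embedding with a uniform lower bound on the angle of any corner, so that any geodesic in the metric on $\tilde{Z}$ maps to a uniform quasigeodesic in $\mathbb{H}^3$ (by the Morse lemma applied to piecewise-geodesics with bounded corners and long segments, using the combinatorial distance lower bound $R'e^{R'}$ from Remark \ref{combinatorialdistance} to guarantee the segments between bends are very long). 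This gives the quasi-isometric embedding statement and, since $\tilde{Z}$ is quasi-isometric to $\pi_1(Z)$ with the word metric (the pieces being compact up to the group action), the injectivity of $\rho_0$ and convex cocompactness of $\rho_0(\pi_1(Z))$. Embeddedness of $\tilde{j}_0$ then follows from the same local data: an isometric immersion with bounded corners of a complete CAT($0$) $2$-complex into $\mathbb{H}^3$ that is a quasi-isometric embedding and a local embedding is globally injective (two points with the same image would be joined by a geodesic in $\tilde{Z}$ whose image is a closed quasigeodesic of bounded length, impossible once $R'$ is large).

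Third, for the ``moreover'' clause I would identify the convex core of $\mathbb{H}^3/\rho_0(\pi_1(Z))$ with $\mathcal{Z}$ by a direct topological construction rather than an abstract argument. The convex core deformation retracts onto (a thickening of) $j_0(Z)=\tilde{j}_0(\tilde{Z})/\rho_0(\pi_1(Z))$, so it suffices to show a regular neighborhood of $j_0(Z)$ in $\mathbb{H}^3/\rho_0(\pi_1(Z))$ is homeomorphic to $\mathcal{Z}$. Cut $Z$ along its graph-of-spaces structure: $\mathcal{N}^{(1)}$ comes from thickening $Z^{(1)}\cup(\text{three-cornered annuli})$, and each surface $S_{ijk}$ thickens to $S_{ijk}\times I$, glued along annuli $\partial S_{ijk}\times I$ — this is by definition exactly the description of $\mathcal{Z}$ in Section \ref{contructdomination1}. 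The only thing to verify is that these thickenings glue up consistently inside $\mathbb{H}^3/\rho_0$, i.e. that the normal bundle data match, which follows because the pieces of $j_0(Z)$ meet along the geodesics $j_0(c)$ with transverse, non-degenerate dihedral angles (so a regular neighborhood of each geodesic is a solid torus carrying the $S_{ijk}\times I$ and $\mathcal{N}^{(1)}$ pieces in the cyclic order prescribed by the combinatorics), and because we are tracking orientations throughout (the frames ${\bf F}_{ijk}^{M_0}$ and the foot conventions fix coherent orientations on all pieces).

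I expect the main obstacle to be the global embeddedness and the exact identification of the convex core, as opposed to the quasi-isometric embedding, which is fairly standard once the uniform angle bounds are in hand. The subtlety is that $Z$ is a genuine $2$-complex (not a surface), so pieces meet three-or-more at a time along the curves $j_0(c)$, and one must rule out ``hidden'' self-intersections of $j_0(Z)$ coming from distinct planar pieces that happen to be parallel or to cross far from their common geodesic; here the combinatorial-distance estimate of Remark \ref{combinatorialdistance} ($R'e^{R'}$) is exactly what is needed to push any such putative intersection to infinite combinatorial distance and derive a contradiction with the quasigeodesic bound. A secondary technical point is the transversality/non-degeneracy of the dihedral angles along the $(R',\epsilon)$-good curves, which rests on the $1$-shift (not $o(1)$-shift) of feet; this is where the specific form of the $(R,\epsilon)$-well-matched condition, rather than any finer estimate, is used, and it must be verified by an explicit hyperbolic trigonometry computation with the feet living in $N^1(\sqrt{\gamma})$.
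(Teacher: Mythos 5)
Your central geometric input is incorrect, and it contradicts your own opening sentence. In $N^1(\sqrt{\gamma})=\mathbb{C}/({\bf hl}(\gamma)\mathbb{Z}+2\pi i\mathbb{Z})$ the real part of a difference of feet is a shear along $\gamma$ and the imaginary part is a rotation of the normal direction; so a difference of exactly $1+\pi i$ (the $t=0$ value in Parameter \ref{parameter} (5)) means the two adjacent pieces are \emph{coplanar}: the unit translation is a Fenchel--Nielsen twist, which produces no bending at all, and the $\pi$-rotation places the two half-planes on opposite sides of the same hyperbolic plane. This is exactly why $\tilde{j}_0$ maps each component of $\tilde{Z}\setminus\tilde{Z}^{(1)}$ into a single totally geodesic plane (Construction \ref{family} (4)), as you yourself assert at the start. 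Consequently there is no ``book of half-planes with dihedral angles bounded away from $0$ and $\pi$'' along the decomposition curves of the $S_{ijk}$, and the ``explicit hyperbolic trigonometry computation'' you flag as the crucial verification rests on a misreading of the well-matched condition (a $1$-shift is not an order-$1$ bending). The only genuine corners of $\tilde{j}_0(\tilde{Z})$ occur along $\tilde{Z}^{(1)}$, where the relevant bounds come from Notation \ref{notation} (4)(5); the real difficulty, which your Morse-lemma sketch never addresses, is controlling a shortest path of $\tilde{Z}$ as it crosses $\tilde{Z}^{(1)}$ when consecutive crossing points cluster near a vertex, so that successive geodesic segments can be arbitrarily short. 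The paper resolves this by replacing each such cluster by the common vertex (the modified sequence and modified path), showing the middle segments have length at least $R/128$ and consecutive angles at least $\theta_0$, and then applying Lemma \ref{lengthphase} (1); your appeal to Remark \ref{combinatorialdistance} is aimed at the wrong target, since the $R'e^{R'}$ bound (used via Construction \ref{constructionZ} (2)) keeps the boundary edges of one component far from each other, and does nothing about short segments between crossings of two edges sharing a vertex.

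The ``moreover'' clause is also incomplete as sketched: that the convex core deformation retracts onto a thickening of $j_0(Z)$ homeomorphic to $\mathcal{Z}$ yields only a homotopy equivalence, not a homeomorphism. The paper instead notes that $\mathcal{N}(\tilde{Z})/\rho_0(\pi_1(Z))$ is a compact submanifold carrying $\pi_1$, that all components of $\partial\mathcal{Z}$ are incompressible, and hence that every complementary component of $\mathbb{H}^3/\rho_0(\pi_1(Z))$ is a product of a surface with $(0,\infty)$; this identifies $\mathbb{H}^3/\rho_0(\pi_1(Z))$ with the interior of $\mathcal{Z}$, and convex cocompactness (already obtained from the quasi-isometric embedding) then identifies the convex core with $\mathcal{Z}$ as oriented manifolds. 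Your regular-neighborhood discussion, which again leans on transversality of the planes along the interior good curves, needs to be replaced by this kind of argument, and your injectivity step should be run as in the paper: the quasi-isometry inequality forces $d_{\tilde{Z}}(x,y)$ to be bounded (at most about $R/30$) whenever the images coincide, after which one checks directly the case of points in the same or adjacent components.
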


The proof of Proposition \ref{model} is similar to the proof of Proposition 4.1 of \cite{Sun2}, and all works will be done on $\mathbb{H}^3$. So the fact that $M_0$ has cusps and $Z$ contains hamster wheels do not cause much complication, and most of the proof in \cite{Sun2} is still valid here. Some terminologies defined in the proof will also be used in the next section.

\begin{proof}
The bulk of this proof will be devoted to prove that $\tilde{j}_0$ is a quasi-isometric embedding.

Let $Z''$ be the subset of $Z$ consists of $Z^{(1)}$, $\partial S_{ijk}$ and all decomposition curves in $S_{ijk}$. Since $Z''$ is $2$-dense in $Z$, to prove $\tilde{j}_0$ is a quasi-isometric embedding, we only need to prove a quasi-isometric inequality for two arbitrary points $x,y\in \tilde{Z''}\subset \tilde{Z}$. 

Let $\gamma$ be the shortest oriented path in $\tilde{Z}$ from $x$ to $y$. If $\gamma\setminus \{x,y\}$ does not intersect with $\tilde{Z}^{(1)}\subset \tilde{Z}$, then the restriction of $\tilde{j}_0$ on $\gamma$ is an isometry, and the proof is done. So we can assume that $\gamma\setminus \{x,y\}$ intersects with $\tilde{Z}^{(1)}$, and we denote the intersection points of $(\gamma\setminus \{x,y\})\cap \tilde{Z}^{(1)}$ by $x_1,x_2,\cdots, x_n$, which follow the orientation of $\gamma$. Here if $\gamma$ contains an edge of $\tilde{Z}^{(1)}$, we only record the initial and terminal points of this edge. The sequence $x_1,x_2,\cdots,x_n$ is called the {\it intersection sequence} of $\gamma$.

Based on the intersection sequence $x_1,x_2,\cdots,x_n$, we construct a sequence of points $y_1,y_2,\cdots,y_m$ called the {\it modified sequence} of $\gamma$. For any maximal subsequence $x_i,\cdots,x_{i+j}$ of the intersection sequence such that any two adjacent points have distance at most $R/64$, the edges of $\tilde{Z}^{(1)}$ containing these points must share a vertex $y\in \tilde{Z}^{(0)}$. Then we replace the subsequence  $x_i,\cdots,x_{i+j}$ of the intersection sequence by $y$. By doing this process for all such maximal subsequences of the intersection sequence, we get the {\it modified sequence} $y_1,\cdots,y_m$ of $\gamma$. Let $y_0=x, y_{m+1}=y$, and let $\gamma_i$ be the oriented geodesic segment from $y_i$ to $y_{i+1}$, with $i=0,1,\cdots,m$. The {\it modified path} $\gamma'$ of $\gamma$ is defined to be the concatenation of all $\gamma_i$ for $i=0,1,\cdots,m$. 

By Construction \ref{constructionZ} (2) and the fact that edges of $Z^{(1)}$ have length $R$, any two edges in $\tilde{Z}^{(1)}$ that do not share a vertex have distance at least $R/2$ (note that $R'>R$ holds). By the proof of Proposition 4.1 of \cite{Sun2}, we have $l(\gamma_i)>R/128$ for $i=1,\cdots,m-1$, and the angle between $\gamma_i$ and $\gamma_{i+1}$ in $\mathbb{H}^3$ is at least $\theta_0$ ($\theta_0>0$ was defined in Notation \ref{notation} (5).) Although the proof of Proposition 4.1 of \cite{Sun2} used some explicit angles, the proof is still valid here, since we have a lower bound $\phi_0>0$ for all angles of triangles in $N$ and all dihedral angles between triangles in $N$ (Notation \ref{notation} (4)).

Since the metrics on components of $\tilde{Z}\setminus \tilde{Z}^{(1)}$ are pull-back metrics of $\mathbb{H}^3$ via $\tilde{j}_0$, we always have
$$d_{\mathbb{H}^3}(\tilde{j}_0(x),\tilde{j}_0(y))\leq d_{\tilde{Z}}(x,y).$$
If both $l(\gamma_0),l(\gamma_m)>R/128$, we have $(m+1)R/128\leq \sum_{i=0}^ml(\gamma_i).$
By Lemma \ref{lengthphase} (1), we have 
\begin{align*}
&d_{\mathbb{H}^3}(\tilde{j}_0(x),\tilde{j}_0(y))\geq \sum_{i=0}^ml(\gamma_i)-m(I(\pi-\theta_0)+1)\\
\geq\ & \sum_{i=0}^ml(\gamma_i)-\frac{128(I(\pi-\theta_0)+1)}{R}\sum_{i=0}^ml(\gamma_i)
\geq (1-\frac{128(I(\pi-\theta_0)+1)}{R})d_{\tilde{Z}}(x,y).
\end{align*}
If only one of $l(\gamma_0)$ and $l(\gamma_m)$ is greater than $R/128$, say it is $l(\gamma_0)$, then we have $mR/128\leq \sum_{i=0}^ml(\gamma_i).$ By Lemma \ref{lengthphase} (1), we have 
\begin{align*}
&d_{\mathbb{H}^3}(\tilde{j}_0(x),\tilde{j}_0(y))\geq d(\tilde{j}_0(x),\tilde{j}_0(y_m))-l(\gamma_m)\geq \sum_{i=0}^{m-1}l(\gamma_i)-(m-1)(I(\pi-\theta_0)+1)-R/128\\
\geq\ & \sum_{i=0}^{m-1}l(\gamma_i)-\frac{128(I(\pi-\theta_0)+1)}{R}\sum_{i=0}^{m}l(\gamma_i)-R/128\geq (1-\frac{128(I(\pi-\theta_0)+1)}{R})\sum_{i=0}^{m}l(\gamma_i)-R/64\\
\geq\ &  (1-\frac{128(I(\pi-\theta_0)+1)}{R})d_{\tilde{Z}}(x,y)-R/64.
\end{align*}
If $l(\gamma_0),l(\gamma_m)<R/128$, then we have $(m-1)R/128\leq \sum_{i=0}^ml(\gamma_i)$. By Lemma \ref{lengthphase} (1) again, we have 
\begin{align*}
&d_{\mathbb{H}^3}(\tilde{j}_0(x),\tilde{j}_0(y))\geq d(\tilde{j}_0(y_1),\tilde{j}_0(y_m))-l(\gamma_0)-l(\gamma_m)\geq \sum_{i=1}^{m-1}l(\gamma_i)-(m-2)(I(\pi-\theta_0)+1)-R/64\\
\geq\ & \sum_{i=1}^{m-1}l(\gamma_i)-\frac{128(I(\pi-\theta_0)+1)}{R}\sum_{i=0}^{m}l(\gamma_i)-R/64\geq (1-\frac{128(I(\pi-\theta_0)+1)}{R})\sum_{i=0}^{m}l(\gamma_i)-R/32\\
\geq\ & (1-\frac{128(I(\pi-\theta_0)+1)}{R})d_{\tilde{Z}}(x,y)-R/32.
\end{align*}
So we have proved that $\tilde{j}_0:\tilde{Z}\to \mathbb{H}^3$ is a quasi-isometric embedding, with quasi-isometric constants $((1-\frac{128(I(\pi-\theta_0)+1)}{R})^{-1},\frac{R}{32})$. Since $\tilde{j}_0:\tilde{Z}\to \mathbb{H}^3$ is equivariant with respect to the representation $\rho_0:\pi_1(Z)\to \text{Isom}_+(\mathbb{H}^3)$ and $\pi_1(Z)$ is torsion free, we know that $\rho_0$ is injective and $\rho_0(\pi_1(Z))$ is a convex cocompact subgroup of $\text{Isom}_+(\mathbb{H}^3)$.

In the above case-by-case argument, we also obtain the following inequality:
\begin{align}\label{5}
& \sum_{i=0}^ml(\gamma_i)\leq (1-\frac{128(I(\pi-\theta_0)+1)}{R})^{-1}(d_{\tilde{Z}}(x,y)+R/32).
\end{align}

For large enough $R$ (say $R>2^{11}(I(\pi-\theta_0)+1)$), the above estimates imply that $\tilde{j}_0(x)\ne \tilde{j}_0(y)$ if $d_{\tilde{Z}}(x,y)>R/30$. If $d_{\tilde{Z}}(x,y)\leq R/30$, then either $x$ and $y$ lie on the same component of $\tilde{Z}\setminus \tilde{Z}^{(1)}$, or they lie on two different components of $\tilde{Z}\setminus \tilde{Z}^{(1)}$ whose closures have nontrivial intersection. Then it is clear that $\tilde{j}_0(x)\ne \tilde{j}_0(y)$ in this case, and we finish the proof that $\tilde{j}_0:\tilde{Z}\to \mathbb{H}^3$ is injective.

So $\tilde{j}_0:\tilde{Z}\to \mathbb{H}^3$ is injective and maps each component of $\tilde{Z}\setminus \tilde{Z}^{(1)}$ to a totally geodesic subsurface in $\mathbb{H}^3$. By the construction of $Z$, $\tilde{j}_0(\tilde{Z})$ has a $\rho_0(\pi_1(Z))$-invariant neighborhood $\mathcal{N}(\tilde{Z})$ in $\mathbb{H}^3$ such that $\mathcal{N}(\tilde{Z})/\rho_0(\pi_1(Z))$ is homeomorphic to the compact oriented $3$-manifold $\mathcal{Z}$. Also note that $\mathcal{N}(\tilde{Z})/\rho_0(\pi_1(Z))$ is a compact submanifold of $\mathbb{H}^3/\rho_0(\pi_1(Z))$ such that the inclusion induces an isomorphism on $\pi_1$. Since all boundary components of $\mathcal{Z}$ are incompressible, each component of 
$$\big(\mathbb{H}^3/\rho_0(\pi_1(Z))\big)/\big(\mathcal{N}(\tilde{Z})/\rho_0(\pi_1(Z))\big)$$
is homeomorphic to the product of a surface and $(0,\infty)$, so $\mathbb{H}^3/\rho_0(\pi_1(Z))$ is homeomorphic to $\mathcal{Z}\setminus \partial \mathcal{Z}$. Since $\rho_0(\pi_1(Z))$ is convex cocompact, the convex core of $\mathbb{H}^3/\rho_0(\pi_1(Z))$ is homeomorphic to $\mathcal{Z}$. Note that all homeomorphisms in this paragraph preserve orientations on these oriented manifolds.
\end{proof}

\bigskip

\subsection{$\tilde{j}_t$ is a quasi-isometric embedding}\label{qi1}
In this section, we prove that each $\tilde{j}_t:\tilde{Z}\to \mathbb{H}^3$ is a quasi-isometric embedding, and finish the proof of Theorem \ref{pi1injectivity}.

We first state the following lemma that estimates the geometry of $\tilde{j}_t$ on components of $\tilde{Z}\setminus \tilde{Z}^{(1)}$. Note that Theorem \ref{estimateonsurface} estimates the geometry of maps (on universal covers) of $(R',\epsilon)$-nearly geodeisc subsurfaces with boundaries. However, each component of $Z\setminus Z^{(1)}$ also contains three-cornered annuli and $(R_{ijk},R',\epsilon)$-good pants (as in Construction \ref{constructionZ}). So we need a version of Theorem \ref{estimateonsurface} for components of $\tilde{Z}\setminus \tilde{Z}^{(1)}$.

\begin{lemma}\label{smallangle}
For any $\delta\in(0,10^{-6})$, there exists $\epsilon_0>0$ and $R_0>0$, such that for any positive numbers $\epsilon\in (0,\epsilon_0)$, $R>R_0$ and any positive integer $R'$ greater than all of the $R_{ijk}$, the following statement holds. 

If $\{\tilde{j}_t:\tilde{Z}\to \mathbb{H}^3\ |\ t\in [0,1]\}$ is constructed with respect to $\epsilon,R$ and $R'$, then for any $t\in[0,1]$ and any $x,y$ lying in the closure of a component $C\subset \tilde{Z}\setminus \tilde{Z}^{(1)}$ such that $x\in \partial C$, we have:
\begin{align}\label{a}
\frac{1}{2}d_{\mathbb{H}^3}(\tilde{j}_t(x),\tilde{j}_t(y))\leq d_{\tilde{Z}}(x,y)\leq 2d_{\mathbb{H}^3}(\tilde{j}_t(x),\tilde{j}_t(y)).
\end{align}
Moreover, let $e$ be the edge in $\tilde{Z}^{(1)}$ containing $x$ (with a preferred orientation), if $d(x,y)\geq 100$, then we have
\begin{align}\label{b}
d_{S^2}(\Theta(x,y,e),\Theta(\tilde{j}_t(x),\tilde{j}_t(y),\tilde{j}_t(e)))<10\delta.
\end{align}
Here $\Theta(x,y,e)$ denotes the point in $S^2$ determined by the tangent vector of $\overline{xy}$ in $\mathbb{H}^3$, with respect to a coordinate of $T^1_x(\mathbb{H}^3)$ given by a frame ${\bf p}=(x,\vec{v},\vec{n})$, such that $\vec{v}$ is tangent to $e$ and $\vec{n}$ is tangent to $C$ (points inward). Similarly, $\Theta(\tilde{j}_t(x),\tilde{j}_t(y),\tilde{j}_t(e))$ is defined with respect to a frame based at $\tilde{j}_t(x)$, with first vector tangent to $\tilde{j}_t(e)$, and the second vector is $\epsilon$-close to be tangent to $\tilde{j}_t(C)$ (points inward).
\end{lemma}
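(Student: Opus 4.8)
The plan is to prove Lemma~\ref{smallangle} by reducing it to Theorem~\ref{estimateonsurface}, treating the three-cornered annuli and the $(R_{ijk},R',\epsilon)$-good pants as controlled perturbations of the nearly geodesic piece $S'$ inside each $S_{ijk}$. First I would set up the constants: given $\delta$, apply Theorem~\ref{estimateonsurface} with parameter $\delta/10$ (or smaller) to obtain $\epsilon_0$ and $R_0$, and enlarge $R_0$ so that all the geometric estimates below (hyperbolic sine law computations, $I(\pi-\phi_0)$ bounds, $e^{-R}$-type error terms) are negligible. A component $C\subset\tilde Z\setminus\tilde Z^{(1)}$ is the universal cover of the piece of $Z$ obtained from a single $S_{ijk}$ together with its three-cornered annulus $A_{ijk}$ and the three $(R_{ijk},R',\epsilon)$-good pants $\Pi_i,\Pi_j,\Pi_k$; the decomposition curves $\hat{\mathcal C}$ of the $(R',\epsilon)$-nearly geodesic subsurface $S'$, together with $\gamma_{ijk}$ and the extra edges in $Z'$, give a graph-of-spaces structure on $C$ whose pieces each map, under $\tilde j_t$, into (a uniformly bounded neighborhood of) a totally geodesic plane: for the $S'$-pieces this is exactly the content controlled by $\tilde E$ in Theorem~\ref{estimateonsurface}, and for the annulus and the good pants it is a direct computation since those are made of a bounded number of totally geodesic triangles meeting with bending angle at most $\epsilon$ (Lemma~\ref{triplenullhomologous}(1)) resp.\ of two right-angled hexagons/pants with cuff lengths within $\epsilon$ of prescribed values.

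The core argument for inequality~(\ref{a}) is then a coarse-geometry patching: a shortest path in $\tilde Z$ between $x,y$ in $\bar C$ crosses a sequence of decomposition curves / cuffs, and on each crossed piece $\tilde j_t$ is bi-Lipschitz with constant close to $1$ (by Theorem~\ref{estimateonsurface}(1) on the $S'$-pieces, by the bounded-combinatorics computation on the finitely many annulus/pants pieces). The feet-matching condition from Parameter~\ref{parameter}(5) (the $1+\pi i+t\eta_C$ shift, with $|\eta_C|<200\epsilon$, resp.\ $|\eta_C|<\epsilon/R$ when formal feet exist on both sides) means that consecutive pieces are glued with bending angle bounded away from $0$ in $S^2$; combined with the length lower bounds $2R'-O(\epsilon)$ for cuffs and $\approx 2$ for hamster-wheel segments, and with the combinatorial-distance lower bound $R'e^{R'}$ from Remark~\ref{combinatorialdistance} which guarantees any essential path from $\gamma_{ijk}$ back to itself is long, one applies Lemma~\ref{lengthphase}(1) exactly as in the proof of Proposition~\ref{model} to convert a concatenation of such segments into a quasigeodesic in $\mathbb H^3$. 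The factor $2$ (rather than something close to $1$) is deliberately generous, so the only real requirement is that the accumulated multiplicative distortion $1+O((I(\pi-\theta_0)+1)/R')$ and additive error $O(1)$ per crossing stay below the stated bounds, which holds for $R$ (hence $R'$) large; the $2$-denseness of $Z''$ lets us reduce to $x,y$ in the $1$-skeleton-adjacent locus as in Proposition~\ref{model}.

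For inequality~(\ref{b}), the angle comparison, I would run the same three-case analysis as in the proof of Theorem~\ref{estimateonsurface}: if $d(x,y)$ is bounded ($\le \delta^{-1}/2$, say) the estimate is the local-distortion statement (Theorem~\ref{localestimateKW} / the annulus-and-pants version of it) plus the observation that $x$ lies on $\partial C=\tilde\gamma_{ijk}$-type curves where the framing in Parameter~\ref{parameter}(1) is $\epsilon/10$-close to ${\bf F}^{M_0}_{ijk}$, so $\Theta$ computed with respect to the edge $e$ and with respect to the tangent plane of $\tilde j_t(C)$ differ by $O(\epsilon)$ (this is where one uses that the normal of ${\bf F}^{M_0}_{ijk}$ is $\epsilon$-close to the normal of the adjacent geodesic triangles, Lemma~\ref{triplenullhomologous}(1)(c)); if $d(x,y)$ is large one chains together bounded-length sub-steps, estimates the turning angle at each step from below by a sine-law computation (exactly as in Case~II of Theorem~\ref{estimateonsurface}, with the lower bound $0.26$ for the turning angle at the interpolation points), and applies Lemma~\ref{directiondifference} to conclude that the initial direction of the whole concatenation is within $O(e^{-L})+O(\delta)$ of the initial direction of the first sub-step, which in turn is $O(\delta)$-close to $\Theta(x,y,e)$. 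The main obstacle, and the step requiring genuine care, is verifying that the three-cornered annulus $A_{ijk}$ and the three $(R_{ijk},R',\epsilon)$-good pants — objects with cuffs of two different lengths $R$ and $R'$ and which are \emph{not} covered by Kahn--Wright's Theorem~\ref{localestimateKW} — still satisfy a uniform local-distortion estimate of the required quality; this needs a direct hyperbolic-trigonometry argument (bounded number of totally geodesic triangles, bending angles $\le\epsilon$, cuff lengths within $\epsilon$ of prescribed values, inner angles in $[\phi_0,\pi-\phi_0]$) showing the developing map is $O(\epsilon)$-distorted at the fixed scale $D=\delta^{-1}$, after which the rest of the proof is a repackaging of the arguments already used for Theorem~\ref{estimateonsurface} and Proposition~\ref{model}. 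I would also note that the "moreover" clause is not needed here since $\tilde j_t$ is defined directly, not as a perturbation, but the $t$-uniformity is automatic because all parameters $tA_{ij}, t\lambda_{ij},\dots$ are bounded in absolute value by their $t=1$ values, so every estimate above is uniform in $t\in[0,1]$.
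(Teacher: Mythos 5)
Your proposal follows essentially the same route as the paper, which itself only sketches this proof: decompose the component $C$ into the three-cornered annulus, the $(R_{ijk},R',\epsilon)$-good pants and the $(R',\epsilon)$-nearly geodesic part, handle the two exceptional pieces by direct hyperbolic trigonometry (the paper cites Lemmas 4.6--4.7 of \cite{Sun2} and Appendix A.9 of \cite{KW1} for exactly the step you single out as the one needing genuine care), control the bulk by Theorem \ref{estimateonsurface}, and then globalize as in Proposition 4.8 of \cite{Sun2}; your alternative of proving a Theorem \ref{localestimateKW}-type distortion statement for the whole component and re-running the proof of Theorem \ref{estimateonsurface} is literally the paper's ``alternatively'' paragraph.

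Two caveats. First, your stated mechanism for (\ref{a}) --- that the feet-matching in Parameter \ref{parameter}(5) makes consecutive pieces meet with ``bending angle bounded away from $0$,'' so that Lemma \ref{lengthphase}(1) can be applied as in Proposition \ref{model} --- is wrong as written: the $(1+\pi i)$-shift condition makes the assembled surface \emph{nearly geodesic} across each decomposition curve (bending close to zero), and a shortest path inside $C$ can cross such a curve almost tangentially, so there is no $\theta_0$-type lower bound at these crossings; the angle lower bound in Proposition \ref{model} comes from the dihedral angles of the triangulation of $N$ at crossings of $\tilde{Z}^{(1)}$ (Notation \ref{notation}(4)(5)), which is not available inside a single component $C$. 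The globalization inside $C$ must instead go through the chaining argument of Case II of Theorem \ref{estimateonsurface} (interpolation points at a definite scale, with the $0.26$-type turning bound at those points coming from the local near-isometry), which you do invoke for (\ref{b}); once the local distortion estimate is in place on the exceptional pieces, that same chaining yields (\ref{a}). Second, the paper does use the moreover clause of Theorem \ref{estimateonsurface}, since for $t<1$ the restriction of $\tilde{j}_t$ to the lifted decomposition curves need not coincide exactly with the canonical map of the $t$-interpolated assembly (and that assembly no longer sits in $M_0$); your claim that the clause is unnecessary should either be dropped or justified by observing that the proof of Theorem \ref{estimateonsurface} is really a statement about configurations in $\mathbb{H}^3$ via Theorem \ref{localestimateKW}, independent of discreteness of the holonomy.
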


The proof of Lemma \ref{smallangle} is left to the readers, and we only describe the idea of proof here.

At first, Lemmas 4.6 and 4.7 of \cite{Sun2} estimate the geometry of nearly geodesic $4$-gons (with two right angles) in $\mathbb{H}^3$, and estimate the geometry of $\tilde{j}_t$ on the preimage of three-cornered annuli in $C$. 
For the $(R_{ijk},R',\epsilon)$-good pants $\Pi_i \subset Z\setminus Z^{(1)}$, a computation on hyperbolic geometry as in Appendix A.9 of \cite{KW1} estimates the geometry of $\tilde{j}_t$ on the preimage of $\Pi_i$ in $C$. Besides the above estimates on these exceptional pieces, the moreover part of Theorem \ref{estimateonsurface} estimates the geometry of $\tilde{j}_t$ on the preimage of $(R',\epsilon)$-nearly geodesic subsurfaces in $C$.  Adopting these estimates on pieces, we can argue as in Proposition 4.8 of \cite{Sun2} to prove Lemma \ref{smallangle}.

Alternatively, for the component $C\subset \tilde{Z}\setminus \tilde{Z}^{(1)}$, we can associate it with subsets of frames in $\text{SO}(\mathbb{H}^2)$ and $\text{SO}(\mathbb{H}^3)$ as in Section \ref{estimatesurface}. We can first prove a version of Theorem \ref{localestimateKW} for these frames as in \cite{KW1}. Then we can prove Lemma \ref{smallangle} by running the proof of Theorem \ref{estimateonsurface}.

The following proposition proves that each $\tilde{j}_t$ is a quasi-isometric embedding.

\begin{proposition}\label{quasiisometry}
If $\epsilon>0$ is small enough and $R>0$ is large enough, for any $t\in [0,1]$, the map $\tilde{j}_t:\tilde{Z}\to \mathbb{H}^3$ is a quasi-isometric embedding.
\end{proposition}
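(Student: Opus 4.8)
\textbf{Proof proposal for Proposition \ref{quasiisometry}.} The plan is to run the same case-by-case argument that proved Proposition \ref{model}, but now using Lemma \ref{smallangle} in place of the exact isometry property of $\tilde{j}_0$ on the components of $\tilde{Z}\setminus \tilde{Z}^{(1)}$. First I would fix $\delta$ small (say $\delta = 10^{-7}$), choose $\epsilon_0$ and $R_0$ so that Lemma \ref{smallangle} and all earlier geometric estimates apply, and take $\epsilon<\epsilon_0$, $R>R_0$ with $R$ large enough that $128(I(\pi-\theta_0)+1)/R$ and related error terms are tiny. As in the proof of Proposition \ref{model}, it suffices to verify the quasi-isometric inequality for two points $x,y$ in $\tilde{Z''}$, the $2$-dense subcomplex consisting of $\tilde{Z}^{(1)}$, the lifts of the $\partial S_{ijk}$, and the lifts of the decomposition curves. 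Take $\gamma$ the shortest path in $\tilde{Z}$ from $x$ to $y$, form its intersection sequence $x_1,\dots,x_n$ with $\tilde{Z}^{(1)}$, and then its modified sequence $y_1,\dots,y_m$ and modified path $\gamma'=\gamma_0\cdots\gamma_m$ exactly as before. The combinatorial facts used in Proposition \ref{model}---that $l(\gamma_i)>R/128$ for $1\le i\le m-1$, that consecutive edges of $\tilde{Z}^{(1)}$ either share a vertex or are at distance $\ge R/2$, and that the turning angles in $\tilde{Z}$ are bounded below by $\theta_0$---are purely about $\tilde{Z}$ and its metric, so they carry over verbatim.

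The new input is controlling $\tilde{j}_t$ along each $\gamma_i$. Each $\gamma_i$ has endpoints on $\tilde{Z}^{(1)}$ and interior in a single component $C_i\subset \tilde{Z}\setminus\tilde{Z}^{(1)}$, so Lemma \ref{smallangle} applies: the hyperbolic length $d_{\mathbb{H}^3}(\tilde{j}_t(y_i),\tilde{j}_t(y_{i+1}))$ is within a factor $2$ of $l(\gamma_i)$, and when $l(\gamma_i)\ge 100$ the direction $\Theta(\tilde{j}_t(y_i),\tilde{j}_t(y_{i+1}),\tilde{j}_t(e))$ agrees with the "straight" direction $\Theta(y_i,y_{i+1},e)$ up to $10\delta$ in $S^2$. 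Combining the $10\delta$ direction estimate at each $y_i$ with the lower bound $\theta_0$ on turning angles in $\tilde{Z}$, the bending angle of the broken geodesic $\tilde{j}_t(\gamma_0)\cdots\tilde{j}_t(\gamma_m)$ in $\mathbb{H}^3$ at each $\tilde{j}_t(y_i)$ is at least $\theta_0 - 30\delta > \theta_0/2$, and each segment has length $\ge R/128 > I(\theta)+10\log 2$ for the relevant bound $\theta=\pi-\theta_0/2$. So Lemma \ref{lengthphase} (1) applies to the chain $\tilde{j}_t(\gamma_0),\dots,\tilde{j}_t(\gamma_m)$ (after the trivial extra step of turning these geodesic segments into $\partial$-framed segments with arbitrary framings; only the length conclusion is needed), giving
\[
d_{\mathbb{H}^3}(\tilde{j}_t(x),\tilde{j}_t(y)) \ \ge\ \sum_{i=0}^{m} l_{\mathbb{H}^3}(\tilde{j}_t(\gamma_i)) - m\bigl(I(\pi-\theta_0/2)+1\bigr),
\]
and then the three-way split on the sizes of $l(\gamma_0),l(\gamma_m)$ (both large / one large / both small), exactly as in Proposition \ref{model}, yields
\[
d_{\mathbb{H}^3}(\tilde{j}_t(x),\tilde{j}_t(y)) \ \ge\ \Bigl(1-\tfrac{C_0}{R}\Bigr)\, d_{\tilde{Z}}(x,y) - \tfrac{R}{16}
\]
for a constant $C_0$ depending only on $\theta_0$ (using $\sum l(\gamma_i)\ge d_{\tilde{Z}}(x,y)$ after discarding short end segments as before). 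For the upper bound, since the metric on each $C_i$ is the pull-back of $\mathbb{H}^3$ only at $t=0$, I would instead invoke the length comparison of Lemma \ref{smallangle} (equation \eqref{a}): $d_{\tilde{Z}}(x,y)\le \sum_i l(\gamma_i) \le 2\sum_i d_{\mathbb{H}^3}(\tilde{j}_t(y_i),\tilde{j}_t(y_{i+1}))$, together with inequality \eqref{5}-type control of $\sum l(\gamma_i)$ by $d_{\tilde Z}(x,y)$, giving a two-sided quasi-isometry with constants depending only on $\theta_0$ and the (large) $R$.

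The step I expect to be the main obstacle is the direction/turning-angle bookkeeping: one must check that the $10\delta$ errors from Lemma \ref{smallangle} at the matching vertices $y_i$ genuinely do not eat into the lower bound $\theta_0$ on the turning angles coming from the combinatorics of $\tilde{Z}$, i.e.\ that the angle between $\tilde{j}_t(\gamma_{i-1})$ and $\tilde{j}_t(\gamma_i)$ stays bounded away from $\pi$ uniformly in $i$ and $t$. This is where the careful choice of $\delta$ relative to $\theta_0$ matters, and where one needs that the framings/normal directions used to define $\Theta$ on the two sides of $y_i$ are compatible (both "point inward" into the respective components, and the edge-frame conventions of Notation \ref{notation} (5) and Lemma \ref{smallangle} match the ones used to bound turning angles in $\tilde{Z}$). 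Once that uniform bending bound is in hand, everything else is a routine repetition of the $t=0$ computation in Proposition \ref{model}. I would then note that Proposition \ref{quasiisometry} for $t=1$ shows $\tilde{j}=\tilde{j}_1$ is a quasi-isometric embedding, hence $j$ is $\pi_1$-injective with convex cocompact image; combined with Proposition \ref{model}, the interpolation $\{\rho_t\}_{t\in[0,1]}$ is a path of convex cocompact representations, so the convex core of $\mathbb{H}^3/\rho_1(\pi_1(Z))=\mathbb{H}^3/j_*(\pi_1(Z))$ is homeomorphic (as an oriented manifold) to that of $\rho_0$, which by Proposition \ref{model} is $\mathcal{Z}$; this completes the proof of Theorem \ref{pi1injectivity}.
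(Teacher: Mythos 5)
Your proposal is correct and follows essentially the same route as the paper: reduce to the $2$-dense set $\tilde{Z}''$, use the modified sequence and path from Proposition \ref{model}, control lengths and directions of each $\tilde{j}_t(\gamma_i)$ via Lemma \ref{smallangle} (equations (\ref{a}), (\ref{b})), deduce bending angles at least $\theta_0/2$, and apply Lemma \ref{lengthphase} (1) together with the same case split and inequality (\ref{5}). The one point you flag as a worry is handled in the paper exactly as you anticipate, by fixing $\delta<\min\{10^{-6},\theta_0/100\}$ before invoking Lemma \ref{smallangle}, so the $O(\delta)$ direction errors cannot erode the $\theta_0$ turning bound.
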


%The proof runs similarly as the proof of Theorem 4.4 of \cite{Sun2}, with Theorem 4.5 of \cite{Sun2} replaced by Theorem \ref{estimateonsurface}. 

\begin{proof}
We choose $0<\delta<\min{\{10^{-6}, \frac{\theta_0}{100}\}}$, where $\theta_0$ is defined in Notation \ref{notation} (5). Then we take $\epsilon>0$ and $R>0$ so that Lemma \ref{smallangle} holds for $\delta$.

Since $\tilde{Z}''$ (defined in the proof of Proposition \ref{model}) is $2$-dense in $\tilde{Z}$, it suffices to prove that $\tilde{j}_t|:\tilde{Z}''\to \mathbb{H}^3$ is a quasi-isometric embedding. For any $x,y\in \tilde{Z}''$, we take the shortest path $\gamma$ in $\tilde{Z}$ from $x$ to $y$. If $x$ and $y$ lie in the same component of $\tilde{Z}\setminus \tilde{Z}^{(1)}$, the proof follows from Lemma \ref{smallangle}. So we assume that they lie in different components of $\tilde{Z}\setminus \tilde{Z}^{(1)}$.
 Let $y_1,\cdots,y_m$ be the modified sequence of $\gamma$, and let $\gamma'$ be the modified path of $\gamma$. 

Then we have 
\begin{align*}
& d_{\mathbb{H}^3}(\tilde{j}_t(x),\tilde{j}_t(y))\leq \sum_{i=0}^md_{\mathbb{H}^3}(\tilde{j}_t(y_i),\tilde{j}_t(y_{i+1}))\leq 2\sum_{i=0}^md_{\tilde{Z}}(y_i,y_{i+1}) \\
\leq\ & 2(1-\frac{128(I(\pi-\theta_0)+1)}{R})^{-1}(d_{\tilde{Z}}(x,y)+R/32).
\end{align*}
Here the second inequality follows from equation (\ref{a}), and the last inequality follows from equation (\ref{5}).

On the other hand, since $d_{\tilde{Z}}(y_i,y_{i+1})>R/128$ for each $i=1,\cdots,m-1$, we still have $(m-1)R/128\leq \sum_{i=0}^{m}d_{\tilde{Z}}(y_i,y_{i+1})$. Moreover, by equation (\ref{a}), we have $d_{\mathbb{H}^3}(\tilde{j}_t(y_i),\tilde{j}_t(y_{i+1}))>R/256$. Since the angle between $\overline{\tilde{j}_0(y_{i-1})\tilde{j}_0(y_i)}$ and $\overline{\tilde{j}_0(y_i)\tilde{j}_0(y_{i+1})}$ is at least $\theta_0$, by equation (\ref{b}), the angle between  $\overline{\tilde{j}_t(y_{i-1})\tilde{j}_t(y_i)}$ and $\overline{\tilde{j}_t(y_i)\tilde{j}_t(y_{i+1})}$ is at least $\theta_0-20\delta>\theta_0/2$. 

By the case-by-case argument in the proof of Proposition \ref{model}, we have
\begin{align*}
& d_{\mathbb{H}^3}(\tilde{j}_t(x),\tilde{j}_t(y))\geq \sum_{i=0}^md_{\mathbb{H}^3}(\tilde{j}_t(y_i),\tilde{j}_t(y_{i+1}))-m(I(\pi-\theta_0/2)+1)-R/32\\
\geq\ & \frac{1}{2}\sum_{i=0}^md_{\tilde{Z}}(y_i,y_{i+1})-m(I(\pi-\theta_0/2)+1)-R/32\\
\geq\ & \frac{1}{2}\sum_{i=0}^md_{\tilde{Z}}(y_i,y_{i+1})-(\frac{128}{R}\sum_{i=0}^md_{\tilde{Z}}(y_i,y_{i+1})+1)(I(\pi-\theta_0/2)+1)-R/32\\
=\ & \big(\frac{1}{2}-\frac{128(I(\pi-\theta_0/2)+1)}{R}\big)\sum_{i=0}^md_{\tilde{Z}}(y_i,y_{i+1})-(R/32+I(\pi-\theta_0/2)+1)\\
\geq\ & \big(\frac{1}{2}-\frac{128(I(\pi-\theta_0/2)+1)}{R}\big)d_{\tilde{Z}}(x,y)-(R/32+I(\pi-\theta_0/2)+1)
\end{align*}
Here the second inequality follows from equation (\ref{a}) and the third inequality follows from our upper bound of $m$.

We finish the proof that $\tilde{j}_t:\tilde{Z}\to\mathbb{H}^3$ is a quasi-isometric embedding.
\end{proof}

Now we are ready to prove Theorem \ref{pi1injectivity}, the main result of this section.

\begin{proof}[Proof of Theorem \ref{pi1injectivity}]
By Proposition \ref{quasiisometry}, each $\tilde{j}_t:\tilde{Z}\to \mathbb{H}^3$ is a quasi-isometric embedding. Since $\pi_1(Z)$ is torsion free, each representation $\rho_t:\pi_1(Z)\to \text{Isom}_+(\mathbb{H}^3)$ is injective, and $\rho_t(\pi_1(Z))<\text{Isom}_+(\mathbb{H}^3)$ is a convex cocompact subgroup. In particular, $\rho_1(\pi_1(Z))=j_*(\pi_1(Z))$ is a convex cocompact subgroup of $\pi_1(M_0)<\text{Isom}_+(\mathbb{H}^3)$.

Moreover, $\{\rho_t(\pi_1(Z))\ |\ t\in [0,1]\}$ forms a continuous family of convex cocompact subgroups of $\text{Isom}_+(\mathbb{H}^3)$. So the convex core of $\mathbb{H}^3/j_*(\pi_1(Z))=\mathbb{H}^3/\rho_1(\pi_1(Z))$ is homeomorphic to the convex core of $\mathbb{H}^3/\rho_0(\pi_1(Z))$, which is homeomorphic to $\mathcal{Z}$ (as oriented manifolds) by Proposition \ref{model}.
\end{proof}

\bigskip
\bigskip

\end{document}